\newtheorem{theorem}{Theorem}[section]
\newtheorem{theorem+definition}{Theorem\,+\,Definition}[theorem]
\numberwithin{case}{theorem}
\newtheorem*{claim*}{Claim}
\newtheorem{corollary}[theorem]{Corollary}
\newtheorem{lemma}[theorem]{Lemma}
\newtheorem{notation}[theorem]{Notation}
\newtheorem{convention}[theorem]{Convention}
\newtheorem{proposition}[theorem]{Proposition}
\newtheorem{assumption}[theorem]{Assumption}
\newtheorem{def+prop}[theorem]{Definition\,+\,Proposition}
\theoremstyle{definition}
\newtheorem{definition}[theorem]{Definition}
\theoremstyle{remark}
\newtheorem{remark}[theorem]{Remark}
\newtheorem{example}[theorem]{Example}
\DeclareMathOperator{\SL}{SL}
\DeclareMathOperator{\GL}{GL}
\DeclareMathOperator{\PGL}{PGL}
\DeclareMathOperator{\CC}{\mathbb{C}}
\DeclareMathOperator{\RR}{\mathbb{R}}
\DeclareMathOperator{\PP}{\mathbb{P}}
\DeclareMathOperator{\ZZ}{\mathbb{Z}}
\DeclareMathOperator{\LL}{\mathbb{L}}
\DeclareMathOperator{\HH}{\mathbb{H}}
\DeclareMathOperator{\im}{Im}
\DeclareMathOperator{\Hol}{Hol}
\DeclareMathOperator{\lcm}{lcm}
\DeclareMathOperator{\res}{res}
\DeclareMathOperator{\ord}{ord}
\DeclareMathOperator{\GM}{GM}
\DeclareMathOperator{\cd}{codim}
\newcommand{\codim}[1][M]{\cd(#1)}
\DeclareMathOperator{\ANN}{Ann}
\DeclareMathOperator{\GRC}{GRC}
\DeclareMathOperator{\MRH}{MRH}
\DeclareMathOperator{\interior}{int}
\DeclareMathOperator{\Sec}{Sec}
\newcommand\calA{\mathcal{A}}
\newcommand\calB{\mathcal{B}}
\newcommand\calU{\mathcal{U}}
\newcommand\calQ{\mathcal{Q}}
\newcommand\calT{\mathcal{T}}
\newcommand\calC{\mathcal{C}}
\newcommand\calM{\mathcal{M}}
\newcommand\calH{\mathcal{H}}
\newcommand\calX{\mathcal{X}}
\newcommand\calZ{\mathcal{Z}}
\newcommand\calS{\mathcal{S}}
\newcommand\calP{\mathcal{P}}
\newcommand\calY{\mathcal{Y}}
\newcommand{\Mgn}[1][g,n]{\calM_{#1}}
\newcommand{\Stra}[1][(\mu)]{\calH {#1}}
\newcommand{\WYSIC}[1][(\mu)]{\widetilde{\calH}{#1}}
\newcommand{\PStra}[1][(\mu)]{\PP\!\Stra[#1]}
\newcommand{\MSDS}[1][(\mu)]{\Xi\Mgn {#1}}
\newcommand{\PMSDS}[1][(\mu)]{\PP\MSDS[#1]}
\newcommand{\LPS}[1][\sigma]{\operatorname{LPS}_{#1}}
\newcommand{\LPSpre}[1][\sigma]{\operatorname{LPS}^{\circ}_{#1}}
\newcommand{\LPSmap}[1][\sigma]{\pi_{#1}}
\newcommand{\Vext}[1][\sigma]{\widetilde{V}_{#1}}
\newcommand{\LogP}[1][\gamma]{\psi_{#1}}
\newcommand{\bdComp}[1][\Gamma]{D_{\enhancG[#1]}}
\newcommand{\bdCompP}{\PP\!D_{\enhancG}}
\newcommand{\LVLF}[1][i]{L_{#1}}
\newcommand{\GRCF}[1][i]{W_{#1}}
\newcommand{\Ri}[1][1]{\operatorname{R}^{#1}}
\newcommand{\Hi}[1][(i)]{\operatorname{H}^{1}_{#1}}
\newcommand{\twistD}{\mathbf{\eta}}
\newcommand{\stabC}{X}
\newcommand{\modD}[1][\xi]{#1}
\newcommand{\starD}[2][\parLevel]{#1\star #2}
\newcommand{\resleq}[2][i]{{#2}_{(\leq #1)}}
\newcommand{\resgeq}[2][i]{{#2}_{(\geq #1)}}
\newcommand{\reseq}[2][i]{{#2}_{(#1)}}
\newcommand{\resg}[2][i]{{#2}_{(> #1)}}
\newcommand{\resl}[2][i]{{#2}_{(< #1)}}
\newcommand{\pa}{\gamma}
\newcommand{\cycle}[1][]{[\pa_{#1}]}
\newcommand{\invpa}[1][b]{\hat{\pa}(#1)}
\newcommand{\invcycle}[1][b]{[{\invpa[#1]}]}
\newcommand{\dimU}{M}
\newcommand{\dimPar}{N}
\newcommand{\Nearby}[1][e]{\varsigma_{#1}}
\newcommand{\stdFix}[1][e]{\mathbb{V}_{#1}}
\newcommand{\stdDiff}[1][e]{\Omega_{#1}}
\newcommand{\stdAnn}[1][e]{ \calA_{#1}}
\newcommand{\imAnn}[1][e]{ \calB_{#1}}
\newcommand{\imDisk}[1][e]{ \calU_{#1}}
\newcommand{\wideDisk}[1][e]{ \widetilde{\calU}_{#1}}
\newcommand{\diskh}[1][\calZ_{k}]{\mathbb{D}_{#1}}
\newcommand{\stdFormA}[1][e]{\phi_{#1}}
\newcommand{\stdFormB}[1][e]{\upsilon_{#1}}
\newcommand{\gluing}[1][e]{\Upsilon_{#1}}
\newcommand{\intSum}[1][\pa]{\sum_{e\in E}\IP[#1,\Van]}
\newcommand{\parLevel}{t}
\newcommand{\parHor}{h}
\newcommand{\stdpar}{b}
\newcommand{\Per}{\varphi}
\newcommand{\WYSI}{``WYSIWYG'' }
\newcommand{\Van}[1][e]{\lambda_{#1}}
\newcommand{\hcc}{horizontal-crossing cycle }
\newcommand{\Hsigma}{H_1(\Sigma\!\setminus\! P,Z)}
\newcommand{\Hcut}[1][i]{
H_1(\Sigma^{cut}_{(#1)}\!\setminus\! P,Z\cup \Lambda_{(#1)}^{ver,+})}
\newcommand{\Hver}[1][i]{
H_1(\Sigma_{(#1)}\!\!\setminus\! P,Z\cup \Lambda_{(#1)}^{ver,+})}
\newcommand\IP[1][\cdot,\cdot]{\langle #1\rangle}
\newcommand{\universal}{\calY}
\newcommand{\eq}{\calX}
\newcommand{\norm}{\tilde{\calX}}
\newcommand{\universalFam}[1][B, \omega]{(\universal\to {#1})}
\newcommand{\eqFam}[1][B, \twistD]{(\eq\to #1)}
\newcommand{\normFam}[1][B, \twistD]{(\norm\to {#1})}
\newcommand{\dualG}[1][\Gamma]{#1}
\newcommand{\enhancG}[1][\Gamma]{\overline{\dualG[#1]}}
\newcommand{\lvl}[1][(i)]{\ell {#1}}
\newcommand{\stdlvl}{i}
\newcommand{\scl}[1][\stdlvl]{t_{\lceil {#1}\rceil}}
\newcommand{\lvlset}{L^{\bullet}(\enhancG)}
\newcommand{\lvlsetb}{L(\enhancG)}
\newcommand{\topl}{{\top}}
\newcommand{\sharc}[1][B]{\arc:\Delta \rightarrow {#1}}
\newcommand{\arc}{f}
\tikzset{
  symbol/.style={
    draw=none,
    every to/.append style={
      edge node={node [sloped, allow upside down, auto=false]{$#1$}}}
  }
}
\newlength{\mylen}
\numberwithin{equation}{section}
\title{The boundary of linear subvarieties}
\author{Frederik Benirschke}
\address{Mathematics Department, The University of Chicago,
Chicago, IL, 60637, USA}
\email{benirschke@uchicago.edu}
\begin{document}

\begin{abstract} We describe the boundary of linear subvarieties in the moduli space of multi-scale differentials. Linear subvarieties are algebraic subvarieties of strata of (possibly) meromorphic differentials that in local period coordinates are given by linear equations. The main example of such are affine invariant submanifolds, that is, closures of $\SL(2,\RR)$-orbits. We prove that the boundary of any linear subvariety is again given by linear equations in {\em generalized period coordinates} of the boundary. Our main technical tool is an asymptotic analysis of periods near the boundary of the moduli space of multi-scale differentials which yields further techniques and results of independent interest.
\end{abstract}
\maketitle
\setcounter{tocdepth}{1}
\tableofcontents
\newpage
\section{Introduction}
Let $\mu=(\mu_1,\ldots,\mu_n)\in \ZZ^n,\, \sum_{i=1}^n \mu_i=2g-2$. The stratum $\Stra$ is the moduli space consisting of pairs $(X,\omega)$ where $X$ is a Riemann surface of genus $g$ and $\omega$ is a meromorphic differential with multiplicities of zeroes and poles prescribed by $\mu$. The projectivized stratum $\PStra$ is the quotient of $\Stra$ by $\CC^*$, where $\CC^*$ acts on a differential by rescaling.
Strata have a natural linear structure, i.e. a set of coordinates, distinguished up to the action of the linear group, called period coordinates, such that the transition functions are linear. A special class of subvarieties of strata is given by linear subvarieties.

\begin{definition}
A $(\CC)$-\textit{linear subvariety} $M$ is an irreducible algebraic subvariety of a stratum $\Stra$ that, at any point, is given by a finite union of linear subspaces in local period coordinates.
\end{definition}

A particularly important class of linear subvarieties are {\em affine invariant submanifolds}. Those are linear subvarieties in strata of holomorphic differentials where the linear subspaces are defined over the real numbers. By a combination of \cite{EskinMirzakhani} and \cite{FilipAlgebraicity}, affine invariant submanifolds are exactly orbit closures of the natural $\SL(2,\RR)$-action.

Linear subvarieties  in projectivized strata are usually not compact. For example, affine invariant submanifolds are never compact since one can use cylinder deformations to degenerate to a stable curve.

 Recently in \cite{BCGGMsm} the authors constructed a modular compactification $\PMSDS$ of the projectivized stratum $\PStra$, the {\em moduli space of projectivized multi-scale differentials}. The goal of this paper is to study the boundary of a linear subvariety in $\PMSDS$.

 The boundary $\PMSDS\!\setminus\!\PStra$ parametrizes {\em multi-scale differentials}, i.e. stable curves together with a collection of meromorphic differential forms on the irreducible components, subject to several technical conditions, which we recall in \Cref{section:TwistDifferentials}. Furthermore, the boundary decomposes into a union of open boundary strata, each of which possesses a natural linear structure induced by \textit{generalized period coordinates}. We will explain the structure of the boundary in more detail in~\Cref{section:LinStructure}.

For technical reasons we work with the ``unprojectivized" \textit{moduli space of multi-scale differentials} $\MSDS$. The group $\CC^*$ acts on $\MSDS$ by rescaling and $\PMSDS=\MSDS/\CC^*$ is the quotient.
Our main result is as follows.
\begin{theorem}[Main theorem]\label{thm:Main} Let $M\subseteq\Stra$ be a $\CC$-linear subvariety.
Then the intersection of the closure $\overline M\subseteq \MSDS$ with any open boundary stratum $\bdComp$ of the moduli space $\MSDS$ of multi-scale differentials is a {\em levelwise} linear subvariety, for the natural linear structure on the boundary stratum $\bdComp\subset\partial\MSDS$.

Furthermore, the linear equations for $\partial M\cap \bdComp$ are {\em explicitly} computable from the linear equations for $M$ near the boundary.
\end{theorem}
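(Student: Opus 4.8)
The strategy is to reduce \Cref{thm:Main} to an asymptotic expansion of the period coordinates of nearby smooth differentials in terms of the generalized period coordinates of the boundary, and then to pass to the limit in the linear equations cutting out $M$.

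First, fix a boundary point $(X_0,\omega_0)\in\bdComp\cap\overline{M}$ with enhanced level graph $\enhancG$. The local structure of $\MSDS$ near $\bdComp$, coming from the plumbing construction underlying \cite{BCGGMsm}, presents a neighborhood as the product of the generalized period coordinates on $\bdComp$ with the plumbing parameters, the latter comprising the level parameters together with the horizontal and prong-matching gluing data. I would first make such a chart explicit, so that a point of the open stratum $\Stra$ near $(X_0,\omega_0)$ is encoded by a multi-scale differential over $\bdComp$ together with a collection of small nonzero plumbing parameters.

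The technical heart — the asymptotic analysis of periods advertised in the abstract — is to expand the period $\int_\pa\omega$ over a basis of relative homology of the nearby smooth surface chosen adapted to the level graph: horizontal cycles supported on a single level, vanishing cycles around the nodes, and crossing cycles connecting different levels. The expected output is that, to leading order and uniformly in the parameters, a level-$i$ horizontal period is a monomial in the level parameters (determined by $i$ and $\enhancG$) times the corresponding generalized period coordinate on level $i$; vanishing-cycle periods are governed by the plumbing parameters and the residue data; and crossing-cycle periods additionally contribute logarithmic terms in the level parameters. The main obstacle is to establish these expansions with enough precision and uniformity — in particular, to isolate the dominant monomial of each period, to bound the error terms, and to control the $\log$-contributions and cross-level terms — so that all the relevant limits exist as the plumbing parameters tend to $0$.

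Granting the expansion, I would substitute it into the equations defining $M$ in period coordinates near $(X_0,\omega_0)$. Since such an equation may couple periods of different orders of magnitude, I would sort the equations by their leading order in the level parameters: dividing each equation by its dominant monomial, its leading term becomes a linear relation among the generalized period coordinates of a single level and the lower-order terms vanish in the limit. Assembling these over all levels produces, on $\bdComp$, a system of linear equations each involving the coordinates of a single level — exactly a levelwise linear subvariety — and these equations are read off algorithmically from the original ones together with the combinatorics of $\enhancG$, which yields the claimed explicit computability. To conclude, I would check that the levelwise linear locus thus obtained is independent of the chosen point of $\bdComp\cap\overline{M}$ and of the local branch of $M$, and that conversely every component of $\overline{M}\cap\bdComp$ arises in this way, using density of $\Stra$ near the boundary and holomorphic dependence of periods on the plumbing parameters; a dimension count then identifies $\overline{M}\cap\bdComp$ with the levelwise linear variety, completing the proof.
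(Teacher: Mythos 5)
Your overall plan — expand periods in an adapted basis, substitute into the equations for $M$, divide by the dominant monomial, and pass to the limit — points in the right direction, but there are two genuine gaps. The first is that the limits you need do \emph{not} exist in the generality you assume. A period $\int_{\pa}\omega$ with $\IP[\pa,\lambda_e]\neq 0$ diverges like $r_e(b)\ln(s_e)$, and after dividing an equation of top level $i$ by $\scl[i]$ the logarithmic contribution of a top-level crossing cycle tends to $\res_{q_e^-}(\twistD)\ln(s_e)$, which is unbounded; moreover the deformation $\pa(b)$ of a cycle to nearby fibers is multivalued, so "the period coordinates near $(X_0,\omega_0)$" are not single-valued functions on a punctured neighborhood of the boundary. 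The limit of the equations exists only because of a cancellation you never establish: restricting to a \emph{holomorphic} one-parameter family $f$ inside $M$ (which exists because $M$ is algebraic, hence $\overline{M}$ is analytic — a hypothesis your argument never invokes), monodromy invariance of the defining subspace forces $\sum_l A_{kl}\sum_e\IP[\pa_l,\lambda_e]\sigma_e r_e(b)=0$, and only this identity converts the equations into relations among (extendable) log periods. Without restricting to holomorphic arcs one runs directly into the cautionary example of \cite[Section 4]{ChenWrightWYSIWYG}, where the limit of a continuous family fails to satisfy the limit of the equations.

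The second gap is the converse inclusion. You propose to identify $\overline{M}\cap\bdComp$ with the levelwise linear locus by "density, holomorphic dependence, and a dimension count," but taking limits of equations only yields \emph{necessary} conditions (and the conditions obtained from horizontal-crossing equations even depend on the chosen arc, so they must be discarded). There is no a priori lower bound on $\dim(\partial M\cap\bdComp)$ that a dimension count could exploit. Establishing that every boundary point satisfying the surviving (vertical, levelwise) equations actually lies in $\overline{M}$ is the core of the argument: it requires extending the equations to a subvariety of a suitable cover of the model domain (indexed by the possible monodromy types of arcs), proving this subvariety is smooth along the boundary and that its projection to the boundary stratum is open and submersive, and then connecting points by arcs inside it. This step cannot be bypassed by the soft considerations you list.
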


For this statement, we recall that the irreducible components of stable curves in the boundary of $\MSDS$ are stratified by levels, depending on the vanishing order of the differential on each component along one-parameter families. By a levelwise linear subvariety we mean that each linear equation only relates periods of the differential along curves contained in the same level.

This version of the main theorem is only a preliminary qualitative result. In the course of the paper we state several more precise versions. Once we define the linear structure of the boundary, we can make a more precise, but still qualitative statement, given in~\Cref{thm:MainLvl}. Later, in~\Cref{section:MonodromyLinearVariety,section:CuttingOut,section:LinearStructures}, we will be able to determine the explicit equations defining the boundary $\partial M \cap\bdComp$ provided we know the linear equations defining $M$ at a point near the boundary.
 In~\Cref{prop:LinContainment} we give an explicit formula in local coordinates, while~\Cref{prop:CoordFree} gives a coordinate-free description of the linear equations defining $\partial M$.

Our main technical tool is a detailed asymptotic analysis of the behavior of periods near the boundary of $\MSDS$. When integrating differentials over cycles passing through nodes of the limiting stable curve, the period might diverge logarithmically. In particular, periods do not extend as holomorphic functions to the boundary $\partial\MSDS$, but  they do extend after subtracting their logarithmic divergences. We call the resulting functions \textit{log periods}.
The first part of the paper is devoted to properly defining log periods and computing their limits at the boundary.

Log periods can also be viewed naturally in the context of Hodge theory.
Stated in this language, the extension of log periods can be seen as an analogue of Schmid's nilpotent orbit theorem \cite{SchmidVHS} in the flat setting. We discuss the relations to Hodge theory in~\Cref{section:MonodromyLinearVariety}.
See also \cite{ecStrata} for similar discussions.

\subsection*{The linear equations of the boundary}
We now explain how to obtain the linear equations defining the intersection of a boundary stratum with the closure of a linear subvariety $M$ from the linear equations defining $M$ near the boundary.

Let $(X,\eta)$ be a multi-scale differential in the boundary of $M\cap \bdComp$.
The stable curve $X$ has two types of nodes: {\em vertical} nodes connect irreducible components of different levels, while {\em horizontal} nodes connect components of the same level. Near the boundary stratum $\bdComp$ of $\MSDS$, every smooth surface can be cut by simple closed curves into subsurfaces of different levels. The subsurface of level $i$ specializes to the irreducible components of $X$ of level $i$ under degeneration.

In a period chart inside $\Stra$, a linear equation for $M$ is a homology class $F=\sum_{l} A_{l} [\pa_l]$ where the collection $\{\pa_l\}$ is a suitable basis for relative homology, called a {\em $\enhancG$-adapted} homology basis. We define the notion of $\enhancG$-adapted basis in \Cref{section:CohBasis}. Roughly speaking one starts by choosing a homology basis for each subsurface of level $i$ and extends those to a basis on the whole surface by only passing through lower levels.

We say the homology class $F$ is of {\em top level at most $i$}, if it can be represented by a sum of paths, each of which is completely contained in the subsurface of level $\leq i$. To obtain the equations for the boundary proceed as follows. Start with the defining equations $F_1,\ldots,F_k$ for $M$, written in terms of a $\enhancG$-adapted bases and put into reduced row echelon form. Then for each $F_l$ repeat the following steps.
\begin{enumerate}
\item Determine the top level $\topl(F_l)$ of $F_l$.
\item If the equation $F_l$ crosses  horizontal nodes of level $\topl(F_l)$, delete it.
\item Otherwise, restrict $F_l$ to each irreducible component of $X$ of level $\topl(F_l)$. The resulting cycle then defines an equation for $\partial M\cap \bdComp$.
\end{enumerate}
 We describe the restriction procedure more explicitly in \Cref{section:Filtr}. The collection of linear equations obtained in this way are  then the linear equations defining the boundary of $M$.
In \Cref{sec:Example} we give an explicit example illustrating the above process.

\subsection*{Potential applications}
The main theorem gives a novel tool to study  the classification problem for affine invariant submanifolds.
Let $M\subseteq \Stra$ be an affine invariant submanifold. Then by~\Cref{thm:Main} the intersection of $\partial M$ with {\em any} boundary stratum is a lower dimensional linear subvariety. One can now try to iterate this process inductively.
A useful consequence of~\Cref{thm:Main} for this approach is the following corollary.
\begin{corollary}
If the linear equations for $M$ are defined over a field $K\subseteq \CC$, then the linear equations  $\partial M$ intersected with any open  boundary stratum of $\MSDS$ are defined over a subfield of $K$.
\end{corollary}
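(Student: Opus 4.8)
The plan is to deduce the corollary directly from the explicit description of the boundary equations, that is, from the algorithm recalled in the introduction and made precise in \Cref{prop:LinContainment} (see also the coordinate-free version in \Cref{prop:CoordFree}). The point is that every operation turning the equations of $M$ into those of $\partial M\cap\bdComp$ is $\QQ$-rational, while the two operations that are not changes of basis — deleting an equation, and restricting a cycle to an irreducible component — can only shrink the coefficient field. So first I would record the linear algebra I need: a linear subspace $V\subseteq\CC^N$ is defined over a subfield $K\subseteq\CC$ precisely when its space of defining linear forms has a $K$-basis; this predicate is invariant under any $\QQ$-linear change of coordinates of $\CC^N$ (and every subfield $K\subseteq\CC$ contains $\QQ$), and under a coordinate projection $\CC^N\to\CC^{N'}$ the image of a $K$-defined subspace is again defined over $K$, with field of definition a subfield of the original one. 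Likewise, the hypothesis that ``the linear equations for $M$ are defined over $K$'' is meaningful because period coordinates are canonical up to the integral transition maps of relative homology, which preserve it.

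Next I would run the algorithm step by step against these facts. Passing from an arbitrary period-coordinate basis to a $\enhancG$-adapted homology basis (\Cref{section:CohBasis}) is a change of $\ZZ$-basis of the relevant relative homology group, hence given by a matrix in $\GL_n(\ZZ)$; so if $M$ is cut out near the boundary by linear forms $F_1,\dots,F_k$ with coefficients in $K$, so it is after rewriting the $F_l$ in a $\enhancG$-adapted basis. Putting $F_1,\dots,F_k$ into reduced row echelon form with respect to the level ordering uses only the field operations of $K$, so the new generators keep their $K$-coefficients. Computing the top level $\topl(F_l)$ is combinatorial. Discarding the $F_l$ that cross horizontal nodes of level $\topl(F_l)$ leaves the survivors over $K$. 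Finally, restricting a surviving $F_l$ to an irreducible component $X_v$ of level $\topl(F_l)$ (the restriction procedure of \Cref{section:Filtr}) produces, in the generalized period coordinates on $\bdComp$ attached to $X_v$, a linear form whose coefficients lie in the $\QQ$-span of the original coefficients $A_l\in K$, hence in $K$. Collecting these forms over all top-level components exhibits $\partial M\cap\bdComp$ as the zero locus, inside the linear structure of $\bdComp$, of finitely many linear forms with $K$-coefficients; therefore its field of definition is a subfield of $K$, and it may be a proper subfield, since both deletion and restriction can discard the coefficients witnessing a larger field.

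I do not expect a genuine obstacle here: once \Cref{thm:Main} and its quantitative refinements are available, the corollary is pure bookkeeping. The one point that requires care is to confirm that the $\enhancG$-adapted bases and the generalized period coordinates on $\bdComp$ are related to ordinary period coordinates by $\QQ$-rational (in fact integral) transformations, so that ``defined over $K$'' makes sense in each chart in play and transports along all the maps above. This follows from the constructions in \Cref{section:CohBasis} and \Cref{section:LinStructure}: a $\enhancG$-adapted basis is by definition a basis of an integral relative homology group, and the generalized period coordinates are period integrals against such cycles.
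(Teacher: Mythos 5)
Your argument is correct and is exactly the one the paper intends: the corollary is presented as an immediate consequence of the explicit description of the boundary equations, since the matrix $A^{(i),ver}$ of \Cref{prop:LinContainment} is obtained from the reduced-row-echelon matrix $A$ (written in an integral, $\enhancG$-adapted homology basis) by deleting rows and zeroing out entries, all of which are $K$-rational operations. Your additional care about the integrality of the change to a $\enhancG$-adapted basis and of the restriction/specialization maps is the right point to check, and it holds for the reasons you give.
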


Another consequence of the proof of \Cref{thm:Main} are restrictions on the possible linear equation defining $M$ inside $\Stra$ arising from considerations of invariance under monodromy.
The precise statement is given in \Cref{rem:MonInv}. This should be compared to the cylinder deformation theorem \cite[Thm. 5.1]{WrightCylinder} which also restricts the possible linear equations, albeit in a slightly different language and thus the results are not directly comparable. In  \cite{BDGCylinder} we investigate in detail the relation of our approach to cylinder deformation results, together with applications to describing the geometry and combinatorics of possible degenerations of affine invariant manifolds. More precisely, we will show that the cylinder deformation theorem for affine invariant submanifold is a direct consequence of algebraicity. Furthermore we determine the explicit analytic equations for the closure of linear subvarieties in plumbing coordinates  in a neighborhood of the boundary, rather than just describing the boundary.

\subsection*{Algebraicity}
We stress that our setup only works for algebraic subvarieties that are locally given by finitely many linear subspaces, and does not apply to merely analytic subvarieties. By \cite{FilipAlgebraicity} affine invariant submanifolds, i.e. analytic subvarieties given by subspaces defined over the {\em real} numbers, are always algebraic. On the other hand, \cite{BMpriv} have communicated to us an example of an analytic subvariety of a meromorphic stratum which is locally defined by linear equations with rational coefficients, which is not algebraic.

Algebraicity is only used once in the argument, in~\Cref{section:SetupComplexLinear}, where we use the classical fact that the Euclidean closure of an algebraic variety in an algebraic compactification coincides with the Zariski closure and in particular is an analytic variety.

Afterwards, we use the fact that every boundary point of an analytic variety is the limit of a holomorphic one-parameter family, and not just of some sequence. This will ultimately allow us to avoid the cautionary example from \cite[Section 4]{ChenWrightWYSIWYG} and take limits of linear  equations. We will discuss the cautionary example in more detail in \Cref{rem:AvoidEx}.

\subsection{Relationship to previous work}
Degenerations of affine invariant submanifolds have been considered in \cite{MirzakhaniWrightWYSIWYG, ChenWrightWYSIWYG}. If we consider a family of differentials inside the Hodge bundle, the limit on a stable curve is a collection of differentials on each irreducible component with at most simple poles at the nodes and opposite residues at each node.
In \cite{MirzakhaniWrightWYSIWYG, ChenWrightWYSIWYG} the authors consider a partial compactification $\WYSIC$ of $\Stra$ which is constructed by removing all nodes and filling them in with marked points, and contracting all components where the differential vanishes. Thus they only consider the top level part of a limit multi-scale differential in the boundary. Each differential $(X_{\infty},\omega_{\infty})\in \WYSIC$ is contained in a stratum $\Stra[(\omega_{\infty})]$ of possibly disconnected differentials with at most simple poles. The resulting partial compactification is called \WYSI compactification because only the parts of the limit are considered that are represented by flat surfaces of positive area. The following is a description of the boundary of an affine invariant submanifold inside $\WYSIC$.
\begin{theorem}[{\cite[Thm. 1.2]{ChenWrightWYSIWYG}}\label{thm:WYSImain}
]Let $M$ be an affine invariant submanifold and $(X_\infty,\omega_{\infty})\in \WYSIC$  with no simple poles.
The intersection of the boundary $\partial M\subseteq \WYSIC$ with the stratum ~$\Stra[(\omega_{\infty})]\subseteq \WYSIC$ is an algebraic variety, locally given by finitely many subspaces in the period coordinates of $\Stra[(\omega_{\infty})]$.
Furthermore, assume that a sequence $(X_n,\omega_n)$ of points of~$M$ converges to $(X_\infty,\omega_{\infty})$.
After removing finitely many terms, the sequence $(X_n,\omega_n)$ may be partitioned into finitely many subsequences  such that for each subsequence the tangent space to a branch of $\partial M \cap \Stra[(\omega_{\infty})]$  at $(X_\infty,\omega_{\infty})$, inside $\WYSIC$,
 is equal to the intersection of the tangent space
of a branch of $M$ at $(X_n, \omega_n)$ and the tangent space of $\Stra[(\omega_{\infty})]$, for $n$ sufficiently large.
Here we use the fact that, since $(X_{\infty},\omega_{\infty})$ has no simple poles, the tangent space to $\Stra[(\omega_{\infty})]$ is naturally a subspace of $\Stra$ at $(X_n,\omega_n)$.
\end{theorem}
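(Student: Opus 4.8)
The plan is to recover this statement from the Main Theorem, \Cref{thm:Main}, via a comparison of the multi-scale compactification $\MSDS$ with the \WYSI compactification $\WYSIC$. A point $(X_\infty,\omega_\infty)\in\WYSIC$ is obtained from a stable pointed curve carrying a stable differential by contracting the zero-area components and replacing the nodes with marked points, so it is the \emph{top-level part} of a multi-scale differential $(X,\eta)$ lying in some boundary stratum of $\MSDS$; the multi-scale differentials over a fixed $(X_\infty,\omega_\infty)$ form a union of finitely many boundary strata $\bdComp[\Gamma_1],\dots,\bdComp[\Gamma_r]$, indexed by the level graphs refining the given top-level picture. The hypothesis that $\omega_\infty$ has no simple poles forces, since residues at a node are opposite, all residues at the vertical nodes leaving level $0$ to vanish; hence $\int_{\Van[e]}\omega_n\to 0$ along any approaching family for the corresponding vanishing cycles $\Van[e]$, and dually the natural inclusion $T\Stra[(\omega_\infty)]\hookrightarrow T\Stra$ at $(X_n,\omega_n)$ used in the statement exists and has image the ``top level $\le 0$'' subspace appearing in the algorithm of the introduction.

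First I would match the two linear structures: under ``forgetting the lower levels'', the restriction of the generalized period coordinates on each $\bdComp[\Gamma_j]$ to the level-$0$ periods is exactly a set of period coordinates for $\Stra[(\omega_\infty)]$. By \Cref{thm:Main} the variety $\overline M\cap\bdComp[\Gamma_j]$ is levelwise linear, so after putting its defining equations into reduced row echelon form and discarding those that cross horizontal nodes of level $0$, the surviving level-$0$ equations restrict — as in step~(3) of the algorithm — to linear equations in the period coordinates of $\Stra[(\omega_\infty)]$. Taking the union over $j=1,\dots,r$ exhibits $\partial M\cap\Stra[(\omega_\infty)]$ near $(X_\infty,\omega_\infty)$ as a finite union of linear subspaces in period coordinates; since $\overline M$ is algebraic so is this intersection, which is the first assertion.

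For the tangent-space statement, given a sequence $(X_n,\omega_n)\in M$ converging to $(X_\infty,\omega_\infty)$ in $\WYSIC$, I would lift it to $\MSDS$ and, after discarding finitely many terms, partition it so that each subsequence converges to a point on a single branch of $\overline M\cap\bdComp[\Gamma_j]$ for a fixed $j$; this is possible because $\MSDS$ is compact and $\overline M$ has only finitely many branches along each boundary stratum. For such a subsequence, the proof of \Cref{thm:Main} identifies the tangent space of the corresponding branch of $\partial M\cap\bdComp[\Gamma_j]$ with the intersection of $T_{(X_n,\omega_n)}M$ with the ``top level $\le 0$'' subspace of $T_{(X_n,\omega_n)}\Stra$; projecting to level $0$ and using the identification of the first paragraph, this is $T_{(X_n,\omega_n)}M\cap T\Stra[(\omega_\infty)]$ for all large $n$, as claimed.

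The step I expect to be the main obstacle is that taking limits of the defining linear equations of $M$ is delicate — precisely the phenomenon behind the cautionary example of \cite[Section~4]{ChenWrightWYSIWYG}, where a carelessly chosen approaching sequence produces a limiting subspace that is too small, so the containment of the second paragraph might fail to be an equality. The resolution, used throughout the paper, is that algebraicity of $M$ via \cite{FilipAlgebraicity} lets one replace an arbitrary approaching sequence by a holomorphic one-parameter family $(X(q),\omega(q))$ in $M$, $q\in\Delta^*$, limiting to $(X_\infty,\omega_\infty)$ as $q\to 0$, whose periods are convergent Puiseux series in $q$; reading off the leading terms of these series simultaneously pins down the limiting branch and shows the limiting linear space has the expected dimension, so the containment becomes an equality. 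One must also verify compatibility with the level-rotation tori on $\MSDS$ — which $\WYSIC$ does not quotient by — but this presents no essential difficulty.
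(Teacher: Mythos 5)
Your overall strategy is indeed the paper's: use the top-level projection $p:\MSDS\to\WYSIC$ with compact fibers, apply \Cref{thm:Main} (in the form of \Cref{prop:CoordFree}) to read off the level-$0$ equations, and use compactness of the fiber $W=p^{-1}(X_\infty,\omega_\infty)$ together with finiteness of local branches. But there is a genuine gap: you only ever invoke \Cref{thm:Main} on the finitely many boundary strata that meet the fiber over $(X_\infty,\omega_\infty)$, i.e.\ you implicitly assume that all points of $\overline M$ relevant to describing $\partial M\cap\Stra[(\omega_{\infty})]$ near $(X_\infty,\omega_\infty)$ lie in the most degenerate stratum of the local charts. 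That is false: a sequence in $M$ converging to a nearby point $(X'_\infty,\omega'_\infty)\neq(X_\infty,\omega_\infty)$ of $\Stra[(\omega_{\infty})]$ lifts to a sequence whose limit in $\MSDS$ typically sits in an \emph{undegeneration} of $\enhancG$ (a stratum in $D_{-1}=\{t_{-1}=0\}$ where some lower levels have merged), not in $\bdComp$. The paper flags exactly this point (``$p$ maps different boundary strata into $\Stra[(\omega_{\infty})]$ and thus the theorem does not just follow from the statements we proved'') and supplies two extra ingredients that your proposal is missing: \Cref{lemma:TopUndeg}, which locates all such limits inside $D_{-1}$, and \Cref{lemma:PerUndeg}, an extension of the log-period asymptotics of \Cref{thm:PerThm} to all of $D_{-1}$ (not just to $\bdComp$) for top-level paths, which shows the level-$0$ equations are \emph{constant} along $D_{-1}$. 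Without this constancy you cannot conclude that the equations obtained at varying boundary points are the finitely many fixed subspaces $V^{(0)}_{\beta}(y_k)$ attached to branches at the fiber, so neither the finiteness assertion nor the identification of the tangent space of a branch at $(X_\infty,\omega_\infty)$ follows.

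A second, smaller omission: you assert that the union of the restricted level-$0$ subspaces \emph{equals} $\partial M\cap\Stra[(\omega_{\infty})]$, but you only argue the forward containment. The reverse inclusion (every point of $V^{(0)}_{\beta}(y_k)$ near $(X_\infty,\omega_\infty)$ really is a limit of points of $M$) is what guarantees each branch has the full claimed tangent space; the paper proves it by gluing the given top-level differential to the lower-level parts of $y_k$, using that the equations of \Cref{thm:Main} are levelwise, so the glued multi-scale differential lies in $\partial M\cap\bdComp$, and then projecting by $p$. Finally, your closing paragraph misidentifies the main difficulty: the passage to one-parameter families and the cautionary example are already absorbed into \Cref{thm:Main}, and the appeal to Puiseux expansions plays no role here; likewise the role of ``no simple poles'' is not a residue cancellation at vertical nodes but the identification $(f_0^*)^{-1}(V)\simeq V\cap\ANN(W_1)$ of \Cref{rem:TSConv}, which is what lets one view $T\Stra[(\omega_{\infty})]$ as a subspace of $T\Stra$.
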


\Cref{thm:Main} should then be seen as an analogue of \Cref{thm:WYSImain} for the moduli space of multi-scale differentials. Roughly speaking, \Cref{thm:WYSImain} says that the boundary of an affine invariant submanifold is given by linear equations on all components of the limit where the differential does not vanish. After suitable rescaling, the limits become non-zero on the remaining components, and we show that, after rescaling, the whole boundary is given by linear equations.

There exists a forgetful map $p:\MSDS\to\WYSIC$ by sending a multi-scale differential to its top level piece.  In \Cref{sec:bdWYSI} we will see that our results quickly imply \Cref{thm:WYSImain}. The crucial observation is that $p$ has compact fibers.
In the presence of simple poles and multiple levels, the description of the tangent space to the boundary in $\MSDS$ is much more involved than \Cref{thm:WYSImain}, and the complete description is given by ~\Cref{prop:CoordFree}.

The proofs in \cite{MirzakhaniWrightWYSIWYG, ChenWrightWYSIWYG} use the theory of cylinder deformations and thus only work for affine invariant submanifolds in strata of holomorphic differentials, our results on the other hand work for linear subvarieties with arbitrary coefficients and in meromorphic strata, provided that they are algebraic. In particular  \Cref{thm:WYSImain} is true for arbitrary linear subvarieties in meromorphic strata.

\subsection{Outline of the proof}
The proof of~\Cref{thm:Main} can be roughly divided into two parts. The first part is to determine a set of linear equations that are satisfied by any boundary point of $\partial M$. Afterwards we need to show that every point on the boundary satisfying those linear equations is indeed in $\overline{M}$.

After choosing a homology basis $\{\pa_1,\dots,\pa_d\}$, $M$ 
 can locally near  $x_0\in M$ be written as the zero locus of $k_0=\cd_{\Stra}(M)$ linear equations. In particular we can find a a matrix $A=(A_{kl})_{kl}$ in reduced row echelon form such that
\[
M= \left\{ (X,\omega)\in\Stra\,:\, \sum_{l=1}^{d} A_{kl}\int_{\pa_l}\omega =0 \text{ for } k=1,\ldots,k_0\right\}.
\]
 Na\"\i vely one would now take the limit of these equations as $\omega$ approaches the boundary of $\MSDS$, but the periods $\int_{\pa_l}\omega$, which are locally holomorphic functions on $\Stra$, cannot be extended holomorphically to the boundary. Firstly, due to monodromy one cannot continuously extend the cycles $[\pa_l]$ to a whole neighborhood of the boundary and secondly, along a sequence converging to the boundary the period might diverge.
In~\Cref{section:LogPeriods} we thus study the asymptotic behavior of periods as they approach the boundary of $\MSDS$.  The main result of that section, ~\Cref{thm:PerThm}, says that after subtracting suitable, explicitly given, multivalued, logarithmic terms, the period $\int_{\pa} \omega$ becomes monodromy invariant and extends holomorphically to $\MSDS$. The resulting extended ``periods'' are called \textit{log periods}.
In~\Cref{thm:PerThm} we additionally compute the limit of the log periods at the boundary $\partial\MSDS$.

We can now describe our strategy to produce linear equations satisfied on the boundary of $M$, which is the content of~\Cref{section:MonodromyLinearVariety}.  Let $b_0\in\partial M$ be a boundary point of the linear subvariety $M$ contained in an open boundary stratum. We can choose a one-parameter family $\arc:\Delta\to \overline{M}$ which is generically contained in $M$ and such that $f(0)=b_0$.  Since $M$ is a linear subvariety, the linear equations are invariant under the monodromy of the Gauss-Manin connection, and this forces the linear equations to be of a special form, see~\Cref{prop:Vdisk}.

The special form of the linear equations together with the explicit formula for the limit of log periods then immediately implies that, at least along one-parameter families, we can take the limit of the linear equations defining $M$. Thus we get necessary linear equations satisfied on $\partial M$. The precise statement is~\Cref{cor:LimEq}.

In~\Cref{section:CuttingOut} we then show that the linear equations obtained in~\Cref{section:MonodromyLinearVariety} are actually the defining equations for the boundary $\partial M$  intersected with an open boundary stratum.
On $\MSDS$ the linear equations for $M$ cannot be extended to the boundary, even if rewritten in log periods, but on a suitable cover of $\MSDS$, that we call the \textit{log period space} $\LPS[]$, they do extend. The proof of~\Cref{thm:Main} is then obtained by a detailed analysis of the extended linear equations on $\LPS[]$.
For technical reasons, instead of a single cover $\LPS[]$, we need to consider a countable collection $\LPS$ of such. The indexing set corresponds roughly to different monodromies of the periods along one-parameter families.

As a result of the proof of~\Cref{thm:Main} we obtain an explicit formula, in local coordinates, for how obtain linear equations for $\partial M$ intersected with an open boundary stratum, given the linear equations for $M$. In~\Cref{section:LinearStructures} we interpret these results in a coordinate-free way by constructing natural maps in relative homology relating the tangent spaces of the stratum $\Stra$ and the boundary $\bdComp$.

In \Cref{sec:bdWYSI} we apply the results of \Cref{section:LinearStructures} to prove  \Cref{thm:WYSImain}.

\subsection{Acknowledgments}
I would like to thank my advisor Samuel Grushevsky for suggesting and guiding me through this project.  I am also thankful for valuable conversations with Ben Dozier, Quentin Gendron, Martin M{\"o}ller, Chaya Norton and John Sheridan.
Furthermore, I am grateful to Alex Wright for suggesting to reprove \cite[Thm. 1.2]{ChenWrightWYSIWYG} (\Cref{thm:WYSImain})
 with our methods. I also thank the referee for many helpful comments.

\section{Basic setup and notation}\label{section:BasicSetup}

\subsection{Setup for families}\label{section:SetupFamilies}
We fix a stratum $\Stra$ of meromorphic differentials with
\[
\mu=(\mu_1,\dots,\mu_r,\mu_{r+1},\dots,\mu_{r+s})\in \ZZ^{r+s},\, r+s=n
\] where
\[
\sum_{i=1}^{r+s}\mu_i=2g-2; \quad\mu_1\geq \dots\geq \mu_r\geq 0 > \mu_{r+1}\geq \dots \geq \mu_{r+s}.
\]
Our setup for families of differentials is as follows.
A \textit{family of differentials} $(\pi:\calX\rightarrow B,\omega,\calS)$ is a family $(\pi:\calX\rightarrow B,\calS)$ of pointed stable curves with sections $\calS=(\calS_1,\dots,\calS_{r+s})$ over the base $B$, together with a section $\omega $ of $\omega_{\calX/B}(-\sum_{i=r+1}^{r+s}\mu_i\calS_i)$ defined on the complement of the nodes. By abuse of notation we will sometimes denote the family of differentials by $\omega$. For generic $b\in B$ we  require $\ord_{\calS_i(b)} \omega_b= \mu_i$  and additionally require $\omega$ to have no other zeroes or poles outside of the nodes.
A family of flat surfaces of type $\mu$ is a family of differentials where all fibers $X_b$ are smooth and $\omega_b\in\Stra$ for all $b\in B$.

We will often write
\[
\calZ=(\calZ_1:=\calS_1,\dots,\calZ_r:=\calS_r),\,\calP=(\calP_1:=\calS_{r+1},\dots,\calP_s:=\calS_{r+s})\]
 for the {\em zero} and {\em pole} sections, respectively.

 For equisingular families $(\calX,\omega)$  of differentials we let $(\widetilde{\calX},\omega)$ be the associated family which is obtained by fiberwise normalization. Here the differential  on $\widetilde{\calX}$ is simply the  pullback of $\omega$ from $X$ and by abuse of notation we denote it also by $\omega$. In this case we let $\calQ_e^{\pm}$ be the sections of the preimages of the nodes on $\widetilde{\calX}$.

We usually consider families over a smooth base $B=(\Delta^*)^{d}\times \Delta^{e}$ for non-negative integers $d,e$, most of the time arising as the complement of a simple normal crossing divisor. Our convention is that $\Delta^k$ is a polydisk in $\CC^k$ centered at the origin of sufficiently small radius, to be chosen, and possibly further shrunk.

{\em The moduli space of multi-scale differentials.}
We now start recalling the moduli space of multi-scale differentials $\MSDS$ and its projectivized version $\PMSDS$ constructed in \cite{BCGGMsm}.

The main features of interest for us are:
\begin{itemize}
\item $\MSDS$ and $\PMSDS$ are smooth algebraic orbifolds and their respective boundaries $\partial\MSDS,\, \partial\PMSDS$ are normal crossing divisors;
\item the boundary has a modular interpretation in terms of multi-scale differentials and assigned prong-matchings, which we will recall next;
\item $\PMSDS$ is compact.
\end{itemize}

{\em Orbifold structure.}
The moduli space $\MSDS$ of multi-scale differentials and its projectivization $\PMSDS$ are smooth, algebraic DM-stacks. All our results are true for linear algebraic substacks of $\Stra$. We usually omit the stack structure and only work with the underlying varieties.

\subsection{Enhanced level graphs}\label{section:LevelGraphs}
To describe the boundary of $\MSDS$, we need to  add additional decorations to the dual graph of a stable curve.  Our setup mostly follows the conventions from \cite{BCGGMgrc}, where we simplify some conventions to focus on the features that are important to us, avoiding some of the more technical notions. A \textit{level graph}  $\enhancG=(\dualG,\lvl[])$ is a stable graph $\dualG=(V,E,H)$ with half-edges $H$ corresponding to marked points of the stable curve, together with a total {\em pre}-order on the vertices $V$ defined by a \textit{level function}
\[
\lvl[]: V\rightarrow L^{\bullet}(\enhancG)
\]
where $L^{\bullet}(\enhancG):=\{0,-1,\dots, -\ell(\enhancG)\}$ is the set of levels. Following the convention of \cite{BCGGMsm} we write $L(\enhancG):=L^{\bullet}(\enhancG)\setminus\{0\}$, and refer to it as the set of \textit{lower levels}. An edge is called {\em horizontal} if it joins vertices of the same level, and {\em vertical} otherwise. We let $E^{ver},\,E^{hor}\subseteq E$ be the sets of all vertical and horizontal edges, respectively.
An \textit{enhancement} is an assignment of an integer $\kappa_e\geq 0$, called the {\em number of prongs}, to each edge $e$, so that $\kappa_e = 0$ if and only $e\in E^{hor}$.
If an edge $e$ joins the vertices $v$ and $v'$ such that $\ell(v)\geq \ell (v')$ then we let $\lvl[(e+)]$ be the level of $v$ and similarly $\lvl[(e-)]$ the level of $v'$. Furthermore we set $v(e+):=v$ and $v(e-):=v'$. At horizontal nodes we make a random choice. Similarly, for a half-edge $h$ we let $v(h)$ be the vertex adjacent to $h$ and $\ell(h)$ the level of $v(h)$.

We let $\resleq{\enhancG}$ be the restriction of $\enhancG$ to levels at most $i$, i.e. we remove all vertices from $\enhancG$ with levels above $i$ and all edges and half-edges connecting to those vertices.
The restrictions $\reseq{\enhancG},\resg{\enhancG}$ are defined similarly.

For later use we define
\begin{equation}\label{eq:ak}
a_i:= \lcm(\kappa_e),\, m_{e,i}:=a_i/\kappa_e,
\end{equation}
where the $\lcm$ is taken over all edges connecting $\resleq{\Gamma}$ and $\resg{\Gamma}$ and $m_{e,i}$ is defined for any edge $e$ such that $\ell(e+)> i\geq\ell(e-)$.

\subsection{Stable curves and level graphs}
Let $\enhancG$ be an enhanced level graph and  $(X,S)$ be a stable curve with marked points $S$ and dual graph $\dualG$. Usually we omit the marked points in our notation. We denote by $X_v$ the irreducible component of $X$ corresponding to $v\in V$. Similarly we let $\reseq{X}$ be the subcurve consisting of all irreducible components of level $i$. We refer to $\reseq[0]{X}$ as the top level of $X$. There are analogous definitions for the subcurve $\resleq{X}$ consisting of components of level $\leq i$, for $\resgeq{X}$, and $\resg{X}$. For each node $e$ let $q_e^+$ and $q_e^-$ be the preimages of the node that are contained in $X_{v(e+)}$ and $X_{v(e-)}$, respectively.

Let $S=Z\cup P$ be the marked points, partitioned into marked zeroes and poles.

On the normalization $\widetilde{X}$ of $X$ we define
\[
\begin{split}
\widetilde{Z}&:= Z \cup \{q_e^+, e\in E^{ver}\},\\
\widetilde{P}&:=P \cup \{q_e^-, e\in E^{ver}\}\cup \{q_e^{\pm},\,e\in E^{hor}\},\\
\widetilde{S}&:=\widetilde{Z}\cup \widetilde{P}.
\end{split}
\]

We denote by $\reseq{\widetilde{X}}$ the normalization of $\reseq{X}$ and consider it as possible disconnected curve with marked points $\widetilde{S}_{(i)}$ where

\begin{equation}\label{eq:Ximarked}
\widetilde{Z}_{(i)}:=\widetilde{Z}\cap \reseq{X},\,
\widetilde{P}_{(i)}:=\widetilde{P}\cap \reseq{X},\,
\widetilde{S}_{(i)}:=\widetilde{S}\cap \reseq{X}.
\end{equation}

We define $\widetilde{Z}_{v}, \widetilde{P}_{v}, \widetilde{S}_{v}$  on the normalization $\widetilde{X}_v$ of $X_v$ analogously.

\subsection{Multi-scale differentials}\label{section:TwistDifferentials}
The boundary of $\MSDS$ can be described in terms of multi-scale differentials.
A \textit{multi-scale differential} $(\stabC,S,\twistD)$ compatible with an enhanced dual graph $\enhancG$ is a stable curve $(\stabC,S)$ and a collection $\twistD=(\twistD_v)_{v\in V}$ of meromorphic differentials on the normalization $\widetilde{X}_v$ of each irreducible component $X_v$ satisfying
\begin{itemize}
\item \textbf{(Prescribed vanishing)} Each differential $\twistD_v$ is non-zero, and has no zeroes and poles outside $\widetilde{S}_v$. Moreover, the order of vanishing at the marked point $S_k$ is $\mu_k$.
\item \textbf{(Matching orders)} For every node $e$ we have
\[
\ord_{q_e^+} \eta_{v(e+)}=\kappa_e-1,\, \ord_{q_e^-}\eta_{v(e-)}=-\kappa_e-1.
\]
\item \textbf{(Matching residues at horizontal nodes)} At horizontal nodes $e\in E^{hor}$, we have
\[
\res_{q_e^+}\twistD_{v(e+)} + \res_{q_e^-}\twistD_{v(e-)}=0.
\]
\item \textbf{(Global residue condition)}
For every level $i$ and every connected component $Y$ of $\resg{X}$ that does not contain a marked point with a prescribed pole, i.e. such that $P \cap Y=\emptyset$, the following condition holds. Let $\{e_1,...,e_b\}$ denote the set of all nodes where $Y$ intersects $\reseq{X}$. Then
\[
\sum_{j=1}^{b} \res_{q_{e_j}^-} \eta_{v(e_j-)}=0.
\]
\end{itemize}
Instead of grouping the differentials by irreducible components, it is often useful to group them level by level. In this case we write $\twistD=\left(\reseq{\twistD}\right)_{i\in L^{\bullet}(\enhancG)}$. We usually omit the marked points $S$ in the notation, since they are already encoded as the zeroes and poles of $\twistD$  away from the nodes.

\subsection{The structure of the boundary}\label{section:Boundary}
The boundary components of the moduli space of multi-scale differentials $\MSDS$ are indexed by the discrete data of enhanced  level graphs. The {\em open} boundary stratum corresponding to the enhanced level graph $\enhancG$ is denoted by $D_{\enhancG}$. A point of $D_{\enhancG}\subseteq \partial\MSDS$ corresponds to a pair $(X,\twistD)$ where $X$ is a stable curve with dual graph $\dualG$, and $\twistD$ is a multi-scale differential compatible with $\enhancG$.
Additionally there needs to be a choice of a prong-matching at every vertical node. Since we only work locally, we do not need to keep track of the prong-matching, and refer to \cite[Section 5.4]{BCGGMsm} for a proper discussion.

Two multi-scale differentials $(X,\twistD)$ and $(X',\twistD')$ correspond to the same boundary point of $\bdComp$ if they are related by the action of the level-rotation torus which acts simultaneously on the different levels by rescaling and on the prong-matchings; we refer to \cite[Section 6]{BCGGMsm} for the precise definitions.
For our purposes we can again mostly ignore the action: near a boundary point $(X,\twistD)$ with a chosen prong-matching, the boundary component $D_{\enhancG}$ can be parametrized by a small neighborhood in the space of  multi-scale differentials compatible with $\enhancG$, considered up to scaling each differential $\twistD_{(i)}$ on lower levels by an arbitrary non-zero complex number, one complex number for each level.

\begin{remark}
Suppose $(X,\twistD)$ and $(X,\twistD')$ are two multi-scale differentials with the same underlying differential but different prong-matchings. Since $\twistD$ and $\twistD'$ have the same periods it seems interesting to ask: if $(X,\twistD)$ is contained in the boundary of a linear subvariety is the same true for $(X,\twistD')$ and furthermore are the linear equations the same? Our methods are purely local inside the moduli space of multi-scale differentials, i.e. they only allow us to describe the linear subvariety in a small neighborhood of $(X,\twistD)$ which might not contain $(X,\twistD')$ and thus seem not applicable to this question.
\end{remark}

\subsection{Local coordinates on the boundary} \label{sec:GeneralizedPeriodCoordinates}
It is classically known that the stratum $\Stra$ has local coordinates given by the relative cohomology $H^1(X\setminus P,Z;\CC)$. The boundary stratum $D_{\enhancG}$ has a similar local description, which we now discuss.

The prescribed vanishing and matching orders conditions for multi-scale differentials imply that a multi-scale differential $\twistD$ is contained in the product of  strata $\prod_{v\in V}\Stra[(\mu_v)]$, where each $\mu_v$ is completely determined by $\mu$ and the enhanced level graph $\enhancG$.
Thus the space of  multi-scale differentials, i.e.~unprojectivized and without a choice of prong-matchings, can be identified with the subspace $\prod_{v\in V} \Stra[(\mu_v)]^{GRC}$ of $\prod_{v\in V} \Stra[(\mu_v)]$, constrained by the matching residues at horizontal nodes, as well as the global residue conditions.
To describe the boundary component $\bdComp$, we need to additionally projectivize the differential on lower levels, and choose the prong-matchings. This causes the stratum $\bdComp$ to be a cover of $\prod_{v\in V} \Stra[(\mu_v)]^{GRC}$, suitably projectivized.
 We can use this to describe local coordinates on $D_{\enhancG}$. For every level $i$ we set
\begin{equation}\label{eq:HiX}
\Hi(X;\ZZ):= H^1(\reseq{\widetilde{X}}\!\!\setminus \!\reseq{\widetilde{P}},\reseq{\widetilde{Z}};\ZZ),
\end{equation}
where we recall $\reseq{\widetilde{P}},\reseq{\widetilde{Z}}$ from~\cref{eq:Ximarked}. We sometimes simply write $\Hi(X)$ instead of $\Hi(X;\ZZ)$ and also write $\Hi(X;\CC):=\Hi(X;\ZZ)\otimes_{\ZZ}\CC$. We additionally denote $\Hi(X)^{\GRC}\subseteq \Hi(X)$ the subspace satisfying the global residue and matching residue conditions at horizontal nodes.
We will revisit the global residue condition in \Cref{sec:GRC}. See in particular \cref{eq:GRC} for an explicit definition of $\Hi(X)^{\GRC}$.
The boundary stratum $D_{\enhancG}$ then has \textit{local projective coordinates} given by
\[
\Hi[](X,\enhancG):=\Hi[(0)](X;\CC) \times
\prod_{i\in\lvlsetb} \PP\!\left (\Hi(X;\CC)^{GRC}\right).
\]
By this we mean that, after choosing local coordinates on each projective space, we get local coordinates on $\bdComp$. Note that this statement is only meaningful because the transition functions in those coordinates are given by projective linear maps. We will discuss the transition functions in more detail in~\Cref{section:LinStructure}.
We refer to those coordinates as \textit{generalized period coordinates}.
Similarly, the boundary $\bdCompP$ of $\PMSDS$ has projective local coordinates given by $\PP(H^1(X,\enhancG))$, where additionally  the homology  $\Hi[(0)](X)$ of the top level is projectivized.

Let $U\subseteq \bdComp$ be such a generalized period chart centered at $b_0=(\stabC_{b_0},\twistD_{b_0})$.
Then over $U$ there exists an equisingular family \begin{equation}\label{eq:ClutchingFamily}
\eqFam[U, \twistD]
\end{equation}
of stable curves with dual graph $\dualG$, where $\eta$ is the multi-scale differential determined by generalized period coordinates.
We define $(\eq_{(i)}\rightarrow U,\twistD_{(i)})$ to be the potentially disconnected family consisting only of irreducible components of level $i$.
From time to time it will be useful to consider the fiberwise normalization
\begin{equation}\label{eq:NormFamily}
\normFam[U, \twistD],
\end{equation}
which is a family of smooth, possibly disconnected, Riemann surfaces, where we make a choice of marking of the preimages of all nodes.
Notice that while a point in $U$ only parametrizes an equivalence class of multi-scale differentials, choosing local charts on each projective space $\PP\left(\Hi(X)^{\GRC}\right)$ allows us to choose  for each $u\in U$ a representative $(X_u,\eta_u)$, varying holomorphically in $u$.

\begin{convention}
From now on,  $b_0\in\bdComp$ will denote a boundary point, chosen once and for all, in a neighborhood of which in $\MSDS$ we will perform all of our constructions and computations.
We will usually write $(X,\twistD)$ instead of $(X_{b_0},\twistD_{b_0})$.
Furthermore, from now on, $X$ always denotes a stable curve contained in $\bdComp$ and $\Sigma$ a smooth curve in $\Stra$.
\end{convention}

\subsection{The linear structure of the boundary}\label{section:LinStructure}
After choosing a homology basis on $\reseq{X}$ for each $i$, the changes of coordinates for $D_{\enhancG}$ are given by linear transformations in $\GL(\enhancG):=\GL(d_1,\ZZ)\times\left(\prod_{i\in\lvlsetb} \PGL(d_i,\ZZ)\right)$ where $d_i:=\dim \Hi(X)^{GRC}$.
Thus $D_{\enhancG}$ possesses  a $\GL(\enhancG)$-structure or what we call a \textit{levelwise linear structure}. We call a subvariety of $D_{\enhancG}$ \textit{levelwise linear} if locally in generalized period coordinates it is given by subvarieties
\[
V_0 \times \prod_{i\in\lvlsetb} \PP(V_i)\subseteq \Hi[](X,\enhancG)
\]
where $V_i\subseteq \Hi(X;\CC)$ are some linear subspaces.
Since on $\bdCompP$ also the top level is projectivized, $\bdCompP$ admits a $\prod_{i\in\lvlset} \PGL(d_i,\ZZ)$-structure, i.e. coordinate changes live in $\prod_{i\in\lvlset} \PGL(d_i,\ZZ)$, and a subvariety is {\em levelwise linear} if locally it is  given by $\prod_{i\in\lvlset} \PP(V_i)$.
Note that if a subvariety $M\subseteq D_{\enhancG}$ is levelwise linear, the same is true for its image $\PP\!M\subseteq \PP\!D_{\enhancG}$.
We can now give a more precise, though still qualitative, version of our main result,~\Cref{thm:Main}.
\begin{theorem}[Main theorem, Levelwise version] \label{thm:MainLvl}
Let $M\subseteq \Stra$ be a linear subvariety.
For each open boundary component $\bdComp\subseteq \MSDS$ the intersection $
\partial M\cap \bdComp$
 is a levelwise linear subvariety of $\bdComp$.
The same is true for the projectivization $\PP\! M\subseteq \PStra$.
\end{theorem}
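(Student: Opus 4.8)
The plan is to follow the two-part strategy outlined in the introduction, taking seriously the fact that we only need a \emph{qualitative} statement here, so all the explicit bookkeeping from the later sections can be black-boxed or deferred. The overall structure: first produce a system of linear equations that every point of $\partial M\cap\bdComp$ must satisfy, then show these equations actually cut out $\partial M\cap\bdComp$ inside $\bdComp$. For the first part, I would fix a boundary point $b_0\in\partial M\cap\bdComp$ and, using algebraicity of $M$ (so that $\overline M$ is an analytic variety, as noted in the excerpt), choose a holomorphic one-parameter family $\arc:\Delta\to\overline M$ generically inside $M$ with $\arc(0)=b_0$. Pulling back a $\enhancG$-adapted homology basis $\{\pa_l\}$ and the defining matrix $A$ of $M$ near the boundary, I would invoke the monodromy-invariance of the linear equations (the linear subvariety is preserved by the Gauss--Manin monodromy around the boundary divisors, cf.\ the forthcoming \Cref{prop:Vdisk}) to deduce that each defining equation $F=\sum_l A_l[\pa_l]$ must have the special triangular form forced by compatibility with the monodromy weight filtration — concretely, after reducing to row echelon form, the leading term of each $F$ sits in a single level and the equation does not ``see'' the logarithmically divergent directions unless it also sees the corresponding monodromy-invariant vanishing cycles. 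Combining this special form with the explicit limit formula for log periods from \Cref{thm:PerThm}, the limit $\lim_{t\to 0}\sum_l A_l(t)\int_{\pa_l}\omega_t$ exists and equals a \emph{levelwise} linear expression in the generalized period coordinates of $\bdComp$: each surviving equation restricts to a single level $i$ and becomes a linear (or, after projectivization, projective-linear) relation among the periods $\int_\pa\twistD_{(i)}$, $\pa\in H^1(\reseq{\widetilde X}\setminus\reseq{\widetilde P},\reseq{\widetilde Z})$. This gives linear subspaces $V_i\subseteq\Hi(X)$, one per level, such that $\partial M\cap\bdComp\subseteq V_0\times\prod_{i\in\lvlsetb}\PP(V_i)$ locally.

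For the second part — that the inclusion is an equality — the key point is that the limit construction above is, up to the subsequence partition phenomenon familiar from \Cref{thm:WYSImain}, reversible along one-parameter families. I would argue that any boundary point $b\in\bdComp$ satisfying the derived levelwise equations can be realized as $\arc(0)$ for some holomorphic arc into $\MSDS$ whose generic point satisfies the original equations of $M$ (using a plumbing/smoothing construction of a one-parameter degeneration with prescribed limit multi-scale differential), hence lies in $\overline M$ by closedness; this is exactly the step where one passes to the log period covers $\LPS$ of $\MSDS$, on which the extended linear equations are genuine holomorphic (in fact linear) equations, so that $\overline M$ pulls back to an honest linear-by-levels locus and a dimension/irreducibility count closes the argument. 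Concretely I would (i) pull everything back to a chart of a suitable $\LPS[\sigma]$, (ii) check that the closure of the preimage of $M$ there is cut out by the extended equations, which are manifestly levelwise by the form established in part one, and (iii) push forward, noting that the level-rotation torus action and the projectivizations on lower levels are compatible with levelwise linearity, so the image in $\bdComp$ is levelwise linear. The statement for $\PP M\subseteq\PStra$ then follows formally: projectivizing the top level as well only adds one more $\PGL$ factor to the structure group and turns $V_0$ into $\PP(V_0)$, and the remark after the definition of levelwise linear in the excerpt already records that levelwise linearity is preserved under $M\mapsto\PP M$.

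The main obstacle — and the reason the bulk of the paper is devoted to it — is the first part: controlling the asymptotics of periods across vertical nodes well enough to take the limit of the linear equations. Naively the periods $\int_{\pa_l}\omega_t$ diverge like $\log t$ and the cycles $\pa_l$ are not even monodromy-invariant, so ``$\lim_{t\to0} A(t)\cdot(\text{periods})$'' is a priori meaningless. The whole difficulty is packaged into two inputs I am allowed to assume: the log period theorem \Cref{thm:PerThm} (which says the divergences are explicit and universal, and what remains extends holomorphically to $\MSDS$ with computable boundary value), and the monodromy-invariance/special-form statement \Cref{prop:Vdisk} (which guarantees the divergent contributions cancel in each defining equation of $M$, not just term by term). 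The genuinely delicate point is that these two must be used \emph{together}: one needs the special form of $A$ to know that the $\log t$ terms produced by $\Cref{thm:PerThm}$ cancel in $\sum_l A_l(t)\int_{\pa_l}\omega_t$, and one needs the precise finite part from \Cref{thm:PerThm} to identify the surviving relation with a levelwise equation. A secondary subtlety, flagged in the excerpt via the reference to \cite[Section 4]{ChenWrightWYSIWYG}, is that limits of linear equations need not behave well along arbitrary sequences; this is why we restrict to holomorphic arcs and why algebraicity (hence the Euclidean-closure $=$ Zariski-closure fact) is essential rather than cosmetic. For the present qualitative theorem I would not attempt to make the equations explicit — that is precisely what \Cref{prop:LinContainment} and \Cref{prop:CoordFree} later accomplish — but only record that the limiting equations are levelwise linear, which is all of \Cref{thm:MainLvl}.
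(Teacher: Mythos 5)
Your proposal follows essentially the same route as the paper: necessity of the levelwise equations is obtained by taking limits of the defining equations along holomorphic $M$-disks, using the monodromy-invariance of $V$ (\Cref{prop:Vdisk}) to cancel the logarithmic divergences and \Cref{thm:PerThm} to identify the finite part, and sufficiency is obtained by passing to the log period spaces $\LPS$, where the extended equations cut out $\Vext$ and smoothness/openness at the boundary lets one connect any point of the limit locus back to $M$ by an arc. The only cosmetic difference is that you summarize the sufficiency step as a ``dimension/irreducibility count,'' whereas the paper's argument rests on the smoothness of $\Vext$ along $\LPSmap^{-1}(\bdComp)$ and openness of the restricted projection (\Cref{prop:subm}, \Cref{prop:LPS}); otherwise the structure, including the treatment of multiple local components and of the projectivization, matches the paper's proof.
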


We stress that this statement already greatly restricts the possible linear equations of $\partial M\cap \bdComp$, since each linear equations only involves periods contained in the same level.

In~\Cref{section:LinearStructures} we will describe how to relate the levelwise linear structure on the boundary stratum $\bdComp$ with the linear structure on the stratum $\Stra$.
\begin{remark}
We stress that all levelwise projectivizations above are taken with respect to the standard action of $\CC^{\lvlsetb}$ on $\prod_{i\in\lvlsetb} H^1_{(i)}(X)$, not to be confused with the triangular action which will be introduced in~\cref{eq:TriangularAction}, following \cite[eq. (11.1)]{BCGGMsm}.
\end{remark}

\subsection{The model domain}\label{section:ModelDomain}
We now recall the local structure of the moduli space of multi-scale differentials near the open boundary stratum $\bdComp$.
In \cite[Section 8]{BCGGMsm}, the authors first introduce an auxiliary space, the model domain, and then show later in \cite[Section 10]{BCGGMsm} that it is locally biholomorphic to $\MSDS$. Local coordinates of the model domain $\MSDS$ near $b_0\in\bdComp$ can be given by
\begin{equation}\label{eq:ModelDomain}
B:=U\times \Delta^{\ell(\enhancG)-1}\times \Delta^{|E^{hor}|},
\end{equation}
where $U\subseteq \bdComp$ denotes a generalized period chart.
\begin{convention} From now on, unless stated otherwise, $U\subseteq \bdComp$ will always refer to a generalized period chart in $\bdComp$ centered at $b_0$, which we allow to be further shrunk as needed. Furthermore, we often implicitly identify $U$ with $U\times(0,\ldots,0)\subseteq B$.
\end{convention}
We call $B$ the  \textit{local model domain}
and denote its coordinates by $b=(\twistD,\parLevel,\parHor)$ with  {\em scaling parameters} $\parLevel=(t_i)_{i\in \lvlsetb}$,  {\em horizontal node parameters} ~$\parHor=(h_e)_{e\in E^{hor}}$, where $\eta$ is a multi-scale differential. We omit the stable curve $\stabC$ from the notation.

\begin{notation}
Throughout the text we denote
\begin{equation}\label{eq:mn}
\dimPar:=\ell(\enhancG)-1+|E^{hor}|,\quad \dimU:=\dim U=\dim \bdComp.
\end{equation}
In particular, $\dim B=\dimPar+\dimU$.
\end{notation}
Note that our notation here differs from \cite{BCGGMsm} where $N$ denotes the number of levels, not including the count of horizontal nodes. We recall that we denote the number of levels by $\ell(\enhancG)$.

Let $p:B\to U$  denote the projection onto the first factor. On $B$ we consider the pullback family $(p^*\eq\to B,\eta)$ where $\eqFam[U]$ is the family from~\cref{eq:ClutchingFamily}, which we call the \textit{model family}. We usually omit the projection map $p$ and only write
\begin{equation}\label{eq:ModelFam}
\eqFam.
\end{equation} for the model family.

We will explain the role of the parameters $t_i$ and $h_e$ more precisely in~\Cref{section:UniversalFamily}. For the moment we only define, following \cite{BCGGMsm},
\begin{equation}\label{eq:ScalingParameters}
\scl[\stdlvl]:=\prod_{k=i}^{-1}t_{k}^{a_{k}}\,,
\end{equation}
where the exponents $a_k$ are defined in~\cref{eq:ak}, and define the triangular action of $t$ on $\twistD$ by
\begin{equation}\label{eq:TriangularAction}
\starD{\twistD}:=( \scl[\stdlvl]\reseq{\twistD})_{i\in \lvlset}.
\end{equation}
We also define the \textit{plumbing parameters}
\begin{equation}\label{eq:PlumbingParameters}
s_e:= \begin{cases} \prod_{i=\ell(e-)}^{\ell(e+)-1}t_i^{m_{e,i}} & \text{ at vertical nodes},\\
 h_e & \text{ at horizontal nodes}.
 \end{cases}
\end{equation}
where the exponents $m_{e,i}$ are defined in~\cref{eq:ak}.
Note that in particular at vertical nodes we have the relation
\begin{equation}\label{eq:PlumbRelation}
\scl[\ell(e-)]=s_e^{\kappa_e}\scl[\ell(e+)].
\end{equation}

We define the (local) boundary $D\subseteq B$ as the normal crossing divisor given by the equations
\begin{equation}\label{eq:boundary}
D:=\left\{\prod_{i\in\lvlsetb} t_i\cdot \prod_{e\in E^{hor}} h_e=0\right\}
\end{equation}
The boundary component $U\simeq\bdComp\cap B=U\times (0,\dots,0)\subseteq D$ is called \textit{the most degenerate boundary stratum}, while the complement $D\setminus\bdComp$ corresponds to {\em partial undegenerations} of $\enhancG$. We will not need the precise definition of undegenerations, and instead refer the reader to \cite{BCGGMsm}.

\subsection{The universal family of multi-scale differentials}\label{section:UniversalFamily}
In \cite[Section 10]{BCGGMsm} the authors use plumbing to construct the \textit{universal family $\universalFam$  of multi-scale differentials} over the base $B$ defined in~\cref{eq:ModelDomain}.
We refer the reader  to \cite[Section 11]{BCGGMsm} for the precise definition of families of multi-scale differentials. For our purposes we only need the following properties of  the universal family $\universal$:
\begin{enumerate}
\item  For any $b\in B\setminus D$ the differential $\omega_b$ is a flat surface in the stratum $\Stra$;
\item  On the most degenerate stratum, i.e. for any $b\in U\times (0,\dots,0)$, the differential $\omega_b$ is a multi-scale differential in $\bdComp$;
\item There exist families of unions of disks $\wideDisk[]\subseteq \universal,\, \imDisk[]\subseteq\eq$ containing $\widetilde{S}$, and a biholomorphism
\begin{equation}\label{def:psi}
\Psi:\universal\setminus\wideDisk[]\simeq\eq\setminus\imDisk[].
\end{equation}
Near a marked point $\wideDisk[]$ and $\imDisk[]$ are homeomorphic to a disk, while at nodes they are homeomorphic to a union of two disks intersecting at the node;
\item Suppose $K\subseteq \universal\setminus\wideDisk[]$ is compact and $\Psi(K)\subseteq\reseq{\eq}$. Then
\[
\lim_{\parLevel,\parHor\to 0}  \dfrac{1}{\scl[\stdlvl]}\omega(b)_{|K}=\reseq{\twistD}
\]
uniformly, where $b=(\eta,t,h)$ and in the limit all $t_i$ and $h_e$ go to zero. In other words, as $b$ approaches a boundary point $\eta\in \bdComp$,  on the $i$-level  $\omega_{(i)}(b)$, rescaled by $\scl[i],$ converges uniformly to $\reseq{\twistD}$, away from the nodes and marked points.
\item Along the most degenerate boundary stratum $\bdComp$ the map $\Psi$ extends to an isomorphism
\[
\universal|_{\bdComp}\simeq \eq|_{\bdComp}.
\]
\end{enumerate}

\section{Constructing the universal family of \texorpdfstring{$\MSDS$}{}}\label{section:ConstrUniversal}
In this section we outline the construction of the universal family $\universal$. We follow \cite[Section 10]{BCGGMsm} in notation and setup, but we only highlight the features of the construction necessary for our discussion.

\subsection{Modification differentials}
\label{section:modD}
For a multi-scale differential, the residues match at horizontal nodes, while at vertical nodes  the multi-scale differential is holomorphic at $q_e^+$ and has a pole at $q_e^-$. On the other hand, for the plumbing construction in~\Cref{section:PlumbingSetup} it will be important to have differentials with matching residues at every node.
The solution, as found in \cite{BCGGMgrc}, is as follows. It is a consequence of the global residue condition that we can add a ``small" differential $\modD$ to $\twistD$ such that the residues  of $\starD{\twistD}+\modD$ match at all nodes. The precise definition is as follows, see also \cite[Def. 9.1]{BCGGMsm}. A family of \textit{modifying differentials} $\modD$ for the model family $\eqFam$  is a family of   meromorphic differentials $(\calX\rightarrow B,\modD)$ with $\modD=(\modD_v)_{v\in V}$ such that:
\begin{itemize}
\item $\modD$ is holomorphic except for possible simple poles along nodal and polar sections of $\twistD$. We allow $\modD$ to have residues at horizontal nodes.
\item $\reseq{\modD}$ is divisible by $\scl[\stdlvl-1]$ for each $i\in\lvlsetb$, and $\reseq[-\ell(\enhancG)]{\modD}\equiv 0$;
\item $\starD{\twistD}+\modD$ has matching residues at all nodes.
\end{itemize}

\subsection{Plumbing setup}
\label{section:PlumbingSetup}
In ~\cite[Section 10]{BCGGMsm} the authors introduced a plumbing setup for multi-scale differentials which we will now recall and then use subsequently.
We will subsequently work on a polydisk $B_{\varepsilon}\subseteq B$ of radius $\varepsilon=\varepsilon(b_0)>0$.
We define the standard annulus in $\CC$:
\[
A_{\delta_1,\delta_2}:=\{ \delta_1 < |z| <\delta_2\}.
\]
For $\delta=\delta(b_0)>0$, to be determined later,
we define the
\textit{standard plumbing fixture} to be
\[
\stdFix:=\{ (b,u,v)\in B_{\varepsilon}\times \Delta_{\delta}^2\,:\, uv=s_e(b)\},
\]
where $s_e(b)$  is defined by~\cref{eq:PlumbingParameters}. We consider $\stdFix\to B_{\varepsilon}$ as a family over $B_{\varepsilon}$ with fibers $(\stdFix)_{b}$. We equip $\stdFix$ with the relative one-form $\stdDiff$ given by
\[
\stdDiff:= (\scl[\ell(e+)]u^{\kappa_e}-r_e')\dfrac{du}{u}=-(\scl[\ell(e-)]v^{-\kappa_e}+r_e')\dfrac{dv}{v}
\]
with residue $r_e'$ to be determined later. We also consider the families of disjoint annuli $\stdAnn^{+},\stdAnn^{-}\subseteq \stdFix$ given by
\[\begin{split}
\stdAnn^{+}&:= \{ (b,u,v)\,:\, \delta/R < |u| < \delta \},\\
\stdAnn^{-}&:= \{ (b,u,v)\,:\, \delta/R < |v| < \delta \},
\end{split}
\]
for some constant $R>0$.
\begin{definition}\label{def:Van}
For $b\in B\setminus D$, we define the \textit{vanishing cycle} $\lambda_e\subseteq (\stdFix)_b$ to be the standard generator of the fundamental group of the annulus $(\stdFix)_b$ in $u$-coordinates, represented by a path encircling the origin once with  counterclockwise orientation.
\end{definition}
Our convention for the orientation on $\lambda_e$ has the following interpretation: If one chooses a tangent vector $\vec{v}$ on the curve pointing from lower levels to higher levels, then the frame $(\vec{u},\vec{v})$ is positively oriented where $\vec{u}$ is the tangent vector along $\lambda_e$, as seen in \Cref{fig:Orientation}. At horizontal vanishing cycles the orientation depends on the random choice of $e+$ and $e-$.

\begin{figure}[h]
\includegraphics[scale=0.1]{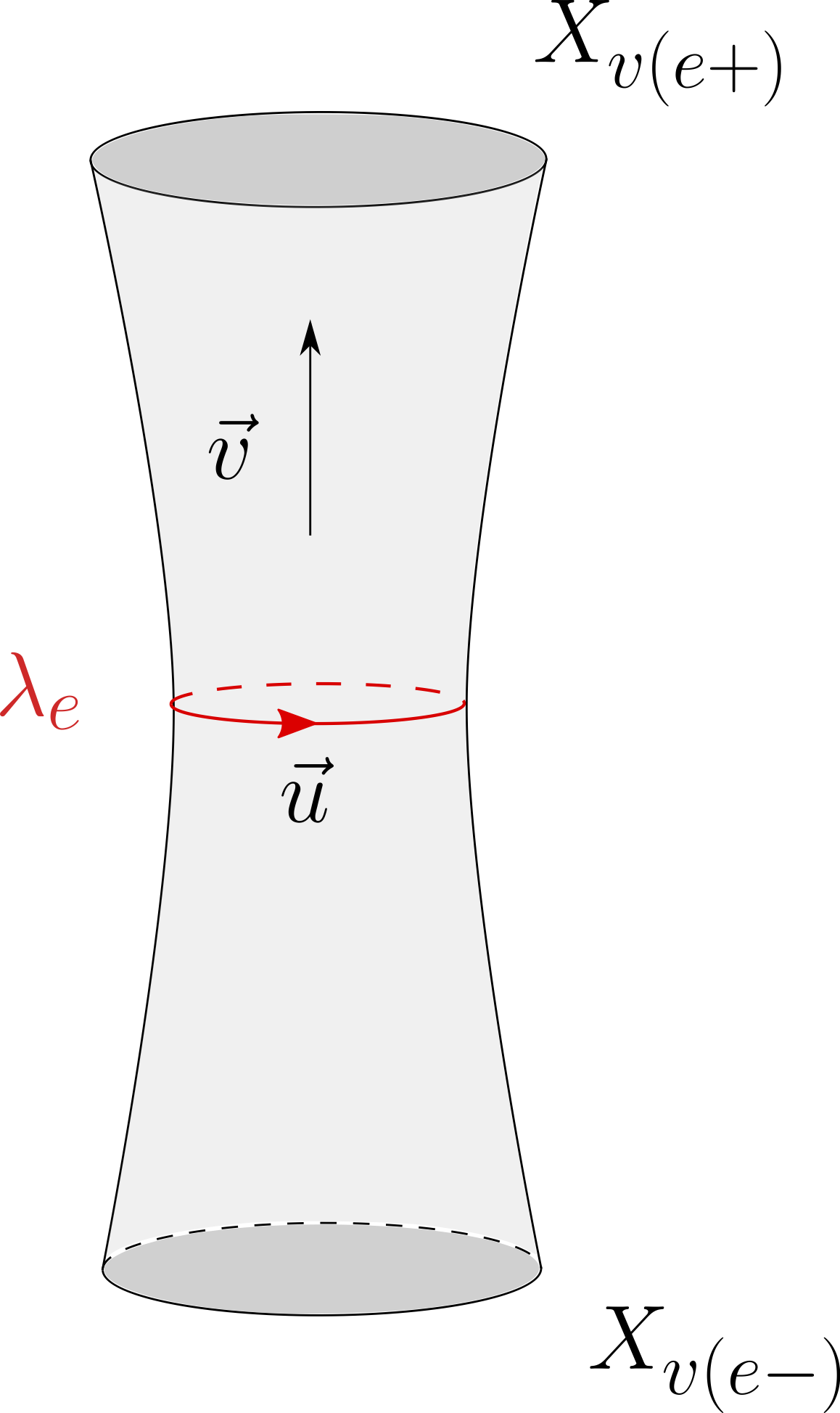}
\caption{Orientation on vanishing cycles}
\label{fig:Orientation}
\end{figure}

We stress that we consider $\lambda_e$ as an actual path and not just a homology class. If no confusion is possible we will not distinguish between $\lambda_e$ and its associated class.

For each marked zero $\calZ_{k}$ of order $m_k$, we define a family of disks, equipped with a relative one-form $\stdDiff[\calZ_{k}]$, by
\[
\diskh:= B_{\varepsilon}\times \Delta_{\delta},\quad \stdDiff[\calZ_{k}]:=z^{m_k}dz.
\]
We define a family of annuli $\stdAnn[\calZ_{k}]\subseteq\diskh$ by
\[
\stdAnn[\calZ_{k}]:= B_{\varepsilon}\times A_{\delta/R,\delta}.
\]

\subsection{Standard form coordinates for multi-scale differentials}
\label{section:StdFormCoord}
The idea of the plumbing construction for the universal family $\universal$ is to find local coordinates near marked zeroes and nodal sections in which the families $(\eq,\starD{\twistD})$ and ~$(\eq,\starD{\twistD}+\modD)$ have a simple form. The difference between the two families is that the order of vanishing at the nodes and marked points is constant for the first family $(\eq,\starD{\twistD})$ but it can jump for the second family $(\eq,\starD{\twistD}+\modD)$. In \cite{BCGGMsm} the authors introduce the following solution. For the family $(\eq,\starD{\twistD})$ we can find local coordinates near each nodal section and each marked point, in which the differential has a simple form, while for the second family $(\eq,\starD{\twistD}+\modD)$ this is only possible on an annulus such that the disc bounded by it contains the marked zeroes and the nodes. The results are as follows. We begin with the family $(\eq, \starD{\twistD})$.

\begin{theorem}[{\cite[Thm. 4.1]{BCGGMsm}}]
There exists a constant $\delta_1 >0$ such that for each edge $e$ and for each marked zero $\calZ_k$ of $\dualG$ there are families of conformal maps of disks
\[\begin{split}
\stdFormA^+&: B_{\varepsilon}\times \Delta_{\delta_1}\rightarrow \eq_{\ell(e+)},\\
\stdFormA^-&: B_{\varepsilon}\times \Delta_{\delta_1}\rightarrow \eq_{\ell(e-)},\\
\stdFormA[\calZ_{k}]&: B_{\varepsilon}\times \Delta_{\delta_1}\rightarrow \eq_{\ell(\calZ_k)},
\end{split}
\]
such that the following properties are satisfied:
\begin{enumerate}
\item The restrictions of these maps to $B_{\varepsilon}\times 0$ coincide with nodal sections  $\calQ_{e^+},\calQ_{e^-}$ and the marked sections $\calZ_k$, respectively.
\item The pullback of $\starD{\twistD}$ has standard form, that is
\[\begin{split}
(\stdFormA^+)^*&(\starD{\twistD})=  \scl[\ell(e+)]\left( z^{\kappa_e}-\res_{q_e^-}(\starD{\twistD})\right ) \dfrac{dz}{z},\\
(\stdFormA^-)^*&(\starD{\twistD}= -\scl[\ell(e-)]\left( z^{-\kappa_e}+\res_{q_e^-}(\starD{\twistD})\right) \dfrac{dz}{z},\\
(\stdFormA[\calZ_k])^*&(\starD{\twistD})= \scl[\ell(\calZ_k)]z^{m_k} dz.
\end{split}
\]
\end{enumerate}
\end{theorem}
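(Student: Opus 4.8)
The statement is local around each of the finitely many sections --- the nodal sections $\calQ_{e^+},\calQ_{e^-}$ and the marked-zero sections $\calZ_k$ --- and, once localized, it is nothing but a holomorphically parametrized version of the classical local normal form of a meromorphic $1$-form near a zero, resp.\ near a pole with prescribed residue. So the plan is as follows. First I would fix a section, say $\calZ_k$, choose any fiberwise holomorphic coordinate $z$ on $B_{\varepsilon}\times\Delta_{\delta_0}$ vanishing exactly on the section, and expand the differential to be normalized in this coordinate as $f_b(z)\,dz$, with $f_b$ holomorphic in $(b,z)$. From the prescribed-vanishing and matching-orders conditions --- together with the defining properties of the modifying differential $\modD$ where it enters (holomorphic off the nodal and polar sections, at worst a simple pole there with the residue forced by the global residue condition, and not lowering the order of vanishing at the marked zeros) --- one reads off that $f_b$ has exactly the leading behaviour appearing on the right-hand side of the formula in~(2) for that section: a zero of the prescribed order ($m_k$ at $\calZ_k$, $\kappa_e-1$ on the $q_e^+$-side) with leading coefficient nowhere vanishing after shrinking $\varepsilon$, or a pole of order $\kappa_e+1$ with residue $-\res_{q_e^-}(\starD{\twistD})$ on the $q_e^-$-side. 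The sought conformal map $\phi_b$ is characterised by $\phi_b^*(f_b\,dz)=\omega_{\mathrm{std}}$, where $\omega_{\mathrm{std}}$ denotes the relevant right-hand side in~(2); equivalently, passing to primitives, by ``a primitive of $f_b\,dz$, evaluated at $\phi_b$, equals the explicit primitive of $\omega_{\mathrm{std}}$''. I would build $\phi_b$ from this identity.

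In the pure-power cases --- the marked zeros, and any nodal side where the relevant differential is holomorphic at the node --- the primitive of $\omega_{\mathrm{std}}$ is a single monomial, so $\phi_b$ is produced by one root extraction applied to the nowhere-vanishing holomorphic unit factor of the primitive of $f_b\,dz$; this is the usual distinguished (flat) coordinate construction, it depends holomorphically on $(b,w)$, restricts on $B_{\varepsilon}\times 0$ to the given section (property~(1)), and satisfies property~(2) by construction. In the cases where $\omega_{\mathrm{std}}$ carries a residue term --- the $q_e^-$-side, and the $q_e^+$-side whenever $\res_{q_e^-}(\starD{\twistD})\neq 0$ --- the primitive of $\omega_{\mathrm{std}}$ acquires a genuine $\log$ term, so root extraction alone no longer closes the argument. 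There I would write $\phi_b(z)=z\,P_b(z)$ with $P_b(0)\neq 0$, expand the resulting identity of (log-)primitives in powers of $z$, and observe that it becomes a triangular system for the Taylor coefficients of $P_b$: at each order exactly one new coefficient enters linearly, its coefficient invertible after shrinking $\varepsilon$ (a nonzero multiple of a leading Laurent coefficient of $f_b$, or the residue), with the leading coefficient of $P_b$ additionally pinned down, when needed, by an ordinary root extraction. Thus the coefficients of $P_b$ are uniquely and holomorphically determined by those of $f_b$, and a Cauchy--majorant estimate uniform over $B_{\varepsilon}$ shows the defining series for $P_b$ converges on a fixed disk $\Delta_{\delta_1}$ with $\delta_1=\delta_1(b_0)>0$; after shrinking $\delta_1$ once more, $\phi_b$ is a fiberwise biholomorphism onto its image, with properties~(1)--(2).

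Running these constructions at all sections simultaneously and shrinking $\varepsilon$ and $\delta_1$ so that all finitely many leading coefficients and residues stay bounded away from $0$ and all majorant series converge on $\Delta_{\delta_1}$ produces the maps $\stdFormA^{\pm}$ and $\stdFormA[\calZ_k]$ with the asserted properties. I expect the crux to be the residue case above: one has to show that the formal order-by-order normalization converges with a radius bounded below independently of $b\in B_{\varepsilon}$ --- in effect, the classical fact that, unlike a germ of a vector field, a meromorphic $1$-form with prescribed pole order and residue carries no local analytic modulus, here made effective and uniform in the parameter. The rest is bookkeeping: holomorphic dependence on $b$ is automatic, since every Taylor coefficient of $\phi_b$ is a universal polynomial in the relevant Laurent coefficients of $f_b$ and in the inverses of the (nonvanishing) leading coefficient and residue.
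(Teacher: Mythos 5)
This statement is quoted verbatim from \cite[Thm.~4.1]{BCGGMsm}; the paper under review gives no proof of it, so there is nothing internal to compare against. Your proposal is, in outline, the standard proof of that result: reduce to the classical local normal form of a meromorphic differential near a zero, or near a pole of order $\geq 2$ with prescribed residue (no local moduli), and make it parametric via root extraction in the pure-power case and an order-by-order/implicit-function argument with uniform majorants in the residue case. Two points to tighten. First, the reason this theorem produces coordinates on full disks (as opposed to the annular version for $\psi$-coordinates that follows it) is precisely that the differential being normalized here is $\starD{\twistD}$ alone, whose vanishing/pole orders at the marked and nodal sections are \emph{constant} in $b$; your hedge about ``$\modD$ where it enters'' should be resolved by noting that $\modD$ does not enter this statement (the appearance of $\modD$ in the displayed formulas is an artifact of the transcription), since once $\modD$ is added the order at a section can jump and no full-disk normal form exists. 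Second, in your triangular system the invertible coefficient at each order must be a nonzero multiple of the leading Laurent coefficient of $f_b$, never the residue: the residue $\res_{q_e^-}(\starD{\twistD})$ may well vanish (and may vanish non-identically in $b$), so a setup in which some Taylor coefficient of $P_b$ is determined by dividing by the residue would degenerate; in the correct setup the log terms on the two sides of the primitive identity cancel automatically because the residue is a conformal invariant, and the remaining single-valued equation is solved using only the nonvanishing leading coefficient.
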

The next result concerns the family $(\eq,\starD{\twistD}+\modD)$.

\begin{theorem}[{\cite[Thm. 10.4]{BCGGMsm}}]\label{thm:Plumbing}
For any $R >1$, there exist constants $\varepsilon,\delta >0$ such that for each edge $e$ and for each marked zero $\calZ_k$ of $\dualG$ there are families of conformal maps of annuli
\[\begin{split}
\stdFormB^+&: B_{\varepsilon}\times A_{\delta/R,\delta}\rightarrow \eq_{\ell(e+)},\\
\stdFormB^-&: B_{\varepsilon}\times A_{\delta/R,\delta}\rightarrow \eq_{\ell(e-)},\\
\stdFormB[\calZ_{k}]&: B_{\varepsilon}\times A_{\delta/R,\delta}\rightarrow \eq_{\ell(\calZ_{k})},
\end{split}
\]
such that the following properties are satisfied:
\begin{enumerate}
\item The images of $\stdFormB^+, \stdFormB^-, \stdFormB[\calZ_{k}]$ are families of annuli $\imAnn^+, \imAnn^-, \imAnn[\calZ_{k}]$ not containing any zeroes of  $(\eq,\starD{\twistD}+\modD)$. The families of annuli bound families of unions of disks $\imDisk^+, \imDisk^-, \imDisk[\calZ_k]$ containing the nodal sections $\calQ_e^+, \calQ_e^-$ and the section $\calZ_k$, respectively.
\item The pullback of $\starD{\twistD}+\modD$ has standard form, that is
\[\begin{split}
(\stdFormB^+)^*&(\starD{\twistD}+\modD)=  \scl[\ell(e+)]\left( z^{\kappa_e}-\res_{q_e^-}(\starD{\twistD}+\modD)\right ) \dfrac{dz}{z},\\
(\stdFormB^-)^*&(\starD{\twistD}+\modD)= -\scl[\ell(e-)]\left( z^{-\kappa_e}+\res_{q_e^-}(\starD{\twistD}+\modD)\right) \dfrac{dz}{z},\\
(\stdFormB[\calZ_{k}])^*&(\starD{\twistD}+\modD)= \scl[\ell(\calZ_{k})]z^{m_k} dz.
\end{split}
\]
\item The holomorphic maps $\stdFormB^+,\stdFormB^-, \stdFormB[\calZ_{k}]$ agree with the corresponding maps $\stdFormA^+,\stdFormA^-, \stdFormA[\calZ_{k}]$ on the most degenerate boundary stratum, i.e. on $\left(U\times (0,\dots,0)\right) \times A_{\delta/R,\delta}.$
\end{enumerate}
\end{theorem}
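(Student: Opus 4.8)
\emph{Proof sketch} (following \cite[Section 12]{BCGGMsm}). The plan is to obtain the maps $\stdFormB^{\pm}$ and $\stdFormB[\calZ_k]$ as small corrections of the standard form coordinates $\stdFormA^{\pm},\stdFormA[\calZ_k]$ of \cite[Thm. 4.1]{BCGGMsm}, which already put $\starD{\twistD}$ in standard form on \emph{full} disks. The obstruction to using the $\stdFormA$ verbatim for the family $(\eq,\starD{\twistD}+\modD)$ is the one pointed out before the statement: adding $\modD$ perturbs $\starD{\twistD}$ near $\calQ_e^+$ (respectively near $\calZ_k$) so that the order of vanishing along the section is no longer constant in the family and a small residue appears at vertical nodes; hence there is no uniform standard form on a full disk, only on an annulus $A_{\delta/R,\delta}$ disjoint from the section, on which $\starD{\twistD}+\modD$ has no zeroes and the obstruction disappears. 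Accordingly, I would fix $R>1$, then choose $\delta>0$ and shrink $B$ to a polydisk $B_{\varepsilon}$ so that, uniformly over $B_{\varepsilon}$, every zero of $\starD{\twistD}+\modD$ near a node or marked zero lies inside the disk of radius $\delta/R$ in the $\stdFormA$-coordinate. This is possible because $b_0$ lies on the most degenerate stratum, where the residues produced by $\modD$ vanish together with the lower-level parts of $\starD{\twistD}$ and of $\modD$ (using the divisibility of $\reseq{\modD}$ by $\scl[\stdlvl-1]$ and~\cref{eq:ScalingParameters}), so near $b_0$ the relevant zeroes of $\starD{\twistD}+\modD$ stay close to the sections.

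On the annulus $A_{\delta/R,\delta}$ I would then look for $\stdFormB^-:=\stdFormA^-\circ\phi$ with $\phi$ a family of conformal embeddings of annuli close to the inclusion, and likewise at $q_e^+$ and at every $\calZ_k$. Pulling the desired identity $(\stdFormB^-)^*(\starD{\twistD}+\modD)=\stdDiff$ back through $\stdFormA^-$ and invoking \cite[Thm. 4.1]{BCGGMsm}, the equation for $\phi$ takes the shape
\[
\phi^*\stdDiff=\stdDiff+\gamma ,
\]
where $\gamma$ gathers the pullback $(\stdFormA^-)^*\modD$ together with the difference between the residue terms of the $\starD{\twistD}$- and of the $(\starD{\twistD}+\modD)$-standard forms. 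The point is that $\gamma$ is a holomorphic one-form on $A_{\delta/R,\delta}$ with \emph{vanishing residue}: this is forced, since $\phi^*\stdDiff$ and $\stdDiff$ share the same residue for any biholomorphism $\phi$, and it indeed holds because the target $\stdDiff$ is built with the residue $\res_{q_e^-}(\starD{\twistD}+\modD)$, which by additivity of residues and the residue-matching property of a modifying differential equals the residue of $(\stdFormA^-)^*(\starD{\twistD}+\modD)$. Hence $\gamma=dG$ for a single-valued primitive $G$, of the same small order as $\modD$ relative to $\stdDiff$. Writing $\phi$ as the inclusion plus a small perturbation and linearizing, the equation for $\phi$ reduces to leading order to dividing $G$ by the coefficient of $\stdDiff$, which is nonvanishing on $A_{\delta/R,\delta}$ once $\delta$ is small; a contraction-mapping argument in a suitable Banach space of holomorphic functions on $A_{\delta/R,\delta}$ then produces $\phi$, with the smallness of $\gamma$ guaranteeing both the contraction estimate and that the fixed point $\phi$ is again a conformal embedding of annuli onto a neighborhood of $\calQ_e^-$. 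The argument at $q_e^+$ and at a marked zero $\calZ_k$ is identical, with $\kappa_e$ replaced by the multiplicity $m_k$ and no residue term present.

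With $\stdFormB$ constructed this way, properties (1) and (2) hold by design: the image annuli $\imAnn^{\pm},\imAnn[\calZ_k]$ avoid the zeroes by the choice of $\delta$ and $\varepsilon$, they bound disks $\imDisk^{\pm},\imDisk[\calZ_k]$ containing $\calQ_e^{\pm}$ and $\calZ_k$, and the pullback is in standard form because that is precisely the equation that $\phi$ solves. For property (3), observe that along the most degenerate stratum $U\times(0,\dots,0)$ every scaling parameter $t_i$ with $i<0$ vanishes, so the lower-level parts of $\modD$ vanish by divisibility and the residues $\modD$ would absorb on level $0$ are zero as well; hence $\modD\equiv 0$ there, so $\gamma\equiv 0$, the contraction mapping has the inclusion $\phi=\id$ as its fixed point, and $\stdFormB=\stdFormA$ on $\left(U\times(0,\dots,0)\right)\times A_{\delta/R,\delta}$.

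The step I expect to be the main obstacle is making all of the above \emph{uniform} as the base point approaches the boundary divisor $D$: one must fix $\varepsilon$ and $\delta$ once and for all so that the control on the migrating zeroes, the invertibility of the linearized operator, and the convergence of the iteration hold simultaneously over $B_{\varepsilon}$. This is exactly where the precise divisibility hypotheses on $\modD$ and the plumbing relation $\scl[\ell(e-)]=s_e^{\kappa_e}\scl[\ell(e+)]$ of~\cref{eq:PlumbRelation} are essential: they guarantee that $\modD$ is of strictly lower order than $\starD{\twistD}$ near every node, uniformly near the boundary. Organizing these estimates is the technical heart of the construction in \cite[Section 12]{BCGGMsm}.
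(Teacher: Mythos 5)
This statement is not proved in the paper at all: it is quoted verbatim (up to notation) from \cite[Thm.\ 12.3]{BCGGMsm}, so there is no in-paper argument to measure your sketch against; the most I can do is judge it as a reconstruction of the cited construction. As such it is plausible and hits the right ingredients: the reason only an annular normal form exists (zeroes of $\starD{\twistD}+\modD$ can migrate/split and $\modD$ creates residues at nodal sections), the residue-matching that makes the comparison form exact on the annulus so a perturbative/contraction argument can correct $\stdFormA[]$ to $\stdFormB[]$, uniformity in $(\varepsilon,\delta)$ coming from the divisibility of $\reseq{\modD}$ by $\scl[i-1]$, and property (3) from the vanishing of $\modD$ on the most degenerate stratum together with uniqueness of the normalized solution. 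This is the same circle of ideas as \cite[Sections 4 and 12]{BCGGMsm}, where the normal form is obtained by integrating the perturbed differential, splitting off the residue term, and extracting a $\kappa_e$-th root; your Banach fixed-point formulation is a repackaging of that perturbative step rather than a genuinely different route.

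One inaccuracy to fix: you treat $q_e^+$ and the marked zeroes $\calZ_k$ on the same footing, ``with no residue term present.'' That is only true at $\calZ_k$. At the upper end $q_e^+$ of a vertical node the modification differential is allowed a simple pole along the nodal section precisely so that the residues on the two sides of the node match, and accordingly the normal form there is $\scl[\ell(e+)]\bigl(z^{\kappa_e}-\res_{q_e^-}(\starD{\twistD}+\modD)\bigr)\tfrac{dz}{z}$, with a residue term. The same residue-invariance argument you use at $q_e^-$ applies verbatim at $q_e^+$, so this is a slip in bookkeeping rather than a gap in the method, but it matters: the presence of that residue is exactly what allows the plumbing fixture form $\stdDiff$ to glue to both sides. (Relatedly, note that the displayed formulas in the paper's statement of the $\stdFormA[]$-theorem contain $\starD{\twistD}+\modD$ where $\starD{\twistD}$ is meant; your reading of $\stdFormA[]$ as the normal form for $\starD{\twistD}$ alone is the intended one.)
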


\begin{definition}\label{def:phicoord}
By a slight abuse of notation, we refer to  the family of coordinates given by $\stdFormA^+$ as $\stdFormA^+$-coordinates, and similarly  for $\stdFormA^-,\stdFormA[h],\stdFormB^+,\stdFormB^-,\stdFormB[\calZ_{k}]$.
If we do not want to specify whether we refer to a preimage of a node or a marked point, we simply write $\stdFormA[]$ or $\stdFormB[]$.
\end{definition}

The maps $\stdFormB^+,\stdFormB^-, \stdFormB[\calZ_{k}]$ are not determined uniquely. Following, \cite{BCGGMsm}, note that the maps can be specified uniquely by choosing base points near the marked zeroes and nodes.
We choose sections $\Nearby^+,\Nearby^-, \Nearby[\calZ_{k}]:B\rightarrow \universal$ such that the image is contained in a chart centered at $q_e^+,q_e^-$ and $Z_k$ respectively. Fix $p_0:=\delta/\sqrt{R}\in A_{\delta/R,\delta}$ as the base point of the annulus.
Then, by {\cite[Thm. 4.1]{BCGGMsm}} there exist unique $\stdFormB^+,\stdFormB^-, \stdFormB[h]$ such that
\[\begin{split}
\stdFormB^+&(b,p_0)=\Nearby[e+](b),\\
\stdFormB^-&(b,p_0)=\Nearby[e-](b),\\
 \stdFormB[\calZ_{k}]&(b,p_0)= \Nearby[\calZ_{k}](b),
\end{split}
\]
for any $b\in U\times (0,\dots,0)$.

\begin{convention}\label{setup:Nearby}
From now we fix once and for all a choice of nearby sections $\Nearby[]$.
\end{convention}

\subsection{The plumbing construction}
\label{section:PlumbConstruction}
For each node $e$ and each marked zero $\calZ_{k}$ we define conformal isomorphisms $\gluing^{\pm}:\stdAnn^{\pm}\to \imAnn^{\pm}$ and $\gluing[\calZ_{k}]:\stdAnn[\calZ_{k}]\to \imAnn[\calZ_{k}]$ by
\[
\begin{split}
\gluing^{+}(b,u,v)&:=\stdFormB^+(b,u),\\
\gluing^{-}(b,u,v)&:=\stdFormB^-(b,v),\\
\gluing[\calZ_{k}](b,z)&:=\stdFormB[\calZ_{k}](b,z),\\
\end{split}
\]
We define $\universal$ to be the family obtained by removing the disks $\imDisk^{\pm},\,\imDisk[\calZ_{k}]$ from $\eq$ and attaching $\stdFix$ and $\diskh$ by identifying the $\stdAnn[]$- and $\imAnn[]$-annuli via the $\gluing[]$-gluing maps. Since $\starD{\twistD}+\modD$ and $\Omega^{\pm}, \Omega_{\calZ_k}$ are identified via $\gluing[]$, the family $\universal$ inherits a relative one-form $\omega$.

We denote by $\imDisk[]$ the union of $\imDisk^+,\imDisk^-,\imDisk[\calZ_{k}]$ over all nodes and marked points and similarly by $\wideDisk[]$ the union of the families of disks $B_{\varepsilon}\times A_{\delta/R,\delta}\subseteq \universal$ over all marked points and nodes. The families of disks $\imDisk[]$ and $\wideDisk[]$ are exactly the families of disks from~\Cref{section:UniversalFamily}, $(3)$.

We have thus locally described the universal family $(\universal\to B,\omega)$. It will be needed to compare the periods of $\omega$ and the limit multi-scale differential $\twistD$ in~\Cref{thm:PerThm}. We will in particular need the particular form of $\stdFormB[]$-coordinates to analyze what happens in a neighborhood of the nodes.

\section{Level filtrations}\label{section:Filtr} In this section we introduce various notions of level for paths and homology classes. This will be necessary since the asymptotics of periods $\int_{\pa} \omega$ are governed by the level of $\pa$.

We now introduce the setup for the rest of this section. We let $\Sigma$ be a topological surface homeomorphic to surfaces in $\Stra$, which is obtained from a nodal Riemann surface using the plumbing construction from \Cref{section:ConstrUniversal}.
In \Cref{def:Van} we have defined the vanishing cycles on an annulus. Using the plumbing maps from \Cref{thm:Plumbing} we can pullback $\lambda_e$ from the annulus to $\Sigma$. By abuse of notation we denote the resulting curves again by $\lambda_e$.
We define $\Lambda:=\{\lambda_e,e\in E\}\subseteq \Sigma$ considered as a multicurve and then topologically a stable curve in $\bdComp$ is obtained by pinching $\Lambda$. As before we fix a stable curve $X\in \bdComp$.

\subsection{Thickenings of vanishing cycles}
For each vanishing cycle $\lambda_e$, let  $\lambda_e^{\circ}$ be a small open neighborhood of $\lambda_e$ that deformation retracts onto $\lambda_e$, and denote by $\Lambda^{\circ}\subseteq \Sigma$ the union of all such thickenings.

We decompose
\[
\Lambda=\Lambda^{ver}\sqcup \Lambda^{hor}
\]
into vanishing cycles corresponding to vertical and horizontal nodes, respectively, and further decompose
\[
\Lambda^{ver}=\sqcup_{i\in \lvlset} \Lambda^{ver}_{(i)},\quad \Lambda^{hor}= \sqcup_{i\in\lvlset} \Lambda_{(i)}^{hor}
\]
where \[
\begin{split}
\Lambda^{ver}_{(i)}&:=\{\lambda_e\,:\, e\in E^{ver},\, \ell(e+)=i,\, \ell(e-)<i\},\quad\\ \Lambda_{(i)}^{hor}&:=\{\lambda_e\,:\, e\in E^{hor},\, \ell(e+)=\ell(e-)=i\}.
\end{split}
\]
In words, $\Lambda^{ver}_{(i)}$ consists of  vertical vanishing cycles connecting $\Sigma_{(i)}$ to lower levels and $\Lambda^{hor}_{(i)}$ consists of horizontal vanishing cycles contained in $\Sigma_{(i)}$.

For each edge $e\in E$ the boundary $\partial \lambda_e^{\circ}$ consists of two boundary circles $\lambda_e^+ \sqcup \lambda_e^-$ with $\lambda_e^+\subset \Sigma_{(\ell(e+))}$ and $\lambda_e^-\subset\Sigma_{(\ell(e-))}$. At horizontal nodes we randomly choose which boundary component is denoted $\lambda_e^+$. We need to be careful about choosing orientations for $\lambda_e^{\pm}$. Our convention is that $\lambda_{e}^+$ has the same orientation as $\lambda_e$, while $\lambda_e^-$ has the opposite orientation.
We write
\[
\begin{split}
\Lambda^+&:=\{\lambda_e^+\,:\, e\in E^{ver}\},\\
\Lambda^-&:=\{\lambda_e^-\,:\,e\in E^{ver}\}\sqcup \{\lambda_e^{\pm}\,:\, e\in E^{hor}\}.
\end{split}
\]

We define analogues of $\Lambda_{(i)}$ and $\Lambda^{ver}$ for $\Lambda^{\circ}$ and $\Lambda^{\pm}$. For example $\Lambda^{+,ver}_{(i)}$ consists of $\lambda^{+}_e$ for all vertical nodes $e$ with $\ell(e+)=i, \ell(e-)<i$  and
$\Lambda^{\circ,ver}_{(> i)}$ consists of  $\lambda^{\circ}_e$ for all vertical nodes with $\ell(e+)> i, \ell(e-)\leq i$.

\Cref{fig:VanishingCycles} illustrates the definitions.

\begin{figure}[h]
\includegraphics[width=0.3\columnwidth,scale=0.3]{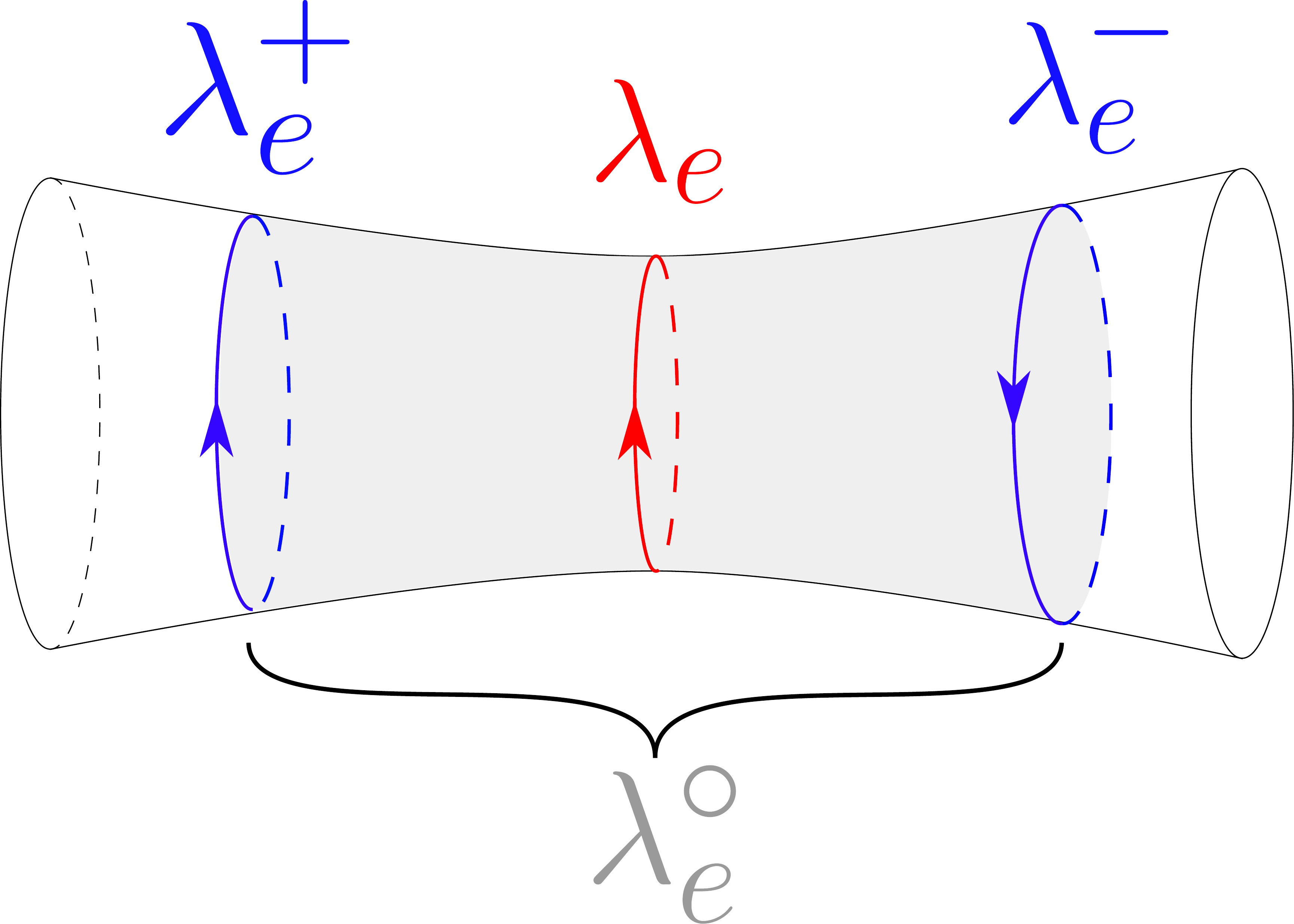}
\caption{Thickenings of vanishing cycles}
\label{fig:VanishingCycles}
\end{figure}

We now introduce several different ways of filtering the Riemann surface $\Sigma$ by levels.
If we remove all vertical vanishing cycles from $X$, then the remaining surface decomposes into a disjoint union of surfaces of a fixed level, i.e.
\[
\Sigma\setminus \Lambda^{\circ,ver} =\sqcup_{i\in\lvlset} \Sigma_{(i)}.
\]
Each of the resulting subcurves $\Sigma_{(i)}$ is a connected compact surface potentially with boundary.
We note that we remove $\Lambda^{\circ,ver}$ instead of just $\Lambda^{ver}$ since we want the result to be compact.
If instead of removing all vertical vanishing cycles, we only remove vertical vanishing cycles that cross the $i$-th level transition, i.e. exactly the vanishing cycles in $ \Lambda^{\circ,ver}_{(\geq i)}$, then we decompose $\Sigma$ into two surfaces: The part of $\Sigma$ that is at least of level $i$ and the part of $\Sigma$ below level $i$. We write 
\[
\Sigma \setminus \Lambda^{\circ,ver}_{(\geq i)}=: \Sigma_{(\geq i)} \sqcup \Sigma_{(< i)}.
\]

So far we have only removed the vertical vanishing cycles but later we will also need to remove the horizontal vanishing cycles.
We thus define
\begin{gather}
\Sigma_{(i)}^{cut}:= \Sigma_{(i)}\setminus \Lambda^{\circ,hor}
\end{gather}
to be the surface obtained from $\Sigma_{(i)}$ by cutting along all horizontal vanishing cycles.
Note that the resulting surface is usually a disconnected compact surface with boundary.
%
 See \Cref{fig:SigmaFilter} for an example illustrating the different ways of filtering $\Sigma$.

\begin{figure}[h]
\centering
\includegraphics[width=0.8\columnwidth]{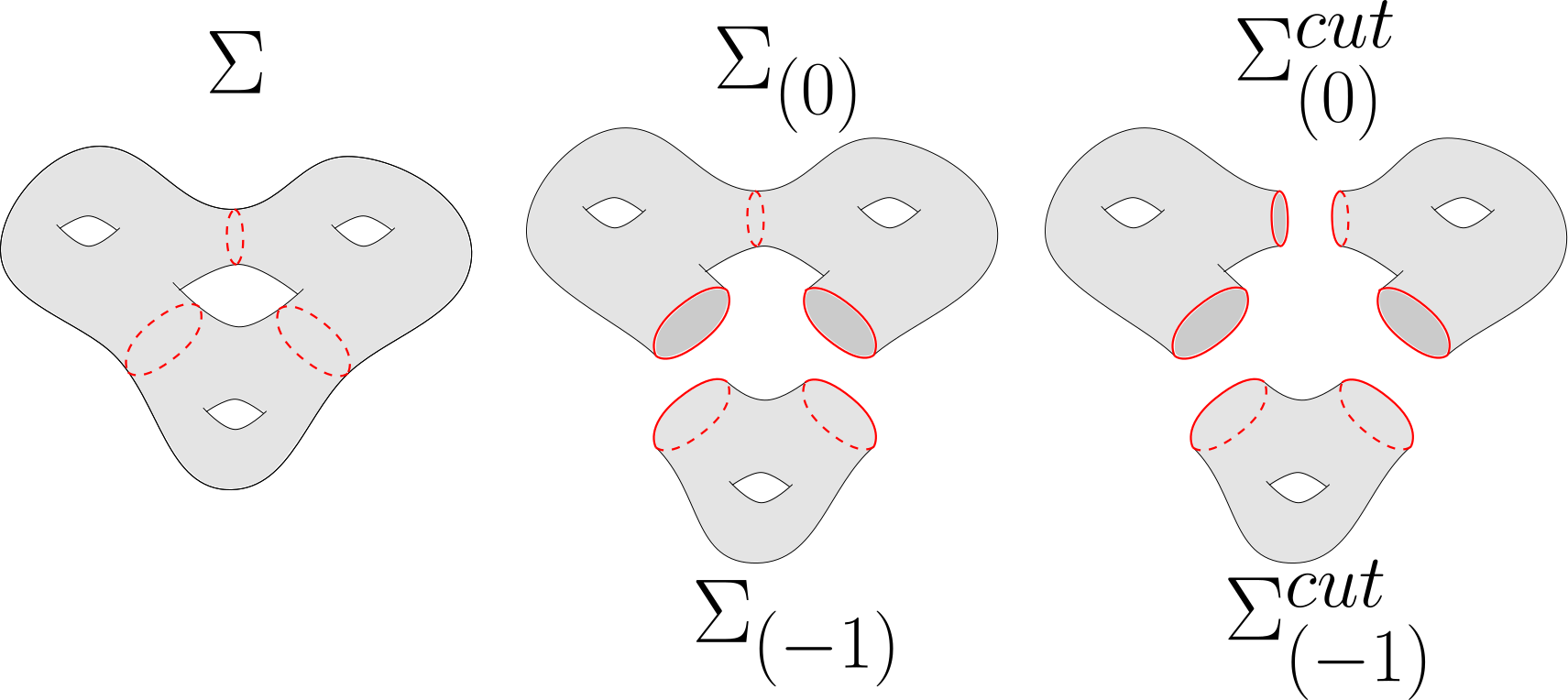}
\caption{The different ways of filtering $\Sigma$ by level}
\label{fig:SigmaFilter}
\end{figure}

\subsection{Global residue condition revisited}\label{sec:GRC}
We later want to compare period coordinates on $\Stra$ and generalized period coordinates on $\bdComp$. Rephrased in the terminology of this section, we have
\[
H^1_{(i)}(X;\CC)= H^1\!\left(\Sigma_{(i)}^{cut}\!\setminus\! P,Z\cup \Lambda_{(i)}^{+,ver};\CC\right);
\] 
where we recall $\Hi(X)$ from \Cref{eq:HiX}. Thus local coordinates on  $\bdComp$ and  $\Stra$  are given by  ~$\oplus_{i\in\lvlset} H^1(\Sigma_{(i)}^{cut}\!\setminus\! P,Z\cup \Lambda_{(i)}^{+,ver};\ZZ)^{\GRC}$ and ~$H^1(\Sigma\!\setminus\! Z,P;\CC)$, respectively.

Instead of working with cohomology we phrase everything in this section in terms of homology, where we think of a linear subspace of cohomology as being the annihilator of a subspace in homology. For the rest of the section we follow the convention that, unless stated otherwise, homology is taken with $\ZZ$-coefficients.
We first  need to setup various spaces modeling residue conditions on multi-scale differentials.
Let $Y$ be a connected component of $\resg{X}$ with $P\cap Y=\emptyset$, and denote $\{e_1,\ldots,e_b\}$ the set of all nodes where $Y$ intersects $\reseq{X}$.
Then we define
\[
\lambda_Y:= \sum_{k=1}^b \lambda_{e_k}\in \Hver.
\]
and we denote $\GRC^{ver}_{(i)}\subseteq \Hver$ the linear span
 \begin{equation}\label{eq:GRCver}
 \GRC^{ver}_{(i)}:=\langle \lambda_Y, Y \text{ a connected component of $\resg{X}$ with $P\cap Y=\emptyset$}\rangle_{\CC}.
 \end{equation}
 In words, $\GRC_{(i)}^{ver}$ is the span of all the equations defining the global residue conditions of level $i$.
We stress that this does not include the matching residue conditions at horizontal nodes at level $i$.
We analogously define $\GRC_{(i)}^{ver,cut}\subseteq \Hcut$ defined by the same cycles $\lambda_Y$, now considered as elements of
 $\Hcut$.

To include that matching residue condition at horizontal nodes, we let \[
 \MRH_{(i)}:=\langle \lambda^+-\lambda^-, \lambda\in \Lambda^{hor}_{(i)}\rangle_{\CC}\subseteq \Hcut.
 \]
 and finally we define
\[
\begin{split}
\GRC_{(i)}&:=\GRC_{(i)}^{ver,cut} + \MRH_{(i)}
\subseteq \Hcut,\\
\GRC&:= \bigoplus_{i\in\lvlset} \GRC_{(i)}.
\end{split}
\]

In particular $\GRC_{}$ consists exactly of all  global residue equations  including the matching residue condition at horizontal nodes.
We obtain the following description for $H^1\!\left(\Sigma_{(i)}^{cut}\setminus P,Z\cup \Lambda_{(i)}^{+,ver}\right)^{GRC}$, which is exactly the subspace in cohomology satisfying all global residue and matching residue conditions:
\begin{equation}\label{eq:GRC}
\begin{split}
H^1\!\left(\Sigma_{(i)}^{cut}\setminus P,Z\cup \Lambda_{(i)}^{+,ver}\right)^{GRC}
&=\ANN\left(\GRC_{(i)}\right)\\
&\simeq \left(H_1\!\left(\Sigma^{cut}_{(i)}\setminus P,Z\cup \Lambda_{(i)}^{+,ver}\right)/\GRC_{(i)}\right)^*.
\end{split}
\end{equation}

\subsection{Level and vertical filtration}\label{sec:Filtr}
In this section we define the concept of level for homology classes.
Additionally, we introduce two filtrations $\LVLF[\bullet]$ and $\GRCF[\bullet]$ of $H_1(\Sigma\setminus P,Z)$.
Roughly speaking, $\LVLF$ will consist of all cycles which can be represented by paths supported in the subsurface  $\resleq{\Sigma}$ of level at most $i$, while cycles $\GRCF\subseteq \LVLF$ additionally can be represented by paths disjoint from the horizontal vanishing cycles of level $i$.

The motivation for introducing $\GRCF$ is that it will come with a surjective linear map
\[
f_i:W_i\to \Hcut/\GRC_{(i)},
\]
the {\em specialization morphism},
with kernel $\LVLF[i-1]$, and we will thus have
\[
\bigoplus_{i\in\lvlset} \GRCF/ \LVLF[i-1]\simeq \bigoplus_{i\in\lvlset} \Hcut/\GRC_{(i)}.
\]
This will allow us to compare the local coordinates on~$\Stra$ and on~$\bdComp$.

We start by describing the specialization morphism in words. Any class $[\gamma]$ in $W_i$ can be represented by a collection of smooth curves in $\Sigma_{(\leq i)}$ which are all disjoint from the horizontal vanishing cycles. Suppose for simplicity that $[\gamma]$ can be represented by a single curve $\alpha$ in $\Sigma_{(\leq i)}$, disjoint from the vanishing cycles. We then restrict $\alpha$ to the subsurface $\Sigma_{(i)}$ of level $i$, i.e. we remove all the parts of $\alpha$ that go into lower levels. The result is a path $\alpha'$ in $\Sigma_{(i)}$ but since $\alpha$ is disjoint from all horizontal vanishing cycles, $\alpha'$ is actually contained in $\Sigma_{(i)}^{cut}$.
 We then define
 \[
f_i([\gamma])=\alpha'. 
 \]
The rest of the section is concerned with making the definition of $f_i$ precise and showing that it is well-defined in homology. The specialization morphism is illustrated in \Cref{fig:Specialization}.
\begin{figure}[h]
\centering
\includegraphics[width=0.6\columnwidth]{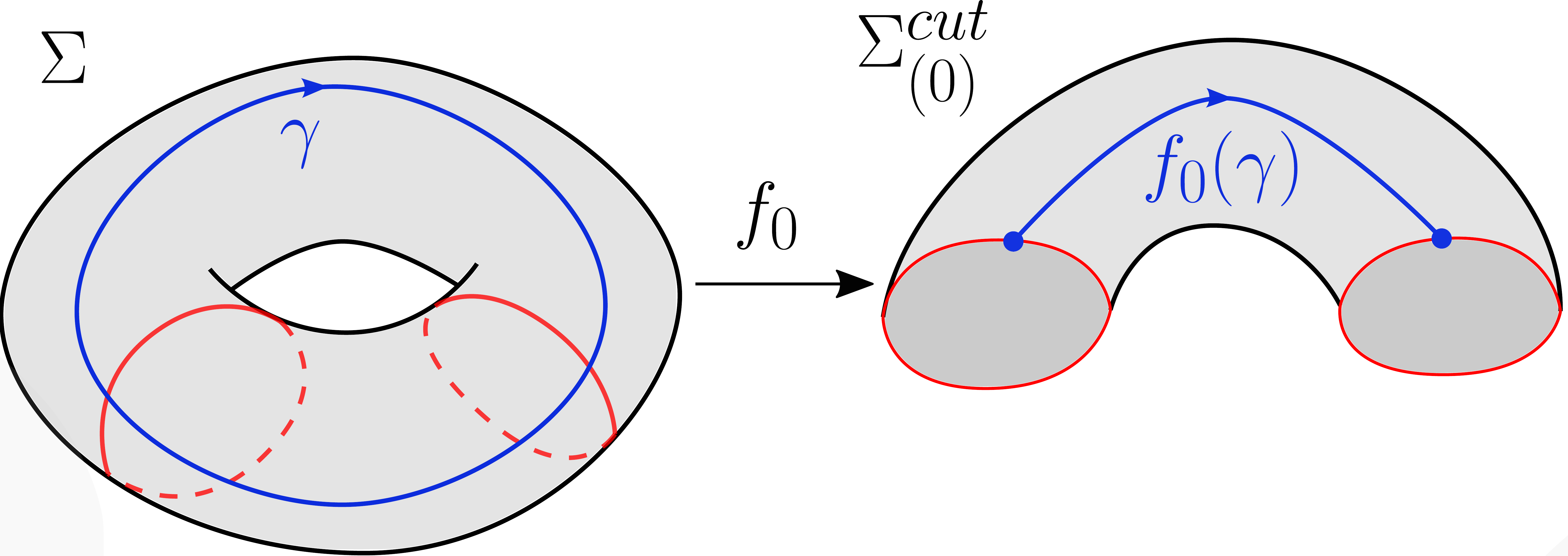}
\caption{The specialization morphism $f_0$}
\label{fig:Specialization}
\end{figure}

To simplify notation, for the rest of this section we adopt the convention that for $B,A\subseteq C$ the relative homology $H_1(B,A)$ is always to be understood as $H_1(B,A\cap B)$ (which is simply equal to $H_1(B,A)$ if $A\subseteq B$).
\begin{definition}
The inclusion $\Sigma_{(\leq i)}\subseteq \Sigma$ induces a map
\[
v_i:H_1(\Sigma_{(\leq i)}\!\!\setminus\! P,Z)\to H_1(\Sigma\!\setminus\! P,Z)\,,
\]
and we define the \textit{level filtration} $\LVLF[\bullet]$ by
\[
\LVLF:=\im(v_i)\subseteq H_1(\Sigma\!\setminus\! P,Z)\,.
\]

By naturality $v_{i-1}$ factors over the natural map
\[
H_1(\Sigma_{(\leq i-1)}\!\!\setminus\! P,Z)\to H_1(\Sigma_{(\leq i)}\!\!\setminus\! P,Z)\,,
\]
and thus $\LVLF[i-1]\subseteq\LVLF$. We say a cycle $\cycle\in H_1(\Sigma\!\setminus\! P,Z)$ is of \textit{top level} $i$ if $\cycle \in \LVLF\!\setminus\! \LVLF[i-1]$, and we then write $\topl(\cycle)=i$.

We let
\[(\Lambda_{(i)}^{hor})^{\perp}:=\{ \cycle \in H_1(\Sigma\setminus P,Z)\,|\, \langle \cycle, \lambda\rangle = 0\  \forall\ \lambda \in \Lambda_{(i)}^{hor}\}
\]
where $\langle \cdot,\cdot\rangle: H_1(\Sigma\setminus P,Z)\times H_1(\Sigma\setminus Z,P)\to \ZZ$ denotes the algebraic intersection pairing.
We then  define the \textit{vertical filtration} $\GRCF[\bullet]$ by
\[
\GRCF:= \LVLF \cap (\Lambda_{(i)}^{hor})^{\perp}\subseteq \LVLF.
\]
\end{definition}

By construction  every cycle in $\LVLF[i-1]$ can be represented by a collection of paths contained in $\resleq[i-1]{\Sigma}$. Since $\Sigma_{(\leq i-1)}$ is disjoint from all horizontal vanishing cycles of level $i$, we have in particular that
\[
\LVLF\supseteq \GRCF\supseteq \LVLF[i-1].
\]

\begin{example}
To demonstrate some of the features of the level and vertical filtration we consider the example from \Cref{fig:Specialization}.
Here $\lambda_1$ is a horizontal vanishing cycle and $\lambda_2$ and $\lambda_3$ are vertical vanishing cycles separating $\Sigma$ into $\Sigma_{(0)}$ and $\Sigma_{(-1)}$.
Since $\alpha$ is a path completely contained in $\Sigma_{(0)}$ and is not homologous to any path contained in $\Sigma_{(-1)}$ we have $[\alpha] \in L_{0}$. Furthermore $\alpha$ has zero intersection number with $\lambda_1$ and thus $[\alpha]$ is contained in the vertical filtration $W_{0}$.
On the other hand the path $\beta$ intersects $\Sigma_{(0)}$ but is homologous to a path completely contained in $\Sigma_{(-1)}$, thus $[\beta]\in L_{-1}$. Since there are no horizontal vanishing cycles in $\Sigma_{(-1)}$, the $-1$-th piece of the level filtration coincides with the $-1$-th piece of the vertical filtration, i.e. $L_{-1}=W_{-1}$.  Thus $[\beta]\in W_{-1}=L_{-1}$.
All three vanishing cycles $\lambda_1,\lambda_2$ and $\lambda_3$ are homologous (up to orientation), thus $\lambda_i\in L_{-1}$ for $i=1,2,3$. In \Cref{fig:Filtr}(B) we exhibit an explicit basis for each graded piece of the filtration $L_{-1}=W_{-1}\subseteq W_{0}\subseteq L_{0}$.
\begin{figure}[h]
\centering

\subfloat[Cautionary examples]{\label{fig:FiltrA}
\includegraphics[scale=0.4]{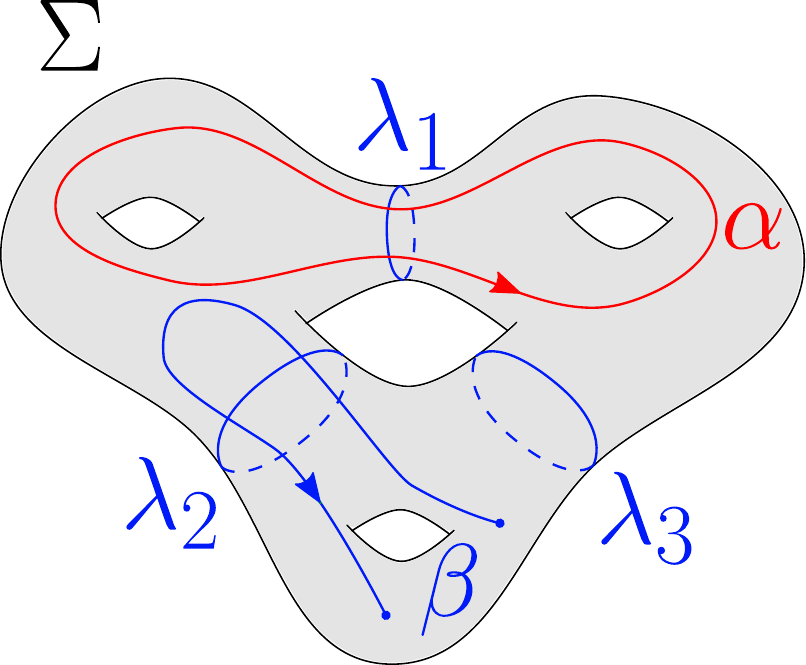}}
\qquad
\subfloat[A basis for $L_{\bullet}$ and $W_{\bullet}$]{\label{fig:FiltrB}
\includegraphics[scale=0.4]{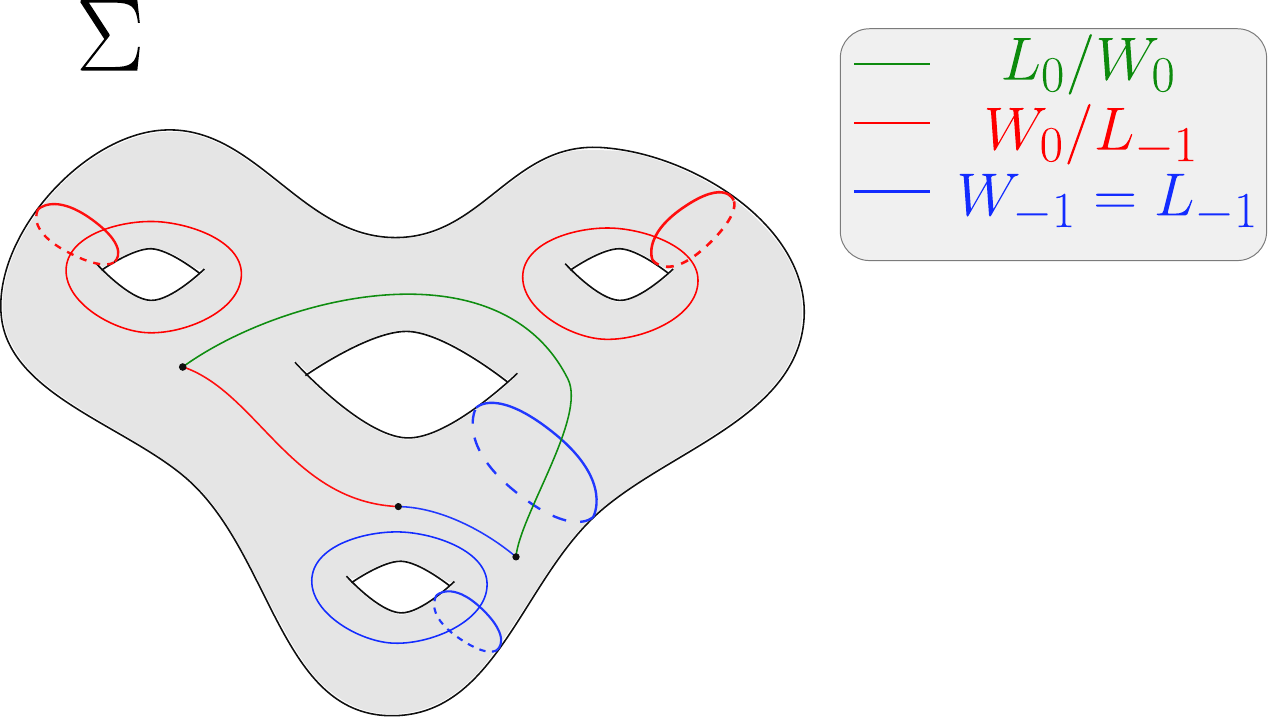}
}
\caption{The level filtration $L_{\bullet}$ and the vertical filtration $W_{\bullet}$}
\label{fig:Filtr}
\end{figure}
\end{example}

\subsection*{The specialization morphism} Our goal is now to make the construction of the {\em specialization morphism}

\[
f_i:\GRCF\to \Hcut/\GRC_{(i)}
\]
precise. As a technical step we first construct an auxiliary map

\[
g_i:\LVLF\to \Hver
\]
which is defined on the whole level filtration $L_i$ and not just the vertical filtration $W_i$.
Both $f_i$ and $g_i$ are basically restriction maps: Given a path of level $i$, the map $g_i$ simply restricts the path to the $i$-th level, i.e. the result is a path in $\Sigma_{(i)}$. The map $f_i$ is similar, the difference is that if a path is disjoint from all horizontal vanishing cycles, then its restriction is actually contained in $\Sigma_{(i)}^{cut}=\Sigma_{(i)}\setminus \Lambda_{(i)}^{hor}$. There is an ambiguity in how a cycle in $W_i$ can be represented by a collection of curves disjoint from the vanishing cycles. It turns out that the ambiguity is an element of ${\GRC_{(i)}}$ and thus $f_i$ will give a well-defined map to  $\Hcut/\GRC_{(i)}$ and not to $\Hcut$.

We now start the description of $f_i$ and $g_i$.
We first describe ~$f_i$ and ~$g_i$ on the level of paths and then show that these give well-defined maps on homology afterwards.

Let $\cycle\in \LVLF$. By the definition of the level filtration we can write $\cycle=\sum_{k} a_k\pa_k$ where $\pa_k$ are simple smooth curves contained in $\resleq{\Sigma}$. For each $\pa_k$ we let $\pa_k':=(\pa_k)_{|\reseq{\Sigma}}$ be the restriction to $\Sigma_{(i)}$ considered as relative cycles with boundaries in $Z\cup \Lambda_{(i)}^{ver,+}$ and then define
\[
g_i(\cycle):= \sum_{k} a_k[\pa_k']\in \Hver.
\]

Now that we have constructed $g_i$ we need to define an auxiliary map $h_i$, and afterwards we will define $f_i$ as the composition of $h_i$ and $g_i$. Set \[
\widetilde{W}_i:=(\Lambda_{(i)}^{hor} )^{\perp}\subseteq \Hver
\]
Note that in particular $g_i(\GRCF)\subseteq \widetilde{W}_i$.
We are now going to define a map
\[
h_i:\widetilde{W}_i\to  \Hcut/\GRC_{(i)}
\]
as follows.

Let $\cycle\in \widetilde{W}_i$.
Write $\cycle=\sum_{k}c_k\alpha_k$ as a sum of smooth simple curves. Since the intersection number with any horizontal vanishing cycle is zero, we can make the collection of curves $\{\alpha_k\}$ disjoint from any horizontal vanishing cycle of level $i$ by a series of {\em band moves}, as depicted in \Cref{fig:Bandmove} or \cite{JohnsonSpin}. Thus we can write $\cycle=\sum_{k}c_k\alpha_k''$ where $\alpha_k''$ is a collection of smooth simple curves in $\Sigma_{(i)}^{cut}$.
We then define
\[
h_i(\cycle):=\sum_{k}c_k[\alpha_k'']\in \Hcut/\GRC_{(i)}.
\]
and finally set
\[
f_i:=h_i\circ {g_i}_{|\GRCF}.
\]

\begin{figure}[h]
\centering
\includegraphics[width=0.4\columnwidth,scale=0.3]{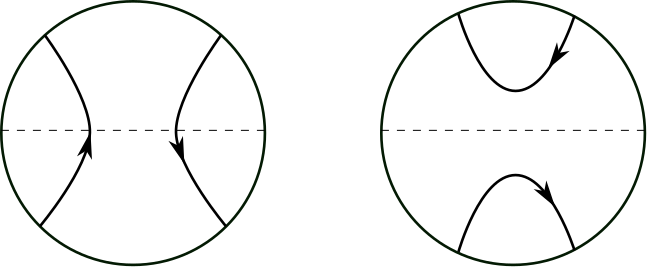}
\caption{A band move}
\label{fig:Bandmove}
\end{figure}

The maps $f_i$ are not well-defined as maps to $\Hcut$, since different choices of band moves can differ by multiples of the vanishing cycles, as seen in \Cref{fig:Ambiguity}.

\begin{figure}[H]
\includegraphics[width=0.7\columnwidth]{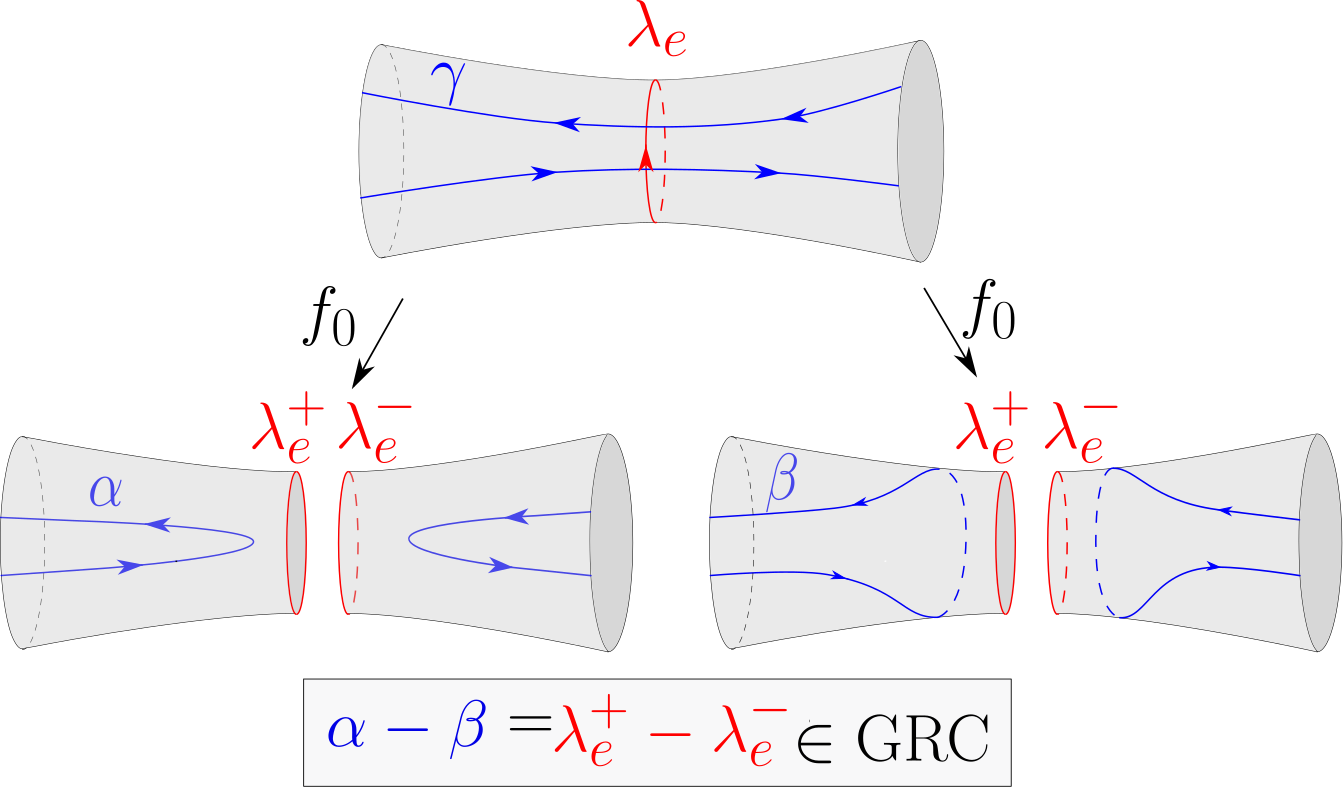}
\caption{The ambiguity of band moves}
\label{fig:Ambiguity}
\end{figure}

But we will see that $f_i$ is well-defined as a map to $\Hcut/\GRC_{(i)}$. See \Cref{fig:fiEx} for an illustration of the map $f_i$.

\begin{figure}[H]
\centering
\includegraphics[width=0.65\columnwidth,scale=0.38]{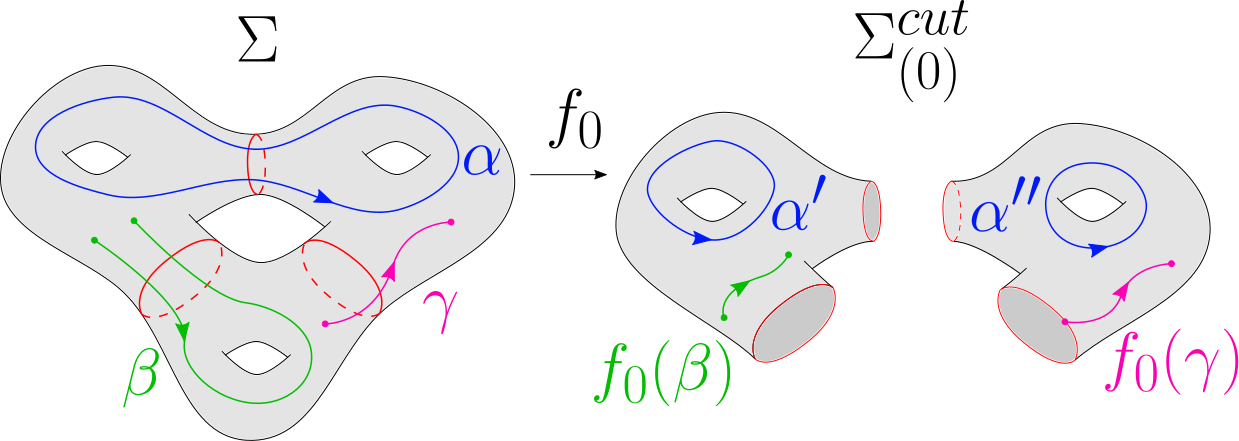}
\caption{The map $f_0$}
\label{fig:fiEx}
\end{figure}

\begin{proposition}
The linear maps 
\begin{gather*}
g_i:\LVLF\to\Hver,\\
f_i:\GRCF\to\Hcut/\GRC_{(i)}
\end{gather*} are well-defined and surjective.
Furthermore
\[
\ker f_i=\ker g_i=\LVLF[i-1].
\]
\end{proposition}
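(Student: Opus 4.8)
The plan is to prove the proposition in three stages: first well-definedness of $g_i$, then well-definedness of $f_i$ (via $h_i$), then surjectivity and the computation of kernels. For well-definedness of $g_i$, I must check two things: that the restriction procedure does not depend on the representation $\cycle=\sum_k a_k\pa_k$ as a sum of simple curves in $\resleq{\Sigma}$, and that it sends boundaries to boundaries. If two such representations of the same class differ by a boundary in $\resleq{\Sigma}\setminus P$, the restriction of that difference to $\reseq{\Sigma}$ is still a boundary relative to $Z\cup\Lambda_{(i)}^{ver,+}$; the key point is that the only way the relative-cycle condition can be violated is along the cut curves $\Lambda_{(i)}^{ver}$, and these have been included in the relative homology of the target, so restriction is well defined. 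Concretely I would use the long exact sequence of the pair $\bigl(\resleq{\Sigma},\resleq[i-1]{\Sigma}\cup\Lambda^{\circ,ver}_{(i)}\bigr)$ together with excision to identify the relative term with $H_1\bigl(\reseq{\Sigma}\setminus P, Z\cup\Lambda_{(i)}^{ver,+}\bigr)$, and realize $g_i$ as the boundary-then-excision map in that sequence. This simultaneously gives well-definedness and exhibits $\ker g_i$ as the image of $H_1(\resleq[i-1]{\Sigma}\setminus P,Z)$, which is exactly $\LVLF[i-1]$; surjectivity of $g_i$ also drops out, since every relative cycle in $\reseq{\Sigma}$ with endpoints on $Z\cup\Lambda_{(i)}^{ver,+}$ can be capped off by arcs running through lower levels to give a class in $\LVLF$.

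For $f_i$ I would first handle $h_i:\widetilde W_i\to\Hcut/\GRC_{(i)}$. A class in $\widetilde W_i=\bigl(\Lambda_{(i)}^{hor}\bigr)^\perp\subseteq\Hver$ has zero intersection with every horizontal vanishing cycle of level $i$, so by the band-move argument (\Cref{fig:Bandmove}) it is represented by curves in $\Sigma_{(i)}^{cut}$; the only ambiguity in the outcome is the choice of band moves, which by \Cref{fig:Ambiguity} changes the representative by integer combinations of the pairs $\lambda_e^+-\lambda_e^-$ for $e\in\Lambda^{hor}_{(i)}$ — precisely the generators of $\MRH_{(i)}$ — hence $h_i$ is well defined modulo $\MRH_{(i)}\subseteq\GRC_{(i)}$. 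Then $f_i=h_i\circ g_i|_{\GRCF}$ is well defined since $g_i(\GRCF)\subseteq\widetilde W_i$ (noted in the text). For surjectivity of $f_i$, given any class in $\Hcut$, push its representative curves back across the horizontal cut curves to get a class in $\Hver$, then cap endpoints on $\Lambda_{(i)}^{ver,+}$ through lower levels to lift to $\LVLF$; this lift automatically lies in $\GRCF$ because its image under $g_i$ lies in $\widetilde W_i$, and $h_i$ of that image is the original class. Thus $f_i$ is onto $\Hcut$, and a fortiori onto the quotient $\Hcut/\GRC_{(i)}$.

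It remains to show $\ker f_i=\LVLF[i-1]$. The inclusion $\LVLF[i-1]\subseteq\ker f_i$ is immediate: a cycle representable in $\resleq[i-1]{\Sigma}$ restricts to zero on $\reseq{\Sigma}$, so $g_i$, hence $f_i$, kills it. For the reverse inclusion, suppose $f_i(\cycle)=0$, i.e.\ $g_i(\cycle)\in\GRC_{(i)}^{ver,cut}+\MRH_{(i)}$ after the band moves; unwinding the definitions of $\GRC_{(i)}^{ver,cut}$ and $\MRH_{(i)}$, this says $g_i(\cycle)$, as an honest relative cycle in $\reseq{\Sigma}$ before cutting horizontal nodes, lies in the span of the global-residue cycles $\lambda_Y$. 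The crucial geometric input — and this is the step I expect to be the main obstacle — is that $g_i(\cycle)=0$ already in $\Hver$, not merely modulo $\GRC_{(i)}^{ver}$: one must argue that a class in $\LVLF$ whose $i$-level restriction is a combination of the $\lambda_Y$ can be modified, by subtracting cycles supported in $\resg{\Sigma}$ (which do not change the class in $H_1(\Sigma\setminus P,Z)$ only if they bound — here one uses that each $\lambda_Y$ bounds the connected component $Y$ of $\resg{X}$, so the corresponding cycle in $\Sigma$ is null-homotopic through $\resg{\Sigma}$), to one with trivial $i$-level part, hence representable in $\resleq[i-1]{\Sigma}$. This is exactly where the global residue condition enters topologically: $\lambda_Y$ is a boundary in $\Sigma$ precisely because $Y$ has no polar marked points, so the separating multicurve $\sum\lambda_{e_k}$ bounds the subsurface $Y$. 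Once $g_i(\cycle)=0$ in $\Hver$ is established, the description of $\ker g_i$ from the exact sequence in the first paragraph gives $\cycle\in\LVLF[i-1]$, and since $\ker f_i\supseteq\ker g_i$ trivially while we have just shown $\ker f_i\subseteq\ker g_i$, we conclude $\ker f_i=\ker g_i=\LVLF[i-1]$.
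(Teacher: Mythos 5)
Your overall route is the paper's: you realize $g_i$ as the natural map to relative homology followed by excision (the paper uses the long exact sequence of the triple $Z\subseteq(\resleq[i-1]{\Sigma}\setminus P)\cup Z\subseteq(\resleq{\Sigma}\setminus P)\cup Z$, which is the same computation), you reduce $f_i$ to the auxiliary map $h_i$, and your kernel argument --- trade the level-$i$ part $\sum_Y c_Y\lambda_Y$ of a representative for curves at levels $<i$, using that $\lambda_Y$ together with the remaining vanishing cycles bounds the punctureless component $Y$ --- is exactly the paper's step ``$G_{(i)}\subseteq\LVLF[i-1]$'', only phrased by modifying representatives instead of exhibiting the subspace $G_{(i)}$. (Your parenthetical should say nullhomologous in $\Sigma\setminus P$ because $P\cap Y=\emptyset$, not ``null-homotopic through $\resg{\Sigma}$'', and the cycle you subtract is supported in $\resleq{\Sigma}$, not in $\resg{\Sigma}$; but the geometric input is the right one.)

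Two steps are genuinely under-justified compared with the paper. First, well-definedness of $h_i$: you assert that ``the only ambiguity in the outcome is the choice of band moves.'' That is not obvious: two systems of curves in $\Sigma^{cut}_{(i)}$ representing the same class of $\widetilde W_i\subseteq\Hver$ need not arise from band moves applied to a common representative; they can differ by a homology in $\Sigma_{(i)}$ whose $2$-chain crosses the horizontal annuli. The paper handles precisely this with a relative Mayer--Vietoris argument for the decomposition of $\reseq{\Sigma}\setminus P$ into $\Sigma^{cut}_{(i)}\setminus P$ and $\Lambda^{hor,\circ}_{(i)}$, showing that any such difference is of the form $a(\lambda^+-\lambda^-)\in\MRH_{(i)}$; your plan needs this (or an equivalent argument), not just the picture of the band-move ambiguity. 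Second, in both surjectivity claims you ``cap off endpoints on $\Lambda^{ver,+}_{(i)}$ through lower levels''; to obtain a relative cycle you must terminate on points of $Z$, so you need marked zeroes reachable from every such boundary circle by passing only through levels $<i$ --- this is exactly what the paper imports from \cite[Lemma 3.9]{BCGGMgrc}, and it is not automatic. With those two points supplied, your plan matches the paper's proof.
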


\begin{proof}
We start with the map $g_i$.
From the long exact sequence of the triple
\[
Z \subseteq (\resleq[i-1]{\Sigma}\setminus P)\cup Z \subseteq (\resleq{\Sigma}\setminus P)\cup Z,
\]
 we obtain
an exact sequence
 \[
 \begin{tikzcd}
 H_1(\resleq[i-1]{\Sigma}\setminus P,Z)\ar[r,"\nu_i"] &  H_1(\resleq[i]{\Sigma}\setminus P,Z) \ar[r] &  H_1(\resleq[i]{\Sigma}\setminus P, \left(\resleq[i-1]{\Sigma}\setminus P\right) \cup Z)\ar[d,"\simeq"] \\
 & & H_1(\reseq[i]{\Sigma}\setminus P, \Lambda_{(i)}^{ver,+}\cup Z)
 \end{tikzcd}
 \]
 where the vertical isomorphism is induced by excising $\resleq[i-1]{\Sigma}\setminus \Lambda_{(i)}^{ver}$.

 Since the excision map is defined via barycentric subdivision, it follows that for a simple smooth curve $\alpha$ the composition
 \[
 H_1(\resleq[i]{\Sigma}\setminus P,Z) \to H_1(\reseq[i]{\Sigma}\setminus P, \Lambda_{(i)}^{ver,+}\cup Z)
 \]
 is given by the restriction  $\alpha'$ to $\reseq{\Sigma}$ and thus the map coincides with $g_i$. From the exact sequence we then obtain that $\ker g_i=\LVLF[i-1]$.
 To see that $g_i$ is surjective, take any smooth closed curve $\pa$ representing a class in $\Hver$. By \cite[Lemma 3.9]{BCGGMgrc} we can connect the boundary points of $\pa$ in $\Lambda_{(i)}^{ver,+}$ to marked zeros in $\resleq{\Sigma}$ by only passing through levels below $i$ and thus creating a $g_i$-preimage for $\pa$.
 This proves all claims about $g_i$ and it remains to prove the analogous statements for $f_i$.

To show that $f_i$ is well-defined, it is enough to show that $h_i$ is well-defined.
Let $\cycle\in \widetilde{W}_i$ and represent $\cycle=\sum_{k} c_k\alpha_k= \sum_{l} d_l\beta_l \in \Hver$
 in two different ways by collections of smooth simple curves contained in $\Sigma_{(i)}^{cut}$.
We let $\alpha= \sum_{k} c_k\alpha_k$ and $\beta =\sum_{l} d_l\beta_l$ be the associated relative homology classes in $\Hcut$.
  We want to show that $\alpha-\beta\in GRC_{(i)} $.

We will apply the relative version of Mayer-Vietoris.
We set \[
A:=\Sigma_{(i)}\setminus \left (\Lambda_{(i)}^{hor}\cup P\right)=\Sigma_{(i)}^{cut}\setminus P,\, B:= \Lambda_{(i)}^{hor,\circ}.
\] In particular we have $A\cap B=\Lambda_{(i)}^{hor,\pm}$ and $A\cup B=\reseq{\Sigma}\setminus P$. We need the following part of the Mayer-Vietoris sequence
\[
\begin{tikzcd}
H_1(\Lambda^{hor,\pm}_{(i)}) \ar[r,"{(\iota_*,\iota_*')}"] &\Hcut\oplus H_1(\Lambda^{hor,\circ}_{(i)})\ar[r,"k_*-l_*"] & \Hver
\end{tikzcd}
\]
where $\iota:A\cap B\to A,\iota':A\cap B\to B,k:A\to A\cup B,l:B\to A\cup B$ are the natural  maps induced from the inclusions.

By construction $(\alpha-\beta,0)\in \Hcut\oplus H_1(\Lambda^{hor,\circ}_{(i)})$ lies in the kernel of $k_*-l_*$ and thus in the image of $(\iota_*,\iota_*')$.
Note that $(\iota_*,\iota_*')(a\lambda^+ +b\lambda^-)=(a\lambda^+ +b\lambda^-, (a+b)\lambda)$.
We conclude that $\alpha-\beta= a(\lambda^+-\lambda^-)\in\GRC_{(i)}$ and thus $h_i$ and $f_i$ are well-defined.

Note that $h_i$ fits into a commutative diagram

\[
\begin{tikzcd}
\Hcut   \ar[d,swap,"q_i"]    \ar[rr,"p_i"] & &\widetilde{W}_i \ar[dll,"h_i", start anchor = {[xshift=1ex, yshift=-1.5ex]},end anchor={[xshift=7ex, yshift=-1ex]}]\\
 \Hcut/\GRC_{(i)} &&
\end{tikzcd}
\]
where $p_i$ is the natural map induced by the inclusion $\Sigma_{(i)}^{cut}\subseteq \Sigma_{(i)}$ and $q_i$ is the natural quotient map. Thus $h_i$ is surjective and $\ker h_i=p_i(\GRC_{(i)})$.

Since $g_i$ and $h_i$ are surjective it follows that $f_i=h_i\circ g_i$ is surjective. It remains to show that $\ker f_i=L_{i-1}$.

We define $G_{(i)}\subseteq \GRCF$ to be the subspace generated by $\lambda_Y$ as in the definition \cref{eq:GRCver}. Then $g_i(G_{(i)})=p_i(\GRC_{(i)})=
\ker h_i$ and thus $\ker f_i=G_{(i)} + \ker g_i= G_{(i)}+\LVLF[i-1]$.
We claim that $G_{(i)}\subseteq \LVLF[i-1]$. This can be seen as follows. Let $Y$ be a connected component of $\resg{X}$ with $P\cap Y=\emptyset$, and denote by $\{e_1,\ldots,e_b\}$ the set of all nodes where $Y$ intersects $\reseq{X}$ and additionally by $\{e_{b+1},\ldots,e_c\}$ the nodes where $Y$ intersects $\resl{X}$.
Then $\sum_{k=1}^{c}\lambda_{e_k}=0\in H_1(\Sigma\setminus P,Z)$ since this collection of vanishing cycles is separating and we thus have
\[
\lambda_Y= \sum_{k=1}^{b}\lambda_{e_k}= -\sum_{k=b+1}^{c}\lambda_{e_k}\in \LVLF[i-1].
\]
\end{proof}
\subsection{Top level}
So far we have defined the level of cycles $\cycle \in H_1(\Sigma\setminus P,Z)$ but it will be convenient to be able to talk about the level of paths. There has to be some care when comparing the level of a path and of its homology class.

\begin{definition}
For a collection of curves $\pa$ on $\Sigma$ we define its \textit{(top) level} to be the largest  $i$ such that ~$\pa\cap \Sigma_{(i)} \neq \emptyset$ and then write $\topl(\pa)=i$. Note that this is only well-defined as long as none of the curves are contained in $\Lambda^{\circ}$.
We  then define the {\em top level restriction} $\pa_{\topl}$ to be the intersection of $\pa$ with $\Sigma_{(\topl(\pa))}$.
By considering $\Sigma_{(\topl(\pa))}$ as a subsurface of $X$ we can also define the level of a collection of curves on the stable curve $X$.
In this case we define $\pa_{\topl}$ to be the restriction of $\pa$ to $X_{(\topl(\pa))}$.
\end{definition}

The following example shows that one has to be cautious when comparing the level of a path and a homology class.

\begin{example}[Tilted cherry]
\Cref{fig:TiltedCherry} depicts a smooth genus $3$ curve with vanishing cycles corresponding to a tilted cherry level graph.
\begin{figure}[H]
\centering
 \includegraphics[scale=0.22]{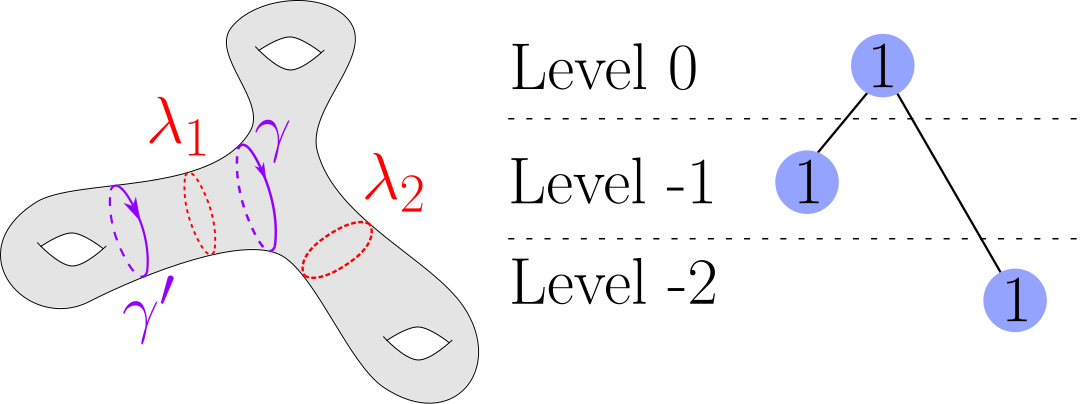}
 \caption{A smooth genus $3$ curve and the tilted cherry level graph}
 \label{fig:TiltedCherry}
\end{figure}
The two vanishing cycles $\lambda_1$ and $\lambda_2$ are homologous and thus $\topl([\lambda_1])=\topl([\lambda_2])=-2$.
On the other hand, both $\pa$ and $\pa'$ are simple closed curves representing $[\lambda_1]$, but ~$\topl(\pa)=0, \ \topl(\pa')=-1$.
\end{example}
The example shows that even if a cycle is represented by a simple curve, we cannot necessarily read off the level of a cycle from a path representing it. But since every cycle $\cycle\in \LVLF$ can be represented by a collection of paths supported on $\resleq{X}$ we have
\[
\topl(\cycle)=\inf\left\{ \topl(\pa)\, \, \text{ $\pa$ is a collection of simple smooth curves  representing of $\cycle$}\right\}.
\]

\subsection{An adapted homology basis}
\label{section:CohBasis}
In this section we construct a homology basis suited to analyzing linear equations. Roughly speaking, we only want to consider paths that cross different levels as little as possible. This will allow us later to compare the local coordinates on $\Stra$ and on $\bdComp$.
For the remainder of this section we let $\Sigma$ be a topological surface in $\Stra$ and $X$ a stable curve in $\bdComp$.

\begin{definition}\label{def:hcc}
We say a cycle $\cycle\in H_1(\Sigma\setminus P,Z)$ {\em crosses} a node $e\in E(\enhancG)$ if $\IP[\cycle,\lambda_e]\neq 0$.
A cycle $\cycle$ is called a {\em \hcc} if it crosses some horizontal node at level $\topl(\cycle)$
and {\em non-horizontal} if  is not a \hcc.
Note that non-horizontal cycles are allowed to cross horizontal nodes below top level.
Similarly, for $\cycle\in H_1(X\setminus P,Z)$ we say that $\cycle$ crosses $e$, is a \hcc or non-horizontal, if the same is true for some lift of $\cycle$ to  $H_1(\Sigma\setminus P,Z)$.
\end{definition}

If $\cycle\in H_1(\Sigma\setminus P,Z)$ has top level $i$ and is a horizontal crossing cycle, then $\cycle\in \LVLF\setminus \GRCF$. On the other hand if $\cycle$ is non-horizontal, then $\cycle\in \GRCF\setminus \LVLF[i-1]$.

\begin{example}
We consider the dual graph in \Cref{fig:CrossCurve} with two components of top level and three horizontal nodes. The diamond indicates a marked pole. The cycle $\pa_1$ crosses $e_1$, but $\pa_2$ is non-horizontal, since it can be deformed away from $e_2$.
\begin{figure}[H]
\centering
\includegraphics[scale=0.3]{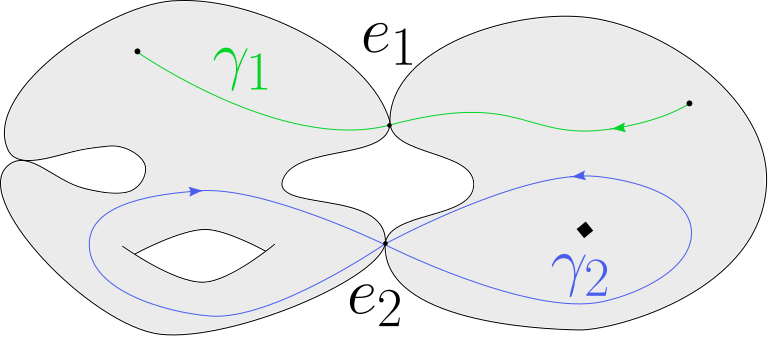}
\caption{Crossing and non-crossing curves}
\label{fig:CrossCurve}
\end{figure}
\end{example}

\begin{definition}
A basis $\{\pa_1,\ldots, \pa_n\}$ of $\Hsigma{}$ is called $\enhancG$-\textit{adapted} if
there exists a partition
\[
\{\pa_1,\ldots,\pa_n\}=\bigsqcup_{i\in\lvlset} \left(\left\{ \alpha_1^{(i)},\ldots,\alpha_{n(i)}^{(i)}\right\} \sqcup \left\{\delta_e^{(i)}, e\in E_{(i)}^{hor}\right\}\right)
\]
into horizontal-crossing cycles $\delta_e^{(i)}\in \LVLF\setminus \GRCF$ and non-horizontal cycles $\alpha_k^{(i)}\in \GRCF\setminus \LVLF[i-1]$
such that
\begin{gather*}
\LVLF= \langle \bigsqcup_{j \leq i}\, \{ \alpha_1^{(j)},\ldots,\alpha_{n(j)}^{(j)}\} \bigsqcup \{\delta_e^{(j)}, e\in E_{(j)}^{hor}\}\rangle_{\CC}; \\
\LVLF/\GRCF\simeq \langle \delta_e^{(i)}, e\in E_{(i)}^{hor}\rangle_{\CC};\\
\GRCF/\LVLF[i-1]\simeq \langle \alpha_1^{(i)},\ldots,\alpha_{n(i)}^{(i)}\rangle_{\CC};
\end{gather*}
and additionally
\[
\IP[\delta_e^{(i)},\lambda_{e'}]=\begin{cases} 1 & \text{ if } e=e',\\
 0 & \text{otherwise,}
 \end{cases} \text{ for all } e\in E^{hor}_{(i)}, e'\in E^{hor}.
\]
\end{definition}

As a first remark, we note that the definition of $\enhancG$-adapted basis only depends on the level graph and not on the enhancement. The basic statement is the existence of $\enhancG$-adapted bases.
\begin{proposition}\label{prop:ExBasis}
For every enhanced level graph $\enhancG$ there exists a $\enhancG$-adapted homology basis.
\end{proposition}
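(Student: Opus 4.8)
The plan is to construct the basis one level at a time, from the bottom level $-\ell(\enhancG)$ upward, so that after all levels $\le i$ have been treated we have cycles whose classes form a basis of $\LVLF$ satisfying the span and intersection conditions of a $\enhancG$-adapted basis up to level $i$. Fix a level $i$ and assume such cycles $\alpha_k^{(j)},\delta_e^{(j)}$ have been produced for every $j<i$; for $i=-\ell(\enhancG)$ there is nothing to assume, since $\LVLF[i-1]=0$. The non-horizontal cycles cause no trouble: by the preceding proposition the specialization morphism $f_i\colon\GRCF\to\Hcut/\GRC_{(i)}$ is surjective with kernel $\LVLF[i-1]$, so I pick $\alpha_1^{(i)},\dots,\alpha_{n(i)}^{(i)}\in\GRCF$ whose $f_i$-images form a basis of $\Hcut/\GRC_{(i)}$, realized (as in the proof of that proposition) by smooth curves in $\Sigma_{(i)}^{cut}$ with endpoints joined to marked zeroes through levels below $i$, using \cite[Lemma 3.9]{BCGGMgrc}. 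These lie in $\GRCF\setminus\LVLF[i-1]$, hence have top level $i$, and, belonging to $\left(\Lambda_{(i)}^{hor}\right)^{\perp}$, are non-horizontal; since $\ker f_i=\LVLF[i-1]$ they extend the inductive basis of $\LVLF[i-1]$ to a basis of $\GRCF$ which induces a basis of $\GRCF/\LVLF[i-1]$.

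The horizontal-crossing cycles are the heart of the matter. Let $\sigma_i\colon\LVLF\to\CC^{E_{(i)}^{hor}}$ be the map $\cycle\mapsto(\IP[\cycle,\lambda_{e'}])_{e'\in E_{(i)}^{hor}}$; by the definition $\GRCF=\LVLF\cap\left(\Lambda_{(i)}^{hor}\right)^{\perp}$ its kernel is exactly $\GRCF$. The crucial step is to prove that $\sigma_i$ is \emph{surjective}, i.e.\ that for each horizontal edge $e$ at level $i$ there is a cycle in $\LVLF$ crossing $\lambda_e$ once and disjoint from every other horizontal vanishing cycle of level $i$: if $e$ is a self-node of a level-$i$ component, a dual curve drawn inside that component works; if $e$ joins distinct level-$i$ components $X_v$ and $X_w$, one instead takes an arc crossing the neck of $e$ once and running from a marked zero reachable downward from $X_v$ to a marked zero reachable downward from $X_w$ --- these marked zeroes and connecting paths exist by \cite[Lemma 3.9]{BCGGMgrc} --- and then makes the resulting curve disjoint from the other level-$i$ horizontal vanishing cycles by band moves (\Cref{fig:Bandmove}). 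Granting this, choose for each $e\in E_{(i)}^{hor}$ a cycle $\widetilde{\delta}_e\in\LVLF$ with $\sigma_i(\widetilde{\delta}_e)$ the $e$-th standard basis vector. Its intersections with higher-level horizontal vanishing cycles vanish automatically, those being disjoint from $\resleq{\Sigma}$, and its intersections $\IP[\widetilde{\delta}_e,\lambda_{e'}]=c_{e'}$ with lower-level horizontal vanishing cycles are cancelled by setting $\delta_e^{(i)}:=\widetilde{\delta}_e-\sum_{e'}c_{e'}\,\delta_{e'}^{(\ell(e'))}$, the sum over horizontal $e'$ with $\ell(e')<i$; here $\delta_{e'}^{(\ell(e'))}\in\LVLF[i-1]\subseteq\GRCF$ and $\IP[\delta_{e'}^{(\ell(e'))},\lambda_{e''}]=\delta_{e'e''}$ for all $e''\in E^{hor}$ by the inductive hypothesis, and the correction lies in $\LVLF[i-1]$, so it alters neither $\sigma_i(\widetilde{\delta}_e)$ nor the class modulo $\GRCF$. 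Thus the $\delta_e^{(i)}$ lie in $\LVLF\setminus\GRCF$, are horizontal-crossing cycles with $\IP[\delta_e^{(i)},\lambda_{e'}]=\delta_{ee'}$ for every $e'\in E^{hor}$, and induce a basis of $\LVLF/\GRCF\cong\CC^{E_{(i)}^{hor}}$.

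Putting the pieces together, the classes $\{\alpha_k^{(i)}\}\cup\{\delta_e^{(i)}\}$ extend the inductive basis of $\LVLF[i-1]$ to a basis of $\LVLF$ via the short exact sequences $0\to\LVLF[i-1]\to\GRCF\xrightarrow{f_i}\Hcut/\GRC_{(i)}\to 0$ and $0\to\GRCF\to\LVLF\xrightarrow{\sigma_i}\CC^{E_{(i)}^{hor}}\to 0$, and reading off the induced spans gives precisely the identities defining a $\enhancG$-adapted basis at level $i$, together with the intersection normalization for the $\delta_e^{(i)}$. Iterating up to $i=0$, where $\LVLF[0]=\Hsigma$, produces a spanning set of $\Hsigma$ that is in fact a basis (it extends bases through short exact sequences at every stage; as a sanity check, it contributes $\dim(\Hcut/\GRC_{(i)})+|E_{(i)}^{hor}|=\dim(\LVLF/\LVLF[i-1])$ elements at each level). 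This is the desired $\enhancG$-adapted basis. The step I expect to demand the most care is the surjectivity of $\sigma_i$ --- concretely, constructing for a horizontal node a relative cycle crossing its vanishing cycle once while avoiding all the others --- where the surface topology of \cite[Lemma 3.9]{BCGGMgrc} and the band-move technique are indispensable.
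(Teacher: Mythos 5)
Your proof is correct and follows essentially the same route as the paper: lift bases of the graded pieces $\GRCF/\LVLF[i-1]$ and $\LVLF/\GRCF$ of the filtration, the key point being surjectivity of the intersection map $\rho_i$, established by an explicit curve crossing the node once and descending to marked zeroes via \cite[Lemma 3.9]{BCGGMgrc}. The only minor difference is that the paper routes these curves through vertical edges only, making $\delta_e^{(i)}$ disjoint from \emph{all} other horizontal vanishing cycles at once, whereas you arrange only the level-$i$ intersections directly and then normalize the lower-level ones by subtracting the inductively constructed lower-level $\delta$'s --- both are fine.
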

\begin{proof}
We claim that the natural map
\[
\rho_i:\LVLF\to \CC^{|E_{(i)}^{hor}|}, \cycle\mapsto \left(\langle \cycle,\lambda_e\rangle\right)_{e\in E_{(i)}^{hor}}
\]
is surjective and thus
\[
\LVLF/\GRCF\simeq \CC^{|E_{(i)}^{hor}|}.
\]

Assuming this for now, the existence of a $\enhancG$-adapted basis can now be seen as follows.
We have the filtration
\[
\Hsigma\supseteq \LVLF[0]\supseteq \GRCF[0]\supseteq \ldots\supseteq  \LVLF[\ell(\enhancG)]\supseteq \GRCF[\ell(\enhancG)]
\]
with graded pieces
\[
\LVLF/\GRCF\simeq  \CC^{|E_{(i)}^{hor}|},\quad
\GRCF/\LVLF[i-1]\simeq\Hcut/\GRC_{(i)}.
\]
We are now going to construct a $\enhancG$-adapted basis inductively by lifting a basis from each graded piece.
We start by choosing a basis $\left \{ \tilde{\alpha}_1^{(i)},\ldots,\tilde{\alpha}_{n(i)}^{(i)}\right \}$ for $\Hcut/\GRC_{(i)}$ and then let $\alpha_k^{(i)}$ be preimages of $ \tilde{\alpha}_k^{(i)}$ under the specialization map $f_i$, respectively.
 Afterwards we let $\left\{\delta_e^{(i)}\right\}$ be $\rho_{i}$-preimages of the unit basis in $\CC^{|E^{hor}_{i}|}$.

It thus remains to prove the surjectivity of the map $\rho_i$. For this we construct explicit cycles $\delta_e^{(i)}$ with \[
\IP[\delta_e^{(i)},\lambda_{e'}]=\begin{cases} 1 & \text{ if } e=e'\\
 0 & \text{otherwise}
 \end{cases} \text{ for all } e'\in E^{hor}.
\]

We fix a horizontal node $e\in E_{(i)}^{hor}$.
If both $v(e+)$ and $v(e-)$ are local minima for the level order, then they contain a marked point which is not a pole or a preimage of any node, see \cite[Lemma 3.9]{BCGGMgrc}, and we can then connect these marked points by a path that goes through the node $e$ once and does not cross any other horizontal nodes.
On the other hand, if for example $v(e+)$ is not a local minimum then consider a path $\pa'$ in the dual graph connecting $v(e+)$ to a local minimum $v'$ by only passing through vertical edges connecting to levels below $i$. Then by the same argument as before $X_{v'}$ contains a marked point $P_+$ as above. We can run the same argument for $v(e-)$ and find a marked point $P_-$.
By embedding the dual graph into $X$ we can represent $\pa'$ by a path $\delta_e^{(i)}$ in $X$ connecting $P_+$ and $P_-$.
By construction $\delta_e^{(i)}$  intersects $\lambda_e$ once and is disjoint from all other horizontal vanishing cycles.
\end{proof}

The motivation for introducing $\enhancG$-adapted bases is that it allows to relate the coordinates on the boundary $\bdComp$ to the coordinates on the open stratum $\Stra$, which we recall are given by  $\bigoplus_{i\in\lvlset} H_{(i)}^{1}(X)^{\GRC}$ and $H^1(\Sigma\setminus P,Z)$, respectively.
We now fix a $\enhancG$-adapted basis $\{\gamma_1,\dots,\gamma_n\}$ once and for all.

\begin{example} We now illustrate this definition in some examples.~\Cref{fig:AdpBasisNon} depicts a level graph $\enhancG$ and two different homology bases for $H_1(X\setminus P,Z)$. The numbers inside the vertices denote the genus of the corresponding irreducible component of the stable curve. Note that the decorations $\kappa_e$ and the zero orders at the marked points are irrelevant for our discussion (since the notion of a $\enhancG$-adapted basis only depends on the level graph and not on the choice of an enhancement) and we thus omit them. The stable curves are degenerations of genus $3$ curves and the multi-scale differentials live in the codimension $4$ boundary stratum $\bdComp$.
The homology basis of~\Cref{fig:AdpBasisNon-b} provides an example of a $\enhancG$-adapted basis while the basis in~\Cref{fig:AdpBasisNon-c} violates the definition in two ways. Firstly,
every horizontal node is crossed by multiple basis elements and secondly, all paths have top level $0$, thus when restricting to the top level  they are linearly dependent. In particular, the top level restrictions together with the vanishing cycles generate $ H_1(\reseq[0]{X}\setminus \reseq[0]{P},\reseq[0]{Z})$ but fail to generate $H_1(\reseq[1]{X}\setminus \reseq[1]{P},\reseq[1]{Z})$.

\begin{figure}[h]
\centering
\subfloat{
         \centering
         \includegraphics[scale=0.18]{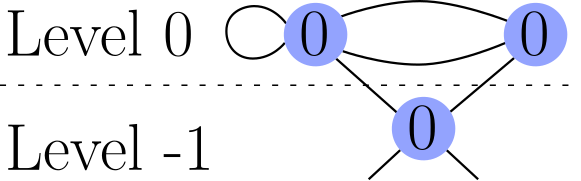}
     }
    \\
\subfloat[A $\enhancG$-adapted basis]{\label{fig:AdpBasisNon-b}
\includegraphics[scale=0.2]{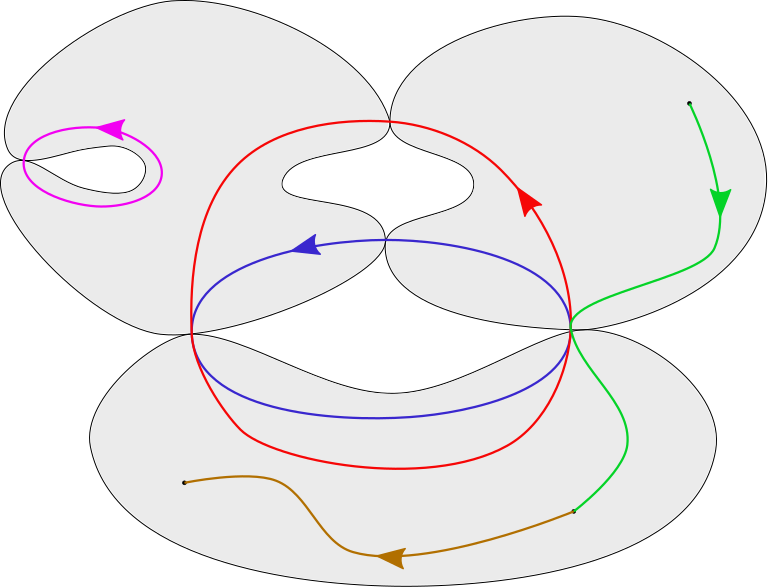}}
\qquad
\subfloat[Not a $\enhancG$-adapted basis]{\label{fig:AdpBasisNon-c}
\includegraphics[scale=0.2]{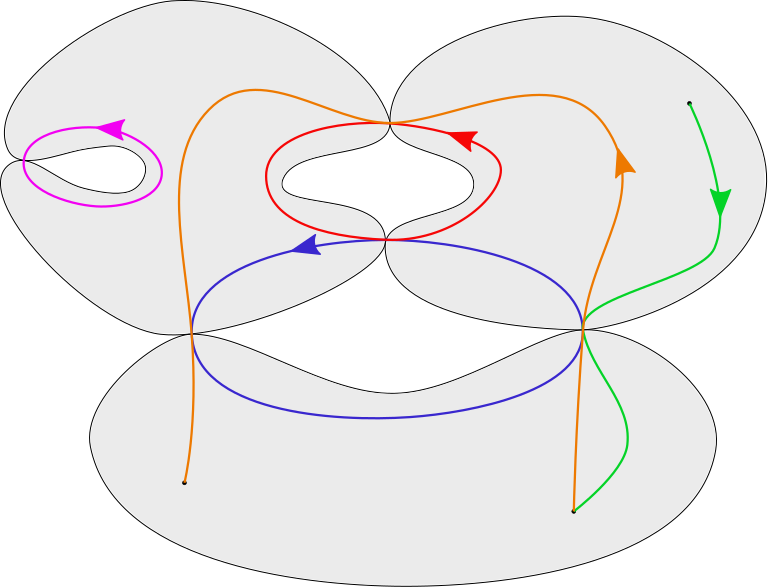}
}
\caption{A level graph $\enhancG$ and an example and a non-example of $\enhancG$-adapted bases}
\label{fig:AdpBasisNon}
\end{figure}

The second example, as depicted in~\Cref{fig:AdpBasisEx}, is a degeneration of a genus $7$ curve in a codimension $4$ boundary component of $\MSDS$. Bullets and diamonds represent marked zeroes and marked poles, respectively. We omit the orientation of paths. In this slightly more complicated example one can see all the features of an $\enhancG$-adapted basis.
First we start by choosing a basis for the vertical filtration $W_{-1}=\langle \alpha_1^{(-1)},\alpha_2^{(-1)}\rangle$  and extend this to a basis of $L_{-1}=\langle \delta_1^{(-1)},\delta_2^{(-1)}\rangle\oplus W_{-1}$ by adding paths that cross the horizontal nodes in level $-1$.
On the top level we have
\[
W_0=L_{-1}\oplus\langle \alpha^{(0)}_1,\ldots, \alpha^{(0)}_8\rangle.
\]
Here we started by choosing a symplectic basis $\{\alpha^{(0)}_1,\ldots, \alpha^{(0)}_6\}$ on each irreducible component and then extended it by paths encircling marked nodes, in this case only $\alpha_{7}^{(0)}$, as well as paths connecting marked zeros, in this case $\alpha^{(0)}_{8}$.
Finally for the vertical filtration $L_0$ one has to add paths crossing horizontal nodes in level $0$. In this case we have
\[
L_0=W_{0}\oplus \langle \delta_1^{(0)},\delta_2^{(0)},\delta_3^{(0)}\rangle.
\]

\begin{figure}[h]
\centering
\subfloat{
\includegraphics[scale=0.18]{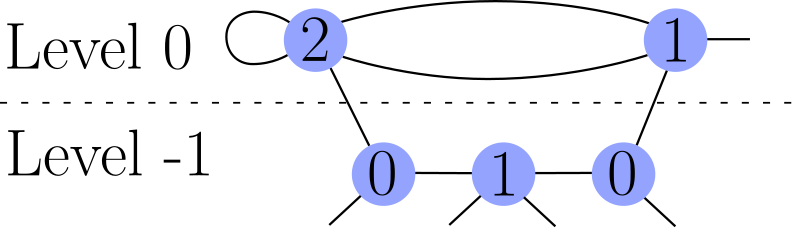}
}
\\
\subfloat{

\includegraphics[scale=0.35]{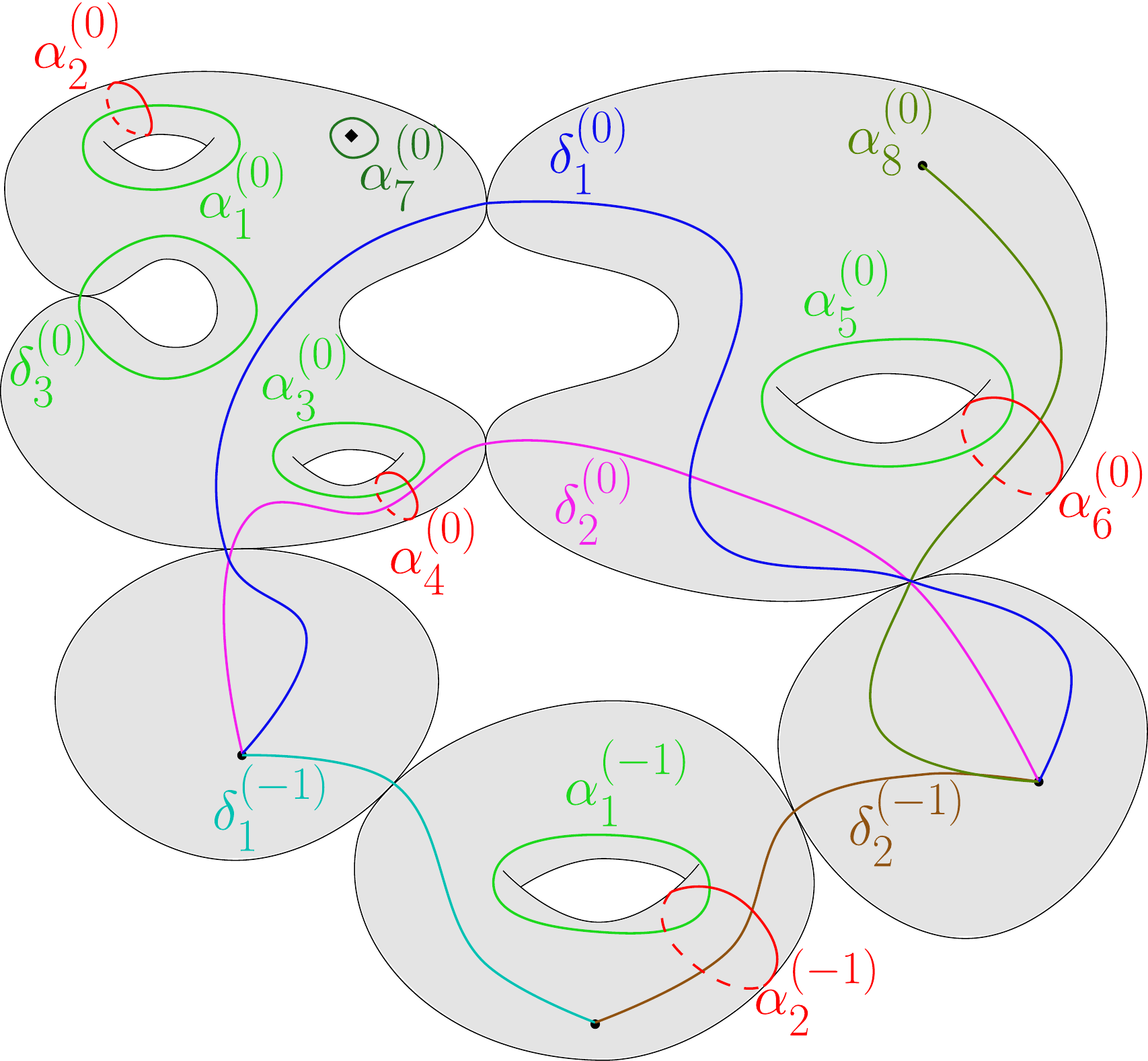}
}
\caption{An example of a $\enhancG$-adapted basis}
\label{fig:AdpBasisEx}
\end{figure}
\end{example}

\section{Log periods}
In this section we only work with the universal family of multi-scale differentials $\universalFam$. Our goal is to study the asymptotics of relative periods as we approach the boundary $D$ of the local model domain $B$.
When integrating differential forms over cycles, we will usually not distinguish between a cycle $\cycle$ and a representative $\pa$.
\label{section:LogPeriods}

\subsection{The definition of log periods}
We fix a cycle $\cycle \in H_1(X\!\setminus\! P, Z)$ represented by a path $\pa$. We want to investigate the behavior of the periods of $\omega(b)$ as $b$ approaches the boundary. Thus we need a way of deforming the cycle $\cycle$ from $X=X_{b_0}=Y_{b_0}$ to nearby fibers of $\universal\to B$. In~\Cref{section:CycleExtension} below we will give an explicit construction of a continuous family of cycles $[\pa(b)]\in H_1(Y_b\!\setminus\!P_b,Z_b)$ deforming $\cycle$.
The family of cycles $[\pa(b)]$ is only well-defined locally. By a process analogous to analytic continuation, it can be considered as a multivalued family with multiple branches; the values of different branches differ by integral multiples of the vanishing cycles $\lambda_e$.

Afterwards, in~\Cref{section:invcycle}, we perform a second construction, making the family of cycles $[\pa(b)]$ invariant under the monodromy, thus obtaining a family $\invcycle$ of relative cycles well-defined on $B\setminus D$.
By repeating the process for all elements of a basis $\{\pa_1,\ldots,\pa_n\}$ of homology we construct a family of bases for $H_1(X_{b}\setminus P_{b},Z_{b},\CC)$ for all $b\in B\setminus D$ that varies continuously over $B\setminus D$. In algebro-geometric terms, we construct an explicit frame for the dual of the Deligne extension of the local system of relative cohomology, see for example \cite[Section 3]{SchnellCanonical}. Postponing the explicit constructions of $[\pa(b)]$ and $[\hat{\pa}(b)]$ for now, we are able to define log periods.
We define the \textit{vanishing cycle period}
\[
r_e(b):=\frac{1}{2\pi i}\int_{\lambda_e}\omega(b).
\]
\begin{definition}\label{def:LogPeriod}
We define the \textit{log period} $\LogP:B\setminus D\rightarrow \CC$ of $\omega$ along $\gamma$ by
\[\begin{split}
\LogP(b):=& \dfrac{1}{\scl[\topl(\gamma)]}\int_{\invpa}\omega(b)\\
=&\dfrac{1}{\scl[\topl(\gamma)]}\left[\int_{\gamma(\stdpar)}\omega(b)-\sum_{e\in E}\IP[\gamma,\lambda_e] r_e(b)\ln(s_e)\right],
\end{split}
\]
where $[\pa(b)]$ and $\invcycle$ are the families of cycles that will be constructed in~\Cref{section:CycleExtension} and~\Cref{section:invcycle}, respectively, and $\IP$ denotes the intersection pairing. Recall that the plumbing parameters $s_e$ were defined in \cref{eq:PlumbingParameters}.
\end{definition}

Several comments are in order. Heuristically, the scaling factor $ \dfrac{1}{\scl[i]}$ comes from the fact that on $\eq_{(i)}$, at least away from the nodes, the differential $\omega$ behaves like $(\starD{\twistD})_{(i)}=\scl[\topl(\gamma)]\twistD$ since the contribution from the modification differential $\modD$ is small. The logarithm $\ln(s_e)$ is to be understood as follows. One starts by choosing a branch of the logarithm for each coordinate $t_i$ and $h_e$ at some base point $x_0\in B\setminus\bdComp$. Afterwards we extend the branches via analytic continuation. By requiring that $\ln(s_e)=\sum_{i=\ell(e-)}^{\ell(e+)-1}a_i\ln(t_i)$ we then define branches for all parameters $s_e$. This of course only defines a multivalued function but later on we will see that $\LogP$ is single-valued, where we recall that the deformation $\gamma(b)$ is also multivalued, and this multivaluedness will cancel out the multivaluedness of $\ln(s_e)$. The idea is that $\int_{\pa(b)} \omega$ and $\ln(s_e)$ behave similarly under analytic continuation and thus their difference is single-valued. We stress that there is not a unique function $\LogP$ but there is a countable collection of such depending on  the choices of representatives $\pa$ on the fixed based surface and branches of the logarithms, but once those initial choices are made, $\LogP$ is a well-defined and single-valued function on $B\setminus D$. From now on we always fix such an initial choice.

The following is the main result of this section.
\begin{theorem}\label{thm:PerThm} For any homology class $\cycle\in H_1(X\setminus P,Z)$, the log period $\LogP$
is single-valued and extends to an analytic function on $B$.
Furthermore, the limit  of $\LogP$ at  $b=(\eta,0,\ldots, 0)\in \bdComp$ is
\[
\LogP(b)=
\int_{\pa(b)_{\topl}}\Hol(\twistD)-
 \sum_{e\in E^{hor}}\IP[\pa_{\topl},\lambda_e] \res_{q_e^-}(\twistD)c_e.
\]
where $c_e$ is a constant
and the {\em holomorphic part}
$\int_{\pa(b)}\Hol(\twistD) $ will be defined in~\cref{eq:HolPart}. The constants $c_e$ only depend on choices of the normal coordinates and branches of the logarithm.
\end{theorem}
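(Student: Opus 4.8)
The plan is to reduce the statement to the explicit plumbing description of $\universal$ from \Cref{section:PlumbConstruction}, and to analyze separately the three types of regions into which a representing path $\pa$ decomposes: the ``thick'' parts away from all disks $\wideDisk[]$, where $\Psi$ identifies $\universal$ with the model family $\eq$, and the ``neck'' parts $\stdFix$ and $\diskh$ glued in at the vertical nodes, horizontal nodes, and marked zeroes. First I would fix an $\enhancG$-adapted representative $\pa$ of $\cycle$ of top level $i=\topl(\cycle)$ supported on $\resleq{\Sigma}$, and fix the continuous family $\pa(b)$ (to be constructed in \Cref{section:CycleExtension}) and its monodromy-invariant modification $\invpa$ (from \Cref{section:invcycle}). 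The key identity to prove is that $\LogP(b)$, which is $\frac{1}{\scl[i]}\int_{\pa(b)}\omega(b)$ minus the logarithmic correction $\frac{1}{\scl[i]}\sum_e \IP[\pa,\lambda_e] r_e(b)\ln(s_e)$, is (a) monodromy-invariant and (b) bounded near $D$, hence extends holomorphically by Riemann's removable singularity theorem in several variables, and then to compute the limit.

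The main body of the argument is the local analysis at each neck. At a vertical node $e$ with $\ell(e+)>i\geq\ell(e-)$, the path $\pa$ only enters through lower levels; in $\stdFormB[]$-coordinates the differential on the plumbing fixture is $\stdDiff=(\scl[\ell(e+)]u^{\kappa_e}-r_e')\frac{du}{u}$, and integrating a segment of $\pa(b)$ that crosses the annulus produces exactly a term $\IP[\pa,\lambda_e]\,r_e(b)\ln(s_e)$ up to a bounded remainder: the factor $r_e(b)=\frac{1}{2\pi i}\int_{\lambda_e}\omega(b)$ is the residue-type coefficient, and $\ln(s_e)=\sum_{k=\ell(e-)}^{\ell(e+)-1}a_k\ln(t_k)$ records how many times one winds through the plumbing neck. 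This is precisely the divergence subtracted in \Cref{def:LogPeriod}, so after subtraction the contribution of each vertical neck is bounded, and in the limit $t,h\to 0$ it converges to the contribution recorded in $\int_{\pa(b)_{\topl}}\Hol(\twistD)$ — this is where the definition of the holomorphic part in~\cref{eq:HolPart} gets used. At a horizontal node $e$ of level $i$ the path may genuinely cross $\lambda_e$; here there is no damping factor $\scl[\cdot]$ separating the two sides, the residue $\res_{q_e^-}\twistD$ survives in the limit, and integrating the standard form $\scl[i](z^{-\kappa_e}+\res)\frac{dz}{z}$ across the glued annulus yields the boundary term $-\IP[\pa_{\topl},\lambda_e]\res_{q_e^-}(\twistD)c_e$, with $c_e$ the constant coming from the chosen base point $p_0=\delta/\sqrt{R}$ and the branch of $\log$ on the annulus. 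At marked zeroes the differential is $z^{m_k}dz$, which has a primitive, so those necks contribute nothing singular. On the thick parts, property~(4) of \Cref{section:UniversalFamily} gives $\frac{1}{\scl[i]}\omega(b)|_K\to \twistD_{(i)}$ uniformly on the part of $\pa$ lying in level $i$, and for the parts of $\pa$ lying in levels $j<i$ one has $\frac{1}{\scl[i]}\omega(b)|_K=\frac{\scl[j]}{\scl[i]}\cdot\frac{1}{\scl[j]}\omega(b)|_K\to 0$ since $\scl[j]/\scl[i]\to 0$ by~\cref{eq:PlumbRelation}; this is why only $\pa_{\topl}$ appears in the limit.

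For monodromy-invariance I would argue that going around a loop $t_k\mapsto e^{2\pi i}t_k$ acts on $\pa(b)$ by a Dehn twist, changing it by an integral combination $\sum_e m_e\lambda_e$ of vanishing cycles, hence changing $\int_{\pa(b)}\omega(b)$ by $\sum_e m_e\cdot 2\pi i\, r_e(b)$; simultaneously $\ln(s_e)$ changes by $2\pi i$ times an integer matching the intersection bookkeeping, so the two variations cancel in $\LogP$ — this is exactly the heuristic stated after \Cref{def:LogPeriod}, and \Cref{section:invcycle}'s construction of $\invpa$ is designed to make this precise. Once single-valuedness and boundedness are in hand, holomorphic extension across the normal-crossing divisor $D$ is automatic, and evaluating the uniform limits above gives the claimed formula. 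The hard part, I expect, is the careful neck-by-neck bookkeeping: matching the intersection numbers $\IP[\pa,\lambda_e]$, the exponents $a_k$, $m_{e,i}$, and the winding of $\pa(b)$ through nested plumbing fixtures so that the subtracted logarithmic term cancels the divergence \emph{exactly} rather than up to an unbounded error — in particular handling paths that pass through several levels below $i$ and thus wind through a composition of plumbing annuli, where the relation $\scl[\ell(e-)]=s_e^{\kappa_e}\scl[\ell(e+)]$ must be used repeatedly. Isolating the constants $c_e$ and checking they depend only on the normalization data (the nearby sections $\Nearby[]$ and the log branches) is a secondary bookkeeping point that falls out of the same analysis.
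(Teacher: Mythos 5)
Your proposal follows essentially the same route as the paper's proof: split $\pa(b)$ into thick and thin parts, integrate the standard-form differential over each plumbing neck to see that the subtracted term $\IP[\pa,\lambda_e]\,r_e(b)\ln(s_e)$ cancels the divergence exactly (with the constants $c_e$ arising from the base point $\delta/\sqrt{R}$ and log branches, surviving only at horizontal nodes where the residue persists), use the uniform convergence $\tfrac{1}{\scl[i]}\omega\to\twistD_{(i)}$ and $\scl[j]/\scl[i]\to 0$ on the thick part so only $\pa_{\topl}$ contributes, and obtain single-valuedness from the matching Dehn-twist/branch-of-log monodromies. Only minor slips: a path of top level $i$ crosses vertical nodes with $\ell(e+)\leq i$ rather than $\ell(e+)>i\geq\ell(e-)$, and at horizontal nodes $\kappa_e=0$ so the standard form is purely the residue term; neither affects the argument.
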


While we postpone the definition of $\Hol(\omega(b))$ in general to~\Cref{section:HolPart}, we mention here a special case. If $b\in \bdComp$ and $\pa$ is non-horizontal, then our definition will yield
\[
\int_{\pa}\Hol(\omega(b))=\int_{\pa} \twistD.
\]
Recall that non-horizontal was defined in \Cref{def:hcc}.
We thus obtain the following corollary

\begin{corollary}\label{cor:NonCross}
If $\pa$ is {\em non}-horizontal, then
\[
\LogP(\eta,0,0)=\int_{\gamma_{\topl}}\eta.
\]

\end{corollary}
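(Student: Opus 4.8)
This corollary will be essentially immediate once \Cref{thm:PerThm} is in hand; the plan is to specialize the limit formula of \Cref{thm:PerThm} and observe that both of the boundary terms it produces simplify when $\pa$ is non-horizontal. Evaluating that formula at the point $b=(\eta,0,\ldots,0)\in\bdComp$ gives
\[
\LogP(\eta,0,0)=\int_{\pa(b)_{\topl}}\Hol(\twistD)-\sum_{e\in E^{hor}}\IP[\pa_{\topl},\lambda_e]\,\res_{q_e^-}(\twistD)\,c_e,
\]
so it suffices to show (i) that the sum over horizontal nodes vanishes and (ii) that the surviving integral equals $\int_{\gamma_{\topl}}\eta$.

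For (i) I would check the vanishing of each summand by comparing levels. Fix $e\in E^{hor}$, say $e\in E^{hor}_{(j)}$, so that $\lambda_e$ is supported in the level-$j$ subsurface $\Sigma_{(j)}$. If $j\neq\topl(\pa)$, then $\lambda_e$ and the top-level restriction $\pa_{\topl}\subseteq\Sigma_{(\topl(\pa))}$ lie in subsurfaces of $\Sigma$ that meet only in their boundaries, so $\IP[\pa_{\topl},\lambda_e]=0$. If $j=\topl(\pa)$, then since $\lambda_e$ is contained entirely in $\Sigma_{(\topl(\pa))}$ and is disjoint from the neighborhoods $\Lambda^{\circ,ver}$ of the vertical vanishing cycles, the intersection number $\IP[\pa_{\topl},\lambda_e]$ equals $\IP[\pa,\lambda_e]$, which is zero because $\pa$ is non-horizontal (\Cref{def:hcc}: a non-horizontal cycle crosses no horizontal node at its own top level). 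Hence every summand vanishes and $\LogP(\eta,0,0)=\int_{\pa(b)_{\topl}}\Hol(\twistD)$.

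For (ii) I would invoke the special case of the holomorphic part singled out just before this corollary, whose verification is deferred to \Cref{section:HolPart}: for $b\in\bdComp$ and non-horizontal $\pa$, the holomorphic part carries no logarithmic correction, so after restricting to the top-level component one has $\int_{\pa(b)_{\topl}}\Hol(\twistD)=\int_{\gamma_{\topl}}\eta$; combined with (i) this yields the claim. The only point requiring any care is the level-bookkeeping in (i) — identifying $\IP[\pa_{\topl},\lambda_e]$ with $\IP[\pa,\lambda_e]$ for a top-level horizontal node $e$ — but this is a direct consequence of the constructions in \Cref{section:Filtr}, and nothing beyond \Cref{thm:PerThm} and the special-case description of $\Hol$ is needed.
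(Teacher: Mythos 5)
Your argument is correct and matches the paper's own (very brief) derivation: specialize \Cref{thm:PerThm} at the boundary point, note that $\IP[\pa_{\topl},\lambda_e]=0$ for every horizontal node $e$ (by level separation for lower-level nodes and by the non-horizontal hypothesis at top level), and use the special case $\int_{\pa}\Hol(\twistD)=\int_{\pa}\twistD$ recorded in \Cref{section:HolPart}. Your level-bookkeeping in step (i) just makes explicit what the paper leaves implicit, so there is no genuine difference in approach.
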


\begin{remark}
We will later see in~\Cref{section:LogPeriodsHodge} that
the \Cref{thm:PerThm} can be seen as a version of Schmid's nilpotent orbit theorem \cite{SchmidVHS}
for flat surfaces with the following difference in the setup. Instead of a whole basis for stable differentials we only have a single multi-scale differential and instead of absolute homology we integrate over relative homology.
\end{remark}

\subsection*{Comparing log periods and perturbed periods} In \cite{BCGGMsm} the authors introduce a coordinate system on $B$ given by so-called {\em perturbed period coordinates}. Perturbed periods come in two different types, depending on whether $\pa$ crosses any horizontal nodes or not.
If $\pa$ only crosses vertical nodes, one  truncates $\pa_k$ at the nearby section $\Nearby[]$ of such a node and the perturbed period along $\pa$ is obtained by integrating $\omega(b)$ over the truncation of $\pa$. In particular, the perturbed period forgets about the period inside the plumbing cylinder.
We use log periods in this paper because it is easier to compare them, rather than perturbed periods, quantitatively to the actual periods $\int_{\pa(b)}\omega(b)$.
The downside of using log periods is that the collection of all log periods over a relative homology basis are not local coordinates on $B$, since one cannot recover the plumbing parameters $h_e$ at horizontal nodes.

The rest of the section is devoted to the setup for and the proof of~\Cref{thm:PerThm}.

\subsection{Deforming cycles to the universal family}\label{section:CycleExtension}
We now describe the construction of the family of cycles $[\pa(b)]$. We first explain the construction at the level of paths.%

The construction proceeds in two steps. In the first step we deform $\pa$ from $X$ to nearby curves $X_b$ of the {\em model family}. Afterwards, in the second step, we parse through the explicit construction of the universal family in~\Cref{section:PlumbConstruction} to deform the cycles to $\universal$.

We now start with the first step. First, lift $\pa$ to a path on the normalization $\tilde{X}$. Since the family of normalizations $\normFam$ is a family of (possibly disconnected) smooth Riemann surfaces, we can, after possibly shrinking $B$, find a $C^{\infty}$-trivialization of $\normFam$, by Ehresmanns lemma. Furthermore we can choose the trivialization such that it identifies the marked points and nodes. Via the trivialization we construct a family of paths on $\norm$ deforming $\pa$, which we still denote by $\pa$. Since we chose a trivialization that preserves the nodes, the family of paths $\pa$ descends to $\eq$. By abuse of notation we denote this new family of paths on $\eq$ also by $\pa$.

Suppose we now start with the homology cycle  $\cycle\in H_1(X\setminus P,Z)$ represented by the original path $\pa$. By deforming $\pa$ as above and then taking the associated homology class, we get a family of cycles in $H_1(X_b\setminus P_b,Z_b)$ for all $b\in B$ deforming $\cycle$. Note that the cycles still live in the appropriate relative homology since we  chose the trivializations to preserve the marked points.

\subsection{Thin and thick part of \texorpdfstring{$\pa$}{}}
We now prepare for the second step of the construction. Again we work with actual paths first.
For every node or marked point that $\pa$ goes through, we modify $\pa$ through a homotopy such that, locally in a $\stdFormA[]$-coordinate neighborhood of $b_0$, where we recall $\phi$-coordinates from \Cref{def:phicoord}, the path $\pa$ coincides with the straight line from $p_0$ to the origin. By choosing the trivialization from the first step appropriately, we can achieve this for the whole family of paths $\pa$ over $B$. Afterwards, we define the \textit{thick part} $\gamma^{thick}$ of $\pa$ to be the path contained in $\eq\setminus \imDisk[]$, obtained by truncating~$\pa$ at the nearby sections $\Nearby[](b_0)$. The remaining part of $\pa$, given by the straight lines from $p_0=\Nearby[](b_0)$ to the origin in $\stdFormA[]$-coordinates, is denoted by $\gamma^{thin}$ and is called the \textit{thin part} of $\pa$. By construction, $\pa$ is the collection of disjoint paths $\pa^{thick}$ and $\pa^{thin}$.
See \Cref{fig:gammathick} for an example.

\begin{figure}[h]
\centering
\includegraphics[scale=0.4]{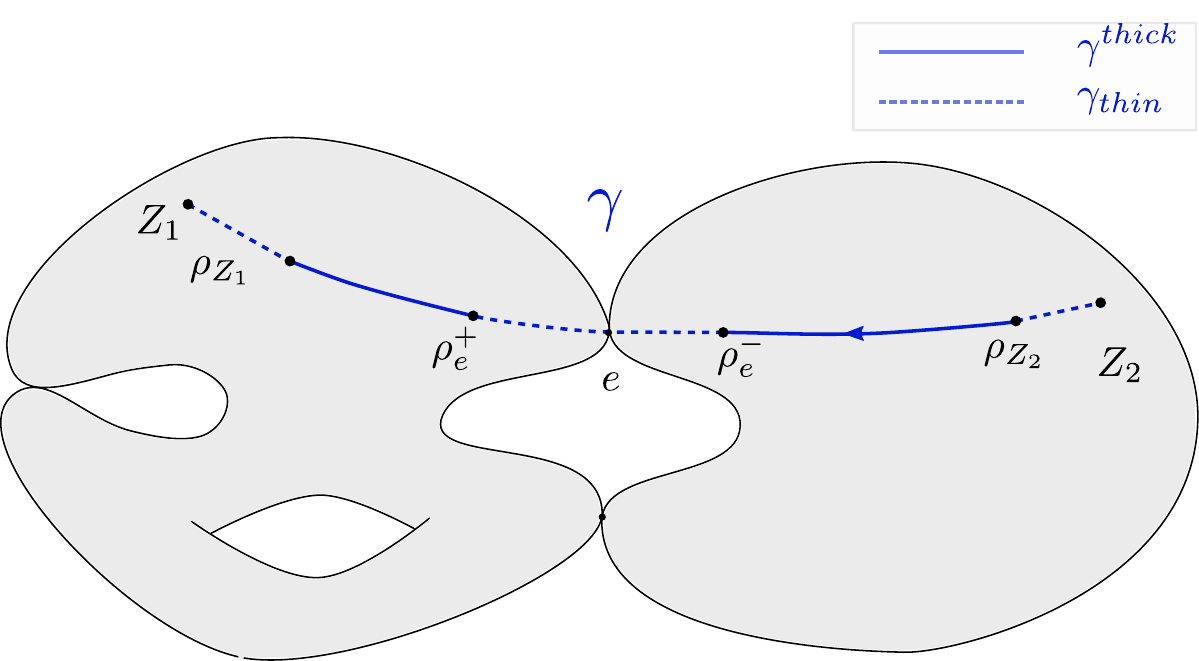}
\caption{The separation of $\gamma$ into $\gamma^{thick}$ and $\gamma^{thin}$}
\label{fig:gammathick}
\end{figure}

\subsection{Second construction step}\label{section:SecondStep}
We now proceed with the construction of $\gamma(b)$. Recall that so far we constructed a family $\pa$ of paths on $\eq$.
We will define the thick part $\gamma(b)^{thick}$ and the thin part $\gamma(b)^{thin}$ separately and finally let $\gamma(b)$ be the composition of $\gamma(b)^{thick}$ and $\gamma(b)^{thin}$.
The thick part $\gamma(b)^{thick}$ is simply
\[
\gamma(b)^{thick}:=\Psi^{-1}(\gamma^{thick}),
\]
where $\Psi$ was defined in \cref{def:psi}.
The construction now differs near nodes and near marked points.
We focus on a marked point $\calZ_k$ first.
In $\stdFormB[\calZ_k]$-coordinates the endpoint of $\pa(b)^{thick}$ is $(\stdFormB[h,b])^{-1}(\Nearby[k](b_0))$. We denote by $\gamma(b)_k^{thin}$ the straight line from $(\stdFormB[k,b])^{-1}(\Nearby[h](b_0))$ to the base point $(\stdFormB[k,b_0])^{-1}(\Nearby[h](b_0))=p_0$, followed by the straight line  from $p_0$ to the origin.

At nodes the construction is more involved.
As before we can connect $(\stdFormB[e,b]^+)^{-1}(\Nearby^+(b_0))$ to $p_0$  via a straight line in $\stdFormB^+$-coordinates and similarly  $(\stdFormB[e,b]^-)^{-1}(\Nearby^-(b_0))$ to $p_0$  via a straight line in $\stdFormB^-$-coordinates.
We denote $p_0^+$ and $p_0^-$ the images of $p_0$ in $\stdAnn^+$ and $\stdAnn^-$ respectively. To finish the construction it remains to connect $p_0^+$ and $p_0^-$ on the plumbing fixture $\stdFix$.

At $b\in B$, we identify $(\stdFix)_b:=\{(u,v)\in\Delta_{\delta}\,:\, uv=s_e\}$ with the annulus ~$A:=\{ u\,:\, \delta/|s_e|\leq |u|\leq\delta\}$ in $u$-coordinates. Under this identification we have
\[
p_0^+=\delta/\!\sqrt{R},\, p_0^-=s_e\!\sqrt{R}/\delta.
\]
We divide the annulus $A$ into finitely many sectors $\Sec_l$ each with a chosen base point $x_l$.
Suppose $s_e\!\sqrt{R}/\delta\in \Sec_l$. We then choose a path from $\delta/\!\sqrt{R}$ to $x_l$ and connect $x_l$ to $s_e\!\sqrt{R}/\delta$ via a straight line, as depicted in \Cref{fig:ucoord}.

\begin{figure}[h]
\centering
\includegraphics[scale=0.25]{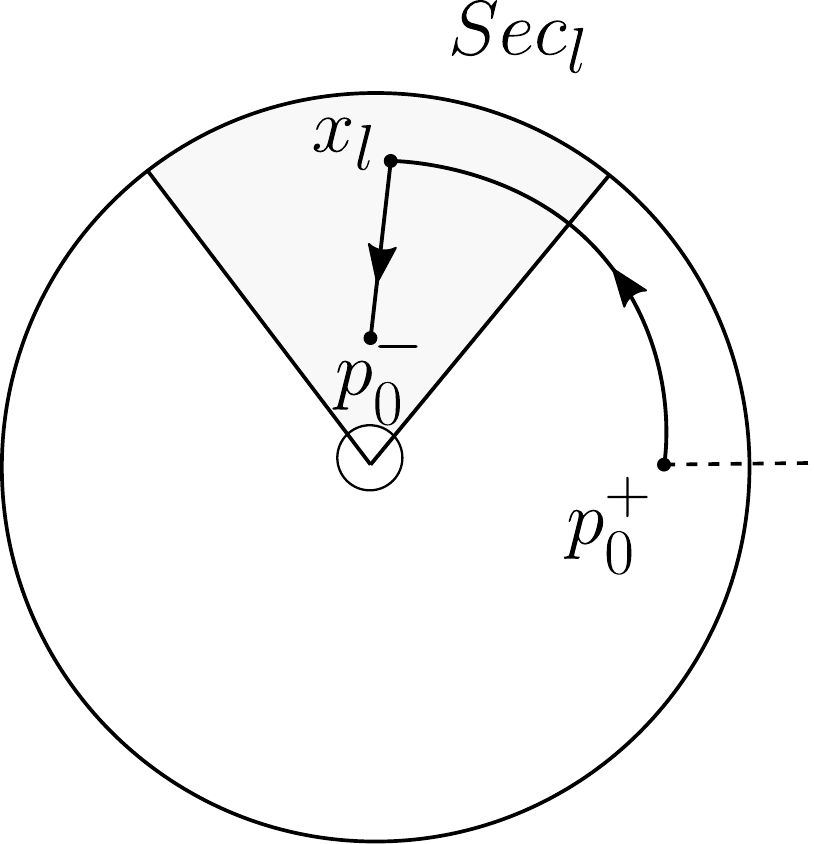}
\caption{The path $\gamma(b)_e^{thin}$ in $u$-coordinates}
\label{fig:ucoord}
\end{figure}
The resulting construction is continuous on $\Sec_l$ but depends on the choice of a path from $\delta/\!\sqrt{R}$ to $x_l$; different choices differ by a multiple of the vanishing cycle $\lambda_e$. We can make all choices in such a way that the construction is continuous on the intersection of two sectors but has monodromy if we try to extend it to all of $A$.
We thus constructed a (multivalued) path $\pa(b)_e^{thin}$ from $p_0^+$ to $p_0^-$.
We let $\pa^{thin}$ be the  composition of $\pa(b)^{thin}_e$ and $\pa(b)_k^{thin}$ for all nodes and marked zeroes $\calS_k$ crossed through by $\pa$.
We finally let $\gamma(b)$ be the composition of $\pa(b)^{thick}$ and $\pa(b)^{thin}$.

We stress that while $\gamma^{thick}$ and $\gamma^{thin}$ are paths on the model family $\eq$, the paths $\gamma(b)^{thin}$, $\gamma(b)^{thick}$, and thus also $\pa(b)$, are on the universal family $\universal$.

The family $\pa(b)$ is multivalued, with different branches differing by integral multiples of the vanishing cycles. Furthermore if $\pa$ and $\pa'$ are homologous on $X$ but differ by multiples of the vanishing cycles, then $\pa(b)$ and $\pa'(b)$ yield different branches of the same multivalued function. Thus we can define a multivalued family of cycles $[\pa(b)]$, but which branch is picked out depends on the choice of a representative $\pa$ for $\cycle$ on $X$.

\subsection{Monodromy invariant cycles}\label{section:invcycle}
Due to monodromy the family ~$[\pa(b)]\in H_1(X_b\setminus P_b,Z_b,\ZZ)$ is not well-defined on all of $B\setminus D$. By subtracting suitable logarithmic terms we are going to construct a new family of cycles which will be monodromy invariant.
We now choose, once and for all, branches of logarithms for $t_i$ and $h_e$ locally near a base point $x_0\in B\setminus D$,
and then define branches for $s_e$ at vertical nodes, locally near $x_0$, via
\[
\ln(s_e):=\sum_{i=\ell(e-)}^{\ell(e+)-1} a_i\ln(t_i)
\]
where $a_i$ was defined in \cref{eq:ak}.

To make the family $[\pa(b)]$ monodromy-invariant we set
\begin{equation}\label{eq:invcycle}
[\invpa]:= [\pa(b)]-\dfrac{1}{2\pi i}\sum_{e\in E}\IP[ \gamma,\lambda_e] \ln(s_e)[\lambda_e]\in H_1(X_b\setminus P_b,Z_b,\CC).
\end{equation}
We call $[\invpa]$ the \textit{invariant cycle associated to} $\pa$ since $[\invpa]$ is invariant under analytic continuation along any path in $\pi_1(B\setminus D,x_0)$.
The invariant cycle is well-defined on $B\setminus D$ but not unique, since both $[\pa(b)]$ as well as the branches of $\ln(s_e)$ involve certain choices. From now on we fix one set of those choices.

\subsection{Holomorphic part of a period}
\label{section:HolPart}
We can now define the holomorphic part of the period  which appeared in the statement of~\Cref{thm:PerThm}. We recall that $b=(\twistD,t,h)$.
We set
\begin{equation}\label{eq:HolPart}
\int_{\gamma}\Hol(\omega(b)):=\int_{\gamma^{thick}}(\starD{\twistD}+\modD) + \int_{\gamma^{thin}(b)} (\starD{\twistD}+\modD)^{hol}
\end{equation}
where $(\starD{\twistD}+\modD)^{hol}$ is the holomorphic part of the Laurent expansion of $\starD{\twistD}+\modD$ of $\eta$ in $\stdFormA^{\pm}$-coordinates
near the nodes.
We stress that we are not defining a differential $\Hol(\omega(b))$ but only the expression $\int_{\gamma}\Hol(\omega(b))$. We define $\int_{\pa}\Hol(\twistD)$ in the same way where $\int_{\gamma^{thin}(b)} (\starD{\twistD}+\modD)^{hol}$ is replaced by $ \int_{\gamma^{thin}}\twistD$.

Note that in particular $\int_{\gamma^{thin}}\twistD^{hol}=\int_{\gamma^{thin}}\twistD$ if $\gamma$ is non-horizontal and we thus obtain \Cref{cor:NonCross}

We have now defined all the objects appearing in $\Cref{thm:PerThm}$ and can thus proceed with the proof.

\begin{proof}[Proof of~\Cref{thm:PerThm}]
We first show that the log period is indeed single-valued.
Recall that both $\int_{\gamma(b)}\omega(b)$ and $r_e(b)\ln(s_e)$ are multivalued; analytic continuation along a path encircling the origin $k_e$ times counter-clockwise in $s_e$-coordinates  changes $\gamma(b)\mapsto \gamma(b)+k_e\IP[\pa(b),\lambda_e]\lambda_e$, where $\lambda_e$ is the vanishing cycle of the node $e$. Thus both $\int_{\gamma(b)}\omega(b)$ and $r_e(b)\IP[\pa(b),\lambda_e]\ln(s_e)$ change under such an analytic continuation by the addition of
\[
k_e\int_{\lambda_e}\omega(b)= k_er_e(b)\IP[\pa(b),\lambda_e],
\]
and in particular their difference is single-valued.
Our goal is to compare the periods of $\int_{\gamma} \eta$ and $\int_{\gamma(\stdpar)} \omega(\stdpar)$. For this we need to use the plumbing construction of $\omega$ reviewed in ~\Cref{section:PlumbConstruction}. We split the period over $\pa$ into the thick and thin part, i.e. $\int_{\gamma(\stdpar)}\omega(\stdpar)=\int_{\gamma(b)^{thick}}\omega(\stdpar)+\int_{\gamma(b)^{thin}}\omega(\stdpar)$.

Over the thick part we have
\[
\int_{\gamma(b)^{thick}}\omega(\stdpar)=\int_{\gamma^{thick}}(\starD{\twistD}+\modD(b))
\]
and thus
\[
\lim_{t,h\to 0} \dfrac{1}{\scl[\topl(\pa)]}\int_{\gamma(b)^{thick}}\omega(b)=\int_{\pa^{thick}_{\topl}}\eta,
\]
since on $\reseq{\eq}$ the modification differential $\modD$ is divisible by $\scl[i-1]$.
It remains to compute the integral over $\gamma(b)^{thin}$.

We discuss the situation at vertical nodes, horizontal nodes and marked zeroes separately.
We start with the case of vertical nodes.
We recall from the construction that, in this case, $\gamma(b)^{thin}_e$ consists of two parts,
the straight line from $\stdFormB[b]^{-1}(\Nearby[](b))$ to $p_0^{\pm}$ and then a chosen path from $p_0^+=\delta/\!\sqrt{R}$ to $p_0^-=s_e\!\sqrt{R}/\delta$. We analyze both parts separately.

 Near a vertical node $e$, in $u$-coordinates, we have
\[
\begin{split}
\int^{\delta/\!\sqrt{R}}_{s_e\sqrt{R}/\delta}(\starD{\twistD}+\modD)=& \int^{\delta/\!\sqrt{R}}_{s_e\!\sqrt{R}/\delta} \left(\scl[\ell(e+)]u^{\kappa_e}-r_e(b)\right)\dfrac{du}{u}\\
=& \scl[\ell(e+)] \cdot\dfrac{(\delta/\!\sqrt{R})^{\kappa_e}-(s_e\!\sqrt{R}/\delta)^{\kappa_e}}{\kappa_e}\\
&-r_e(b)\left[\ln(\delta/\!\sqrt{R})-\ln(s_e\!\sqrt{R}/\delta)\right].
\end{split}
\]

Note that there exist integers $\alpha_e$ such that
\[
\ln(s_e\!\sqrt{R}/\delta)= \ln(s_e)-\ln(\delta/\!\sqrt{R}) +2\pi i\alpha_e.
\]
We thus define $c_e:=2\pi i\alpha_e-2\ln(\delta/\!\sqrt{R})$.
Additionally, we compute
\[
\int_{\gamma^{thin}_{e^+}}\twistD=\int_{\gamma^{thin}_{e^+}}\eta^{hol}=\int_{0}^{\delta/\!\sqrt{R}}u^{\kappa_e-1}du=\dfrac{(\delta/\!\sqrt{R})^{\kappa_e}}{\kappa_e}.
\]

Finally, we need to estimate the period along the straight line segment from $\stdFormB[b]^{-1}(\Nearby[](b))$ to $p_0$.
Recall from~\Cref{section:StdFormCoord} that $\stdFormB[b_0]^{-1}(\Nearby[](b_0))=p_0$ for all multi-scale differentials $\twistD$ and thus
\[
\lim_{t,h\to 0} \stdFormB[(\twistD,t,h)]^{-1}(\Nearby[](\twistD,t,h))= p_0.
\]
We conclude that
 \[
\int^{\stdFormB[(\twistD,t,h)]^{-1}(\Nearby[](\twistD,t,h))}_{p_0}(\starD{\twistD}+\modD)=O(\scl[\ell(e+)](\parLevel+\parHor)).
\]

The notation $O(\scl[\stdlvl](\parLevel+\parHor))$ here means that the left-hand side is analytic in $b=(t,h,\twistD)$ and every monomial in the power series expansion is divisible by $\scl[\stdlvl]t_i$ or $\scl[\stdlvl]h_e$ for some $i$ or $e$.

Putting everything together, we conclude that at vertical nodes
\[
\int_{\pa(b)^{thin}_e} \omega(b)= \scl[\ell(e+)]\int_{\gamma^{thin}_{e^+}}\eta^{hol} + r_e(b)\ln(s_e)+ O(\scl[\ell(e+)](\parLevel+\parHor))
\]
where we used that $r_e(b)=O(\scl[\ell(e+)](\parLevel+\parHor))$.

We now turn to the case of a marked zero $\calZ_k$. In this case $\pa(b)_k^{thin}$ consists of the straight line from
$(\stdFormB[k,b])^{-1}(\Nearby[h](b_0))$ to $(\stdFormB[k,b_0])^{-1}(\Nearby[h](b_0))=p_0$ combined with the straight line from $p_0$ to the origin.

As before we have
 \[
\int^{\stdFormB[(\twistD,t,h)]^{-1}(\Nearby[](\twistD,t,h))}_{p_0}(\starD{\twistD}+\modD)=O(\scl[\ell(e+)](\parLevel+\parHor)).
\]
And furthermore
\[
\begin{split}
\int^{p_0}_{0}(\starD{\twistD}+\modD)&= \int^{p_0}_{0} \left(\scl[\ell(e+)]u^{\kappa_e-1}\right)du\\
&= \scl[\ell(e+)] \int_{\pa_k^{thin}}\eta=\scl[\ell(e+)] \int_{\pa_k^{thin}}\eta^{hol}.\end{split}
\]

Finally we address the case of horizontal nodes. In that case we have $\twistD^{hol}=0$ in $\stdFormA$-coordinates and
\[
\begin{split}
\int^{\delta/\!\sqrt{R}}_{s_e\!\sqrt{R}/\delta}(\starD{\twistD}+\modD)&= \int^{\delta/\!\sqrt{R}}_{s_e\!\sqrt{R}/\delta} -r_e(b)\dfrac{du}{u}\\
&= r_e(b)\ln(s_e)+ O(\scl[\ell(e+)](\parLevel+\parHor)).
\end{split}
\]
\end{proof}

\section{Setup for one-parameter families}
\label{section:OneParameterRestriction}
Our method for studying the boundary of a linear subvariety is based on the observation that every point in the boundary can be approached along a holomorphic one-parameter family. This will enable us to do computations in one-parameter families, which is more useful for our purposes since it allows to control the relative growth rates of the parameters $t_i$ and $h_e$. We first collect some simple facts about one-parameter families.

\subsection{Short arcs}\label{section:ShortArcs}

See \cite{KollarArcs} for an introduction to this circle of ideas.
\begin{definition}
A \textit{(complex) analytic arc} on a complex analytic space $X$ is a holomorphic map $\sharc[X]$.
Given a subset $Z\subseteq X$, a \textit{short arc} on $(X,Z)$ is an analytic arc with $\arc^{-1}(Z)=\{0\}$.
We say an analytic arc $\arc$ \textit{connects two points} $x$ and $y$ if both are contained in the image of $\arc$. Similarly we say $\arc$ passes through $x$ if $x$ is contained in the image.
Furthermore we say a short arc $\arc$ is \textit{smooth} if $\arc(\Delta^*)\subseteq X_{reg}$, where $X_{reg}$ denotes the smooth locus of $X$.
\end{definition}
Unless stated otherwise, we denote the coordinate on $\Delta$ by $z$. Recall that we do not specify the radius of $\Delta$ in order to lighten the notation.
The following is a simple consequence of the ideas developed by Winkelmann in \cite{Winkelmann}.
\begin{lemma}\label{lemma:Arc}
Let $X$ be an irreducible complex analytic space and let $Z\subseteq X$ be a complex analytic subspace. Then for any pair of points $x\in X\setminus Z,\, z\in Z$ there exists a short arc $
\sharc[X]$ on $(X,Z)$ connecting $x$ and $z$. Furthermore if $x \in X_{reg}\!\setminus Z$, then there exists a smooth such arc~$\arc$.
\end{lemma}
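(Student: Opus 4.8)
The plan is to first produce \emph{some} analytic arc through $x$ and $z$, and then reparametrise it so as to kill all of its intersections with $Z$ except the one at the origin. Since $x\in X\setminus Z$, the subspace $Z$ is a proper analytic subset of the irreducible space $X$. The one substantive input, taken from \cite{Winkelmann}, is that an irreducible complex analytic space is arc-connected: there is a nonconstant analytic arc $g\colon\Delta\to X$ whose image contains both $x$ and $z$. Composing with an automorphism of $\Delta$ we may assume $g(0)=z$, and we fix $c\in\Delta^*$ with $g(c)=x$. Then $g^{-1}(Z)\subseteq\Delta$ is an analytic subset not containing $c$, hence a proper analytic subset of the connected curve $\Delta$, hence discrete; after replacing $\Delta$ by a smaller disc that still contains $c$ (and rescaling) we may assume $g^{-1}(Z)=\{0,p_1,\dots,p_k\}$ is finite, with the $p_j$ distinct from $0$ and from $c$. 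If $k=0$ we are already done, so assume $k\ge 1$.

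Next I would reparametrise to remove $p_1,\dots,p_k$. Since $\Delta\setminus\{p_1,\dots,p_k\}$ is connected and contains both $0$ and $c$, choose a simple piecewise-smooth arc $\sigma\colon[0,1]\to\Delta$ with $\sigma(0)=0$, $\sigma(1)=c$ and $\sigma\big((0,1]\big)\cap\{p_1,\dots,p_k\}=\emptyset$. Let $N$ be a tubular neighbourhood of $\sigma([0,1])$, chosen small enough to be disjoint from $\{p_1,\dots,p_k\}$; then $N$ is a simply connected proper open subset of $\mathbb{C}$ containing $0$ and $c$, so by the Riemann mapping theorem there is a biholomorphism $\psi_0\colon\Delta\to N$, which we normalise so that $\psi_0(0)=0$. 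Regarding $\psi_0$ as a map $\psi\colon\Delta\to\Delta$ via $N\subseteq\Delta$, we have $\psi(0)=0$, $\psi(\Delta)=N\ni c$, and $\psi^{-1}\big(\{0,p_1,\dots,p_k\}\big)=\psi^{-1}(0)=\{0\}$, since $N$ misses the $p_j$ and $\psi_0$ is injective. Set $f:=g\circ\psi\colon\Delta\to X$. Then $f(0)=g(0)=z$, and $x=g(c)\in g(\psi(\Delta))=f(\Delta)$, and $f^{-1}(Z)=\psi^{-1}\big(g^{-1}(Z)\big)=\{0\}$; so $f$ is a short arc on $(X,Z)$ connecting $x$ and $z$.

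For the final assertion I would simply apply what has just been proved with $Z$ replaced by $Z\cup X_{reg}^{\,c}=Z\cup X_{\mathrm{sing}}$. Because $X$ is irreducible, $Z\cup X_{\mathrm{sing}}$ is again a proper analytic subset of $X$; and if $x\in X_{reg}\setminus Z$ then $x\in X\setminus(Z\cup X_{\mathrm{sing}})$, so the construction above yields a short arc $f$ on $\big(X,\,Z\cup X_{\mathrm{sing}}\big)$ connecting $x$ and $z$. For such $f$ one has $f(\Delta^*)\cap(Z\cup X_{\mathrm{sing}})=\emptyset$, hence $f(\Delta^*)\subseteq X_{reg}$ and $f^{-1}(Z)=\{0\}$; that is, $f$ is the desired smooth short arc.

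I expect the only genuine obstacle to be the very first step — the existence of a \emph{single} analytic arc through the two given points of the irreducible space $X$ — which is precisely what we borrow from \cite{Winkelmann}; once this is granted, the remainder is an elementary one-variable argument (discreteness of $g^{-1}(Z)$, choice of a simple arc, and the Riemann mapping theorem). The only point requiring minor care is that the successive shrinkings of $\Delta$ be kept compatible, i.e.\ the disc must always remain large enough to contain the chosen preimage $c$ of $x$.
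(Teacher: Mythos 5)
Your proof is correct and follows essentially the same route as the paper's: invoke Winkelmann's theorem to get one analytic arc through $x$ and $z$, shrink so that the preimage of $Z$ (resp.\ of $Z\cup X_{\mathrm{sing}}$ for the smooth case) is finite, and then cut down to a simply connected subdomain containing the two relevant preimages but no other point of $f^{-1}(Z)$, finishing with the Riemann mapping theorem. The only cosmetic difference is that the paper uses the interior of a Jordan curve where you use a tubular neighbourhood of a simple arc, and you are slightly more explicit about normalising the Riemann map so that $0$ is sent to the preimage of $z$.
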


\begin{proof}
By \cite[Thm. 5]{Winkelmann} there exists a holomorphic map $\arc:\Delta\to X$ passing through $x$ and $z$. Since $\arc^{-1}(Z)$ is a proper subspace,  after  possibly shrinking $\Delta$, we can assume that $\arc^{-1}(Z)$ is a finite set. We can choose a Jordan curve in $\Delta$ such that $z$ and $x$ are in its interior component while all other points of $\arc^{-1}(Z)$ lie in the exterior component. By the Riemann mapping theorem the interior component is biholomorphic to $\Delta$.
For the second claim we proceed similarly. Again, after shrinking, we can assume that the preimage $\arc^{-1}((X\setminus Z)_{sing})$ of the singular locus is finite. Again  we choose a Jordan curve containing $z$ and $x$ in its interior and all singular points in its exterior.
\end{proof}

\subsection{Log periods along one-parameter families}

Along a one-parameter family $\arc$ we immediately get the following slight improvement of~\Cref{thm:PerThm}. We define $\sigma_e$ to be the order of vanishing of $s_e\circ f$ at $z=0$.

\begin{corollary}\label{cor:LogPerOPF}
Let $\sharc[B]$ be a one-parameter family of differentials with multi-scale limit $\arc(0)=b_0=(X,\twistD)$ and let $\gamma\in H_1(X\setminus P,Z)$.
The log period $\LogP^{\arc}$ along $\arc$ defined by
\[
\LogP^{\arc}(z):= \frac{1}{\scl[\topl(\gamma)]}\left[\int_{\gamma(f(z))}\omega(f(z))-\left(\sum_{e\in E}\IP[\pa,\lambda_e] r_e(f(z))\sigma_e\right)\ln(z)\right]
\] is single-valued, analytic in a neighborhood of the origin,
and satisfies
\[
\LogP^{\arc}(0)=
 \int_{\gamma_{\topl}} \Hol(\twistD) + \sum_{e\in E}\IP[\pa_{\topl},\lambda_e] \res_{q_e^-}(\twistD)\sigma_ec_e'.
\]
Here $c_e'$ are constants and $\pa_{\topl}$ is the restriction of $\pa$ to its top level.
\end{corollary}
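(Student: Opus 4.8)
The plan is to deduce this corollary directly from \Cref{thm:PerThm} by restricting the universal family along the arc $\arc$ and tracking how the scaling and plumbing parameters specialize. First I would set $t_i(z):=t_i\circ\arc$, $h_e(z):=h_e\circ\arc$ and observe that, since $\arc(0)=b_0\in\bdComp$, each $t_i(z)$ and $h_e(z)$ vanishes at $z=0$; writing $\sigma_e$ for the order of vanishing of $s_e\circ\arc$, the relation $\ln(s_e)=\sum_{i=\ell(e-)}^{\ell(e+)-1}a_i\ln(t_i)$ from \Cref{section:invcycle} gives $\ln(s_e\circ\arc)=\sigma_e\ln(z)+(\text{holomorphic near }0)$, where the holomorphic term records the leading coefficient of $s_e\circ\arc$. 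This is the key bookkeeping step: the multivalued logarithmic divergence of $\LogP$ pulls back under $\arc$ to $\bigl(\sum_e \IP[\pa,\lambda_e]r_e(f(z))\sigma_e\bigr)\ln(z)$, plus a single-valued analytic correction coming from the holomorphic parts of $\ln(s_e\circ\arc)$ and from $\frac{1}{\scl[\topl(\gamma)]}$ evaluated along $\arc$. Hence $\LogP^{\arc}(z)=\LogP(\arc(z))+(\text{single-valued analytic function near }0)$, so single-valuedness and analyticity of $\LogP^{\arc}$ follow from the corresponding statements in \Cref{thm:PerThm}.

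Next I would compute the value at the origin. By \Cref{thm:PerThm}, $\LogP(\arc(z))\to \int_{\pa(b)_{\topl}}\Hol(\twistD)-\sum_{e\in E^{hor}}\IP[\pa_{\topl},\lambda_e]\res_{q_e^-}(\twistD)c_e$ as $z\to 0$. I then need to add back the limiting value of the analytic correction term. Examining the proof of \Cref{thm:PerThm}, the logarithmic subtraction there was organized node by node, and the correction at a vertical node $e$ was absorbed into the constant $c_e=2\pi i\alpha_e-2\ln(\delta/\sqrt R)$; along $\arc$ the analogous computation replaces $\ln(s_e)$ by $\sigma_e\ln(z)$ plus the holomorphic part of $\ln(s_e\circ\arc)$, and the limit of that holomorphic part at $z=0$ is a new constant, which I would call $c_e'$, depending only on the normal coordinates, the branches of the logarithm, and the leading coefficients of the $s_e\circ\arc$. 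Tracking which terms survive: the horizontal-node contribution $\res_{q_e^-}(\twistD)$ reappears, now with the factor $\sigma_e$ coming from the pullback of $\ln(s_e)$, giving exactly $\sum_{e\in E}\IP[\pa_{\topl},\lambda_e]\res_{q_e^-}(\twistD)\sigma_e c_e'$ in the statement. (At vertical nodes $\res_{q_e^-}(\twistD)$ is the residue of the multi-scale differential, which need not vanish, so the sum is genuinely over all of $E$, not just $E^{hor}$.) Combining, $\LogP^{\arc}(0)=\int_{\gamma_{\topl}}\Hol(\twistD)+\sum_{e\in E}\IP[\pa_{\topl},\lambda_e]\res_{q_e^-}(\twistD)\sigma_e c_e'$, as claimed.

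The main obstacle I anticipate is purely bookkeeping rather than conceptual: one must be careful that the ``analytic correction'' is genuinely single-valued along $\arc$ even though $\arc$ may wrap around several branches of the $t_i$-logarithms at once, and that the corrections at different nodes do not interfere. Concretely, the subtlety is that the branch of $\ln(z)$ and the branches of $\ln(t_i)$ are linked by $t_i\circ\arc=z^{\sigma_i^{t}}(\text{unit})$ only up to the monodromy already built into the multivalued family $\pa(b)$, so I would need to re-run the single-valuedness argument from the proof of \Cref{thm:PerThm} with $s_e$ replaced by $s_e\circ\arc$ and check that the cancellation between the change in $\int_{\gamma(f(z))}\omega(f(z))$ and the change in $r_e(f(z))\sigma_e\ln(z)$ under $z\mapsto e^{2\pi i}z$ still holds — it does, because encircling $z=0$ once encircles each $s_e=0$ exactly $\sigma_e$ times. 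Once that is verified, everything else is substitution into \Cref{thm:PerThm}, and the definition of $c_e'$ records the $z$-independent part of the substitution.
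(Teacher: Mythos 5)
Your proposal takes essentially the same route as the paper: the paper also writes $s_e(f(z))=z^{\sigma_e}e^{g_e(z)}$, so that along the arc $\ln(s_e)=\sigma_e\ln(z)+g_e(z)+2\pi i k_e$, deduces $\LogP^{\arc}=\LogP\circ \arc+(\text{analytic correction})$, and then gets single-valuedness, analyticity, and the boundary value directly from \Cref{thm:PerThm} by setting $c_e':=c_e+g_e(0)+2\pi i k_e$. The monodromy cancellation you re-verify in your last paragraph is already subsumed in that identity, exactly as in the paper's argument.
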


\begin{proof}
Along $\arc$ we can write $s_e(f(z))=z^{\sigma_e}e^{g_e(z)}$ for some analytic function $g_e$.  Then for each $e$ there exists an integer $k_e$ such that
\[
\ln(s_e)=\sigma_e\ln(z)+ g(z)+2\pi ik_e.
\]
We thus compute
\[
\LogP(f(z))=\LogP^{\arc}(z)-\left(\sum_{e\in E}\IP[\pa,\lambda_e] \dfrac{r_e(f(z))}{\scl[\topl(\pa)]}(g(z)+2\pi ik_e)\right)
\]
and then the result follows directly from~\Cref{thm:PerThm} by setting $c_e':=c_e+g(0)+2\pi ik_e$.
\end{proof}
We note that since $b_0$ is contained in the most degenerate stratum $\bdComp\subseteq D$, all integers $\sigma_e$ are strictly positive.

The usefulness of log periods $\LogP^{f}$ along $f$
stems from the fact that the logarithmic divergence now only depends  on one variable $z$. Thus in order to get sufficient control over the divergence of $\LogP^{\arc}$ on the punctured disk $\Delta^*\!$, we only need to control one expression $\sum_{e\in E(\enhancG)}\langle \pa_{\topl},\lambda_e\rangle r_e(f(z))\sigma_e$.

\section{Monodromy of complex linear varieties}\label{section:MonodromyLinearVariety}

\subsection{The Gauss-Manin connection}\label{section:MonodromyGM}
In this subsection we let $(\pi:\calT\rightarrow A,\omega)$ be an arbitrary family of flat surfaces over  an arbitrary smooth base $A$.

We let $\LL$ be the local system, or equivalently the  vector bundle with flat connection,  of relative cohomology over $A$, with fiber $\LL_{a}\simeq  \Hi[](T_a\setminus \calP_a,\calZ_a,\ZZ)$.
More explicitly,
 \[\mathbb{L}:= \Ri(\pi|_{\calT\setminus\calP})_*j_{!}\underline{\ZZ}\]
where $j: \calZ \rightarrow \calT\setminus\calP$ is the inclusion of the reduced zero-divisor of $\omega$.
For any given pair of points $a,a'\in A$ choose a path $\gamma$ connecting them.
The Gauss-Manin connection associated to $\mathbb{L}$ allows to identify different fibers $\LL_a\simeq \LL_{a'}$  via parallel transport along $\gamma$.
For convenience of the reader, we recall the details.
On any contractible neighborhood $W\subseteq A$\ we can trivialize $\mathbb{L}$ and thus identify $\LL_a\simeq \LL_{a'}$ for all $a,a'\in W$.
Let $\gamma:[0,1]\to A$ be a path and let $V\subseteq \mathbb{L}_{\gamma(0)}$ be a subspace. By covering $\gamma([0,1])$ with finitely many contractible neighborhoods, we get an induced isomorphism $\phi_{\gamma}:\LL_{\gamma(0)}\simeq \LL_{\gamma(1)}$ and we define
\[
\GM_{\gamma}(V):=\phi_{\gamma}(V)\subseteq\LL_{\gamma(1)},
\]
which only depends on the homotopy class of $\gamma$ and not on how it is covered by contractible neighborhoods.

\subsection{Hodge theoretic description of log periods}\label{section:LogPeriodsHodge}
We now describe the monodromy action on the relative cohomology near the boundary of $\MSDS$.
As a byproduct, we get a more conceptual definition of log periods.
In this subsection we work with the local universal family $\universalFam$ of multi-scale differentials.
For the remainder of this subsection only we relabel the local coordinates on $B$. We set
\[
(z_1,\dots,z_{\dimU},z_{\dimU+1},\dots,z_{\dimPar+\dimU}):=\stdpar=(\twistD,t,h)
\] where we recall from~\cref{eq:mn} that $\dimPar=\ell(\enhancG)-1+|E^{hor}|$ and $\dimU=\dim U$.

The boundary $D$ of $B$ in these coordinates is then  $D=\left(\prod_{k=\dimU+1}^{\dimU+\dimPar} z_k=0\right )$.
The universal family $(\universal,\omega)$ over $B\setminus D=\Delta^{\dimU}\times (\Delta^*)^{\dimPar}$ is a family of  flat surfaces contained in $\Stra$. We now restrict the local system $\LL$ from~\Cref{section:MonodromyGM} to $B\setminus D$ with associated monodromy action
\[
\ZZ^{N}\simeq \pi_1(B\setminus D,x_0)\rightarrow \GL(\Hi(X_{x_0}\setminus\calP_{x_0},\calZ_{x_0},\ZZ)
)\] for some base point $x_0$.

\begin{convention}
From now on $x_0\in B\setminus D$ always denotes a base point in $\Stra$ with corresponding fiber $(X_{x_0},\omega_{x_0})$.
\end{convention}

 Let $T_k$ be the monodromy operator, i.e. the image under the monodromy action, of the standard generator of $\pi_1(B\setminus D,x_0)$ encircling the origin once in the coordinate $z_k$ and constant otherwise. We sometimes write $T_i$ or $T_e$ instead of $T_k$ if $z_k=t_i$ or $z_k=s_e$. The monodromy action can be computed explicitly from the construction of $\MSDS$ in~\Cref{section:ConstrUniversal}.
We have
\begin{equation}\label{eq:MonHor}
T_e(\cycle)= \cycle+ \IP[\pa,\lambda_e] [\lambda_e],
\end{equation}
 i.e. $T_e$ acts as a Dehn twist along $\lambda_e$.
Similarly,
\begin{equation}\label{eq:MonVer}
T_i(\cycle)=\cycle+ \sum_{e\in E^{ver},\ \ell(e-)\leq i<\ell(e+)}m_{e,i}\,\IP[\pa,\lambda_e] [\lambda_e],
\end{equation}
i.e. $T_i$ acts as a multitwist along all curves $\lambda_e$ with $\ell(e-)\leq i<\ell(e+)$ where the multiplicities $m_{e,i}$ were defined in \Cref{eq:ak}. Note that in particular $(I-T_k)^2=0$ for all $k$.

We set
\begin{equation}\label{eq:MonLog}
N_k:=-\log T_k=I-T_k.
\end{equation}

We choose a basis $\{\gamma_1(b),\dots,\gamma_n(b)\}$ of $H_1(X_{x_0}\!\setminus  P_{x_0},Z_{x_0})$ where $d:=\dim H_1(X_{x_0}\!\setminus P_{x_0},Z_{x_0})=\dim\Stra$. Due to the multivaluedness of $\gamma_k(b)$, there are two ways of defining a relative period map, which we now explain. We choose one of the branches $\gamma_k(b)$ near $x_0$, as explained in~\Cref{section:SecondStep}.

\begin{definition}
Locally in a period chart $W$ around $x_0\in B\setminus D$ we can define $\varphi:W\rightarrow \CC^d$ by
\[\label{eq:LocalPer}
\varphi(\stdpar):= \left(\int_{\gamma_k(b)} \omega\right)_{k=1,\dots,d}.
\] Note that $\varphi$ does depend on the choice of branches for $\pa_k(b)$.
We cannot extend $\varphi$ to all of $B\setminus D$ due to the monodromy action but we still have the following analogue.
The fundamental group $\ZZ^{N}\simeq \pi_1(B\setminus D,x_0)$ acts on $\CC^{d}\simeq H^1(X_{x_0}\setminus P_{x_0}, Z_{x_0})$ by
\[
(m_1,\dots, m_N)\cdot v= T_1^{m_1}\circ\dots\circ T_N^{m_N}(v)
\]
and we denote $\pi:\CC^{d}\to  \CC^{d}/\ZZ^{\dimPar}$ the quotient by the monodromy action.
On $B\setminus D$ we define the relative log period map $\phi:B\setminus D\rightarrow \CC^{d}/\ZZ^{\dimPar}$ by setting $\phi:=\pi\circ\varphi$.
Note that $\phi$ does not depend on the choice of branches for $\pa_k(b)$, since different branches of log periods differ exactly by the monodromy action for some path $\pa\in \pi_{1}(B\setminus D,x_0)$.

\end{definition}
Via the universal cover
\[\begin{split}
\tilde{\pi}:\Delta^{\dimU}\times\mathbb{H}^{\dimPar}&\rightarrow \Delta^{\dimU}\times (\Delta^{*})^{\dimPar}\\
(w_1,\dots,w_{\dimPar+\dimU})&\mapsto (w_{1},\dots,w_{\dimU},e^{2\pi iw_{\dimU+1}},\dots,e^{2\pi iw_{\dimU+\dimPar}})
\end{split}
\] we obtain a lifting $\tilde{\phi}$ of $\phi$ that fits in the following commutative diagram:
\[
\begin{tikzcd}
\Delta^{\dimU}\times\mathbb{H}^{\dimPar}\arrow[r,"\tilde{\phi}"]\arrow[d,swap,"\tilde{\pi}
"]& \CC^d\arrow[d,"\pi"]\\
\Delta^{\dimU}\times(\Delta^{*})^{\dimPar}\arrow[r,"\phi"]&   \CC^d/{\ZZ^{\dimPar}}
\end{tikzcd}.
\]
The map
\[
\begin{split}
\widetilde{\psi}: \Delta^{\dimU}\times \mathbb{H}^{\dimPar}&\rightarrow \CC^d,\\
(w_1,\dots,w_{\dimPar+\dimU})&\mapsto e^{-\sum_{k=M+1}^{M+N} w_kN_{k-M}}\tilde{\phi}(w)
\end{split}
\]
 is $\ZZ^{\dimPar}$-invariant and thus descends to a map $\psi:\Delta^{\dimU}\times (\Delta^{*})^{\dimPar}\rightarrow \CC^d$.

\begin{proposition}\label{prop:LogPerHodge}
The map $\psi:\Delta^{\dimU}\times (\Delta^{*})^{\dimPar}\rightarrow \CC^d$  is, up to rescaling of each component by the scaling parameters of the top level of the corresponding curve, given by log periods. More precisely,
\[
\psi(w)= \left(\scl[\topl(\gamma_k)]\LogP[\gamma_k](w)\right)_{k=1}^d
\]
for all $w\in \Delta^{\dimU}\times (\Delta^{*})^{\dimPar}$.
\end{proposition}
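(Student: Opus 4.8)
The plan is to verify the identity after pulling everything back to the universal cover $\Delta^{\dimU}\times\mathbb{H}^{\dimPar}$, where $\tilde\phi$ is a genuine holomorphic map (the cover is simply connected, so the branch of $\gamma_k(b)$ fixed near $x_0$ extends by analytic continuation, giving $\tilde\phi(w)_k=\int_{\gamma_k(w)}\omega(\tilde\pi(w))$), and then to descend. The first observation is that the operators $N_k$ of \eqref{eq:MonLog} satisfy $N_jN_k=0$ for \emph{all} pairs $j,k$: we already know $(I-T_k)^2=0$, and more generally \eqref{eq:MonHor}--\eqref{eq:MonVer} show that every $N_k$ maps into the span of the vanishing cycles $\{\lambda_e\}_{e\in E}$, while each $N_j$ annihilates every $\lambda_e$ because distinct vanishing cycles are pairwise disjoint simple closed curves, so $\IP[\lambda_e,\lambda_{e'}]=0$ for all $e,e'$. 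Consequently the exponential truncates, $e^{-\sum_{j}w_{\dimU+j}N_j}=I-\sum_{j}w_{\dimU+j}N_j$, and
\[
\tilde\psi(w)=\tilde\phi(w)-\sum_{j=1}^{\dimPar}w_{\dimU+j}\,N_j\tilde\phi(w).
\]

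Next I would compute $N_j\tilde\phi$ explicitly. From the monodromy description of the local system $\LL$ — equivalently, from the fact established in the proof of \Cref{thm:PerThm} that encircling the $j$-th puncture once sends the continued cycle $\gamma_k(w)$ to $T_{\dimU+j}\gamma_k(w)$ — one gets $\tilde\phi(w+e_{\dimU+j})=e^{N_j}\tilde\phi(w)$, hence $(N_j\tilde\phi(w))_k=\int_{(T_{\dimU+j}-I)\gamma_k(w)}\omega$. Substituting \eqref{eq:MonHor}--\eqref{eq:MonVer} together with $\int_{\lambda_e}\omega=2\pi i\,r_e$: if the coordinate $z_{\dimU+j}$ is the horizontal-node parameter $h_e$ this equals $2\pi i\,\IP[\gamma_k,\lambda_e]\,r_e(b)$, and if $z_{\dimU+j}$ is a scaling parameter $t_i$ it equals $2\pi i\sum_{e\in E^{ver},\ \ell(e-)\le i<\ell(e+)}m_{e,i}\,\IP[\gamma_k,\lambda_e]\,r_e(b)$, where $b=\tilde\pi(w)$.

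Now I would assemble the sum. On the cover $w_{\dimU+j}=\tfrac{1}{2\pi i}\ln z_{\dimU+j}$ for the branch of the logarithm determined by $\tilde\pi$, which we may arrange to be the branch fixed near $x_0$ in \Cref{def:LogPeriod} by placing the base point of the cover over $x_0$. Since $s_e=h_e$ at horizontal nodes and $\ln s_e=\sum_{i=\ell(e-)}^{\ell(e+)-1}m_{e,i}\ln t_i$ at vertical nodes by \eqref{eq:PlumbingParameters}, interchanging the order of the double sum over levels $i$ and edges $e$ in the vertical part gives
\[
\sum_{j=1}^{\dimPar}w_{\dimU+j}\,(N_j\tilde\phi(w))_k=\sum_{e\in E}\IP[\gamma_k,\lambda_e]\,r_e(b)\,\ln(s_e).
\]
Plugging this into the expression for $\tilde\psi$ and using $\tilde\phi(w)_k=\int_{\gamma_k(b)}\omega(b)$, we obtain $\tilde\psi(w)_k=\int_{\gamma_k(b)}\omega(b)-\sum_{e\in E}\IP[\gamma_k,\lambda_e]\,r_e(b)\ln(s_e)$, which is exactly $\scl[\topl(\gamma_k)]\,\LogP[\gamma_k](b)$ by \Cref{def:LogPeriod}. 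Both sides are $\ZZ^{\dimPar}$-invariant ($\tilde\psi$ by construction; the right-hand side because $\scl[\topl(\gamma_k)]$ is single-valued on $B\setminus D$ and $\LogP[\gamma_k]$ is single-valued by \Cref{thm:PerThm}), so the identity descends to $\psi(w)=\bigl(\scl[\topl(\gamma_k)]\LogP[\gamma_k](w)\bigr)_{k=1}^n$, as claimed.

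The only genuinely non-formal point — and thus the step I expect to require the most care — is the monodromy bookkeeping in the second paragraph: the period vector $\tilde\phi$ takes values in $H^1$, on which $\ZZ^{\dimPar}$ acts by the representation transposed from the one on relative cycles, and one must check that this transposed action turns the homological twist $T-I$ into the cohomological $N=I-T$ of \eqref{eq:MonLog}. This sign reversal is precisely what makes $\tilde\psi=e^{-\sum w_{\dimU+j}N_j}\tilde\phi$ produce the \emph{subtraction} of the logarithmic terms in \Cref{def:LogPeriod} rather than their addition; getting it backwards would break the match. Beyond this, the argument is a direct unwinding of the definitions and uses nothing from \Cref{thm:PerThm} except the single-valuedness of log periods.
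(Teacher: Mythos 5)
Your proof is correct and follows essentially the same route as the paper's much terser argument: plug the explicit monodromy formulas \eqref{eq:MonHor}--\eqref{eq:MonVer} into $e^{-\sum_k w_kN_k}\tilde\phi(w)$, use $\ln s_e=\sum_i m_{e,i}\ln t_i$ to resum, and compare with \Cref{def:LogPeriod}. The details you supply beyond the paper — the vanishing $N_jN_k=0$ that truncates the exponential, and the homology/cohomology transpose-and-sign bookkeeping for how the period vector transforms — are precisely the points the paper leaves implicit, and you resolve them correctly.
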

\begin{proof}
In ~\cref{eq:MonHor} and~\cref{eq:MonVer} we described the action of $T_i$ and $T_e$ on homology.
Thus, if we locally write $w_e=\frac{1}{2\pi i}\ln(s_e), w_i=\frac{1}{2\pi i}\ln(t_i)$, we can compute
\[
e^{-\sum_k w_kN_k}\widetilde{\psi}(w)=\left(\int_{\gamma_k(w)}\omega-\dfrac{1}{2\pi i}\sum_{e\in E}\IP[\pa,\lambda_e]\ln(s_e) \int_{\lambda_e}\omega\right)_{k=1}^d.
\]
Now compare this with  \Cref{def:LogPeriod}, where we defined log periods.
\end{proof}

\subsection{Setup for complex linear varieties}
\label{section:SetupComplexLinear}
Let $M\subseteq \Stra$ be a linear subvariety.
Since $M$  is algebraic, the Euclidean closure  $\overline{M}\subseteq \MSDS$ is an algebraic variety. This uses the algebraicity of $\MSDS$ which is proved in \cite[Thm 1.3]{BCGGMsm}). We stress that this is the {\em only} time where we use algebraicity of $M$. From now on we assume that our chosen base point $b_0$ is contained in $\partial M\cap\bdComp$.  Locally near $b_0$, the variety $\overline{M}$ has finitely many irreducible components. Note that this only uses the fact $\overline{M}$ is an analytic variety, i.e. we do not have to use algebraicity a second time.

\begin{assumption}
For now we will assume that $\overline{M}$ is irreducible near $b_0$ and will work under this assumption. In~\Cref{section:MultComp} we explain how to extend the results to the general case.
\end{assumption}
Note that a linear subvariety is only near a smooth point defined by a single linear subspace. Near singular points it looks like a union of multiple linear subspaces. For example, affine invariant submanifolds are manifolds immersed in a stratum and the points of self-intersections correspond exactly to the singular locus.

We choose $x_0\in M_{reg}\cap B$ in the smooth locus $M_{reg}$ of $M$.
In a  local period chart near $x_0$, the variety $M$ coincides with a linear subspace $V\subseteq H^1(X_{x_0}\!\setminus P_{x_0},Z_{x_0})$.
By abuse of notation we don't distinguish the subspace $V$ from the analytic subvariety it defines in a period chart.
We let $\{\pa_1',\ldots,\pa'_{d'}\}$ be a basis of $H_1(X\setminus P,Z)$ where $d'=\dim H_1(X\setminus P,Z)$ and
choose a $\enhancG$-adapted basis $\{\pa_1(b),\ldots,\pa_d(b)\}$ of $H_1(X_{x_0}\setminus P_{x_0},Z_{x_0})$ such that each cycle is either a deformation of $\pa_k'$ for some $k$ as described in \Cref{section:CycleExtension}, a vanishing cycle or a horizontal-crossing cycle. In coordinates given by the $\enhancG$-adapted basis we can write
\[
V= \left(A\cdot \Per(\stdpar)=0\right)
\]
where $A=(A_{kl})_{\substack{k=1,\dots,\codim\\ l=1,\dots,d}}$ and $\Per(\stdpar):=\left(\int_{\gamma_l(b)}\omega(b)\right)_{l=1,\dots,d}$.
To make our computations easier, we will always assume that the matrix $A$ is in {\em reduced row echelon} form. This will be useful in two ways: this determines the matrix $A$ uniquely, and allows us to read off the rank of $A$ easily, for computations in~\Cref{section:CuttingOut}.
We consider linear equations on  $H^1(X_{x_0}\setminus P_{x_0},Z_{x_0})$ as elements of the dual and thus as homology classes.

We let \[
\lvl[(l)]:=\topl(\cycle[l]),\quad
 \lvl[(k)]:= \max_{l=1,\dots,d}\{ \lvl[(l)]\,|\, A_{kl}\neq 0\}.
\]
Let $A^{(i)}=(A^{(i)}_{kl})_{kl}$ denote the matrix obtained from $A$ by
defining
\[
A^{(i)}_{kl}:=\begin{cases} A_{kl} & \text{ if } \ell(k)=\ell(l)=i\\
0 & \text{ otherwise } \end{cases}.
\]
and deleting all zero rows.
In words, $A^{(i)}$ collects all the linear equations of top level $i$ and restricts them to the subsurface $\reseq{X}$, i.e. forgets about all terms in the linear equations corresponding to cycles of levels below $i$.
Furthermore, let $A^{(i),ver}$ denote the submatrix of $A^{(i)}$ only containing rows corresponding to non-horizontal equations.
We refer to $A^{(i)}$ as \textit{$i$-th level equations} and to $A^{(i),ver}$ as \textit{vertical $i$-th level} equations.

From now on, $x_0$ denotes a point in $M$ and $b_0$ a point in $\partial M\cap\bdComp$.
If not stated otherwise, we denote by $\arc$ a short arc on $(B,B\cap \bdComp)$ connecting $x_0$ and $b_0$. We recall that these notions were defined in~\Cref{section:ShortArcs}. We also denote by $z_0\in\Delta^*$ a $f$-preimage of $x_0$, i.e. $f(z_0)=x_0$.

\begin{definition} A short arc $\arc$ on $(B,B\cap \bdComp)$ is called an {\em M-disk} if $\arc(\Delta^*)\subseteq M_{reg}$.
\end{definition}

\subsection{Monodromy along arcs}\label{sec:MonArcs}
Let $\arc$ be as above.
Then the monodromy of the local system $f^*(\LL|_{B\setminus D})$ can be described directly as follows. For every level $i$ and every horizontal node $e$ we define  $\sigma_i$ and $\sigma_e$ to be the orders of vanishing of $t_i$ and $h_e$, respectively, as functions of $z$. At vertical nodes we set
\[
\sigma_e:= \sum_{i=\ell(e-)}^{\ell(e+)-1} m_{e,i}\sigma_i.
\]
Thus $\sigma_e$ is defined for all nodes $e\in E$ as the vanishing order of $s_e\circ f$ at $z=0$.
We call the tuple  \begin{equation}
\label{eq:sigmaf}
\sigma_{\arc}:=\left((\sigma_i)_{i\in\lvlsetb},(\sigma_e)_{e\in E^{hor}}\right)\in \ZZ^\dimPar
\end{equation}
 the {\em monodromy type} of $\arc$.
We let $T_{\arc}$ be the monodromy of the standard generator on $\Delta^*$ and denote $N_f=I-T_f$ its {\em monodromy logarithm}. We have the explicit equation
\begin{equation}\label{eq:MonArc}\begin{split}
N_{\arc}= \sum_{i\in\lvlsetb} \sigma_iN_{i}+ \sum_{e\in E^{hor}} \sigma_eN_e.
\end{split}
\end{equation}
where we recall the monodromy logarithms $N_k$ from \Cref{section:LogPeriodsHodge}.
In particular the monodromy action on the homology $H_1(X_{x_0}\setminus P_{x_0}, Z_{x_0})$  is completely determined by the monodromy type.

We now study one-parameter families of differentials contained in a linear subvariety.
For an arc $\arc:\Delta\to B$
the monodromy of $f^*(\LL)$ is controlled completely by the monodromy type $\sigma_f$. Along a one-parameter family contained in a linear subvariety, the monodromy acts trivially on the defining subspaces and this forces the linear equations for $V$ to be of a special type. That is precisely the content of the next proposition.

\begin{proposition}\label{prop:Vdisk}
Let $\arc$ be an $M$-disk. Then
\[N_f(V)\subseteq V,\]
i.e. the linear subspace defining $M$ is invariant under the monodromy logarithm.
\end{proposition}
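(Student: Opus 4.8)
The plan is to reduce the statement to the assertion that the monodromy operator $T_f$ itself preserves $V$, i.e. $T_f(V)=V$: once this is known, $N_f(V)=(I-T_f)(V)\subseteq V$ by the definition of the monodromy logarithm (see \Cref{sec:MonArcs}). So the whole content is to show that $T_f$ fixes $V$ as a subspace. The point is to reinterpret $T_f$ geometrically. By definition $T_f$ is the monodromy of $\arc^*\LL$ around the standard generator $\alpha$ of $\pi_1(\Delta^*,z_0)$, where we may pick $z_0\in\Delta^*$ with $\arc(z_0)=x_0$, since an $M$-disk connects $x_0$ and $b_0$ and hence passes through $x_0$. Equivalently, $T_f=\GM_{\beta}$ is the Gauss--Manin parallel transport of $\LL$ along the loop $\beta:=\arc\circ\alpha$ based at $x_0$, and — this is the crucial use of the $M$-disk hypothesis $\arc(\Delta^*)\subseteq M_{reg}$ — the loop $\beta$ is entirely contained in the smooth locus $M_{reg}\subseteq\Stra$.

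Next I would run the standard argument that parallel transport around a loop lying in $M_{reg}$ preserves the defining subspace. Cover the compact set $\beta([0,1])$ by finitely many period charts $W_0,\dots,W_m$ of $\Stra$ with $W_m=W_0\ni x_0$, arranged so that $\beta$ traverses them in cyclic order, consecutive charts overlap in connected open sets through which $\beta$ passes, and — using that $\beta([0,1])\subseteq M_{reg}$ together with the fact that $M$ is locally a union of linear subspaces — so that $\overline M\cap W_j$ is, in a neighbourhood of the arc, a single linear subspace $V_j\subseteq H^1$, with $V_0=V$. The transition map $\tau_j$ from the period chart $W_j$ to the period chart $W_{j+1}$ is linear (this is the linear structure of the stratum: period coordinates have linear transition functions, realizing the Gauss--Manin identification). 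Since $\overline M$ is an intrinsic analytic set, $\tau_j$ maps the nonempty open piece $V_j\cap\kappa_j(W_j\cap W_{j+1})$ onto the open piece of $V_{j+1}$; a linear isomorphism carrying a nonempty open subset of one linear subspace onto an open subset of another (necessarily of the same dimension $\dim M$) carries the one subspace onto the other, so $\tau_j(V_j)=V_{j+1}$. Composing around the loop, $T_f=\GM_{\beta}=\tau_{m-1}\circ\cdots\circ\tau_0$ sends $V_0=V$ to $V_m=V$, i.e. $T_f(V)=V$, and therefore $N_f(V)\subseteq V$.

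The main (and really only) obstacle is precisely the reason the hypothesis must be an $M$-\emph{disk} and not an arbitrary short arc approaching $b_0$: a general linear subvariety is only locally a \emph{union} of linear subspaces, so the transition maps $\tau_j$ are linear but $\overline M\cap W_j$ need not be a single subspace at singular points of $\overline M$. The condition $\arc(\Delta^*)\subseteq M_{reg}$ is exactly what lets us keep the transporting loop $\beta$ away from the singular locus so that each $\overline M\cap W_j$ is genuinely one subspace; everything else is bookkeeping with period charts and the Gauss--Manin connection, with no analytic subtlety. I would also note, as an aside, the Hodge-theoretic reading matching \Cref{section:LogPeriodsHodge}: the above says $V\otimes\CC$ is a flat (monodromy-invariant) subbundle of $\arc^*\LL\otimes\CC$ over $\Delta^*$, so that the explicit formula \eqref{eq:MonArc} for $N_f$ in terms of the monodromy type $\sigma_{\arc}$ and the twists along vanishing cycles in fact describes the restriction $N_f|_V$ concretely — which is what the later sections will exploit.
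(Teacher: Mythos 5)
Your proof is correct and follows essentially the same route as the paper: both reduce the claim to monodromy invariance $T_f(V)=V$ by parallel-transporting $V$ along the loop $f\circ\alpha$ inside $M_{reg}$, using that $M$ coincides with a single linear subspace near each smooth point and that period-chart transitions realizing the Gauss--Manin identifications are linear. The only difference is bookkeeping: you chain finitely many period charts along the loop, while the paper runs an open--closed connectedness argument along the path before specializing to a loop.
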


\begin{proof}Let $z_0\in\Delta^*$ be a preimage of $x_0$ under $f$ and $y\in\Delta^*$ an arbitrary point.
Furthermore we choose a path  $\gamma:[0,1]\to \Delta^*$ starting at $z_0$ and ending at $y$.
We let \[
S:=\{ t\in \left[0,1\right]\,:\, \exists \text{ open period chart $W\ni\arc(\pa(t))$ such that} \GM_{\gamma(t)}(V)\cap W=M\cap W \}
.
\]
Note that  by abuse of notation we do not distinguish between the vector space $\GM_{\gamma(t)}(V)$ and the analytic variety it defines in a small period chart around $\pa(t)$.
A period chart here is any  open contractible subset $W\subseteq\Stra$ such that periods are injective. For any point $y\in S$, let $W$ be an open period chart as in the definition of $S$, then $W\subseteq S$. Thus $S$ is open and it is also non-empty since $z_0\in S$. 
Let $(t_l)_{l}$ be a sequence in $S$ converging to $t$. After passing to a subsequence we can assume that the whole segment $\arc(\gamma([t_1,t]))$ lies in a contractible period chart $W$ around $\arc(\gamma(t))$. Furthermore, we can choose $W$ such that the analytic variety $\GM_{\gamma(t)}(V)\cap W$ is irreducible.
By assumption there exists a contractible period chart $W_1\subseteq W$ containing $\arc(\pa(t_1))$ such that \[
\GM_{\gamma(t)}(V)\cap W_1= \GM_{\gamma(t_1)}(V)\cap W_1= M\cap W_1
\]
where the first equality follows since $\GM_{\gamma(t)}(V)$ and $\GM_{\gamma(t_1)}$ are obtained from each other via parallel transport along $\pa|_{[t_1,t]}$ and thus both vector spaces define the same analytic variety.
 Since both $\GM_{\gamma(t)}(V)\cap W$ and $M\cap W$ are irreducible it follows that we have equality $\GM_{\gamma(t)}(V)\cap W= M\cap W$.

Let $\gamma'$ be another path connecting $z_0$ and $y$. We then have
\[
\GM_{\gamma}(V)\cap W=M\cap W=\GM_{\gamma'}(V)\cap W
\]
and thus $\GM_{\gamma}(V)=\GM_{\gamma'}(V)$.

The second statement follows by choosing a loop $\pa$ starting at $x_0$. Since $\GM_{\pa}(V)=V$, the monodromy operator $T_f=I-N_f$ sends $V$ to itself and thus $N_f(V)\subseteq V$.
\end{proof}

\begin{remark}\label{rem:MonInv}
\Cref{prop:Vdisk} should be seen as a type of Cylinder deformation theorem, see \cite[Thm. 5.1]{WrightCylinder} in the sense that it constrains the possible linear equations of complex linear varieties. In period coordinates the equations for $M$ are
\begin{equation} \label{eq:LinEq}
\sum_{l=1}^{d} A_{kl}\int_{\gamma_l(b)}\omega(b)=0 \text{ for $k=1,\dots,\codim$},
\end{equation}
and the condition $N_f(V)\subseteq V$ can be written as
\begin{equation}  \label{eq:ResEq}
\sum_{l=1}^d A_{kl}\intSum[\pa_l]  \sigma_er_e(b)=0 \text{ for $k=1,\dots,\codim$}.
\end{equation}
Thus every linear equation for $M$ forces an additional relation between the vanishing cycle periods.
Furthermore, note  that the coefficients of~\cref{eq:ResEq} involve the monodromy type of $\arc$. In particular the monodromy type of $M$-disks is not arbitrary. This is the main motivation for the complicated construction of the log period space $\LPS$ in~\Cref{section:CuttingOut}.
\end{remark}

\begin{remark}
In \Cref{sec:Example} we give an example of how one linear equation forces an additional one. In a subsequent work \cite{BDGCylinder} we study this phenomenon in more detail and obtain several more restrictions among the linear equations. As a consequence we are able to determine the explicit analytic equations defining $\overline{M}$ in a neighborhood of a boundary point, instead of only the defining equations of $\partial M$ as we do in \Cref{thm:Main}. In (loc.cit) we heavily use the results from this paper. If one could compute the analytic equations by other means this would potentially give a much quicker proof of \Cref{thm:Main}, avoiding the technical difficulties of the log period space in \Cref{section:CuttingOut}. In the special case of linear subvarieties defined over the real numbers, in \cite[Theorem 1.9]{BDGCylinder} we use the restrictions on the linear equations to reprove Wright's Cylinder deformation theorem \cite{WrightCylinder}.
\end{remark}

Along $M$-disks we can rewrite the linear equations cutting out $V$ in period coordinates as linear equations in log periods and, this will allow us to take the limit of the linear equations as $z$ goes to zero and to obtain necessary linear equations that are satisfied on the boundary $\partial M$.
\begin{corollary}\label{cor:LimEq}
Let $\arc$ be an $M$-disk. Then the boundary point $b_0=\arc(0)$ lies in the linear subvariety of $U\subseteq\bdComp$ locally defined by the equations
\begin{equation}\label{eq:LimEq}
A^{(i)}\cdot \psi^{\arc}(0) =0 \text{ for every } i\in\lvlset,
\end{equation}
where
$\psi^{\arc}(0):=\left(\psi^{\arc}_{\gamma_k}(0)\right)_{k=1}^{n}$ is the vector of log periods.
\end{corollary}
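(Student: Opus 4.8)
The plan is to work along a fixed $M$-disk $\arc$ and to combine two facts available there — the linear equations cutting out $M$ in period coordinates, and the monodromy constraint of~\Cref{prop:Vdisk} — with the log-period expansion of~\Cref{cor:LogPerOPF}. Write $F_k:=\sum_{l=1}^{n}A_{kl}\gamma_l$ for the $k$-th row of $A$, viewed as a class in $H_1(X_{x_0}\setminus P_{x_0},Z_{x_0})$, so that $\{F_1,\dots,F_{\codim}\}$ spans the annihilator of $V$. Since $\arc$ is an $M$-disk, $\arc(\Delta^*)\subseteq M_{reg}\subseteq\Stra$ lies in the open stratum, so for each $z\in\Delta^*$ the period vector at $\arc(z)$ lies in $V$, i.e. $\sum_l A_{kl}\int_{\gamma_l(\arc(z))}\omega(\arc(z))=0$ (independently of the branch, as $V$ is invariant under the monodromy $T_\arc$ by~\Cref{prop:Vdisk}); moreover all plumbing parameters $s_e\circ\arc$ are nonzero on $\Delta^*$, and as $\arc(0)=b_0$ lies in the most degenerate stratum $\bdComp$, the vanishing orders $\sigma_i,\sigma_e$ are strictly positive. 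The first ingredient I would extract is the reformulation~\cref{eq:ResEq} of~\Cref{prop:Vdisk} given in~\Cref{rem:MonInv}: the class $\sum_{e\in E}\sigma_e\IP[F_k,\lambda_e]\,[\lambda_e]$ again lies in the annihilator of $V$, so pairing it with $\omega(\arc(z))\in V$ gives, for all $z\in\Delta^*$ and every $k$,
\[
\sum_{l}A_{kl}\sum_{e\in E}\IP[\gamma_l,\lambda_e]\,\sigma_e\,r_e(\arc(z))=0 .
\]
The second ingredient is~\Cref{cor:LogPerOPF}: for each $l$ and $z\in\Delta^*$, the period $\int_{\gamma_l(\arc(z))}\omega(\arc(z))$ equals $(\scl[\topl(\gamma_l)]\circ\arc)(z)\cdot\psi^{\arc}_{\gamma_l}(z)$ plus $\ln z$ times $\sum_{e\in E}\IP[\gamma_l,\lambda_e]\,r_e(\arc(z))\,\sigma_e$.

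Next I would substitute the second ingredient into $\sum_l A_{kl}\int_{\gamma_l(\arc(z))}\omega(\arc(z))=0$; the coefficient of $\ln z$ is precisely the left-hand side of the displayed identity, hence vanishes, leaving
\[
\sum_{l}A_{kl}\,(\scl[\topl(\gamma_l)]\circ\arc)(z)\,\psi^{\arc}_{\gamma_l}(z)=0\qquad(z\in\Delta^*).
\]
Fixing $k$ and setting $i:=\ell(k)=\max\{\topl(\gamma_l):A_{kl}\neq 0\}$ — so $A_{kl}\neq 0$ forces $\topl(\gamma_l)\le i$ — I would divide by $(\scl[i]\circ\arc)(z)$, which is nonzero for $z\neq 0$. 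For $\topl(\gamma_l)=i$ the quotient $(\scl[\topl(\gamma_l)]\circ\arc)/(\scl[i]\circ\arc)$ is identically $1$; for $\topl(\gamma_l)<i$ it equals $\prod_{k'=\topl(\gamma_l)}^{i-1}(t_{k'}\circ\arc)(z)^{a_{k'}}$, which tends to $0$ as $z\to 0$ because each $\sigma_{k'}\ge 1$ and each $a_{k'}\ge 1$. Since every $\psi^{\arc}_{\gamma_l}$ is analytic at $z=0$ by~\Cref{cor:LogPerOPF}, the limit $z\to 0$ yields $\sum_{l:\,\topl(\gamma_l)=i}A_{kl}\,\psi^{\arc}_{\gamma_l}(0)=0$, which by the definition of the level-$i$ matrix $A^{(i)}$ is exactly the $k$-th row of $A^{(i)}\cdot\psi^{\arc}(0)=0$. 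Letting $k$ and $i$ range gives the asserted equations for all $i\in\lvlset$. To read this as a statement about $\bdComp$, one invokes~\Cref{cor:NonCross}: for the non-horizontal $\enhancG$-adapted generators $\gamma_l$ of top level $i$ one has $\psi^{\arc}_{\gamma_l}(0)=\int_{(\gamma_l)_{\topl}}\twistD$, and these are the generalized period coordinates of $b_0$ on the level-$i$ piece of $\bdComp$ (cf.~\Cref{section:LinStructure}); hence the system $A^{(i)}\cdot\psi^{\arc}(0)=0$, $i\in\lvlset$, consists of levelwise linear equations in generalized period coordinates that vanish at $b_0$, i.e. $b_0$ lies in the linear subvariety of $U$ they define.

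The step I expect to be most delicate is the limit above, and in particular the cancellation of the $\ln z$ term: without~\Cref{prop:Vdisk} the periods along $\arc$ genuinely diverge logarithmically, and it is exactly the monodromy invariance of $V$ — encoded in~\cref{eq:ResEq} — that simultaneously keeps $\sum_l A_{kl}\int_{\gamma_l(\arc(z))}\omega(\arc(z))$ well defined along $\Delta^*$ and removes the logarithm, making the limit possible at all. The accompanying bookkeeping point is that, after dividing by $(\scl[i]\circ\arc)(z)$, only genuinely level-$i$ terms survive; this rests on the strict positivity of every $\sigma_i$ (which holds precisely because $b_0$ lies in the most degenerate stratum $\bdComp$) together with the analyticity of the log periods along $\arc$ from~\Cref{cor:LogPerOPF}. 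Finally, one should check that a horizontal-crossing generator $\delta_e^{(i)}$ entering a level-$i$ row with nonzero coefficient does not spoil the argument — it enters the rescaled equation in the same way, its log-period limit being computed by~\Cref{thm:PerThm} — so one still obtains a valid (possibly redundant) linear equation at $b_0$, to be pruned later in~\Cref{section:CuttingOut}.
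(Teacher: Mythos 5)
Your proposal is correct and follows essentially the same route as the paper: use the monodromy constraint of \Cref{prop:Vdisk} in the form of \cref{eq:ResEq} to cancel the $\ln z$ term, rewrite the linear equations along $\arc$ in terms of the arc log periods of \Cref{cor:LogPerOPF}, rescale each row by $1/\scl[\ell(k)]$, and take the limit $z\to 0$ using the strict positivity of the $\sigma_i$ so that only the top-level terms survive. The extra remarks about horizontal-crossing generators and the interpretation on $\bdComp$ match the paper's \Cref{rem:ExplEq}.
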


\begin{proof}
Locally near $x_0$ we know that
\[\begin{split}
0&=\sum_{l=1}^d A_{kl}\int_{\pa_l(z)}\omega\\
&= \sum_{l=1}^d A_{kl}\left(\int_{\pa_l(z)}\omega-\sum_{e\in E}\IP[\pa_l,\lambda_e] \sigma_er_e(b)\ln(z)\right)\\
&=\sum_{l=1}^d A_{kl}\cdot\scl[\ell(l)]\LogP[\pa_l]^f(z),
\end{split}
\]
where the second equality follows from~\cref{eq:ResEq}.
After rescaling each equation by $\frac{1}{\scl[\ell(k)]}$, it follows that the function
\[
\sum_{l=1}^d A_{kl}\frac{\scl[\ell(l)]}{\scl[\ell(k)]}\LogP[\pa_l]^f(z)
\]
is identically zero on $\Delta^*$.
We then take the limit as $z\mapsto 0$.
Since
\[
\lim_{z\to 0} \frac{\scl[\ell(l)]}{\scl[\ell(k)]}=\begin{cases} 1 & \text{ if } \ell(l)=\ell(k), \\
0 & \text{ if } \ell(l)<\ell(k) \end{cases}
\]
we conclude that
\[
\lim_{z\to 0}
\sum_{l=1}^d A_{kl}\frac{\scl[\ell(l)]}{\scl[\ell(k)]}\LogP[\pa_l]^f(z) = \sum_{l=1} A^{(\ell(k))}_{kl}\LogP[\pa_l]^f(0).
\]
\end{proof}

\begin{remark}\label{rem:ExplEq}
\Cref{eq:LimEq} depends not only on the limit point $b_0$, but also on the short arc $\arc$.
On the other hand, if we restrict ourselves to the vertical equations, we can write
\begin{equation}\label{eq:LimEqVer}
A^{(i),ver}\cdot \LogP[]^{\arc}(0)
=\left(\sum_{l:\ell(l)=\ell(k)=i} A^{(i)}_{kl} \int_{(\pa_l)_{\topl}} \eta\right)_{k}=0
\end{equation}
where the index $k$ runs only over non-horizontal equations. Note that ~\cref{eq:LimEqVer} is independent of $\arc$ and only depends on the limit point $b_0$. The goal of the next section
is to show that given any boundary point  $b_0 \in \bdComp$ satisfying~\Cref{eq:LimEqVer}, we can choose a short arc $\arc$ such that~\cref{eq:LimEq} are satisfied along $\arc$. I.e. showing that such $b_0$ lies in $\partial M$ and thus proving sufficiency of the linear equations which were shown above to be necessary in~\Cref{cor:LimEq}.
\end{remark}

\begin{remark}[Avoiding the cautionary example]\label{rem:AvoidEx}
In \cite[Section 4]{ChenWrightWYSIWYG} the authors give an example of a continuous family $f:[0,\varepsilon_0)\to \MSDS$ that satisfies certain linear equations for $t\in(0,\varepsilon_0)$ such that the limit at $t=0$ does not satisfy the limit of the equations, which is in stark contrast to \Cref{cor:LimEq}.
The limit $(X_0,\omega_0)$ is a multi-scale differential which contains two horizontal nodes and such that their plumbing parameters  along $f$ behave like $e^{-1/t^2}$ and thus are not real-analytic at $t=0$.
The proof of \Cref{cor:LimEq} breaks down since one cannot fine suitable rescaling parameters $\scl[i]$.  On the other hand, for families that extend real-analytically to the boundary an analogue of \Cref{cor:LimEq} holds, since all periods and plumbing parameters asymptotically grow like a power of the base parameter and are thus comparable to each other.
\end{remark}

\section{The defining equations on the boundary}\label{section:CuttingOut}
This section contains the proof of~\Cref{thm:Main}. The setup of this section is the same as in~\Cref{section:SetupComplexLinear}. We recall what we need to prove.
Given any boundary point $b_0\in \partial M\cap \bdComp$, we need to show that in a small neighborhood $U\subseteq\bdComp$ of $b_0$ the subvariety $\partial M\cap U$ of $U$ is defined by linear equations in generalized period coordinates, as introduced in \Cref{sec:GeneralizedPeriodCoordinates}. In \cref{eq:LimEqVer} we have found a collection of necessary equations satisfied by $\partial M\cap \bdComp\subset\bdComp$ in a neighborhood of $b_0$, and our goal is now to show that these equations define $\partial M\cap \bdComp$, i.e. that any point in $\bdComp$ near $b_0$ satisfying them indeed is contained in $\partial M$.

\begin{definition}
We define $V^{\lim}$ to be the subvariety of $U\subseteq \bdComp$ defined by~\cref{eq:LimEqVer}, that is we define
\[
V^{\lim}:=\left(A^{(i),ver}\cdot \Per^{ver}(\twistD)=0,\, i\in \lvlset\right)
\]
where $\Per^{ver}(\twistD):=\left(\int_{(\gamma_l)_{\topl}}\eta\right)_l$.
\end{definition}
Recall that geometrically this means we take the equations defining $M$, restrict them to each level subsurface of the stable curve, and forget about all horizontal-crossing equations.

The following proposition says precisely that $\partial M\cap\bdComp$ is defined by the linear equations  $V^{\lim}$, i.e. satisfying the linear equations  defining $V^{\lim}$ are both necessary and sufficient conditions for a point of~$\bdComp$ to be contained in $\partial M\cap \bdComp$.
\begin{proposition}\label{prop:LinContainment}After possibly shrinking $U$, we have
\[
\partial M\cap U=V^{\lim}.
\]
\end{proposition}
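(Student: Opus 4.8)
The inclusion $\partial M\cap U\subseteq V^{\lim}$ is essentially already established: by \Cref{cor:LimEq}, any boundary point $b_0\in\partial M\cap\bdComp$ is the limit $\arc(0)$ of some $M$-disk $\arc$, and hence satisfies the equations \cref{eq:LimEq}; restricting to the vertical equations (which by \Cref{rem:ExplEq} are $\arc$-independent) shows $b_0$ satisfies \cref{eq:LimEqVer}, i.e. $b_0\in V^{\lim}$. Since $V^{\lim}$ is closed and $\partial M\cap U$ is contained in it on a dense set (the limits of $M$-disks), after shrinking $U$ we get $\partial M\cap U\subseteq V^{\lim}$. The real content is the reverse inclusion: every point of $\bdComp$ near $b_0$ satisfying the vertical level equations actually lies in $\overline M$.

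For the reverse inclusion, fix a point $b_1\in V^{\lim}$ close to $b_0$; I want to produce a short arc $\arc\colon\Delta\to B$ with $\arc(0)=b_1$, with $\arc(\Delta^*)\subseteq M_{reg}$, so that $b_1\in\overline M$. The plan is: first, using \Cref{rem:ExplEq}, upgrade the vertical equations \cref{eq:LimEqVer} that $b_1$ satisfies to the \emph{full} system \cref{eq:LimEq} by choosing the monodromy type $\sigma_\arc$ appropriately. Concretely, the full level-$i$ equations $A^{(i)}\cdot\psi^\arc(0)=0$ differ from the vertical ones only in the horizontal-crossing rows, and by \Cref{thm:PerThm} each such row evaluates at the boundary to an expression of the form $\int_{(\pa_k)_\topl}\mathrm{Hol}(\twistD)-\sum_{e}\langle\pa_\topl,\lambda_e\rangle\res_{q_e^-}(\twistD)c_e$ where the $c_e$ depend on the arc through its monodromy type; the invariance constraint \cref{eq:ResEq}, which holds automatically along any $M$-disk, shows these horizontal rows are not free but are forced by the vertical data plus the residue relations. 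So the strategy is to work on the log period space $\LPS$ (the countable family of covers indexed by monodromy types, cf. the introduction and the start of \Cref{section:CuttingOut}): on the component $\LPS[\sigma]$ corresponding to a suitable $\sigma$, the linear equations for $M$ \emph{do} extend holomorphically to the boundary divisor, and the extended equations cut out precisely $V^{\lim}$ on $\bdComp$. Then, given $b_1\in V^{\lim}$, one applies \Cref{lemma:Arc} inside the closure $\overline M\subseteq\MSDS$ (which is an algebraic, hence analytic, variety by the discussion in \Cref{section:SetupComplexLinear}) to get a short arc in $\overline M$ from a nearby smooth point of $M$ to $b_1$; this arc is then, by construction, an $M$-disk realizing $b_1$ as a boundary point, provided its monodromy type is the chosen $\sigma$.

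The dimension bookkeeping is the second half of the argument: one must check that $\dim V^{\lim}=\dim(\partial M\cap U)$, so that the containment $\partial M\cap U\subseteq V^{\lim}$, together with irreducibility of $\overline M$ near $b_0$ (our standing assumption) and levelwise linearity of $V^{\lim}$, forces equality. Here I would count: $\dim M=\operatorname{codim}$-many equations in $H^1(X_{x_0}\setminus P_{x_0},Z_{x_0})$, and the graded-piece isomorphisms $\GRCF/\LVLF[i-1]\simeq H^1_{(i)}(X)/\GRC_{(i)}$ together with the rank behavior of the reduced-row-echelon matrix $A$ (which is why we insisted on reduced row echelon form in \Cref{section:SetupComplexLinear}) let one read off that the number of independent level-$i$ equations $A^{(i),ver}$ equals the expected codimension of $V^{\lim}$ in the level-$i$ factor of generalized period coordinates. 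The point is that passing from $A$ to $A^{(i)}$ and deleting the horizontal-crossing rows drops exactly the ``excess'' that the monodromy-invariance relations \cref{eq:ResEq} account for, so no dimension is lost unexpectedly.

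The main obstacle is the sufficiency direction, specifically controlling the \emph{horizontal} equations and the arc's monodromy type simultaneously: one needs that the short arc produced by \Cref{lemma:Arc} can be taken with a prescribed monodromy type $\sigma_\arc$ (so that the limits of the horizontal log-period equations along it match the equations defining $V^{\lim}$), and that the extended equations on $\LPS[\sigma]$ genuinely cut out $\overline M$ and not something larger. This is where the passage to the countable collection of covers $\LPS$ is essential and where the asymptotic estimates of \Cref{thm:PerThm} (and its one-parameter refinement \Cref{cor:LogPerOPF}, giving $O(\scl[\ell(e+)](t+h))$ error terms) must be combined with an analyticity/Riemann-extension argument on $\LPS[\sigma]$ to conclude that the zero locus of the extended linear equations is exactly $\overline M\cap\LPS[\sigma]$, whose image in $\bdComp$ is $V^{\lim}$.
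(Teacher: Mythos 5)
Your overall strategy matches the paper's — necessity via \Cref{cor:LimEq}, sufficiency via the log period space — but the sufficiency direction as you describe it contains a circular step. You propose, given $b_1\in V^{\lim}$, to ``apply \Cref{lemma:Arc} inside the closure $\overline M$'' to connect a smooth point of $M$ to $b_1$. But \Cref{lemma:Arc} connects two points of a given irreducible analytic space; to run it inside $\overline M$ you must already know $b_1\in\overline M$, which is exactly what you are trying to prove. The paper avoids this by constructing the arc inside $\Vext$ rather than inside $\overline M$: one fixes a single $M$-disk $\arc_0$ from $x_0$ to $b_0$, lifts it to $\Vext$ for $\sigma=\sigma_{\arc_0}$, lets $Z$ be the irreducible component of $\Vext$ containing the lift, connects a preimage $\tilde z$ of $b_1$ to a preimage $\tilde x_0$ of $x_0$ by a short arc in $Z$ (here \Cref{lemma:Arc} is legitimately applied, since $Z$ is cut out by the explicit extended log-period equations, not defined as a closure), and then uses \Cref{prop:LPSEq} — over a period chart, $\Vext$ coincides with $\LPSmap^{-1}(V)$ — to conclude that the projected arc is an $M$-disk, hence $b_1\in\partial M$. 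Note also that the monodromy type is not ``prescribed'' in advance, as you suggest it must be; it is simply read off from $\arc_0$, and every short arc in $Z$ through $\tilde x_0$ automatically projects to an $M$-disk.

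The second gap is that you still need every $b_1\in V^{\lim}$ near $b_0$ to have a preimage in the relevant component $Z$. You propose to get this from a dimension count ($\dim V^{\lim}=\dim(\partial M\cap U)$ plus irreducibility), but you neither compute $\dim(\partial M\cap\bdComp)$ — which is not a priori $\dim M-1$ for a deep boundary stratum — nor verify that restricting levelwise and deleting the horizontal-crossing rows loses no rank. The paper instead proves (\Cref{prop:subm}) that $\Vext$ is \emph{smooth} along $\LPSmap^{-1}(\bdComp)$ and that $\LPSmap|_{\Vext\cap\widetilde{D}}$ is a \emph{submersion} onto $V^{\lim}$: smoothness guarantees a unique component $Z$ through $\tilde b_0$, and openness of the image covers a full neighborhood of $b_0$ in $V^{\lim}$. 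That rank computation — done in an $\enhancG$-adapted basis with $A$ in reduced row echelon form, checking that each pivot corresponds to an actual coordinate on $\LPS$ (for instance that no pivot lands on the omitted coordinate $\chi_{e_0}$ or on $\alpha^{(i)}_{d(i)}$) — is the technical heart of the proof and is precisely the step your ``no dimension is lost unexpectedly'' assertion leaves open.
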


For now we only prove the inclusion $\partial M\cap U\subseteq V^{\lim}$, which follows readily from  ~\Cref{cor:LimEq}. The proof of the remaining inclusion $ V^{\lim}\subseteq \partial M\cap U$ is the core argument, which we will give in~\Cref{section:LPS}.
\begin{proof}[Proof of the containment $\partial M\cap U\subseteq V^{\lim}$]
Let $b_0\in \partial M\cap U$. By~\Cref{lemma:Arc} there exists an $M$-disk connecting $b_0$ and $x_0$. By~\Cref{cor:LimEq}
the limit $\arc(0)=b_0$ satisfies the equations
\[
A^{(i)} \cdot\LogP^{\arc}(0)=0
\]
which in particular implies
\[
A^{(i),ver}\cdot \Per^{ver}(\twistD)=0
\]
as explained in~\Cref{rem:ExplEq}
\end{proof}

\subsection{Proof of the main theorem}
Assuming the proof of \Cref{prop:LinContainment} for now, we show how to finish the proof of our main theorem.

\begin{proof}[Proof of~\Cref{thm:Main}]
We stress that at the moment we still work under the additional assumption that $\partial M$ is locally irreducible near $b_0$. The general case will be handled in \Cref{section:MultComp}.

We recall our setup for convenience. Let $b_0\in\partial M\cap\bdComp$ and $U\subseteq \bdComp$ a period chart containing $b_0$.
To finish the proof we need to exhibit linear equations defining $\partial M$ in a neighborhood of $b_0$.
The content of \Cref{prop:LinContainment} is exactly that $\partial M\cap U$ is defined by the linear equations defining $V^{\lim}$.
\end{proof}

\subsection{The log period space}\label{section:LPS}
Our goal is now to show the remaining inclusion $\partial M\cap U\supseteq V^{\lim}$, after possibly further shrinking $U$. For this we need a new concept, the log period space, which we now motivate. We have already seen in~\Cref{prop:Vdisk} and~\Cref{rem:MonInv} that along one-parameter families the monodromy type of a short arc is restricted by the linear equations for $V$. Instead of working on $\MSDS$, where the monodromy around the boundary is unrestricted, we will thus work on a suitable cover $\LPS$, the \textit{log period space}.
On $U$  the linear equations defining $M$ are only well-defined in a small period chart, and they do not extend to a whole neighborhood of the boundary due to monodromy. But $\LPS$ will be defined in such a way  that the linear equations extend to a whole neighborhood of the boundary and thus define a subvariety $\Vext\subseteq \LPS$. By studying the limiting behavior of the equations for $\Vext$ explicitly, we will be able to prove the inclusion above. We remark that there is not just one log period space, but rather a collection $(\Vext\subseteq \LPS),\, \sigma\in\Sigma$ indexed by the set $\Sigma$ of possible vanishing orders of coordinates $t_i$ and $h_e$ along one-parameter families.
In contrast to $\MSDS$, on $\LPS$ the vanishing of the plumbing parameters $s_e$ is controlled by a single parameter $z$, and the discrete data~$\sigma$ controls how fast each plumbing parameters tends to zero. Thus $\LPS$ has monodromy properties similar to a holomorphic arc.
Before giving the (technical) definition of $\LPS$, we state its properties that we need, and then demonstrate how $\LPS$ can be used to finish the proof of~\Cref{prop:LinContainment}.

\begin{proposition}\label{prop:LPS}
There exists a collection of varieties $(\Vext\subseteq \LPS)_{\sigma\in\Sigma}$ with maps $\LPSmap:\LPS\rightarrow B$ such that
\begin{enumerate}
\item every $M$-disk $\arc$ can be lifted to a short arc $\tilde{\arc}:\Delta\to\Vext$ on $(\Vext, \Vext\cap\LPSmap^{-1}(\bdComp))$ for some $\sigma\in\Sigma$;
\item for every short arc $\tilde{\arc}$ on  $(\Vext,  \Vext\cap\LPSmap^{-1}(\bdComp))$ passing through some preimage of $x_0$ under $\LPSmap$, the composition $\LPSmap\circ\tilde{\arc}$ is an $M$-disk;
\item $\Vext$ is smooth at any point of the preimage $\LPSmap^{-1}(\bdComp)$;
\item the restriction $(\LPSmap)|_{\Vext\cap \LPSmap^{-1}(\bdComp)}$ is open, and $\LPSmap(\Vext)\cap U\subseteq V^{\lim}$.
\end{enumerate}
\end{proposition}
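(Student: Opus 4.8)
The plan is to prove the proposition by an explicit construction of the pairs $(\Vext\subseteq\LPS)_{\sigma\in\Sigma}$, and then to verify the four properties in turn. Fix the base point $x_0\in M_{reg}\cap B$ and write $B\setminus D\simeq\Delta^{\dimU}\times(\Delta^*)^{\dimPar}$, so that $\pi_1(B\setminus D,x_0)\simeq\ZZ^{\dimPar}$ is generated by the loops inducing the operators $T_i\ (i\in\lvlsetb)$ and $T_e\ (e\in E^{hor})$ of \Cref{section:LogPeriodsHodge}. Take $\Sigma:=\{\sigma\in\ZZ_{>0}^{\dimPar}:N_\sigma(V)\subseteq V\}$ where $N_\sigma:=\sum_i\sigma_iN_i+\sum_{e\in E^{hor}}\sigma_eN_e$ as in \cref{eq:MonArc}; by \Cref{prop:Vdisk} this contains the monodromy type $\sigma_{\arc}$ of every $M$-disk $\arc$. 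For $\sigma\in\Sigma$, let $\LPSpre\to B\setminus D$ be the connected cover corresponding to the cyclic subgroup $\ZZ\cdot\sigma\subseteq\ZZ^{\dimPar}$. Writing the universal cover of $(\Delta^*)^{\dimPar}$ with coordinates in $\HH^{\dimPar}$ and changing to a lattice basis of $\ZZ^{\dimPar}$ beginning with a primitive multiple of $\sigma$, one gets on $\LPSpre$ a distinguished holomorphic function $z$ together with $\tilde w'\in\Omega'$, a convex domain in $\CC^{\dimPar-1}$, such that $\LPSmap^*t_i=z^{\sigma_i}v_i$ and $\LPSmap^*h_e=z^{\sigma_e}v_e$ for nowhere-vanishing holomorphic functions $v_i,v_e$ depending only on $\tilde w'$; consequently $\LPSmap^*s_e=z^{\sigma_e}u_e$ with $u_e$ a unit, where $\sigma_e=\sum_{i=\ell(e-)}^{\ell(e+)-1}m_{e,i}\sigma_i$ at vertical nodes, as in \Cref{sec:MonArcs}. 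Let $\LPS$ be the partial compactification obtained by adjoining $\{z=0\}$. Since all $\sigma_i,\sigma_e>0$ one has $\LPSmap^{-1}(D)=\LPSmap^{-1}(\bdComp)=\{z=0\}\simeq\Delta^{\dimU}\times\{0\}\times\Omega'$, the map $\LPSmap$ extends to an analytic map $\LPS\to B$, and near $\{z=0\}$ the space $\LPS$ is a smooth complex manifold which in the $z$-direction has the local structure of a holomorphic arc of monodromy type $\sigma$.

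Next I would define $\Vext$. On $\LPSmap^{-1}(\Stra)=\LPS\setminus\{z=0\}$ the set $\LPSmap^{-1}(M)$ is cut out by $\sum_lA_{kl}\int_{\gamma_l(b)}\omega=0$, $k=1,\dots,\codim$. Substituting $\int_{\gamma_l(b)}\omega=\scl[\ell(l)]\LogP[\gamma_l](b)+\sum_e\IP[\gamma_l,\lambda_e]r_e(b)\ln(s_e)$ and $\ln(s_e)=\sigma_e\ln z+\log(u_e)$, the assumption $\sigma\in\Sigma$ together with \cref{eq:ResEq} makes the coefficient of $\ln z$ vanish, so
\[
E_k:=\sum_lA_{kl}\int_{\gamma_l(b)}\omega=\sum_lA_{kl}\,\scl[\ell(l)]\LogP[\gamma_l](b)+\sum_lA_{kl}\sum_e\IP[\gamma_l,\lambda_e]\,r_e(b)\log(u_e).
\]
Each $\scl[\ell(l)]\LogP[\gamma_l]$, each $r_e$, and each $\log(u_e)$ extends holomorphically across $\{z=0\}$ — for the log periods this is \Cref{thm:PerThm} pulled back via the analytic map $\LPSmap$, equivalently \Cref{cor:LogPerOPF} applied in the $z$-direction — so the $E_k$ extend to holomorphic functions on $\LPS$ agreeing with $\sum_lA_{kl}\int_{\gamma_l(b)}\omega$ off $\{z=0\}$. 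Define $\Vext:=\{E_1=\dots=E_{\codim}=0\}$ near $\LPSmap^{-1}(\bdComp)$; it equals $\LPSmap^{-1}(M)$ over $\LPS\setminus\{z=0\}$, and its restriction to $\{z=0\}$ is governed by the limiting equations $A^{(\ell(k))}\cdot\psi^{\arc}(0)=0$ of \Cref{cor:LimEq}.

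For the four properties: (1) an $M$-disk $\arc$ has type $\sigma_{\arc}\in\Sigma$ and monodromy $\ZZ\sigma_{\arc}$, so $\arc|_{\Delta^*}$ lifts through $\LPSpre[\sigma_{\arc}]\to B\setminus D$ and the lift extends across $0$ to a short arc $\tilde\arc:\Delta\to\LPS[\sigma_{\arc}]$; since $\arc(\Delta^*)\subseteq M$ the lift lies in $\LPSmap^{-1}(M)\subseteq\Vext[\sigma_{\arc}]$, hence in $\Vext[\sigma_{\arc}]$ by continuity. (2) given a short arc $\tilde\arc:\Delta\to\Vext$ through a preimage of $x_0$, the composition $g:=\LPSmap\circ\tilde\arc$ is non-constant, $g^{-1}(\bdComp)=\tilde\arc^{-1}(\{z=0\})=\{0\}$, and $g(\Delta^*)\subseteq\LPSmap(\Vext\setminus\{z=0\})=M$, which is closed in $\Stra$; since $g$ passes through $x_0\in M_{reg}$ and $M_{sing}$ is a proper subvariety, after shrinking $\Delta$ we have $g(\Delta^*)\subseteq M_{reg}$, so $g$ is an $M$-disk — here the standing local irreducibility of $\overline M$ near $b_0$, hence of $\Vext$ near $\{z=0\}$, is used to keep $\tilde\arc$ in a single sheet over $M$. (3) along $\{z=0\}$, $\Vext$ is cut out by the limits of the $E_k$, which after rescaling by the appropriate power of $z$ become the level equations $A^{(\ell(k))}$ evaluated on $\psi^{\arc}(0)$ as in \Cref{cor:LimEq}; because $A$ is in reduced row echelon form and, level by level, the surviving rows $A^{(i)}$ remain linearly independent — this uses the level filtration and the splitting $\GRCF/\LVLF[i-1]\simeq\Hcut/\GRC_{(i)}$ underlying the $\enhancG$-adapted basis (\Cref{prop:ExBasis}) — these $\codim$ functions have independent differentials on $\{z=0\}$ in the generalized-period coordinates of $U$ together with the $\log(v_e)$ (and, if needed, $z$), so $\Vext$ is a submanifold there. (4) $\LPSmap^{-1}(\bdComp)=\{z=0\}\simeq\Delta^{\dimU}\times\{0\}\times\Omega'$ maps onto $\bdComp\cap B=U$ by a submersion (the $\Omega'$-factor fibers freely), so $(\LPSmap)|_{\Vext\cap\{z=0\}}$ is open; and its image lies in $V^{\lim}$ because the vertical part of the limiting equations, being free of horizontal-node terms, does not involve the $\log(v_e)$ and hence reduces to $A^{(i),ver}\cdot\Per^{ver}(\twistD)=0$, i.e. \cref{eq:LimEqVer}.

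The main obstacle is the construction of $\LPS$ itself — making the infinite cover $\LPSpre$ together with the filled-in divisor $\{z=0\}$ into a genuine analytic space, smooth along $\LPSmap^{-1}(\bdComp)$, on which the $\dimPar$ boundary directions of $B$ are simultaneously governed by the single coordinate $z$ with holomorphic unit ratios, and on which the log periods (hence the rewritten linear equations) extend holomorphically. The second delicate point is the constant-rank statement (3): ruling out a drop of rank of the linear system as $z\to0$ is precisely where the reduced-row-echelon normalization of $A$ and the level filtration of \Cref{sec:Filtr} are essential. Once these are in place, (1), (2) and (4) are comparatively routine bookkeeping, modulo the standing assumption that $\overline M$ is locally irreducible near $b_0$.
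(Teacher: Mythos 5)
Your overall route is the same as the paper's: index the spaces by the $V$-preserving cone $\Sigma$, take the cyclic cover on which all boundary directions are governed by one coordinate $z$, extend the linear equations through (arc) log periods, and then verify (1)--(4), with (1)--(2) by lifting arcs and (3)--(4) by a rank computation at the boundary. However, two steps are genuinely incomplete. First, defining $\Vext$ by the \emph{unrescaled} functions $E_k=\sum_l A_{kl}\int_{\gamma_l(b)}\omega$ does not give the right variety: using \cref{eq:ResEq} one has $E_k=\sum_l A_{kl}\,\scl[\ell(l)]\LogP[\gamma_l]^{\Delta}$, and every factor $\scl[\ell(l)]$ with $\ell(l)<0$ vanishes identically on $\{z=0\}$; hence for any equation of top level $\ell(k)<0$ the restriction $E_k|_{z=0}\equiv 0$, so your $\Vext$ contains along the boundary far more than the limit locus (only the top-level equations survive), and (3)--(4) fail for it. The paper divides each equation by $\scl[\ell(k)]$ (the matrix $A'$ in \cref{eq:Vtil}), which on $\LPS$ is a power of $z$ times a unit; your parenthetical ``after rescaling by the appropriate power of $z$'' points at this, but the variety you actually defined is not the rescaled one.

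Second, and more seriously, the rank statement behind (3) and the openness in (4) -- the content of the paper's \Cref{prop:subm}, which it calls the key technical component -- is asserted rather than proved. Linear independence of the rows of $A^{(i)}$ is not sufficient: on $\LPS$, and in the generalized period coordinates of $U$, the lower levels are \emph{projectivized}, so for each $i\in\lvlsetb$ one period $\int_{(\alpha^{(i)}_{d(i)})_{\topl}}\eta$ is not a coordinate, and in addition one angular variable ($\nu_{\ell(\enhancG)}$, or $\chi_{e_0}$ in the one-level case) is omitted. One must rule out that a pivot of the reduced row echelon form lands on an omitted variable: the paper does this by ordering the $\enhancG$-adapted basis so that the omitted period is nonzero at the boundary point and using that the point satisfies the limit equations (a pivot there would force that period to vanish), by the residue argument of \Cref{rem:MonInv} for the omitted $\chi_{e_0}$, and by the nonvanishing of $r_{e(k)}$ for horizontal-crossing pivots. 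None of these checks appear in your argument. Likewise, your justification of openness in (4) -- that the full divisor $\{z=0\}$ submerges onto $U$ -- is a non sequitur: what is needed is that $\LPSmap$ restricted to $\Vext\cap\{z=0\}$ is a submersion onto $V^{\lim}$, which again requires exactly this pivot/coordinate bookkeeping. (Smaller points: the inclusions $\LPSmap^{-1}(M)\subseteq\Vext$ and $\LPSmap(\Vext\setminus\{z=0\})=M$ used in (1) and (2) are not valid globally; one should argue over a period chart at $x_0$ via \cref{eq:LPSEq2} and then propagate along the arc by the identity theorem, as the paper does.)
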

Assuming the above proposition for now, we can prove the other containment in~\Cref{prop:LinContainment}, finishing its proof, and thus also the proof of our main theorem.

\begin{proof}[Proof of~\Cref{prop:LinContainment}]
Proof of the containment $\partial M\cap U\supseteq V^{\lim}$: Choose an $M$-disk $\arc_0$ connecting $b_0$ and $x_0$. By  $(1)$ there exists a lift $\tilde{\arc_0}$ to a short arc on $\Vext$ for some $\sigma$. Let ${\tilde{b}}_0,  \tilde{x}_0$ be some $\LPSmap$-preimages of $b_0,x_0$ contained in $\tilde{\arc}_0(\Delta)$, respectively. Let $Z$ be the irreducible component of $\Vext$ containing $\tilde\arc_0(\Delta^*)$. Since $\Vext$ is smooth at $\tilde{b}_0$ by (3), only one irreducible component of $\Vext$ passes through $\tilde b_0$ and thus there exists an open neighborhood $W\subseteq \Vext$ of $\tilde{b}_0$ contained in $Z$.
We define $U_{b_0}:= \LPSmap(W\cap \LPSmap^{-1}(U))=\LPSmap(W)\cap U$, and note that $U_{b_0}$ is an open neighborhood of $b_0$ by $(4)$.
It remains to show that
\[
\left(\partial M\cap U_{b_0}\right)\supseteq \left(V^{\lim}\cap U_{b_0}\right).
\]
By definition of $U_{b_0}$, for any point $z\in V^{\lim}\cap U_{b_0}$ there exists a $\LPSmap$-preimage $\tilde{z}\in W\subseteq Z$ of $z$. Since $Z$ is irreducible, there exists a short arc on $(Z,Z\cap \LPSmap^{-1}(\bdComp))$ connecting $\tilde{z}$ and $\tilde{x}_0$. Composing with $\LPSmap$ yields an $M$-disk connecting $z$ and $x_0$\, by $(2)$. By the definition of $M$-disks, this shows $z\in \partial M$.
\end{proof}

\subsection{The construction of \texorpdfstring{$\LPS$}{}}
We now start constructing the log period spaces $\LPS$.
In this section we write an element $\sigma\in\ZZ^{\dimPar}$ as \[\sigma=((\sigma_i)_{i\in\lvlsetb}, (\sigma_e)_{e\in E^{hor}}).\]

We consider the positive cone
\[
\calC:=\left\{\sigma\in\ZZ^{\dimPar}\,|\, \sigma_i> 0,\, \sigma_e> 0\right\}\subseteq \ZZ^{\dimPar}.
\]

In analogy to the monodromy logarithm $N_{\arc}$ of a short arc, see~\cref{eq:MonArc}, for any $\sigma\in\calC$ we define the associated {\em monodromy logarithm}
\[
N_{\sigma}:= \sum_{i\in\lvlsetb} \sigma_iN_i + \sum_{e\in E^{hor}} \sigma_eN_e\,.
\]
Additionally, we define the \textit{$V$-preserving cone} to be the set of those monodromy logarithms that preserve~$V$:
\[
\Sigma:=\{ \sigma\in \calC\,|\, N_{\sigma}(V)\subseteq V\}.
\]
This $\Sigma$ will be the index set for $\LPS$ stipulated in the Proposition above.
By ~\Cref{prop:Vdisk} we have $\sigma_{\arc}\in\calC_{V}$ for any $M$-disk $\arc$,  where we recall $\sigma_{\arc}$ from \Cref{sec:MonArcs}.

Our construction of $\LPS$ proceeds in two steps. First we define a covering space $\LPSmap:\LPSpre\to B\setminus D$, and then construct $\LPS$ by adding suitable limit points to $\LPSpre$  such that the map extends to a holomorphic map $\LPSmap:\LPS\to B$.

We start by describing $\LPSpre$.
For any $\sigma\in\Sigma$, we let $\LPSmap:\LPSpre\to B\setminus D$ be the covering of $B\setminus D$ corresponding to the cyclic subgroup $\langle\sigma\rangle\subseteq\ZZ^{\dimPar}=\pi_1(B\setminus D)$. Denote coordinates on $\Delta^*\times \CC^{\dimPar-1}\times \Delta^{\dimU}$ by
\[
\tilde{b}=(z,\nu=((\nu_i)_{i\in\lvlsetb},\chi=(\chi_e)_{e\in E^{hor}}),\twistD).
\]
If $\enhancG$ has at least two levels, we set $\nu_{\ell(\enhancG)}=0$.
On the other hand, if $\ell(\enhancG)=0$, we choose one horizontal node $e_0\in E^{hor}$ and set $\chi_{e_0}=0$. This notation will simplify the following formulas.

Explicitly, we can describe $\LPSpre\subseteq \Delta^*\times \CC^{\dimPar-1}\times \Delta^{\dimU}$ as the domain (that is, open connected subset) given by
\begin{gather*}
\LPSpre:=\left\{ (z,\nu,\chi,\twistD),|\, \im \nu_i > \tfrac{\sigma_i}{2\pi}\log |z|,\,\im \chi_e > \tfrac{\sigma_e}{2\pi}\log |z|\right\}.
\end{gather*}

Note that $\LPSpre$ is diffeomorphic to $\Delta^*\times \HH^{N-1}\times \Delta^{M}$, since the conditions on the imaginary parts define a family of smoothly varying horizontal half-planes over the punctured disk,
and thus in particular $\pi_1(\LPSpre)\simeq \ZZ$.

The covering map
\[
\LPSmap:\LPSpre\to \Delta^*\times (\Delta^*)^{\dimPar-1}\times\Delta^{\dimU}=B\setminus D
\]
is explicitly given by
\[
z=z,\,t_i=z^{\sigma_i}e^{2\pi i\nu_i},\,h_e=z^{\sigma_e}e^{2\pi i\chi_e},\twistD=\twistD.
\]
Additionally, the universal cover $\HH\times\HH^{\dimPar-1}\times\Delta^{\dimU}\to \LPSpre$ is given by
\[
z=e^{2\pi i\tau},\, t_i=\alpha_i-\sigma_i\tau,\, h_e=\beta_e-\sigma_e\tau,\, \twistD=\twistD
\]
where $(\tau,(\alpha_i),(\beta_e),\twistD)$ are the coordinates on $\HH\times\HH^{\dimPar-1}\times\Delta^{\dimU}$.
At horizontal nodes we set $s_e(\tilde{b}):=h_e(\tilde{b})=z^{\sigma_e}e^{2\pi i\chi_e}$ and we are now going to also define functions $s_e:\LPSpre\to \CC$ at vertical nodes. For any vertical node $e$ we define
\begin{gather}\label{eq:ThetaDef}
\sigma_{e}:= \sum_{i=\ell(e-)}^{\ell(e+)-1} m_{e,i}\sigma_{i}, \quad
\chi_{e}:= \sum_{i=\ell(e-)}^{\ell(e+)-1} m_{e,i}\nu_i,\\
s_e(\tilde{b}):= z^{\sigma_e}e^{2\pi i\chi_e}.
\end{gather}
Here $\sigma_e$ and $\chi_e$ are defined such that the relation
\[
s_e=\prod_{\ell(e-)}^{\ell(e+)-1} t_i^{m_{e,i}}
\]
is satisfied, where $m_{e,i}$ was defined by \cref{eq:ak}.
The fact that $N_{\sigma}$ preserves $V$ is then  equivalent to
\[
\sum_{l=1}^d A_{kl} \intSum[\pa_l] r_e(b)\sigma_{e}=0 \text{ for all } k=1,\ldots,\codim.
\]
Note that this follows from \cref{eq:ResEq} together with \cref{eq:ThetaDef}.

We let finally
\[
\LPS:=\LPS^{\circ}\sqcup \left(\{0\}\times \CC^{\dimPar-1}\times \Delta^{\dimU}\right)\subseteq \Delta\times\CC^{\dimPar-1}\times \Delta^{\dimU}.
\]
Observe that $\LPS=\interior(\overline{\LPS^{\circ}})\subseteq \Delta\times\CC^{\dimPar-1}\times\Delta^{\dimU}$  and thus $\LPS$ is open.

\begin{remark} The space $\LPSpre$ can be seen as a family of products of horizontal half-planes $\{\im z>c(b)\}$ parametrized over the punctured disk with $\lim_{b\to 0}c(b)=-\infty$. Each half-plane becomes a copy of $\CC$ in the limit $b\mapsto 0$ and taking the interior closure of $\LPSpre$ fills in the limiting copies of $\CC$.
\end{remark}
Furthermore, since $\LPSmap:\LPS^{\circ}\to B\setminus D$ is the restriction of a holomorphic map $\CC^{\dimPar+\dimU}\to\CC^{\dimPar+\dimU}$, it extends to a holomorphic map of the closures $\LPS\to B$, which we still denote $\LPSmap$.
The \textit{boundary} $\tilde{D}$ of $\LPS$ is
\[
\tilde{D}:=\{z=0\}=\LPSmap^{-1}(B\cap \bdComp)=\LPS\setminus\LPSpre\subseteq\LPS.
\]

\subsection*{Arc log periods}
Now that we have explicitly described the log period space $\LPS$, we describe a variant of log periods which is suitably adapted to $\LPS$.

\begin{definition}\label{def:ArcLogPeriod}
We define the {\em arc log period} $\LogP^{\Delta}:\LPS\to \CC$ by
\[
\LogP^{\Delta}(\tilde{b}):= \dfrac{1}{\scl[\topl(\pa)]}\left[\int_{\gamma(\tilde{b})}\omega(\tilde{b})-\left(\sum_{e\in E}\IP[ \pa,\lambda_e] r_e(\tilde{b})\right)\sigma_{e}\ln(z))\right]
\]
where $\sigma_{e}$ is defined by~\cref{eq:ThetaDef}.
\end{definition}
As in the case of one-parameter families \Cref{cor:LogPerOPF}, we can use  the asymptotics of log periods from ~\Cref{thm:PerThm} to obtain the limit of arc log periods at the boundary $\widetilde{D}$.

\begin{proposition}\label{prop:ArcLogPer}
The arc log period $\LogP^{\Delta}:\LPS\rightarrow \CC$ is single-valued and analytic. Furthermore,
\[
\LogP^{\Delta}(0,\nu,\chi,\twistD)=
 \left[\int_{\pa_{\topl}}\Hol(\twistD) + \sum_{e\in E}\IP[ \gamma_{\topl},\lambda_e] \res_{q_e^+}(\twistD)(\chi_e+\tilde{c}_e)\right].
\]
where
 $\tilde{c}_e$ are certain constants, depending only on the choice of normal form coordinates and branches of logarithms.
\end{proposition}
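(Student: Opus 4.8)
The plan is to reduce \Cref{prop:ArcLogPer} to \Cref{thm:PerThm} (equivalently \Cref{cor:LogPerOPF}) by tracking exactly how the arc log period differs from the ordinary log period after pulling back along the covering map $\LPSmap:\LPS^\circ\to B\setminus D$. First I would observe that on $\LPS^\circ$ the plumbing parameters are given explicitly by $s_e(\tilde b)=z^{\sigma_e}e^{2\pi i\chi_e}$ at all nodes (using \cref{eq:ThetaDef} at vertical nodes), so that, choosing the branch of $\ln$ compatible with the coordinates,
\[
\ln\bigl(s_e(\tilde b)\bigr)=\sigma_e\ln(z)+2\pi i\,\chi_e+2\pi i\,k_e
\]
for some integer $k_e$ depending only on the chosen branches. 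Substituting this into \Cref{def:LogPeriod} and comparing with \Cref{def:ArcLogPeriod}, the difference between $\LogP[\gamma]\circ\LPSmap$ and $\LogP^\Delta$ is
\[
\frac{1}{\scl[\topl(\gamma)]}\sum_{e\in E}\IP[\gamma,\lambda_e]\,r_e(\tilde b)\bigl(2\pi i\,\chi_e+2\pi i\,k_e\bigr),
\]
which is manifestly single-valued and analytic on $\LPS^\circ$, hence $\LogP^\Delta$ inherits single-valuedness and analyticity on $\LPS^\circ$ from \Cref{thm:PerThm}; the extension across $\tilde D=\{z=0\}$ then follows because each $r_e(\tilde b)=O(\scl[\ell(e+)](t+h))$ vanishes at $z=0$ while $\chi_e$ stays bounded on compact subsets of $\tilde D$, so the correction term extends holomorphically and $\LogP^\Delta$ extends to all of $\LPS$.

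Next I would compute the limit at a point $(0,\nu,\chi,\twistD)\in\tilde D$. Taking $z\to 0$ with $\nu,\chi,\twistD$ fixed, by property (5) of the universal family the surface degenerates to the multi-scale differential $\twistD$; applying \Cref{thm:PerThm} to $\LogP[\gamma]\circ\LPSmap$ gives the limit $\int_{\gamma_{\topl}}\Hol(\twistD)-\sum_{e\in E^{hor}}\IP[\gamma_{\topl},\lambda_e]\res_{q_e^-}(\twistD)c_e$. For the correction term, since $r_e(\tilde b)\to 0$ as $z\to 0$, the naive guess would be that the whole correction vanishes; the subtlety is that $r_e$ vanishes to the same order as $\scl[\ell(e+)]$, so after dividing by $\scl[\topl(\gamma)]$ one gets a finite nonzero contribution precisely from those $e$ with $\ell(e+)=\topl(\gamma)$ and only from the $\chi_e$-term (the $k_e$-term and the $c_e$-term combine into the stated constants $\tilde c_e$). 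Concretely I would use the standard-form expansions of \Cref{section:StdFormCoord}: near a node $e$ at top level, in $\stdFormA^+$-coordinates $(\starD\twistD+\modD)$ has residue $\res_{q_e^+}(\starD\twistD)=\scl[\ell(e+)]\res_{q_e^+}(\twistD)$, so $r_e(\tilde b)/\scl[\topl(\gamma)]\to\res_{q_e^+}(\twistD)$ when $\ell(e+)=\topl(\gamma)$ and $\to 0$ otherwise; multiplying by $\IP[\gamma,\lambda_e]=\IP[\gamma_{\topl},\lambda_e]$ and $(\chi_e+k_e)$ and folding the additive constants from the $k_e$ and from $c_e$ into $\tilde c_e$ yields exactly the claimed formula.

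The hardest step is the last bookkeeping one: matching signs and residue conventions ($\res_{q_e^+}$ versus $\res_{q_e^-}$, which differ by a sign at the node) and checking carefully that the contributions from vertical nodes with $\ell(e+)>\topl(\gamma)$ really do vanish in the limit while those at the top level survive, and that the various branch-of-logarithm integers $k_e$, the constants $c_e$ from \Cref{thm:PerThm}, and the constant $2\ln(\delta/\!\sqrt R)$ appearing there all assemble into a single well-defined constant $\tilde c_e$ independent of $\nu,\chi,\twistD$. I would handle this by repeating the node-by-node analysis from the proof of \Cref{thm:PerThm} verbatim, only replacing $\ln(s_e)$ by $\sigma_e\ln(z)+2\pi i\chi_e+2\pi ik_e$ throughout, so that the $\sigma_e\ln(z)$ piece cancels against the subtracted term in \Cref{def:ArcLogPeriod} and the residual $2\pi i(\chi_e+k_e)$ piece, weighted by $\res_{q_e^+}(\twistD)$ after rescaling, produces the stated limit; a brief remark that for non-horizontal $\gamma$ this recovers $\int_{\gamma_{\topl}}\twistD$, consistent with \Cref{cor:NonCross}, serves as a sanity check.
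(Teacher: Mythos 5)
Your proposal is correct and follows essentially the same route as the paper: write $\ln(s_e(\tilde b))=\sigma_e\ln(z)+2\pi i\chi_e+2\pi i k_e$ on the cover, observe that $\LogP\circ\LPSmap$ and $\LogP^{\Delta}$ differ by $\sum_{e}\IP[\pa,\lambda_e]\tfrac{r_e}{\scl[\topl(\pa)]}(\chi_e+\text{const})$, and then invoke \Cref{thm:PerThm} with $\tilde c_e=c_e+2\pi ik_e'$. The paper's proof is terser (it leaves the limit of $r_e/\scl[\topl(\pa)]$ implicit), but your extra bookkeeping of which nodes survive in the limit is exactly what that step silently uses.
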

\begin{proof}
We write $b=\LPSmap(\tilde{b})$ for the rest of the proof.
For all nodes $e$, there exist integers $k_e'$ such that \[
\ln(s_e(\tilde{b}))=\sigma_{e}\ln(z)+\chi_e+2\pi ik_e'
\] by \cref{eq:ThetaDef}.
We thus have
\[
\begin{split}
\LogP(b) &= \dfrac{1}{\scl[\topl(\pa)]}\left [ \int_{\pa}\omega(b)- \sum_{e\in E}\IP[ \pa,\lambda_e] r_e(b)\ln(s_e)\right]\\
&= \LogP^{\Delta}(\tilde{b})-\sum_{e\in E} \IP[\pa,\lambda_e]\dfrac{r_e(b)}{\scl[\topl(\pa)]} \left(\chi_e+2\pi ik_e'\right)
\end{split}
\]
Thus the result follows from~\Cref{thm:PerThm} with $\tilde{c}_e:=c_e+2\pi ik_e'$.
\end{proof}
\subsection*{The subvariety \texorpdfstring{$\Vext$}{}}
We now come to the definition of $\Vext\subseteq\LPS$. On the stratum we can only define the linear equations defining $M$ in a small period chart. Due to monodromy, periods do not extend as holomorphic functions to the boundary $\partial\MSDS$. On the other hand, we have seen that log periods do extend to $\MSDS$. Thus na\"\i vely one would try to convert the linear equations defining $M$ into equations involving log periods. The na\"\i ve idea does not work since the logarithmic divergences do not cancel out.
The space $\LPS$ is constructed in such a way that the logarithmic divergences cancel out, and thus we will be able to rewrite linear equations in period coordinates as equations in arc log periods. We let
 $A'=(A'_{kl})_{1\leq k\leq \codim, 1\leq l\leq d}$ be the matrix with
\[
A'_{kl}:= \dfrac{\scl[\ell(l)]}{\scl[\ell(k)]}A_{kl}
\]
being the equations for $V$, suitably rescaled, and define
\begin{equation}\label{eq:Vtil}
\Vext:=\left\{\tilde{b}\in\LPS\,:\,A'\cdot\LogP[]^{\Delta}(\tilde{b})=0\right\}\subseteq \LPS
\end{equation}
where $\LogP[]^{\Delta}:=(\LogP[\gamma_k]^{\Delta}(\tilde{b}))_{k}$.
The rescaling factors in the definition of $A_{kl}'$  are  motivated by the proof of ~\Cref{cor:LimEq}.

The next result says that, over a period chart, $\Vext$ is just the $\LPSmap$-preimage of $V$.

\begin{proposition}\label{prop:LPSEq}For any sufficiently small period chart $W\subseteq B\setminus D$ containing $x_0$ we have
\begin{align}
\LPSmap(\Vext)\cap W&=V\cap W,\label{eq:LPSEq1}\\
\LPSmap^{-1}(V\cap W)&= \Vext \cap \LPSmap^{-1}(W). \label{eq:LPSEq2}
\end{align}
\end{proposition}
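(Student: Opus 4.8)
The plan is to reduce both statements to the fact that, over a sufficiently small period chart $W$ containing $x_0$, the arc log period $\LogP[\gamma_k]^{\Delta}$ on $\LPS$ agrees with the ordinary period $\int_{\gamma_k(b)}\omega(b)$ composed with $\LPSmap$, up to the rescaling factor $\scl[\topl(\gamma_k)]$. More precisely, on $\LPSmap^{-1}(W)$ the branch of $\ln(z)$ can be chosen so that $\ln(s_e\circ\LPSmap)$ and $\sigma_e\ln(z)+\chi_e$ agree up to the locally constant $2\pi i k_e'$ from the proof of \Cref{prop:ArcLogPer}; substituting into \Cref{def:ArcLogPeriod} one finds, exactly as in that proof, that
\[
\scl[\topl(\gamma_k)]\LogP[\gamma_k]^{\Delta}(\tilde b)=\int_{\gamma_k(b)}\omega(b) - \sum_{e\in E}\IP[\gamma_k,\lambda_e]\,r_e(b)\,(\chi_e+2\pi i k_e')
\]
where $b=\LPSmap(\tilde b)$. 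Since on $W$ the periods $\int_{\gamma_k(b)}\omega(b)$ are single-valued holomorphic coordinates (and since, by the assumption that $W$ is a period chart, $r_e(b)$ are among the coordinate functions), the right-hand side is a fixed invertible affine-linear function of the period vector $\Per(b)$ with coefficients depending only on $\tilde b$ through the $\chi_e$'s. In fact the substitution $\gamma_k\mapsto \sum_e\IP[\gamma_k,\lambda_e]\lambda_e$ produces a lower-triangular (unipotent, by $(I-T_k)^2=0$) change of basis, so the map $\Per(b)\mapsto \scl\cdot\LogP[]^{\Delta}(\tilde b)$ is given by an invertible matrix, over each fixed value of $\nu,\chi$.

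With this dictionary in hand, the two claims follow formally. For \cref{eq:LPSEq2}: a point $\tilde b\in\LPSmap^{-1}(W)$ lies in $\LPSmap^{-1}(V\cap W)$ iff $b=\LPSmap(\tilde b)$ satisfies $A\cdot\Per(b)=0$. Multiplying the $k$-th equation by $1/\scl[\ell(k)]$ and using that the off-diagonal (vanishing-cycle) corrections are absorbed exactly as in the computation above — this is the point where the rescaled matrix $A'$ and the relation \cref{eq:ResEq}, equivalently $N_\sigma(V)\subseteq V$, enter: the extra terms $\sum_e \IP[\gamma_l,\lambda_e] r_e(b)(\chi_e+2\pi i k_e')$ contracted against $A'_{kl}$ vanish by the hypothesis $\sigma\in\Sigma$ — one rewrites $A\cdot\Per(b)=0$ as $A'\cdot\LogP[]^{\Delta}(\tilde b)=0$, i.e. $\tilde b\in\Vext$. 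Conversely the same computation run backwards shows $\Vext\cap\LPSmap^{-1}(W)\subseteq\LPSmap^{-1}(V\cap W)$. This gives \cref{eq:LPSEq2}. Then \cref{eq:LPSEq1} is immediate: $\LPSmap(\Vext)\cap W=\LPSmap(\Vext\cap\LPSmap^{-1}(W))=\LPSmap(\LPSmap^{-1}(V\cap W))=V\cap W$, the last equality because $\LPSmap:\LPSpre\to B\setminus D$ is surjective.

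\textbf{Main obstacle.} The delicate point is the bookkeeping showing that the ``naïve'' conversion actually closes up: one must check that the logarithmic divergences $r_e(b)\ln(s_e)$ appearing in \Cref{def:LogPeriod} are cancelled precisely by the $\sigma_e\ln(z)$ terms in \Cref{def:ArcLogPeriod} after pulling back along $\LPSmap$, so that the correction terms that survive involve only the holomorphic coordinates $\chi_e$ (and $\nu_i$ via \cref{eq:ThetaDef}) and the locally constant integers $k_e'$, not $\ln(z)$. This is exactly the content of the computation in the proof of \Cref{prop:ArcLogPer}, so the work is to invoke it uniformly over $W$ rather than at a single boundary point, and then to observe that the resulting affine change of variables between $\Per$ and $\scl\cdot\LogP[]^{\Delta}$ is invertible — which holds because its linear part is unipotent, the vanishing-cycle corrections being strictly lower-triangular with respect to the level/adapted-basis ordering. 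The hypothesis $\sigma\in\Sigma$, i.e. $N_\sigma(V)\subseteq V$, is what guarantees the correction terms are not merely absorbed into an invertible change of coordinates but in fact respect the subspace $V$, which is precisely what makes the rescaled equations $A'\cdot\LogP[]^{\Delta}=0$ equivalent to $A\cdot\Per=0$ after pullback.
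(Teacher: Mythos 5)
Your overall strategy---rewriting the period equations $A\cdot\Per(b)=0$ as the arc-log-period equations $A'\cdot\LogP[]^{\Delta}(\tilde b)=0$ and invoking $\sigma\in\Sigma$, i.e.\ \cref{eq:ResEq}, to dispose of the discrepancy---is exactly the paper's proof. However, the displayed formula you use to carry this out is incorrect, and the vanishing claim you base on it would fail. By \Cref{def:ArcLogPeriod} one has directly
\[
\scl[\ell(l)]\,\LogP[\gamma_l]^{\Delta}(\tilde b)=\int_{\gamma_l(b)}\omega(b)-\Bigl(\sum_{e\in E}\IP[\gamma_l,\lambda_e]\,\sigma_e\, r_e(b)\Bigr)\ln(z),
\]
with no $\chi_e$ appearing. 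The formula you wrote, with correction term $\sum_e\IP[\gamma_l,\lambda_e]r_e(b)(\chi_e+2\pi ik_e')$, is (up to sign) the comparison between $\LogP^{\Delta}$ and the log period $\LogP$ from the proof of \Cref{prop:ArcLogPer}, not between $\LogP^{\Delta}$ and the raw period: you have in effect dropped the term $-\sum_e\IP[\gamma_l,\lambda_e]r_e(b)\ln(s_e)$ from \Cref{def:LogPeriod}. This matters because \cref{eq:ResEq} asserts $\sum_lA_{kl}\sum_e\IP[\gamma_l,\lambda_e]\sigma_er_e(b)=0$, i.e.\ vanishing for the fixed integer coefficients $\sigma_e$; it therefore kills the single term $\ln(z)\cdot\sum_lA_{kl}\sum_e\IP[\gamma_l,\lambda_e]\sigma_er_e(b)$, but it does not imply $\sum_lA_{kl}\sum_e\IP[\gamma_l,\lambda_e]r_e(b)\chi_e=0$ for the coordinate functions $\chi_e$. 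So the terms you claim ``vanish by the hypothesis $\sigma\in\Sigma$'' do not in general vanish.

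The repair is to drop the detour through $\LogP$ and the $\chi_e$'s: contracting the definition above against $A$ gives
\[
\sum_{l}A_{kl}\,\scl[\ell(l)]\,\LogP[\gamma_l]^{\Delta}(\tilde b)=\sum_{l}A_{kl}\int_{\gamma_l(b)}\omega(b)-\ln(z)\sum_{l}A_{kl}\sum_{e\in E}\IP[\gamma_l,\lambda_e]\sigma_er_e(b),
\]
the last sum vanishes by \cref{eq:ResEq}, and dividing by $\scl[\ell(k)]$ shows that on $\LPSmap^{-1}(W)$ the conditions $A\cdot\Per(b)=0$ and $A'\cdot\LogP[]^{\Delta}(\tilde b)=0$ are equivalent; this is \cref{eq:LPSEq2}, and \cref{eq:LPSEq1} then follows from surjectivity of $\LPSmap$ onto $B\setminus D$, as you say. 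The unipotence/invertibility discussion is not needed for either inclusion.
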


\begin{proof} For any $\tilde{b}\in \LPSmap^{-1}(W)$ we compute
\[
\sum_{l=1}^d A_{kl}\scl[\ell(l)]\LogP[\gamma_l]^{\Delta}(\tilde{b})=\sum_{l=1}^d A_{kl}\left[\int_{\gamma_l}\omega-\intSum[\pa_l] \sigma_er_e(b)\ln(z)\right].
\]
Since the matrix $A$ are the defining linear equations for the linear subvariety $M$ near $x_0$, it follows from~\Cref{prop:Vdisk} or equivalently~\cref{eq:ResEq}
that \[\sum_{l=1}^d A_{kl}\sum_{e\in E}\IP[\pa_l,\lambda_e] \sigma_er_e(b)=0 \text{ for } k=1,\ldots,\codim.\]
Thus $\LPSmap^{-1}(V\cap W)=\Vext\cap \LPSmap^{-1}(W)$ and the first claim follows since $W$ is contained in the image of $\LPSmap$.
\end{proof}

We now study the limiting behavior of the equations defining $\Vext$ on the boundary $\widetilde{D}$ of $\LPS$, for arbitrary $\sigma$.

Consider one of the defining equations
$\sum_{l=1}^{d} A'_{kl}\LogP[\gamma_l]^{\Delta}=0$  of  $\Vext$ and restrict it to $\widetilde{D}=\{z=0\}$. In the limit $z\mapsto 0$ only the arc log periods $\LogP[\gamma_l]^{\Delta}$ with $\ell(l)=\ell(k)$ contribute. Thus, using \Cref{prop:ArcLogPer} the equations for $\Vext\cap\widetilde{D}$ can be written as
\begin{equation}\label{eq:Vlimeq}
\sum_{\{l: \lvl[(l)]=\lvl[(k)]\}}A_{kl} \left[\int_{(\gamma_l)_{\topl}} \Hol(\twistD)+\intSum[(\pa_l)_{\topl}]\res_e(\twistD)(\chi_e+\tilde{c}_e)\right]=0.
\end{equation}
Thus on the boundary $\widetilde{D}$, the equations for $\Vext$ and for $\LPSmap^{-1}(V^{\lim})$ coincide except for the equations involving horizontal nodes. As a corollary of this discussion we obtain

\begin{corollary}\label{cor:Vimage}
 The image $\LPSmap(\Vext\cap \widetilde{D})$ is contained in $V^{\lim}$.
\end{corollary}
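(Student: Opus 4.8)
The plan is to read the statement off directly from the limiting equations \cref{eq:Vlimeq} for $\Vext\cap\widetilde{D}$, restricted to the non-horizontal rows of $A$. First I would fix a point $\tilde{b}=(0,\nu,\chi,\twistD)\in\Vext\cap\widetilde{D}$; by the explicit description of $\LPSmap$, and since $\sigma\in\calC$ forces $\sigma_i>0$ and $\sigma_e>0$, its image is $\LPSmap(\tilde{b})=(\twistD,0,0)\in U\subseteq\bdComp$, so it suffices to check $A^{(i),ver}\cdot\Per^{ver}(\twistD)=0$ for every level $i$. Then I would restrict the defining equation $\sum_{l}A'_{kl}\LogP[\gamma_l]^{\Delta}=0$ of $\Vext$ to $\widetilde{D}=\{z=0\}$: each $\LogP[\gamma_l]^{\Delta}$ is analytic on $\LPS$ by \Cref{prop:ArcLogPer}, while on $\{z=0\}$ the coefficient $A'_{kl}=\tfrac{\scl[\ell(l)]}{\scl[\ell(k)]}A_{kl}$ equals $A_{kl}$ when $\ell(l)=\ell(k)$, vanishes identically when $\ell(l)<\ell(k)$ (the ratio being a positive power of $z$ times a nowhere-vanishing factor), and is $0$ when $\ell(l)>\ell(k)$ since $\ell(k)=\max\{\ell(l):A_{kl}\neq 0\}$. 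Hence the $k$-th equation restricted to $\widetilde{D}$ becomes $\sum_{l:\ell(l)=\ell(k)}A_{kl}\LogP[\gamma_l]^{\Delta}(0,\nu,\chi,\twistD)=0$, and substituting the boundary value from \Cref{prop:ArcLogPer} recovers exactly \cref{eq:Vlimeq}.

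The second step is to run this for the non-horizontal rows $k$, i.e. the rows constituting $A^{(\topl(F_k)),ver}$, and to show the residue sum in \cref{eq:Vlimeq} drops out for them. At a vertical node $e$ this holds because $\twistD$ is holomorphic at $q_e^+$ (order $\kappa_e-1\ge 0$), so $\res_{q_e^+}\twistD=0$. At a horizontal node $e$ one has $\sum_{l:\ell(l)=i}A_{kl}\,\IP[(\gamma_l)_{\topl},\lambda_e]=\IP[F_k,\lambda_e]$ — the lower-level summands of $F_k$ can be pushed off $\lambda_e$ and the higher-level summands are absent — and this vanishes when $\ell(e+)=i$ by the definition of a non-horizontal cycle (\Cref{def:hcc}), and when $\ell(e+)<i$ because then $(\gamma_l)_{\topl}\subseteq\Sigma_{(i)}$ misses $\lambda_e$. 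So for such $k$, \cref{eq:Vlimeq} collapses to $\sum_{l:\ell(l)=i}A_{kl}\int_{(\gamma_l)_{\topl}}\Hol(\twistD)=0$. Finally, in a $\enhancG$-adapted basis the identity $A_{k,\delta_e^{(i)}}=\IP[F_k,\lambda_e]=0$ forces the surviving coefficients $A_{kl}$ with $\ell(l)=i$ to be supported on the non-horizontal generators $\alpha_m^{(i)}$, and for these $\int_{(\gamma_l)_{\topl}}\Hol(\twistD)=\int_{(\gamma_l)_{\topl}}\twistD$ by the special case of \Cref{thm:PerThm} underlying \Cref{cor:NonCross}. Therefore $\sum_{l:\ell(l)=i}A^{(i),ver}_{kl}\int_{(\gamma_l)_{\topl}}\twistD=0$, i.e. $A^{(i),ver}\cdot\Per^{ver}(\twistD)=0$, whence $\LPSmap(\tilde{b})\in V^{\lim}$.

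The delicate point, and essentially the whole content of the argument, is the bookkeeping in the second step: reconciling the two notions of ``horizontal'' that occur, namely that a non-horizontal equation written in reduced row echelon form over a $\enhancG$-adapted basis automatically has vanishing coefficients on the horizontal-crossing generators $\delta_e^{(i)}$ at its own top level. Once this is in hand, both the residue contributions and the discrepancy between $\int\Hol(\twistD)$ and $\int\twistD$ at horizontal nodes disappear simultaneously, and the corollary is just the substitution of \Cref{prop:ArcLogPer} into the equations cutting out $\Vext$, exactly as anticipated in the discussion preceding the statement.
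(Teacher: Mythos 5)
Your argument is correct and follows essentially the same route as the paper, which deduces the corollary directly from restricting the defining equations of $\Vext$ to $\widetilde{D}$, observing that only same-level terms survive, and substituting the boundary values of the arc log periods from \Cref{prop:ArcLogPer} to obtain \cref{eq:Vlimeq}. The additional bookkeeping you supply for the non-horizontal rows (vanishing of the residue terms at vertical nodes and, via the $\enhancG$-adapted basis, at top-level horizontal nodes, together with replacing $\int\Hol(\twistD)$ by $\int\twistD$) is exactly the content the paper compresses into the remark that the equations for $\Vext$ and for $\LPSmap^{-1}(V^{\lim})$ coincide except for those involving horizontal nodes.
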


The following step is crucial in the proof of property $(3)$ of \Cref{prop:LPS}.
\begin{proposition}\label{prop:subm} For any $\tilde{b}_0\in \Vext\cap\widetilde{D}$,
the subvarieties $\Vext$ and $\Vext\cap\widetilde D$ are smooth at $\tilde{b}_0$ and furthermore the restriction \[
(\LPSmap)|_{\Vext\cap\widetilde{D}}:\Vext\cap\widetilde D\rightarrow V^{\lim}
\]
 is a submersion at $\tilde{b}_0$.
\end{proposition}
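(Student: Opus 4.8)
The plan is to compute the differential of the defining equations of $\Vext$ at $\tilde b_0$ explicitly using the limit formula from \Cref{prop:ArcLogPer}, and show it has the expected rank. First I would use the explicit coordinates $\tilde b=(z,\nu=(\nu_i),\chi=(\chi_e),\twistD)$ on $\LPS$, so that $\widetilde D=\{z=0\}$. On $\widetilde D$ we have the coordinates $(\nu,\chi,\twistD)$, and the equations for $\Vext\cap\widetilde D$ are given by~\cref{eq:Vlimeq}. The key point is that the $i$-th level block of these equations is
\[
\sum_{\{l:\ell(l)=\ell(k)=i\}} A_{kl}\left[\int_{(\gamma_l)_{\topl}}\Hol(\twistD)+\sum_{e\in E}\IP[(\gamma_l)_{\topl},\lambda_e]\res_e(\twistD)(\chi_e+\tilde c_e)\right]=0,
\]
and the first summand depends only on the period coordinates of the level-$i$ subsurface of $X$, i.e.\ on the generalized period coordinates of $\reseq{X}$ inside $U$. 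So the equations come in two kinds: the purely vertical equations $A^{(i),ver}$, whose restriction to $\widetilde D$ involves {\em only} the generalized period coordinates $\twistD$ (pulled back via $\pi_\sigma$), and the horizontal-crossing equations $A^{(i)}\setminus A^{(i),ver}$, which in addition involve the coordinates $\chi_e$ for horizontal nodes $e$ of level $i$ linearly, with coefficients $\sum_l A_{kl}\IP[(\gamma_l)_{\topl},\lambda_e]\res_e(\twistD)$.

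Next I would argue about smoothness and the submersion property simultaneously by a dimension/rank count. Because $A$ is in reduced row echelon form with respect to the $\enhancG$-adapted basis, and because for each horizontal node $e$ of level $i$ there is a basis element $\delta_e^{(i)}$ with $\IP[\delta_e^{(i)},\lambda_{e'}]=\delta_{ee'}$, the horizontal-crossing equations can be used to solve for the variables $\chi_e$: more precisely, the submatrix of $\partial(\text{horizontal equations})/\partial(\chi_e)$ is, up to the nonvanishing residue factors $\res_e(\twistD)$ (which are nonzero since $\twistD$ is a multi-scale differential with a genuine pole of order $\kappa_e+1$ at $q_e^-$ — wait, at horizontal nodes one must instead use the matching-residue condition together with the fact that $\delta_e^{(i)}$ crosses {\em only} $\lambda_e$ among horizontal cycles of its level), essentially triangular in the $\chi_e$. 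Hence the horizontal equations are independent of the vertical ones and cut out a smooth subvariety transverse to the $\chi$-directions. After eliminating the $\chi_e$, what remains is exactly the system $A^{(i),ver}\cdot\Per^{ver}(\twistD)=0$ pulled back via $\pi_\sigma$, i.e.\ $\LPSmap^{-1}(V^{\lim})$ restricted to $\widetilde D$. Therefore $\Vext\cap\widetilde D$ is, near $\tilde b_0$, a graph over (the $\pi_\sigma$-preimage of) $V^{\lim}$ in the remaining $(z=0)$-coordinates, which gives both its smoothness at $\tilde b_0$ and the fact that $(\LPSmap)|_{\Vext\cap\widetilde D}\to V^{\lim}$ is a submersion at $\tilde b_0$: the map forgets exactly the graphing coordinates (the $\chi_e$ and the fiber directions of the covering $\LPSmap$), and its differential is onto.

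Finally, for smoothness of $\Vext$ itself (not just $\Vext\cap\widetilde D$) at $\tilde b_0$, I would observe that the function $z$ is one of the ambient coordinates on $\LPS$ and that by \Cref{prop:ArcLogPer} the arc log periods $\LogP[\gamma_k]^{\Delta}$ are analytic on all of $\LPS$; restricting~\cref{eq:Vtil} and differentiating, the full Jacobian of the system $A'\cdot\LogP[]^{\Delta}=0$ at $\tilde b_0$ contains the Jacobian of the restricted system on $\{z=0\}$ as a block (the $z$-derivative only adds a column), so if the restricted Jacobian already has full rank $\codim$, so does the ambient one; hence $\Vext$ is a smooth complex submanifold of $\LPS$ of codimension $\codim$ near $\tilde b_0$, and $\Vext\cap\widetilde D=\Vext\cap\{z=0\}$ is its smooth hyperplane section, consistent with the above. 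The main obstacle I anticipate is bookkeeping the residue coefficients carefully enough to be sure the horizontal block of the Jacobian really is of full rank in the $\chi_e$-directions — one must check that no cancellation forces a horizontal equation to become vertical in the limit, which is exactly where the reduced row echelon form of $A$ and the structure of the $\enhancG$-adapted basis (each $\delta_e^{(i)}$ crossing a unique horizontal node of its level) are used; and one should confirm the residues $\res_e(\twistD)$ appearing there are indeed nonzero, or else account for the degenerate case where a horizontal equation carries no $\chi_e$ dependence at all and is therefore already vertical, and hence was not a genuine horizontal-crossing equation to begin with.
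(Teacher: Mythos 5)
Your strategy is essentially the paper's: write the defining functions of $\Vext$ in the explicit coordinates of $\LPS$, use \Cref{prop:ArcLogPer} to see that modulo $z$ each equation is its pivot coordinate plus a function of later variables, and conclude full rank of the Jacobian from the reduced row echelon form and the structure of the $\enhancG$-adapted basis (horizontal pivots giving triangular dependence on the $\chi_e$ with nonzero residue coefficients, vertical pivots giving free period coordinates); the submersion then follows because the non-pivot coordinates serve as coordinates on both $\Vext\cap\widetilde D$ and $V^{\lim}$. Your two anticipated obstacles are real and are resolved as you guess: at a horizontal node the matching-orders condition forces an honest simple pole, so $\res_e(\twistD)\neq 0$, and RREF prevents a horizontal equation from degenerating.

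There is one point you do not address that the paper must handle explicitly: not every period $\int_{(\gamma_l)_{\topl}}\twistD$ is a free coordinate. The lower levels of $\bdComp$ are projectivized, and correspondingly one coordinate of $\LPS$ is omitted (either $\nu_{\ell(\enhancG)}$, or $\chi_{e_0}$ for a chosen horizontal node $e_0$ when $\enhancG$ has a single level); so on each lower level one period, say $\int_{(\alpha^{(i)}_{d(i)})_{\topl}}\twistD$, is not among the coordinates. Your rank count silently assumes every pivot variable is a coordinate. The paper closes this by ordering the adapted basis so that the omitted period is nonzero at $b_0$ and then observing that a pivot there would force that period (respectively $\res_{e_0}(\twistD)$ in the one-level case) to vanish in the limit, a contradiction. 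Without this step the triangularity argument does not yet yield full rank, so you should add it; with it, your proof matches the paper's.
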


\begin{remark}
This Proposition is the key technical component of the proof of \Cref{thm:Main}. The proof uses both the asymptotic analysis for log periods as well as the notion of $\enhancG$-adapted basis in a crucial way.
\end{remark}

\begin{proof}
For the rest of the proof we write $\boldsymbol\ell=\ell(\enhancG)$.
We start with the smoothness of $\Vext$.
We choose a $\enhancG$-adapted basis
\begin{gather*}
\{\pa_1,\ldots,\pa_d\}=\\
\left\{\delta_{1}^{(0)},\ldots,\delta_{c(0)}^{(0)}, \alpha^{(0)}_{1},\ldots,\alpha^{(0)}_{d(0)},\ldots, \delta_{1}^{(\boldsymbol\ell)},\ldots,\delta_{c(L)}^{(\boldsymbol\ell)},\alpha^{(\boldsymbol\ell)}_{1},\ldots,\alpha^{(\boldsymbol\ell)}_{d(\boldsymbol\ell)}\right \},
\end{gather*}
where we recall that $\alpha^{(i)}_{1},\ldots,\alpha^{(i)}_{d(i)}$ are non-horizontal cycles of level $i$ and $\delta_{1}^{(i)},\ldots,\delta_{c(i)}^{(i)}$ are \hcc of level $i$.
We will write $\chi_{l}^{(i)}$ instead of $\chi_e$ where $e$ is the unique horizontal edge crossed by $\delta_{l}^{(i)}$.
 For each level $i$ we order the $\enhancG$-adapted basis in such a way that $\int_{(\alpha^i_{d(i)})_{\topl}} \twistD\neq 0$.
Additionally, if $\enhancG$ has only one level, we also arrange that $\delta_{d(0)}^{(0)}$ crosses only the horizontal node  $e_0$, where $\chi_{e_0}$ is the omitted coordinate on $\LPS$.

Let $F_1,\ldots, F_{\codim}$ be the defining equations for $\Vext$ in $\LPS$, considered as functions on $\LPS$. Our goal is to show that the Jacobian matrix of the $F_1,\dots, F_{\codim}$ has full rank with respect to a suitable coordinate system on $\LPS$.
For this we recall that $\LPS$ has different coordinates, depending on whether $\enhancG$ has only one or multiple levels.

In the case of only one level we can describe a coordinate system as follows. We choose a horizontal edge $e_0$ such that the coordinate $\chi_{e_0}$ is omitted.
Then $z, \chi_e$ for $e\in E^{hor}\setminus \{e_0\}$ and $\int_{(\pa_l^{(0)})_{\topl}}\twistD$ for  $l=1,\ldots,d(0)$ are coordinates on $\LPS$.

On the other hand if $\enhancG$ has multiple levels, coordinates on~$\LPS$ are given by
$z,\nu_i$ for $i=1,\ldots \boldsymbol\ell-1$, $\chi_e$ for $e\in E^{hor}$ and
 $\int_{(\alpha_l^{(i)})_{\topl}}\twistD$ for $i=0,\ldots,\boldsymbol\ell$ and $l=1,\ldots,d(i)-1$ as well as $\int_{(\alpha_{d(0)}^{(0)})_{\topl}}\eta$.

We are now going to compute the Jacobian with respect to the coordinate systems just described.
According to \cref{eq:Vtil} we can write
\[
F_k=\sum_{l=1}^d A_{kl}\frac{\scl[\ell(l)]}{\scl[\ell(k)]}\LogP[\gamma_l]^{\Delta}\,,
\]
where $A_{kl}$ are the coefficients of the linear equations defining $V$.
We assume, as always, that the matrix $A=(A_{kl})$ is in reduced row echelon form and we denote $A_{kp(k)}$ the pivot of the $k$-th row.
Each pivot $A_{kp(k)}$ corresponds to some element $\gamma_{p(k)}$ of the $\enhancG$-adapted basis.
For the rest of the proof we write $u:=\codim$ and we let $F_1,\ldots,F_{u'}$ be the linear equations such that $\gamma_{p(k)}$ is a horizontal-crossing cycle  and $F_{u'+1},\ldots,F_u$ the remaining equations.
In the former case, we let $e(k)$ be the unique horizontal edge crossed by $\gamma_{p(k)}$.

We now distinguish the two cases described above. First we assume that $\enhancG$ has more than one level.

For every $1\leq k\leq u'$ we can write, using \Cref{prop:ArcLogPer},
\[
F_k=r_{e(k)}(\chi_{e(k)}+\widetilde{c}_{e(k)})+ h_k(\tilde{b})+ zg_k(\tilde{b})
\]
where $g_k, h_k$ are analytic. Furthermore by inspecting \Cref{prop:ArcLogPer} closely we see that
\[
h_k(\tilde{b})=h_k\left( \chi^{(\ell(k))}_{p(k)+1},\ldots,\chi^{(\ell(k))}_{c(k)}, \int_{\left(\alpha^{(\ell(k)}_{1}\right)_{\topl}} \eta, \ldots, \int_{\left(\alpha^{(\ell(k))}_{d(k)}\right)_{\topl}}\twistD\right).
\]
Similarly, the remaining equations $F_{u'+1},\dots, F_u$ can be written near $\tilde{b}_0$ as
\[
F_k= \int_{\left(\alpha^{(\ell(k)}_{p(k)}\right)_{\topl}} \eta + h_k\left( \int_{\left(\alpha^{(\ell(k)}_{p(k)+1}\right)_{\topl}} \eta, \ldots, \int_{\left(\alpha^{(\ell(k)}_{d(k)}\right)_{\topl}} \eta\right) +zg_k(\widetilde{b})
\]
where again $h_k$ and $g_k$ are analytic.

We note that in this case $p(k)\neq d(i)$ since otherwise $\int_{\left(\alpha^{(i)}_{d(i)}\right)_{\topl} } \eta =0$, which can be seen by taking the limit of the equations at $b_0$. Thus for any equation $F_k, \, k> u'$ the pivot $\int_{\left(\alpha^{(\ell(k)}_{p(k)}\right)_{\topl}} \eta$ is a coordinate on $\LPS$.
Thus the submatrix of the Jacobian corresponding to $\chi_{e(k)}$ for $k=1,\ldots, u'$ and $\int_{\left(\alpha^{(\ell(k)}_{p(k)}\right)_{\topl}} \eta$ for $k=u'+1,\ldots, u$ has full rank at $\tilde{b}_0$ and therefore
 $\Vext$ is smooth at $\widetilde{b}_0$.
Smoothness of $\Vext\cap\widetilde{D}$ follows similarly, by noting that additionally $z$ is one of the coordinates on $\LPS$.

The argument is very similar in the second case where $\enhancG$ has only one level, with some care needed to make sure everything works out well for the omitted coordinate $\chi_{e_0}=\chi_{c(0)}^{(0)}$. We recall that  we ordered the $\enhancG$-adapted basis in such a way that $\chi_{e(i)}$ is the omitted coordinate on $\LPS$. We claim that $\delta_{c(0)}^{(0)}$ does not correspond to any of the pivots $p(k)$, since otherwise we would have $\res_{e_0}(\twistD)=0$ by~\Cref{rem:MonInv}, which is impossible.
Thus, as before, each pivot corresponds to a coordinate on $\LPS$, and thus the Jacobian has full rank.

We now come to the final claim that $(\LPSmap)|_{\Vext\cap\widetilde{D}}:\Vext\cap\widetilde D\rightarrow V^{\lim}
$
 is a submersion at $\tilde{b}_0$.
Let $\Omega\subseteq \{1,\dots,d\}$ be the set of all non-pivotal rows and let $\Omega'\subseteq \Omega$ be the non-pivots corresponding to  cross cycles $\delta_e^{(i)}$.
In particular we can then use the periods $\lbrace \int_{(\pa_l)_{\topl}} \twistD\rbrace$ for $l\in \Omega\setminus\Omega'$
together with $\chi_e$ for $e\in \Omega'$ and $\nu_i$ for $i\in\lvlsetb$ as local coordinates on $\Vext\cap \widetilde{D}$.

Similarly, we can use $\int_{(\pa_l)_{\topl}} \twistD$ for all $l\in \Omega\setminus\Omega'$, as coordinates on $V^{\lim}$, and thus $\LPSmap$ is a submersion near $\tilde{b}_0$.
\end{proof}

\subsection*{The proof of \texorpdfstring{\Cref{prop:LPS}}{}}
We now have all the necessary ingredients for the proof of \Cref{prop:LPS}, it is a matter of summarizing what we have proved so far.
\begin{proof}[Proof of \Cref{prop:LPS}]
We have seen in \Cref{prop:subm} that $\Vext$ is smooth at any point of $\LPSmap^{-1}(\bdComp)$, thus proving~(3).  Furthermore,  \Cref{prop:subm} also shows that $(\LPSmap)|_{\Vext\cap \LPSmap^{-1}(\bdComp)}$ is open and maps into $V^{lim}$ by \Cref{cor:Vimage}, and we have thus proved~(4).

We now address the lifting properties of short arcs~(1) and~(2).
Let  $\sharc$ be a short arc with $\sigma_{\arc}=\sigma$.
Since $\arc_{*}(\pi_1(\Delta^*,x_0))=\langle \sigma_{\arc}\rangle$, there exists a lift $\tilde{\arc}^{\circ}:\Delta^*\to\LPS$.
More explicitly, we can write
\begin{equation}\label{eq:Lift}
t_i=z^{\sigma_i}e^{2\pi i\nu_i(z)},s_e=z^{\sigma_e}e^{2\pi i\chi_e(z)},\twistD=\twistD(z),
\end{equation}
where $\nu_i$ and $\chi_e$ are holomorphic. And in particular it follows that $\nu_i$ and $\chi_e$ are holomorphic at $z=0$.
Recall that on $\LPS$ there exists either a level $i$ with  $\nu_i=0$ or a horizontal node $e$ with $\chi_e=0$. We assume that $\nu_i=0$ for some $i$, the other case can be treated analogously. After a change of coordinates $z\mapsto ze^{2\pi i\tfrac{\nu_i(z)}{\sigma_i}}$ we can arrange that $\nu_i(z)=0$
and then define
\[
\tilde{\arc}^{\circ}(z):=(z,(\nu_i(z))_i,(\chi_e(z))_e,\twistD).
\]
Since $f$ is holomorphic at the origin, $\tilde{\arc}^{\circ}$ extends to a short arc $\tilde{\arc}:\Delta\to B$.
Now suppose $\arc$ is an $M$-disk and $W$ a period chart containing $x_0$, then $\arc(\Delta)\cap W\subseteq V\cap W$ and by \cref{eq:LPSEq2} the lift $\tilde{\arc}$ maps into $\Vext$, thus showing~(1).

Similarly, if $g:\Delta\to \Vext$ is a short arc on $(\Vext,\Vext\cap \widetilde{D})$ passing through a preimage of $x_0$ on, then by~\cref{eq:LPSEq2} the composition $\LPSmap\circ g$ is an $M$-disk. This proves~(2).
\end{proof}

\subsection{Multiple components}
\label{section:MultComp}
So far we assumed that $\overline{M}$ is locally irreducible near $b_0$.
In general  we can write $\overline{M}=\cup_{\alpha} M_{\alpha}$ locally near $b_0$ where $M_\alpha$ are the finitely many, local irreducible components of $\overline{M}$. For every $\alpha$ we choose a base point $x_\alpha$ and a subspace $V_\alpha$ such that $M_\alpha$ coincides with $V_\alpha$ near $x_\alpha$.
We can then apply~\Cref{prop:LinContainment} to each irreducible component $M_\alpha$ and thus obtain
\[
\partial M\cap U= \bigcup_{\alpha} V_\alpha^{\lim}
\]
for a suitable neighborhood $U$. In particular $\partial M$ is defined by a finite union of linear subspaces at any boundary point $b_0\in\partial M\cap \bdComp$ and this finishes the proof of \Cref{thm:Main} for the case of multiple components.

\section{An example}\label{sec:Example}
We now demonstrate how to obtain the linear equations on the boundary from the linear equations on a nearby smooth surface in an example.
We stress that we do not claim that there exists an actual linear subvariety which is locally defined by those linear equations; the example is only hypothetical.
\begin{figure}
\centering
\includegraphics[scale=0.22]{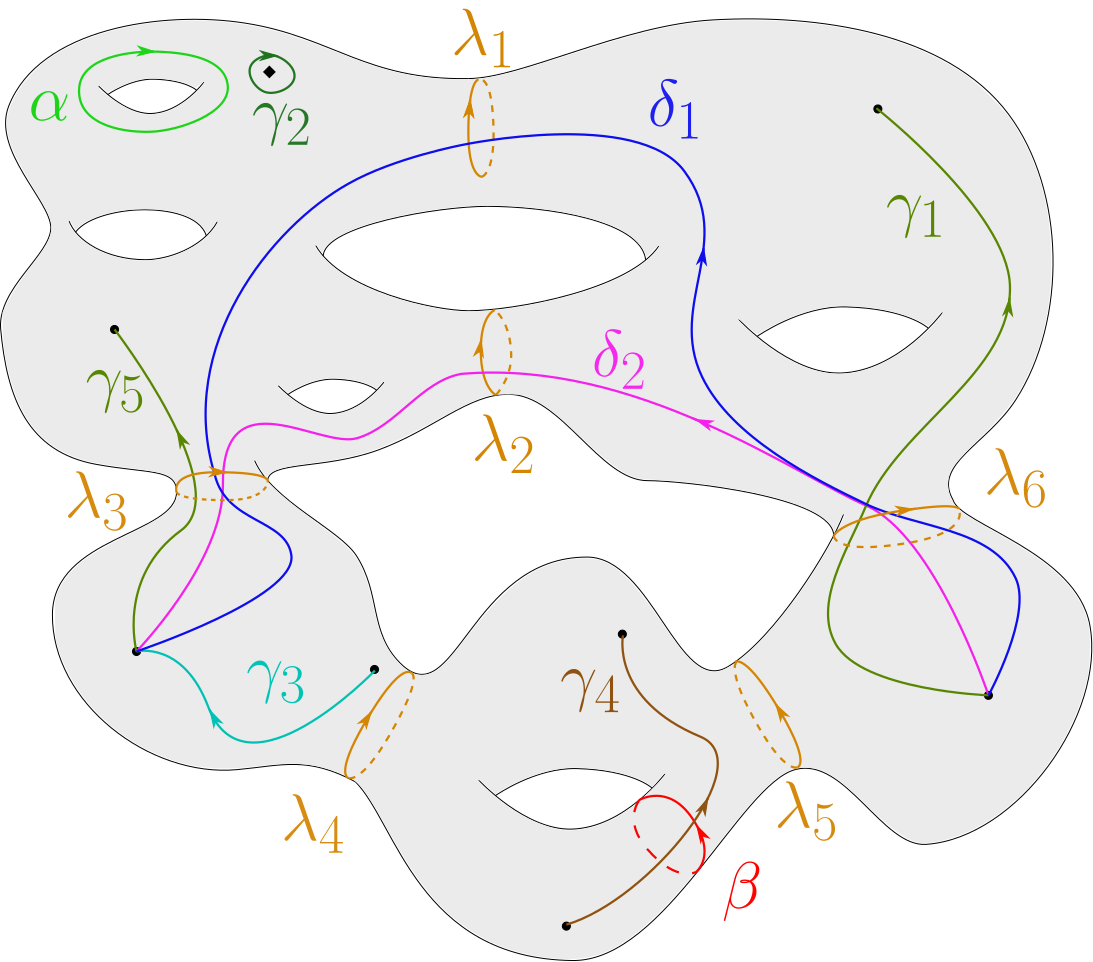}
\caption{A smooth genus $7$ curve}
\label{fig:EqExample}
\end{figure}
In~\Cref{fig:EqExample} we see a smooth genus $7$ curve $\Sigma$, just chosen sufficiently complicated to illustrate all possible phenomena. We consider the degeneration $X$ obtained by simultaneously pinching the cycles $\lambda_i,\, i=1,\ldots, 6,$ with normalization $\widetilde{X}\to X$.
By abuse of notation we denote homology cycles on $X$ and $\Sigma$ with the same name.
The level structure on $\enhancG$ can be seen in \Cref{fig:lG3}.
\begin{figure}[H]
\centering
\includegraphics[scale=0.38]{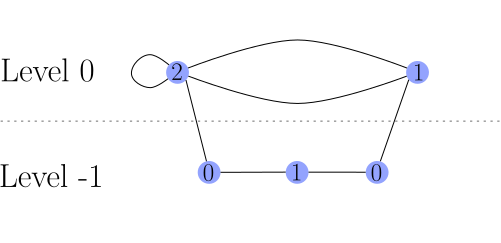}
\caption{The level graph $\enhancG$}
\label{fig:lG3}
\end{figure}

We note that the image of the vanishing cycles in
$H_1(\Sigma\setminus P,Z)$ is generated by $\langle \lambda_1,\lambda_2\rangle$. Furthermore, the images of $\{\alpha,\gamma_1,\gamma_2,\delta_1,\delta_2,\gamma_3,\gamma_4,\beta\}$ on the stable curve $X$ can be extended to a $\enhancG$-adapted basis. The advantage of using a $\enhancG$-adapted basis is that we can read off the equations directly. We assume that at all vertical nodes the number of prongs is one, i.e. $\kappa_e=1$.

Suppose $M\subseteq \Stra$ were a linear subvariety which locally near $\Sigma$ is given by the linear equations

\begin{gather}
\int_{\alpha}\omega+\int_{\gamma_2}\omega+3\int_{\beta}\omega=0,\label{eq:Lin1}\\
\int_{\gamma_1}\omega+\int_{\gamma_5}\omega=0,\label{eq:Lin2}\\
3\int_{\delta_1}\omega-5\int_{\delta_2}\omega=0,\label{eq:Lin3}\\
3\int_{\lambda_1}\omega-10\int_{\lambda_2}\omega=0.\label{eq:Lin4}\\
\int_{\gamma_3}\omega=\int_{\gamma_4}\omega.\label{eq:Lin5}
\end{gather}

Before describing the linear equations of $\partial M\cap\bdComp$ we describe the implications of  \Cref{prop:Vdisk} in this case.
Since \cref{eq:Lin3} crosses the horizontal vanishing cycles $\lambda_1$ and $\lambda_2$  as well as the vertical vanishing cycles $\lambda_3$ and $\lambda_6$
there has to be an additional equation of the form \begin{equation}\label{eq:forced}
\begin{split}
3\left(m_1\int_{\lambda_1}\omega+ m_3\int_{\lambda_3}\omega+m_3\int_{\lambda_6}\omega\right)-5\left(m_2\int_{\lambda_2}\omega+m_3\int_{\lambda_3}\omega+m_3\int_{\lambda_6}\omega\right).\\
\end{split}
\end{equation}
for some positive integers $m_1,m_2$ and $m_3$.
Here we used that the number of prongs at $\lambda_3$ and $\lambda_6$ is both one, thus the coefficient $m_3$ is the same for both of them.
Note that $\lambda_3+\lambda_6=0$ since the sum of the two vanishing cycles is separating.
We decide for the sake of an example, that $m_1=1,m_2=2$ and thus \cref{eq:forced} reduces to \cref{eq:Lin4}.
Similarly, the equation $\int_{\gamma_1}\omega+\int_{\gamma_5}\omega=0$ forces a linear equation
\[
m_3\int_{\lambda_3} \omega+ m_3\int_{\lambda_6} \omega=0.
\]
Since $\lambda_6=-\lambda_3$ this equation is vacuously true and thus does not impose an additional constraint.

\begin{figure}[H]
\subfloat{
\includegraphics[scale=0.26]{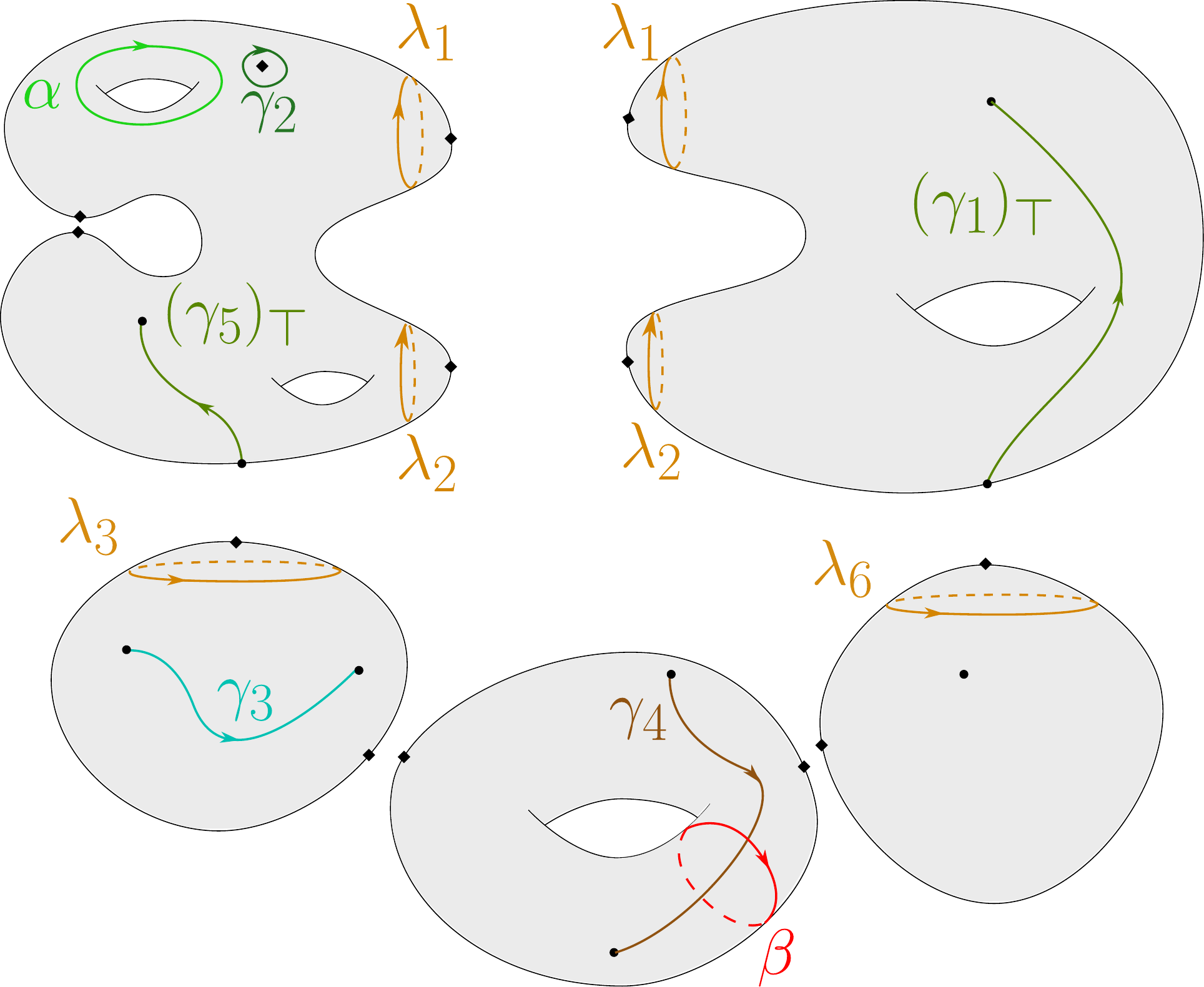}
}
\caption{The normalization $\widetilde{X}$}
\label{fig:EqSingular}
\end{figure}

We now describe the linear equations defining $\partial M\cap\bdComp$ near $X$.
For each of the defining equations $F\in H_1(\Sigma\setminus P,Z)$ for $M$ we repeat the following steps.
\begin{enumerate}
\item Determine the top level $\topl(F)$ and write $F=\sum_{k}d_{k}\pa_k$ in a $\enhancG$-adapted basis
\item If the equation $F$ is a \hcc, delete it.
\item Otherwise, restrict $F$ to its top level $\topl(F)$, i.e.  we consider $f_{\topl(F)}(F)$ in the language of \Cref{section:Filtr}. The resulting cycle then defines an equation for the boundary $\partial M\cap \bdComp$ at level $\topl(F)$.
\end{enumerate}

 First, since~\cref{eq:Lin3} crosses the horizontal vanishing cycles $\delta_1$ and $\delta_2$, we omit it for the equations of $\partial M\cap\bdComp$.
 We then restrict all remaining equations to their respective level. The equations \cref{eq:Lin1,eq:Lin2,eq:Lin4} are of level $0$, while \cref{eq:Lin5} is of level $-1$.
When restricting~\cref{eq:Lin1} we loose $\int_{\beta}\omega$ since $\ell(\beta)=-1$.
We thus arrive at the following equations

\begin{gather}
\int_{\alpha}\twistD+\int_{\gamma_2}\twistD=0,\label{eq:Res1}\\
\int_{(\gamma_1)_{\topl}}\twistD-\int_{(\gamma_5)_{\topl}}\twistD=0,\label{eq:Res2}\\
3\int_{\lambda_1}\twistD-10\int_{\lambda_2}\twistD=0.\label{eq:Res4}\\
\int_{\gamma_3}\twistD=\int_{\gamma_4}\twistD.\label{eq:Res5}
\end{gather}
We remark that the global residue condition for $\enhancG$ imposes the additional constraint ~$\int_{\lambda_3} \twistD=\int_{\lambda_6}\twistD$, that does not come from the linear equations for $M$ but simply from the fact that $\lambda_3$ and $\lambda_6$ are homologous.

\section{Comparing the linear structures}
\label{section:LinearStructures}
We now give a coordinate-free way of interpreting the linear equations for $\partial M$ on the boundary, which will give the final and most precise formulation of \Cref{thm:Main}. Again we suppose that $\overline{M}$ is locally irreducible near $b_0$ and for the general case we just apply the results on each local irreducible component separately.
We still use the same setup as in~\Cref{section:SetupComplexLinear}.
In the course of the proof of ~\Cref{thm:Main} we described how to obtain the subspace defining $\partial M$ in terms of defining equations.

Suppose $T_{x_0}M=V\subseteq H^1(X_{x_0}\setminus P_{x_0},Z_{x_0};\CC)$ is the linear subspace defining $M$ near $x_0$.  We consider an equation in $H^1(\Sigma\setminus P,Z;\CC)$ as an element of the dual $H_1(\Sigma\!\setminus\! P,Z;\CC)$.
 Suppose
\[
F=\sum_{l} A_{l}[\pa_l] \in H_1(X_{x_0}\setminus P_{x_0},Z_{x_0};\CC)
\]
is an equation vanishing on $V$ of level $i$. If $F$ is a \hcc  we want to ignore $F$ and otherwise we restrict it to its top level to obtain an equation vanishing on $\partial M$.
In order to makes this precise, we need to recall some of the setup from \Cref{sec:Filtr}. The only difference is that now we exclusively work with homology and cohomology with $\CC$-coefficients; to lighten notation all spaces here are simply obtained by tensoring their analogue from \Cref{sec:Filtr} by $\CC$.
For example, the vertical filtration \[W_i\subseteq  H_1(X_{x_0}\setminus P_{x_0},Z_{x_0};\CC)\] consists of paths of level at most $i$ that do not intersect any horizontal vanishing cycles of level $i$.
We denote $a_i:W_i\to H_1(X_{x_0}\setminus P_{x_0},Z_{x_0};\CC)$ the inclusion.
The specialization map
\[
f_i:W_i\to  \Hcut\!/\!\GRC_{(i)}
\]
is obtained by restricting a path in $W_i$ to the surface $\Sigma_{(i)}^{cut}$. We recall that $\Sigma_{(i)}^{cut}$ consists only of the $i$-th level piece of $\Sigma$ and is furthermore cut along all horizontal vanishing cycles of level $i$.

The following proposition is now a matter of rephrasing what we have proved so far in the language of \Cref{sec:Filtr}. We denote by $f_i^*$ and $a_i^*$ the dual maps of $f_i$ and $a_i$, respectively.

\begin{proposition}\label{prop:CoordFree}
Let $M$ be a linear subvariety, $b_0\in \partial M\cap\bdComp$ and $x_0 \in M_{reg}\cap \left(B\setminus D\right)$ with $T_{x_0}M=V$.
 Then $\partial M\cap \bdComp$ is near $b_0$ defined by the linear subspace
 \[
p\left(\prod_{i\in\lvlset} (f_i^{*})^{-1}(a_i^{*}(V))\right)\subseteq \Hi[(0)](X;\CC)\times\prod_{i\in\lvlsetb} \PP\left( \Hi(X;\CC)^{GRC}\right) ,
 \]

where $p:\prod_{i\in\lvlset} \Hi(X;\CC)^{GRC}\to\Hi[(0)](X)\times\prod_{i\in\lvlsetb} \PP \left(\Hi(X;\CC)^{GRC}\right )$ is the natural quotient map.
\end{proposition}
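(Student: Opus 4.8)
The plan is to translate the coordinate-dependent recipe from the proof of \Cref{thm:Main} — take the matrix $A$ in reduced row echelon form, delete horizontal-crossing rows, and restrict the remaining rows to their top level — into the language of the maps $f_i, g_i$ and $\alpha_i$ of \Cref{sec:Filtr}. Recall from \Cref{section:SetupComplexLinear} that, with respect to a $\enhancG$-adapted basis $\{\gamma_1,\dots,\gamma_n\}$, the subspace $V\subseteq H^1(X_{x_0}\setminus P_{x_0},Z_{x_0})$ is cut out by the rows of $A$, and that $\partial M\cap U = V^{\lim}$ by \Cref{prop:LinContainment}, where $V^{\lim}$ is defined by the vertical $i$-th level equations $A^{(i),ver}$. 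So the first step is purely linear-algebraic: reinterpret the operation "keep only the rows of top level $i$, restrict them to $\reseq{X}$, and discard horizontal-crossing rows" as an operation on the annihilator subspace.

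\textbf{Setting up the dual picture.} I would work throughout with the identification of a system of linear equations with the subspace of $H_1(\Sigma\setminus P,Z)$ that its coefficient vectors span, so that "$V$" on the cohomology side corresponds to its annihilator $\ANN(V)\subseteq H_1$. Under this dictionary, the level filtration $\LVLF[\bullet]$ and vertical filtration $\GRCF[\bullet]$ on homology induce a dual filtration on cohomology, and the key structural input is the proposition in \Cref{sec:Filtr}: $f_i:\GRCF\to\Hcut/\GRC_{(i)}$ is surjective with $\ker f_i=\LVLF[i-1]$, and similarly for $g_i$. A row $F$ of $A$ of level $i$ corresponds, after passing to the $\enhancG$-adapted basis, to an element of the dual of $\LVLF/\LVLF[i-1]$; the condition that $F$ is \emph{non}-horizontal means precisely that its coefficient vector lies in the span of the $\alpha^{(i)}_k$, i.e. it comes from $\GRCF/\LVLF[i-1]\simeq \Hcut/\GRC_{(i)}$ via $f_i$; if $F$ is a \hcc it pairs nontrivially with some $\delta^{(i)}_e$ and must be deleted. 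Thus the rows that survive step (2) of the recipe are exactly those pulled back along $f_i^*$ from $(\Hi(X)^{GRC})^\vee$, after composing with the inclusion $\alpha_i^*$ of the top-level piece. Concretely, $(f_i^*)^{-1}(\alpha_i^*(V))$ is the locus in $\Hi(X)^{GRC}$ whose annihilator, pulled back to $\LVLF$ and restricted appropriately, contains all the surviving level-$i$ equations coming from $V$; intersecting over all $i$ assembles these level by level, and applying the quotient $p$ performs the required projectivization on lower levels.

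\textbf{The main steps, in order.} (i) Fix the $\enhancG$-adapted basis and express $\ANN(V)$ in terms of the graded pieces $\LVLF/\GRCF$ and $\GRCF/\LVLF[i-1]$; show that, because $A$ is in reduced row echelon form, each basis row of $\ANN(V)$ has a well-defined top level and the rows split cleanly into horizontal-crossing and non-horizontal ones. (ii) For each level $i$, identify the non-horizontal rows of level $i$ with elements of $(\GRCF/\LVLF[i-1])^\vee\simeq(\Hcut/\GRC_{(i)})^\vee$ via the dual $f_i^*$ of the specialization morphism, and check that the restriction-to-top-level operation of the recipe is exactly the map $g_i$ followed by $h_i$, i.e. the map $f_i$ itself — this is where the diagram $h_i\circ g_i = f_i$ and the identification $\Hi(X)^{GRC}=\ANN(\GRC_{(i)})$ do the work. (iii) Show the surviving equations at level $i$, after restriction, cut out $\alpha_i^*(V)$ pulled back by $f_i^*$, i.e. their common vanishing locus is $(f_i^*)^{-1}(\alpha_i^*(V))\subseteq \Hi(X)^{GRC}$. (iv) Take the fiber product over all $i\in\lvlset$ inside $\prod_i \Hi(X)^{GRC}$ and push forward by $p$; by \Cref{prop:LinContainment} and the description of $V^{\lim}$ this is exactly $\partial M\cap\bdComp$. (v) Finally, observe the construction is independent of the choices of base point $x_0$, of $\enhancG$-adapted basis, and of lifts along $f_i$, since $f_i^*$ and $\alpha_i^*$ are canonical and $V^{\lim}$ is already known to be well-defined; for the reducible case one applies this componentwise exactly as in \Cref{section:MultComp}.

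\textbf{Expected main obstacle.} The delicate point is verifying that the recipe's "restrict to top level" operation — which on the level of cycles is literally $g_i$ on a chosen path representative — is genuinely intertwined with $f_i^*$ on the cohomology side, i.e. that passing to the annihilator and then restricting equals restricting and then passing to the annihilator, \emph{modulo} the global residue subspace $\GRC_{(i)}$. This requires carefully tracking how the ambiguity of band moves (\Cref{fig:Ambiguity}) and the kernel $\LVLF[i-1]=\ker f_i = \ker g_i$ interact with duality, and confirming that the well-definedness of $f_i$ as a map to $\Hcut/\GRC_{(i)}$ (rather than to $\Hcut$) is exactly what makes $(f_i^*)^{-1}(\alpha_i^*(V))$ land inside $\Hi(X)^{GRC}=\ANN(\GRC_{(i)})$ rather than a larger space. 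Once that compatibility is pinned down, the rest is bookkeeping with the $\enhancG$-adapted basis and the graded pieces of the two filtrations.
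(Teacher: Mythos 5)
Your plan is essentially the paper's own proof: the paper likewise sets $E^{res}=\bigoplus_{i}f_i(E\cap \GRCF)$, identifies it with the defining equations of $\partial M\cap\bdComp$ via \Cref{prop:LinContainment} and \cref{eq:LimEqVer} (using the $\enhancG$-adapted basis in reduced row echelon form and $\ker f_i=\LVLF[i-1]$), and then passes to the stated form by the elementary annihilator identities $\ANN(\ANN(V))=V$ and $T^{-1}(\ANN(V))=\ANN(T^{*}(V))$, which give $\ANN(E^{res})=\bigcap_i (f_i^{*})^{-1}(\alpha_i^{*}(V))$ before applying $p$. The ``main obstacle'' you flag --- compatibility of restriction with duality modulo $\GRC_{(i)}$ --- is exactly what these identities, together with the well-definedness of $f_i$ as a map into $\Hcut/\GRC_{(i)}$ already proved in \Cref{section:Filtr}, dispose of, so there is no gap and no genuinely different route.
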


\begin{proof}
We write $V=\ANN(E)$ where $E\subseteq H_1(X_{x_0}\setminus P_{x_0},Z_{x_0};\CC)$ are the linear equations defining $V$.  More explicitly $E=\left(H^1(X_{x_0}\setminus P_{x_0},Z_{x_0};\CC)/ V\right)^*$.

The restriction $E\cap W_i$ corresponds to all equations with top level $\leq i$ which do not pass through any horizontal nodes and $f_i(E\cap W_i)$  is then  generated by their restrictions to level $i$.
Note that $f_i(E\cap W_i)\subseteq  H_1\!\left(\Sigma^{cut}_{(i)}\!\setminus\! P,Z\cup \Lambda_{(i)}^{ver,+};\CC\right)\!/\!\GRC_{(i)}$
and 
\[
\ANN(f_i(E\cap W_i))\subseteq\left(\!H_1\!\left(\Sigma^{cut}_{(i)}\!\setminus\! P,Z\cup \Lambda_{(i)}^{ver,+};\CC\right)\!/\!\GRC_{(i)}\right )^*\!\!\!= \! \!H^1\!\left(\Sigma_{(i)}^{cut},Z\cup \Lambda_{(i)}^{+,ver};\CC\right)^{\GRC}\!\!= \! \Hi(X;\CC)^{\GRC}.
\]

By~\Cref{prop:LinContainment} and the explicit form of the equations~\cref{eq:LimEqVer}, we see that the defining equations for $\partial M$  near $x_0$ on the $i$-th level are exactly given by $f_i(E\cap W_i)$. In particular $\partial M$ is given by the linear subspace $p(\prod_{i\in\lvlset}\ANN(f_i(E\cap W_i)))$.

It now follows from properties of the annihilator that

\[\begin{split}
\ANN(f_i(E\cap W_i))
&= \ANN(f_i(a_i^{-1}(E)))\\
&=  (f_i^{*})^{-1}(\ANN(a_i^{-1}(E)))\\
&= (f_i^{*})^{-1}(a_i^{*}(V))).
\end{split}
\]

\end{proof}

\begin{remark}\label{rem:TSConv} We now explain how \Cref{prop:CoordFree} relates to the results of \cite{MirzakhaniWrightWYSIWYG}.
Since $f_i$ is surjective, its dual $f_i^*$ is injective, and we can identify $(f_i^{*})^{-1}(a_i^{*}(V))$ with its image under $f_i^*$ inside $W_i^*$. We obtain
\[
(f_i^{*})^{-1}(a_i^{*}(V))\simeq a_i^{*}(V)\cap \im(f_i^{*})= a_i^{*}(V)\cap \ANN(\ker f_i)\subseteq W_i^*.
\]

In the special case where $\Stra$ is a stratum of holomorphic differentials and $\enhancG$ has no horizontal nodes, we have $H_1(\Sigma\setminus P, Z)=\LVLF[0]=\GRCF[0]$ and $\LVLF[1]=\GRCF[1]= \ker(f_0)$.
In this case the tangent space for the top level part only is given by
\[
(f_0^*)^{-1}(V)\simeq V\cap \ANN(W_1).
\]
This coincides with the tangent space description of \cite[Thm 2.9]{MirzakhaniWrightWYSIWYG}.
We stress that the space of vanishing cycles in \cite{MirzakhaniWrightWYSIWYG} coincides with $W_1$ in our notation and should not be confused with the collection of vanishing cycles $\Lambda$.
\end{remark}

\section{The boundary in  the \WYSI partial compactification }\label{sec:bdWYSI}
We now show how to quickly deduce \Cref{thm:WYSImain} from \Cref{prop:CoordFree}, reproving one of the main results of \cite{ChenWrightWYSIWYG}.

First we recall some of the setup. We work on a fixed stratum $\Stra$ which is allowed to be meromorphic (note that in \cite{ChenWrightWYSIWYG} only holomorphic strata are considered).
For a multi-scale differential $(X,\twistD)$ we define $\pi(X,\twistD)=(X_{(0)},\omega_{(0)})$ to be the top level projection and recall the partial compactification
\[
\WYSIC:=\MSDS/ (\pi(X,\twistD))\simeq \pi(X',\eta')).
\]
with the natural projection map $p:\MSDS\to \WYSIC$. The fibers of $p$ are compact, since they are given by a cover of a finite union $\cup_{\mu'}\PP\Xi\Mgn[g',n'](\mu')$.

Let $M\subseteq \Stra$ be a linear subvariety, potentially defined over the complex numbers.
We now fix a boundary point $(X_{\infty},\omega_{\infty})\in p(\partial M)$  in a stratum $\Stra[(\omega_{\infty})]$.
Our goal is to show that $p(\partial M)\cap \Stra[(\omega_{\infty})]$ is a finite union of linear subvarieties. While Chen and Wright \cite[Thm. 1.1]{ChenWrightWYSIWYG} show that $\WYSIC$ is not a complex analytic space and in general $p$ is only continuous, the restriction $p_{|p^{-1}(\Stra[(\omega_{\infty})])}:p^{-1}(\Stra[(\omega_{\infty})])\to \Stra[(\omega_{\infty})]$ is in fact, algebraic and proper. Thus $p(\partial M)\cap \Stra[(\omega_{\infty})]$ is algebraic.
Furthermore, we have computed at each point $x$ of $p^{-1}(X_{\infty},\omega_{\infty})\cap (\partial M\cap \bdComp)$ the linear equations defining $\partial M\cap \bdComp$. We now want to show that there are only finitely many different linear equations for the top level when we vary the point $x$ over  $p^{-1}(X_{\infty},\omega_{\infty})\cap \partial M$.
While so far we have worked on each open boundary component $\bdComp$ separately, $p$ maps different boundary strata into $\Stra[(\omega_{\infty})]$ and thus \cite[Thm 1.2]{ChenWrightWYSIWYG} does not just follow as part of the statements we proved.
Before we can proceed with the proof we thus need some preparation.

Given $(X,\eta)\in p^{-1}(X_{\infty},\omega_{\infty})\cap \bdComp$ our first goal is to determine which nearby boundary strata map into $p^{-1}(X_{\infty},\omega_{\infty})$.
 Let $B\subseteq \MSDS$ be a small chart containing $(X,\eta)$.

We define
\[
D_{-1}:=\{ t_{-1}=0\}\subseteq B,
\]
where we recall that $t_{-1}$ was defined in \Cref{section:ModelDomain}
Thus $D_{-1}$ is a union of boundary strata, corresponding to undegenerations of $\enhancG$ with the same top level graph as $\enhancG$.
Since on $\WYSIC$ all zeroes and preimages of nodes are marked, we have the following observation.

\begin{lemma}\label{lemma:TopUndeg}Let $(X,\eta)\in B\cap p^{-1}(X_{\infty},\omega_{\infty})$ as above. Then
\[
B\cap p^{-1}(X_{\infty},\omega_{\infty}) \subseteq D_{-1}.
\]
\end{lemma}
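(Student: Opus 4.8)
The map $p$ sends a multi-scale differential to its top-level pointed flat surface, with former zeros and preimages of nodes turned into marked points and with components carrying the zero differential contracted. By the prescribed vanishing condition on multi-scale differentials no component ever carries the zero differential, so no contraction occurs; hence $p(X,\eta)$ and $p(X',\eta')$ record exactly the top-level subcurves together with their differentials and marked points, and if $p(X',\eta')=p(X,\eta)=(X_\infty,\omega_\infty)$ then these top-level pointed stable curves are isomorphic. The plan is to show that whenever $(X',\eta')\in B$ has $t_{-1}(X',\eta')\neq 0$, its top-level subcurve is strictly larger than the top-level subcurve of $(X,\eta)$, which is the pointed curve underlying $(X_\infty,\omega_\infty)$; this forces $p(X',\eta')\neq(X_\infty,\omega_\infty)$ and proves the lemma. (If $\ell(\enhancG)=0$ there is no coordinate $t_{-1}$ and the statement is vacuous, so we assume $\ell(\enhancG)\ge 1$.)

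So suppose $(X',\eta')\in B$ with $t_{-1}(X',\eta')\neq 0$, and let $\enhancG'$ be the enhanced level graph of $(X',\eta')$, an undegeneration of $\enhancG$. By the formula \cref{eq:PlumbingParameters} for the plumbing parameters, a vertical node $e$ of $\enhancG$ persists in $X'$ only if some $t_i$ with $\ell(e-)\le i<\ell(e+)$ vanishes; for an edge from level $0$ to level $-1$ this reads $s_e=t_{-1}^{m_{e,-1}}$, so $t_{-1}(X',\eta')\neq 0$ smooths all such edges, and more precisely it undoes the level passage of $\enhancG$ between level $0$ and level $-1$, so every vertex of $\enhancG$ at level $-1$ lies on the top level of $\enhancG'$. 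Hence the top-level subcurve of $X'$ contains a partial smoothing of both $\reseq[0]{X}$ and $\reseq[-1]{X}$, and $\reseq[-1]{X}\neq\emptyset$ because $\ell(\enhancG)\ge 1$. Since no component of $\reseq[-1]{X}$ is contracted in forming $p(X',\eta')$, the pointed stable curve underlying $p(X',\eta')$ has irreducible components — coming from $\reseq[-1]{X}$ — that the pointed curve underlying $(X_\infty,\omega_\infty)=p(X,\eta)$, namely the top-level subcurve of $X$ with marked points at the former nodes, does not. Thus the two underlying pointed stable curves are non-isomorphic, so $p(X',\eta')$ and $(X_\infty,\omega_\infty)$ are distinct points of $\WYSIC$, contradicting $(X',\eta')\in p^{-1}(X_\infty,\omega_\infty)$. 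Therefore $t_{-1}(X',\eta')=0$, i.e. $(X',\eta')\in D_{-1}$.

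The one point that requires care is the claim that $t_{-1}\neq 0$ forces all of level $-1$ up to the top level of the undegeneration: this is precisely the local description of partial undegenerations of $\MSDS$ encoded by the coordinates $b=(\eta,t,h)$ on $B$ together with the plumbing parameters \cref{eq:PlumbingParameters} — the parameter $t_i$ controls the $i$-th level passage, and its non-vanishing collapses that passage. Everything else is bookkeeping with the construction of $\WYSIC$, together with the observation that $p$ never contracts a component, since multi-scale differentials are nowhere the zero differential.
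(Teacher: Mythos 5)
Your overall strategy is the right one, and it is exactly the elaboration of the paper's one-line justification (the paper offers no proof beyond the remark that on $\WYSIC$ all zeroes and node-preimages are marked): if $t_{-1}\neq 0$ then the level passage between levels $0$ and $-1$ is undone, so the top level of the corresponding multi-scale differential absorbs all of the old level $-1$, and such a point cannot have the same $p$-image as $(X,\eta)$. Your reduction via the plumbing parameters \eqref{eq:PlumbingParameters} and the handling of the case $\ell(\enhancG)=0$ are fine.

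The one step whose justification does not hold as written is the final non-isomorphism claim: you assert that the pointed curve underlying $p(X',\eta')$ ``has irreducible components coming from $X_{(-1)}$ that $X_{(0)}$ does not.'' In the typical case a level $-1$ component is joined to a level $0$ component by a vertical edge with $s_e=t_{-1}^{m_{e,-1}}\neq 0$, so under plumbing the two components \emph{merge} into a single new irreducible component; the old level $-1$ components do not survive as irreducible components of the top level of $X'$, and one cannot distinguish the two pointed curves by listing which irreducible components they contain (e.g.\ one vertex on each of levels $0$ and $-1$ joined by one edge gives a single irreducible top-level component on both sides). The conclusion is nevertheless true and the gap is easy to close with a numerical invariant: the quantity $\sum_{v}(2g_v-2+n_v)$, summed over the components of the pointed normalization of the top level (with $n_v$ the number of marked points, including node-preimages), is additive under plumbing in annuli, so for the top level of $X'$ it equals the corresponding sum over all components of levels $0$ and $-1$ of $X$; since each level $-1$ component is a stable pointed curve carrying a nonzero differential, each contributes at least $1$, so the invariant strictly exceeds that of $(X_\infty,\omega_\infty)$ whenever $X_{(-1)}\neq\emptyset$. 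This is precisely where the paper's remark that all zeroes and preimages of nodes are marked on $\WYSIC$ is used, and with this replacement your argument is complete.
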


We need to study the asymptotic of log periods on $D_{-1}$. While the general asymptotic for undegenerations is complicated, for paths of top level $0$ there is a formula, similar to \Cref{thm:PerThm}. We only state it in the case of curves not crossing horizontal nodes but it can be adapted in general.

\begin{lemma}\label{lemma:PerUndeg}
Let $\pa$ be a path of top level $0$ not crossing any horizontal nodes.
Then
\[
\LogP(b)= \int_{\pa(b)_{\topl}}\twistD
\]
for all $b\in D_{-1}$.
\end{lemma}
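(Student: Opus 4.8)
The statement to prove is Lemma~\ref{lemma:PerUndeg}: for a path $\pa$ of top level $0$ that crosses no horizontal nodes, the log period satisfies $\LogP(b)=\int_{\pa(b)_{\topl}}\twistD$ for all $b\in D_{-1}$. The key point is that $D_{-1}=\{t_{-1}=0\}$ consists of undegenerations of $\enhancG$ whose top level graph is unchanged, so although $b\in D_{-1}$ need \emph{not} lie in the most degenerate stratum $\bdComp$, the part of the surface that $\pa$ actually sees is exactly the top level, and on the top level nothing has degenerated. My plan is to reduce the claim to \Cref{thm:PerThm} (or rather its proof) by tracking which terms survive on $D_{-1}$.

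\textbf{Step 1: reduce to the structure of $\LogP$ near $D_{-1}$.} Recall from \Cref{def:LogPeriod} that
\[
\LogP(b)=\frac{1}{\scl[\topl(\gamma)]}\left[\int_{\gamma(\stdpar)}\omega(b)-\sum_{e\in E}\IP[\gamma,\lambda_e]\, r_e(b)\ln(s_e)\right],
\]
and here $\topl(\pa)=0$ so $\scl[0]=1$ and the normalization is trivial. Since $\pa$ has top level $0$ and crosses no horizontal nodes, for each edge $e$ with $\IP[\pa,\lambda_e]\neq 0$ we have $e\in E^{ver}$ with $\ell(e+)=0$; in particular $\ell(e-)<0$. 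By~\cref{eq:PlumbingParameters}, $s_e=\prod_{i=\ell(e-)}^{-1}t_i^{m_{e,i}}$, and this product includes the factor $t_{-1}^{m_{e,-1}}$ with $m_{e,-1}\geq 1$. Hence on $D_{-1}=\{t_{-1}=0\}$ we have $s_e=0$ for every such $e$. Also $r_e(b)=\frac{1}{2\pi i}\int_{\lambda_e}\omega(b)$; I expect, as in the proof of~\Cref{thm:PerThm}, that $r_e(b)=O(\scl[\ell(e+)](\parLevel+\parHor))=O(\parLevel+\parHor)$ since $\ell(e+)=0$, and more precisely $r_e(b)$ vanishes to the same order in $t_{-1}$ that makes $r_e(b)\ln(s_e)$ extend holomorphically by zero along $D_{-1}$. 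So the sum $\sum_e \IP[\pa,\lambda_e]r_e(b)\ln(s_e)$ vanishes identically on $D_{-1}$, by analyticity of $\LogP$ (\Cref{thm:PerThm}) combined with the fact that each summand extends analytically and vanishes on $\{t_{-1}=0\}$.

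\textbf{Step 2: identify the surviving integral with the top level period.} After Step~1, $\LogP(b)=\int_{\gamma(b)}\omega(b)$ on $D_{-1}$ (again using $\scl[0]=1$). Now I split $\pa(b)=\pa(b)^{thick}\cup\pa(b)^{thin}$ as in~\Cref{section:SecondStep}. The thick part integral gives $\int_{\pa^{thick}}(\starD{\twistD}+\modD(b))$, and on $D_{-1}$, since $\pa$ has top level $0$, the relevant component is $\reseq[0]{\eq}$ where $\starD{\twistD}$ restricts to $\twistD_{(0)}$ (the scaling factor $\scl[0]=1$) and $\modD$ is divisible by $\scl[-1]$, which vanishes on $\{t_{-1}=0\}$; so this contributes $\int_{\pa^{thick}_{\topl}}\twistD$. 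For the thin part: near each marked zero of top level the computation in the proof of \Cref{thm:PerThm} gives $\int_{\pa^{thin}_k}\twistD$ plus an $O(\scl[\ell(e+)](\parLevel+\parHor))$ correction; near each vertical node $e$ crossed by $\pa$ (which has $\ell(e+)=0$), the thin part splits into the segment near $p_0$, contributing $\scl[0]\int_{\pa^{thin}_{e^+}}\twistD^{hol}+O(\parLevel+\parHor)$, plus the path in the plumbing cylinder from $p_0^+$ to $p_0^-$, which contributes $r_e(b)\ln(s_e)$ — but this last term already vanished on $D_{-1}$ in Step~1 (it is exactly the logarithmic piece we subtracted). Since $\pa$ is non-horizontal, $\int_{\pa^{thin}}\twistD^{hol}=\int_{\pa^{thin}}\twistD$ by the discussion after~\cref{eq:HolPart}. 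Collecting thick and thin contributions and using that all the $O(\parLevel+\parHor)$ terms vanish on $D_{-1}$ (in particular the coordinate $t_{-1}$ alone need not vanish, but these error terms are of the form analytic $\times\,(t_i$ or $h_e)$ for various $i,e$; I need to check these error terms actually involve $t_{-1}$, which they do because the relevant node has $\ell(e+)=0,\ell(e-)<0$ and the normal-form coordinates $\stdFormB[]$ agree with $\stdFormA[]$ only on the degenerate stratum — here is where I must be careful, see below), we get $\LogP(b)=\int_{\pa(b)^{thick}_{\topl}}\twistD+\int_{\pa(b)^{thin}_{\topl}}\twistD=\int_{\pa(b)_{\topl}}\twistD$.

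\textbf{Main obstacle.} The delicate point is Step~2's claim that all the $O(\scl[\ell(e+)](\parLevel+\parHor))$ error terms — coming from the discrepancy between $\stdFormB[]$-coordinates and $\stdFormA[]$-coordinates away from $\bdComp$, i.e. from $\stdFormB[(\twistD,t,h)]^{-1}(\Nearby[](\twistD,t,h))\neq p_0$ when $(t,h)\neq 0$ — actually vanish on $D_{-1}=\{t_{-1}=0\}$ rather than merely on the full stratum $\bdComp=\{t=h=0\}$. On $D_{-1}$ some coordinates $t_i$ with $i\ne -1$ and $h_e$ may be nonzero, so a priori $\stdFormB[]\ne\stdFormA[]$ there. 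The resolution should be that for a \emph{top-level} node $e$ (with $\ell(e+)=0$), the normal-form coordinates near $q_e^+$ depend only on the geometry of the top level component and its scaling parameter $\scl[0]=1$, together with the residue $\res_{q_e^-}(\starD{\twistD}+\modD)$, and the latter is divisible by $\scl[-1]$; hence every correction term attached to this node is divisible by $\scl[-1]=\prod_{k\leq -1}t_k^{a_k}$, which vanishes on $D_{-1}$. Making this precise requires re-examining Theorems~4.1 and~12.3 of \cite{BCGGMsm} (the standard-form coordinate theorems recalled in~\Cref{section:StdFormCoord}) restricted to the divisor $\{t_{-1}=0\}$, and checking that the families $\stdFormB^{\pm},\stdFormB[\calZ_k]$ for top-level objects, together with the nearby sections $\Nearby[]$, agree with their $\stdFormA[]$-counterparts not just on $\bdComp$ but on all of $D_{-1}$. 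This compatibility is essentially the statement that an undegeneration preserving the top level does not change the plumbing data at top-level nodes; I expect it to hold but it is the one spot where the proof is not purely formal.
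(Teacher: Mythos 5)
Your overall strategy is the same as the paper's: rerun the computation behind \Cref{thm:PerThm} and check that the correction terms vanish along $D_{-1}$ rather than only along $\bdComp$. But there are two genuine gaps. First, the point you isolate as the ``main obstacle'' --- that the discrepancy terms $\int_{p_0}^{\stdFormB[]^{-1}(\Nearby[])}(\starD{\twistD}+\modD)$, coming from $\stdFormB[]$- versus $\stdFormA[]$-coordinates, vanish on $D_{-1}$ and not merely on $\bdComp$ --- is essentially the entire content of the paper's proof, and you leave it unverified (``I expect it to hold''). The paper resolves it in one stroke, internally to its own setup rather than by re-examining the normal-form theorems of \cite{BCGGMsm}: since $\modD_{(0)}$ is divisible by $\scl[-1]=t_{-1}^{a_{-1}}$, on $D_{-1}$ the level-zero part of $\starD{\twistD}+\modD$ is exactly $\twistD_{(0)}$, so the base-point identity $\stdFormB[b]^{-1}(\Nearby[](b))=p_0$ persists along $D_{-1}$ at nodes with $\ell(e+)=0$, and the corresponding correction integrals are $O(t_{-1})$. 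You do gesture at the right mechanism (divisibility of the residue and of the modification differential by $\scl[-1]$), but since everything else in your argument is declared to be ``verbatim \Cref{thm:PerThm}'', deferring this verification means the lemma itself is not proved.

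Second, your Step 1 claim that every edge with $\IP[\pa,\lambda_e]\neq 0$ has $\ell(e+)=0$ is false: a path of top level $0$ may descend through several levels and cross vertical nodes joining two negative levels, and its thick and thin parts may lie in lower levels even when the corresponding intersection numbers vanish. For such a node, $s_e$ contains no factor of $t_{-1}$, so your assertion that $s_e=0$ on $D_{-1}$ fails, and Step 2 never treats the lower-level thick and thin contributions at all. These terms do all vanish on $D_{-1}$, but by a different argument, which is the second half of the paper's proof: for the subtracted logarithms one uses that $r_e$ is divisible by $t_{-1}^{a_{-1}}$ (because $\ell(e-)\leq -1$ always) while $\ln(s_e)$ grows at worst logarithmically in $t_{-1}$; and for the lower-level integrals one only knows that $\stdFormB[]^{-1}(\Nearby[])$ stays bounded, but the integrand $\starD{\twistD}+\modD$ on levels $\leq -1$ is divisible by $t_{-1}$, so every such contribution is $O(t_{-1})$. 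Without this supplement your argument only covers paths whose geometric support (not just homology class) stays in the level-zero subsurface and its adjacent plumbing annuli.
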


Note that this shows that the top level equations for $\partial M \cap\bdComp$ are constant on $D_{-1}$, at least along a local irreducible component of $\partial M$.

\begin{proof}
The proof of \Cref{thm:PerThm} works almost verbatim, the only difference is that
$\stdFormB[b_0]^{-1}(\Nearby[e](b_0))=p_0$
is only true for nodes with $\ell(e+)=0$ since the modifying differential on the top level vanishes.

Thus at nodes with $\ell(e+)=0$ we have
 \[
\int^{\stdFormB[(\twistD,t,h)]^{-1}(\Nearby[](\twistD,t,h))}_{p_0}\twistD=O(t_{-1}).
\]
On lower levels, we only know that $\stdFormB[b_0]^{-1}(\Nearby[e](b_0))$ is bounded, but the integrand is divisible by $t_{-1}$ and thus
 \[
\int^{\stdFormB[(\twistD,t,h)]^{-1}(\Nearby[](\twistD,t,h))}_{p_0}(\starD{\twistD}+\modD)=O(t_{-1})
\]
as well.
\end{proof}

We have now assembled all the tools to give an independent proof of \Cref{thm:WYSImain}.

\begin{proof}[Proof of~\Cref{thm:WYSImain}]
Let $(X_{\infty},\omega_{\infty})\in \partial M\cap\WYSIC$. 
For each $y\in W\!:=p^{-1}(X_{\infty},\omega_{\infty})$ we let $\enhancG(y)$ be the associated enhanced level graph.
If $y\in\overline{M}$ we choose $U_y$ such that $U_y\cap \overline{M}$ has only finitely many irreducible components and $U_y\cap\bdComp$ satisfies the conditions of \Cref{prop:LinContainment} for each local irreducible component.
 
We write $\partial M\cap \bdComp\cap U_y= \cup_{\beta} M_{\beta}(y)$ as a union of its local irreducible components. We choose arbitrary points $z_{\beta}(y)$ in $M_{\beta}(y)\cap\WYSIC$ and let $V_{\beta}(y)=T_{z_{\beta}(y)} M_{\beta}(y)$ be the corresponding tangent space.
Note that so far in \Cref{prop:LinContainment} we assumed that $z_{\alpha}(y)$ is a smooth point of $M$. We now claim that this assumption can be removed as follows.

 In a small period chart around $z_{\beta}(y)$ we can choose a smooth point of $M_{\beta}(y)$ and then transport the linear subspace for the given branch via the Gauss-Manin connection to $z_{\beta}(y)$.

We set $V_{\beta}^{(0)}(y):= (f_0^*)^{-1}(\alpha_0)^*(V_{\beta}(y))$.
 In the case $\omega_{\infty}$ has no simple poles, by  \Cref{rem:TSConv}, we can identify  $V_{\beta}^{(0)}(y)$ with the intersection of the tangent space to $M_{\beta}(y)$ at $z_{\beta}(y)$ and the tangent space to $\Stra[(\omega_{\infty})]$.

We can assume that $U_{y_{l}}=U'_{y_{l}}\times U''_{y_{l}}$ is a product of polydisks where $U'_{y_{l}}\subseteq H_1(X_{(0)}\setminus P_{(0)},Z_{(0)})$ is a polydisk in the coordinates of the top level.
By compactness of $W$ we can cover $W\cap \partial M$ by finitely many $U_{y_1},\ldots,U_{y_l}$ with $y_k\in \partial M$.
We set $U:= \cap_{k=1}^{l}U'_{y_k}$.

We claim that
\[
\partial M\cap \Stra[(\omega_{\infty})]\cap U= \bigcup_{k=1}^{l}\bigcup_{\beta} V^{(0)}_{\beta}(y_k)\cap U.
\]
Let $(x_n)$ be a sequence in $M$ converging to $(X'_{\infty},\omega'_{\infty})\in U\cap p(\partial M)$. After removing finitely many elements of the sequence, the sequence is contained in $\cup_{k=1}^{l}U_{y_k}$. We partition it into subsequences $(x_{n}^{(k)})$ contained in $U_{y_{k}}$.
After passing to a subsequence we can assume that $(x_n^{(k)})$ converges to $x^{(k)}\in \partial M\cap U_{y_k}$. Now if $x^{(k)}$ lies in the same open boundary component $\bdComp$ as $y_k$, it follows from \Cref{prop:CoordFree} that $p(x^{(k)})\in V^{(0)}_{\beta}(y_k))$ for some $\beta$. If $x$ lies in an undegeneration, then it follows first from \Cref{lemma:TopUndeg} that $x^{(l)}\in D_{-1}$ and then the claim follows from \Cref{lemma:PerUndeg}.

On the other hand if $x^{(k)}\in V^{(0)}_{\beta}(y_k)\cap U$, then we construct a multi-scale differential $(X,\twistD)$ by gluing $x^{(k)}$ and the lower level parts of $y_k$. Since the equations are level-wise, it follows that $(X,\omega)$ is contained in $U_{y_k}\cap \partial M$, and thus there exists a sequence $(x_n)$ in $M$ converging to $(X,\twistD)$ and by continuity $(x_n)$ converges to $x^{(k)}$ in $\WYSIC$.
This proves the first claim.

The second claim follows since we can choose $z_{\beta}(y)$ arbitrarily inside the branch $M_{\beta}(y)$ and thus can take $z_{\beta}(y)=x_n$.

\end{proof}


\begin{thebibliography}{BCGGM19}

\bibitem[BCGGM18]{BCGGMgrc}
Matt Bainbridge, Dawei Chen, Quentin Gendron, Samuel Grushevsky, and Martin
  M{\"o}ller.
\newblock {\em Compactification of strata of abelian differentials}.
\newblock Duke Math. J.  167.12 (2018), pp. 2347--2416.

\bibitem[BCGGM19]{BCGGMsm}
Matt Bainbridge, Dawei Chen, Quentin Gendron, Samuel Grushevsky, and Martin
  M{\"o}ller.
\newblock {\em The moduli space of multi-scale differentials}.
\newblock Preprint \href{https://arxiv.org/abs/1910.13492}{arXiv:1910.13492}, 2019.

\bibitem[BM19]{BMpriv}
Benjamin Bakker and Scott Mullane.
\newblock Private communication, 2019.

\bibitem[BDG20]{BDGCylinder}
Frederik Benirschke, Benjamin Dozier, Samuel Grushevsky.
\newblock {\em Equations of linear subvarieties of strata of differentials.}
\newblock Preprint \href{https://arxiv.org/abs/2011.11664}{arXiv:2011.11664}, 2020.

\bibitem[CW19]{ChenWrightWYSIWYG}
Dawei Chen and Alex Wright.
\newblock {\em The {WYSIWYG} compactification}.
\newblock Preprint \href{https://arxiv.org/abs/1908.07436}{arXiv:1908.07436}, 2019, to appear in Journal of the London Mathematical Society


\bibitem[CMZ20]{ecStrata}
Matteo Costantini, Martin M{\"o}ller, Jonathan Zachhuber.
\newblock {\em
The {C}hern classes and the {E}uler characteristic
of the moduli space of {A}belian differentials}
\newblock Preprint \href{https://arxiv.org/abs/2006.12803}{arXiv:2006.12803}, 2020



\bibitem[EM13]{EskinMirzakhani}
Alex Eskin and Maryam Mirzakhani.
\newblock {\em Invariant and stationary measures for the {$\SL(2,\RR)$-}action on
  moduli space}.
\newblock {P}ublications math{\'e}matiques de l'{IH{\'E}S}  127 (2013), pp. 95--324.

\bibitem[Fil16]{FilipAlgebraicity}
Simion Filip.
\newblock {\em Splitting mixed {H}odge structures over affine invariant manifolds}.
\newblock Annals of Mathematics, 183.2 (2016), pp. 681--713.


\bibitem[Gen20]{GendronCompact}
Quentin Gendron.
\newblock {\em Les strates ne poss{\`e}dent pas de vari\'{e}t\'{e}s compl\`{e}tes}.
\newblock to appear in Comptes Rendus.


\bibitem[Joh80]{JohnsonSpin}
Dennis Johnson.
\newblock {\em Spin Structures and Quadratic forms on Surfaces}.
\newblock  Journal of the London Mathematical Society, 22.2(1980), pp. 365--273.


\bibitem[KN15]{KollarArcs}
J{\'a}nos Koll{\'a}r and Andras Nemethi.
\newblock {\em Holomorphic arcs on singularities}.
\newblock  Invent. Math. 200(2015), pp. 97--147.



\bibitem[MW15]{MirzakhaniWrightWYSIWYG}
Maryam Mirzakhani and Alex Wright.
\newblock {\em The boundary of an affine invariant submanifold}.
\newblock  Invent. Math. 209 (2015), pp. 927--984.

\bibitem[Sch73]{SchmidVHS}
Wilfried Schmid.
\newblock {\em Variation of {H}odge structure: The singularities of the period  mapping}.
\newblock Invent. Math., 22 (1973), pp.211--320.

\bibitem[Sch07]{SchnellCanonical}
Christian Schnell.
\newblock Canonical extensions of local systems.
\newblock Preprint \href{https://arxiv.org/abs/0710.2869}{arXiv:0710.2869}, 2007.

\bibitem[Win05]{Winkelmann}
J{\"o}rg Winkelmann.
\newblock {\em Non-degenerate maps and sets}.
\newblock Mathematische Zeitschrift  249 (2005), pp. 783--795.

\bibitem[Wri15]{WrightCylinder}
Alex Wright.
\newblock {\em Cylinder deformations in orbit closures of translation surfaces}.
\newblock Geom. Topol. 19.1 (2015), pp. 413--438.
\end{thebibliography}
\end{document}